\newcommand{\coloneqq}{\mathrel{\mathop:}=}
\newcommand{\eqqcolon}{=\mathrel{\mathop:}}
\newcommand{\diag}{\operatorname{diag}}
\newcommand{\Log}{\operatorname{Log}}
\renewcommand{\Re}{\operatorname{Re}}
\renewcommand{\Im}{\operatorname{Im}}
\newcounter{dummy}
\def\a{\mathbf{a}}
\def\R{\mathbb{R}}
\def\C{\mathbb{C}}
\def\N{\mathbb{N}}
\def\Z{\mathbb{Z}}
\def\NN{\mathcal{N}}
\def\NU{\mathfrak{N}}
\def\SU{\mathfrak{S}}
\DeclareMathOperator*{\sign}{sign}
\newcommand\smalldiv{\smash{\raisebox{0.29ex}{\scalebox{0.8}{/}}}}
\newtheorem{theorem}{\hspace*{\parindent}Theorem}
\newtheorem{lemma}[theorem]{\hspace*{\parindent}Lemma}
\newtheorem{corollary}[theorem]{\hspace*{\parindent}Corollary}
\theoremstyle{definition}
\newtheorem{definition}{\hspace*{\parindent}Definition}
\newtheorem{remark}[theorem]{\hspace*{\parindent}Remark}
\numberwithin{theorem}{section}
\numberwithin{equation}{section}
\title{Ratios of the Gauss hypergeometric functions with parameters shifted by integers: part I}
\author{Alexander Dyachenko$^{\rm a}$~~and Dmitrii\:Karp$^{\rm b}$\footnote{Corresponding author. E-mail: D. Karp -- \emph{dmitrika@hit.ac.il}, A.\:Dyachenko --  \emph{diachenko@sfedu.ru}}
\\[10pt]
\\
\small{\textit{$\phantom{1}^a$Department of Mathematics, University College London, London WC1E 6BT, UK}}
\\
\small{\textit{$\phantom{1}^b$Department of Mathematics,  Holon Institute of Technology, Holon, Israel}}
}
\date{}
\begin{document}
\maketitle

\begin{center}
    \parbox{12cm}{ \small\textbf{Abstract.} Given real parameters~$a,b,c$ and integer
        shifts~$n_1,n_2,m$, we consider the ratio
        $R(z)={ }_2F_1(a+n_1,b+n_2;c+m;z)/{ }_2F_1(a,b;c;z)$ of the Gauss hypergeometric
        functions. We find a formula for~$\Im R(x\pm i0)$ with~$x>1$ in terms of real
        hypergeometric polynomial~$P$, beta density and the absolute value of the Gauss
        hypergeometric function. This allows us to construct explicit integral representations
        for~$R$ when the asymptotic behaviour at unity is mild and the denominator does not
        vanish. Moreover, for arbitrary $a,b,c$ and $\omega\le 1$ the
        product~$P(z+\omega)R(z+\omega)$ is proved to belong to the generalized Nevanlinna
        class~$\NN_{\kappa}^\lambda$. We give an in-depth analysis of the case $n_1=0$,
        $n_2=m=1$ known as the Gauss ratio. Furthermore, we establish a few general facts
        relating generalized Nevanlinna classes to Jacobi and Stieltjes continued fractions, as
        well as to factorization formulae for these classes. The results are illustrated with a
        large number of examples.}
\end{center}

\bigskip

Keywords: \emph{Gauss hypergeometric function, Gauss continued fraction, Integral representation, Stieltjes class, Generalized Nevanlinna classes}

\bigskip

MSC2010: 33C05 (primary), 30B70, 47B50 (secondary)

\bigskip

\section{Introduction}

The Gauss hypergeometric functions (\cite{Gauss}, \cite[Chapter~II]{HTF1},
\cite[Chapter~{\href{https://dlmf.nist.gov/15}{15}}]{DLMF})
\begin{equation}\label{eq:2F1_def}
{}_2F_1(a,b;c;z)={}_2F_1\left(\!\begin{array}{l}a,b\\c \end{array}\vline\,\,z\right)=\sum\limits_{n=0}^{\infty}\frac{(a)_n(b)_n}{(c)_nn!}z^n
\end{equation}
and ${ }_2F_1(a+n_1,b+n_2;c+m;z)$, ~$n_1,n_2,m\in\Z$, are called contiguous in a wide sense
\cite{EbisuIwasaki}. Any three functions of this type satisfy a linear relation with
coefficients rational in $a,b,c,z$. If $n_1,n_2,m\in\{-1,0,1\}$ the coefficients of this
relation are linear in $z$ and the functions are called contiguous in a narrow sense. Such a
contiguous relation was used by Euler to derive a continued fraction (much later termed
T-fraction) for the ratio ${}_2F_1(a,b+1;c+1;z)/{}_2F_1(a,b;c;z)$.
Gauss described all three-term relations among the functions contiguous in the narrow sense and
found another continued fraction for the above ratio which has the form~\cite[p.~134]{Gauss}
(see also~\cite[(89.9)]{Wall} or~\cite[p. 123]{Perron})
\begin{equation}\label{eq:gen_cont_fr}
G(z)=\frac{F(a,b+1;c+1;z)}{F(a,b;c;z)}=\cfrac{\alpha_0}{1-\cfrac{\alpha_{1}z}{1-\cfrac{\alpha_{2}z}{1-\cdots}}},
\end{equation}
where $\alpha_0=1$, and for $n\ge0$,
\begin{equation}\label{eq:gen_cont_fr_cf}
\alpha_{2n+1}=\frac{(a+n)(c-b+n)}{(c+2n)(c+2n+1)},~~~\alpha_{2n+2}=\frac{(b+n+1)(c-a+n+1)}{(c+2n+1)(c+2n+2)}.
\end{equation}

The main protagonist of this paper is the following generalization of the Gauss ratio \eqref{eq:gen_cont_fr}
\begin{equation}\label{eq:gen-ratio-def}
R_{n_1,n_2,m}(z)=\frac{{}_2F_1(a+n_1,b+n_2;c+m;z)}{{}_2F_1(a,b;c;z)},
\end{equation}
where $n_1,n_2,m\in\Z$ are arbitrary. Our main goal is to derive an integral representation for
this ratio under relatively general conditions, and to employ the continued
fraction~\eqref{eq:gen_cont_fr}--\eqref{eq:gen_cont_fr_cf} to show that a similar
representation exists when those conditions fail. In order to explain in more detail which
specific representations and properties of~$R_{n_1,n_2,m}(z)$ will be established in this paper,
we need to introduce some preliminaries.

Any continued fraction of the  form given by the right hand side of \eqref{eq:gen_cont_fr} 
corresponds to (it may be turned into) a unique power series~$c_0+c_1z+c_2z^2+\dots$
with~$\alpha_0=c_0$. The coefficients of this formal power series then satisfy the
conditions~$D_n^{(0)},D_n^{(1)}\ne 0$ for all~$n$, where
\begin{equation}\label{eq:Delta_def}
    D_n^{(p)}\coloneqq\det(c_{i+j+p})_{i,j=0}^{n-1}.
\end{equation}
These conditions are also sufficient for turning the power series back into a continued fraction as
in~\eqref{eq:gen_cont_fr} via:
\begin{equation}\label{eq:alpha_via_Delta}
        \alpha_{2n-1}=\frac{D^{(0)}_{n-1}D^{(1)}_{n}}{D^{(0)}_{n}D^{(1)}_{n-1}}
        \quad\text{and}\quad
        \alpha_{2n}=\frac{D^{(0)}_{n+1}D^{(1)}_{n-1}}{D^{(0)}_nD^{(1)}_n}
        ,\quad\text{where}\quad
        D^{(0)}_{0}=D^{(1)}_{0}=1
        \text{ and }
        n=1,2,\dots,
\end{equation}
see~\cite[§§21,23]{Perron} for the details. If a fraction as in~\eqref{eq:gen_cont_fr} is
terminating, then it corresponds to a rational function, and there is an index~$n_0$ such
that~$D_n^{(0)},D_{n-1}^{(1)}\ne 0$ for all~$n\le n_0$ and~$D_n^{(p)}=0$ for all~$n>n_0$
and~$p\ge 0$.

We need the following digest of certain classical results by Stieltjes~\cite{Stieltjes}, Van
Vleck~\cite[pp.~148--151]{Perron} or~\cite[Theorem 54.2 with footnote~19]{Wall},
Blumenthal~\cite[pp.~122--124]{Chihara} and others adapted to our situation:

\begin{theorem}\label{th:CF_conv}
    Let~\(\varphi(z)\) be a continued fraction of the form~\eqref{eq:gen_cont_fr} with arbitrary
    coefficients~$\alpha_0$, $\alpha_1$, $\ldots\in\C\setminus\{0\}$. Then the following assertions are
    true:
    \begin{enumerate}[\upshape (a)]
    \item\label{item:Worpitzky}%
        If~$\sup_{n} |\alpha_n|=l_0$, then for any positive~$r<\frac 1{4l_0}$ the continued
        fraction~$\varphi(z)$ and the power series corresponding via~\eqref{eq:Delta_def}--\eqref{eq:alpha_via_Delta} uniformly converge in the
        disc~$|z|\le r$ to an analytic function, say~$f(z)$.
    \item\label{item:VanVleck}%
        If~$\exists\lim_{n\to\infty} \alpha_n\eqqcolon l\in[0,+\infty)$, then~$f(z)$ may be
        analytically continued to a function meromorphic
        in whole~$\C\setminus\big[\frac{1}{4l},+\infty\big)$, or in~$\C$ when~$l=0$.
        Moreover,~$\varphi(z)$ uniformly converges to~$f(z)$ on compact subsets
        of~$\C\setminus\big[\frac{1}{4l},+\infty\big)$ excluding neighbourhoods of its poles. In
        such cases we identify~$\varphi(z)$ with~$f(z)$ and write~$\varphi(z)=f(z)$.
    \item\label{item:Stieltjes_H}%
        If~$0< 4\alpha_n\le \gamma<+\infty$ for all~$n$, then there exists a unique function~$\mu(x)$
        non-decreasing on~$[0,\gamma]$ such that
        \[
            c_n=\int_{[0,\gamma]} x^n d\mu(x) < \infty,
            \quad n=0,1,\dots.
        \]  
        In this case,~$\varphi(z)$ uniformly converges on compact subsets
        of~$\C\setminus\big[\frac1{\gamma},+\infty\big)$ and may be expressed as
        \begin{equation}\label{eq:gen_cont_fr_int}
            \varphi(z) = \sum_{n=0}^\infty c_nz^n
            \quad\text{for~$|z|<\frac 1\gamma$, and}\quad
            \varphi(z)= \int_{[0,\gamma]}\frac{d\mu(x)}{1-x z}
            \quad\text{for~$z\in\C\setminus\big[\frac 1\gamma,+\infty\big)$}
            .
        \end{equation}
    \item\label{item:Blumenthal}%
        If both~\eqref{item:VanVleck} and~\eqref{item:Stieltjes_H} hold, and~$l>0$, then the
        points of growth of~$\mu(x)$ are dense in~$\big[0,4l\big]$.
    \end{enumerate}
\end{theorem}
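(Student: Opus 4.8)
All four assertions are repackagings of classical facts, so the plan is to reduce each to a standard theorem after putting the continued fraction~\eqref{eq:gen_cont_fr} into a convenient normal form by an equivalence transformation, keeping track throughout of the correspondence between~\eqref{eq:gen_cont_fr} and its power series (\cite[§§21,23]{Perron}): the $N$-th approximant $f_N(z)$ is a rational function whose numerator and denominator satisfy the usual three-term recurrences and whose Taylor coefficients at $z=0$ agree with $c_0,c_1,\dots$ up to order at least $N$. Whenever $f_N\to f$ uniformly on a disc, passing to Taylor coefficients via Cauchy's integral formula forces the coefficients of $f$ to be exactly the $c_n$; this is how the power series will be matched to the limit in each case.

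For~\eqref{item:Worpitzky} I would factor out $\alpha_0$ and view the tail as the continued fraction with partial numerators $-\alpha_nz$ and partial denominators $1$. On $|z|\le r$ these numerators have modulus $\le l_0r<\tfrac14$, so Worpitzky's theorem applies: every denominator $B_N$ stays bounded away from $0$ on the disc (hence $f_N$ is analytic there), the differences $f_N-f_{N+1}$ are dominated on it by a geometric sequence independent of $z$, and therefore $f_N$ converges uniformly to an analytic function $f$. Running this on every closed sub-disc of $|z|<\tfrac1{4l_0}$ shows that $f$ is analytic on that open disc, so its Taylor series $\sum_nc_nz^n$ converges to it uniformly on $|z|\le r$ for every $r<\tfrac1{4l_0}$.

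For~\eqref{item:VanVleck} the hypothesis makes~\eqref{eq:gen_cont_fr} limit-periodic with limiting fraction $\cfrac{\alpha_0}{1-\cfrac{lz}{1-\cfrac{lz}{1-\cdots}}}$; its value is $\alpha_0/(1-\lambda)$ with $\lambda=lz/(1-\lambda)$, i.e. $\lambda=\tfrac12\bigl(1-\sqrt{1-4lz}\bigr)$, a single-valued analytic function precisely on $\C\setminus[\tfrac1{4l},+\infty)$ (on all of $\C$ when $l=0$). Van Vleck's convergence theorem (\cite[pp.~148--151]{Perron}, \cite[Theorem~54.2]{Wall}) then gives convergence of~\eqref{eq:gen_cont_fr} on compacta of $\C\setminus[\tfrac1{4l},+\infty)$ away from the poles of its limit $f$, which is meromorphic there; since $f$ agrees near $0$ with the germ obtained in~\eqref{item:Worpitzky}, it is its analytic continuation, and one sets $\varphi\coloneqq f$. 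For~\eqref{item:Blumenthal}, assuming also the hypotheses of~\eqref{item:Stieltjes_H}, I would pass to the Jacobi fraction produced by the even contraction of~\eqref{eq:gen_cont_fr}, whose recurrence coefficients are rational in the $\alpha_n$ and hence, since $\alpha_n\to l$, converge to those of the $l$-periodic Jacobi fraction; Blumenthal's theorem applied to this limit-periodic Jacobi fraction (\cite[pp.~122--124]{Chihara}) then identifies the derived set of $\supp\mu$ with the interval $[0,4l]$, which is the assertion.

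The substantive input is~\eqref{item:Stieltjes_H}, and this is where I expect the real work. After the equivalence transformation $z\mapsto z/\gamma$ the fraction~\eqref{eq:gen_cont_fr} has partial numerators $-(\alpha_n/\gamma)z$ with $0<\alpha_n/\gamma\le\tfrac14$. Since the constant sequence $\tfrac14$ is a chain sequence, the comparison test for chain sequences shows $\{\alpha_n/\gamma\}$ is a chain sequence as well; by Wall's Theorem~54.2 (with its footnote~19) the power series of the rescaled fraction is then the moment series $\sum_n\bigl(\int_{[0,1]}t^n\,d\tilde\mu(t)\bigr)z^n$ of a unique bounded non-decreasing $\tilde\mu$, and the classical Stieltjes argument yields convergence of that fraction to $\int_{[0,1]}d\tilde\mu(t)/(1-tz)$ off $[1,+\infty)$ and to its power series on $|z|<1$. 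Undoing the scaling ($x=\gamma t$) produces the measure $\mu$ on $[0,\gamma]$, the finite moments $c_n=\int_{[0,\gamma]}x^n\,d\mu(x)$, and both representations in~\eqref{eq:gen_cont_fr_int}. The main obstacle is precisely this last step — extracting the representing measure and localising its support to $[0,\gamma]$, which rests on the chain-sequence characterisation of bounded Hausdorff moment sequences; by comparison, the equivalence transformations, the analyticity of the approximants, the upgrading of Worpitzky's estimates to uniform convergence, and the contraction bookkeeping behind Blumenthal's theorem are routine.
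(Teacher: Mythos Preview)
The paper does not give its own proof of this theorem: it is introduced as a ``digest of certain classical results'' with only the inline citations to Stieltjes, Van Vleck (via \cite[pp.~148--151]{Perron} and \cite[Theorem~54.2 with footnote~19]{Wall}), and Blumenthal (via \cite[pp.~122--124]{Chihara}), and no proof environment follows. Your sketch reduces each item to exactly these classical sources --- Worpitzky for~(a), Van Vleck's limit-periodic convergence theorem for~(b), the chain-sequence characterisation of the Hausdorff moment problem (with the rescaling $z\mapsto z/\gamma$) for~(c), and the even contraction plus Blumenthal for~(d) --- so your approach is fully consistent with, and in fact more explicit than, what the paper itself provides; the only slip is that for~(c) the pertinent result in Wall is Theorem~69.2 (the chain-sequence/support criterion the paper invokes just after the theorem) rather than Theorem~54.2, which the paper cites for~(b).
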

The coefficients~$(c_n)_{n=0}^\infty$ in Theorem~\ref{th:CF_conv}~\eqref{item:Stieltjes_H} are
called moments. The restriction~$\gamma<+\infty$ separates out the (scaled) Hausdorff
moment problem (the problem consists in finding~$\mu(x)$ from~$(c_n)_{n=0}^\infty$). On lifting
this restriction, one obtains the more general Stieltjes moment problem, where~$\gamma=+\infty$,
the power series in~\eqref{eq:gen_cont_fr_int} turns to be only asymptotic, and an analogous
integral representation persists (namely~\eqref{eq:StieltjesClass} with $C=0$ under a suitable summability
condition on~$\mu(s)$). However, the continued fraction~$\varphi(z)$ may then diverge
(and oscillate for~$z<0$), in which case~$\mu(x)$ is determined non-uniquely (the indeterminate
moment problem). More details on moment problems may be found
in~\cite{Akhiezer,Chihara,Perron,Stieltjes,Wall}.

In the case of the Gauss ratio we have $\lim_{n\to\infty}\alpha_n=1/4$,
while~$\sup_n|\alpha_n|\eqqcolon\gamma/4 \ge 1/4$. So, if $\alpha_{n}>0$, ~$n=1,2,\ldots$, then
there is a unique positive measure $d\mu(s)$ on~$[0,\gamma]$ whose support is dense in~$[0,1]$
and has at most finitely%
\footnote{%
    Theorem~\ref{th:CF_conv} only implies that the measure~$d\mu(s)$ is discrete
    in~$(1,\gamma]$. The fact that~$d\mu(s)$ has at most finitely many atoms in this interval
    directly follows from that~${ }_2F_1(a,b;c;z)$ has finitely many zeros in~$[0,1)$. The
    latter is given by Theorem~\ref{th:2F1zeros}, a corollary of~\cite{Runckel}.} %
many points in~$(1,\gamma]$, such that
\begin{equation}\label{eq:Gauss_St_int}
    G(z)=\int_{[0,\gamma]}\frac{d\mu(s)}{1-sz}
    .
\end{equation}
Moreover, \cite[Theorem~69.2]{Wall} asserts that one may take~$\gamma=1$
in~\eqref{eq:Gauss_St_int} iff $\alpha_n=(1-g_{n-1})g_{n}$ for all~$n\ge 1$ with some numbers
$g_n\in[0,1]$ (the equality cases correspond to rational~$G(z)$). It is immediate to see that the condition $\alpha_n>0$ is satisfied for the Gauss
fraction for all $n$ when $-1<b<c$ and~$0<a<c+1$. The more restrictive condition $g_n\in[0,1]$
holds true if $0\le{a}\le{c+1}$, $0\le{b}\le{c}$, see \cite[Proof of Theorem~1.1]{Kuestner} for
details. Surprisingly enough, the representing measure $d\mu$ for the Gauss fraction has only
been computed in 1984 by Vitold Belevitch \cite{Bel}.  Around the same time Jet Wimp \cite{Wimp87} constructed explicit formulae for the convergents of the Gauss continued fraction in terms of hypergeometric polynomials.  

The class of all functions possessing the integral representation 
\begin{equation}\label{eq:StieltjesClass}
    f(z)=-\frac{C}{z}+\int_{[0,\infty)}\frac{d\mu(s)}{1-sz}
\end{equation}
for some $C\ge0$ and a positive measure $\mu$ supported on $[0,\infty)$ making the integral
convergent is known as the Stieltjes cone $\mathcal{S}$ \cite{Berg}. It is a much studied and
important class in many areas from analysis and operator theory to combinatorics and
probability. The Stieltjes cone is a subclass of another famous class in function theory
commonly known as the Pick-Nevanlinna class $\NN_0$ comprising functions $f$ holomorphic in
$\C\setminus\R$ satisfying $\Im(f(z))\ge0$ for all $\Im(z)>0$ and possessing the symmetry
property
\begin{equation} \label{eq:real_func}
    f(\overline z)=\overline {f(z)}
    \quad\text{for all $z\in\mathbb C$, where~$f(z)$ is defined}.
\end{equation}
Any function~$f(z)$ meromorphic in~$\mathbb C\setminus\mathbb R$ possessing the above symmetry
will be called \emph{real} throughout the paper. It is well-know that $f\in{\NN_0}$ in fact
satisfies~$\Im(f(z))>0$ for all $\Im(z)>0$ unless it is identically equal to a real constant.
The elements $f$ of $\NN_0$ are characterized by the canonical integral representation
\begin{equation}\label{eq:N0canonical}
f(z)=\nu_1z+\nu_2+\int_\R\left(\frac{1}{t-z}-\frac{t}{t^2+1}\right)\,d\sigma(t),
\end{equation}
where $\nu_1\ge0$, $\nu_2\in\R$ and $d\sigma(t)$ is nonnegative locally finite Borel measure on
$\R$ such that $\int_{\R}d\sigma(t)/(1+t^2)<\infty$. If $f\in{\NN_0}$ is holomorphic in
$I\cup(\C\setminus\R)$ for an open interval $I\!\subset\!\R$ then it is a consequence%
\footnote{Another reasoning may be found in the beginning of the proof of
    Lemma~\ref{lm:NkappaM2a} on page~\pageref{pg:proof_lm:NkappaM2a}.}
% \textcolor{red}{[BakanHedenmalm, ref.13]}
of the Schwarz reflection principle that $\mathrm{supp}(\sigma)\subset\R\!\setminus\!{I}$. In
particular, it can be shown~\cite[pp.~127--128]{Akhiezer} that $f\in\mathcal{S}$ iff both
$f\in{\NN_0}$ and $zf\in{\NN_0}$. Another way to characterize the elements of $\NN_0$ is in
terms of the Hermitian form \cite[Chapter~3]{Akhiezer}
\begin{equation}\label{eq:quadratic_form}
    [\xi_1,\dots,\xi_n]\cdot H_f\cdot
    \begin{bmatrix}
        \,\overline\xi_1\\
        \vdots\\
        \,\overline\xi_n
    \end{bmatrix}
    ,
    \quad\text{where}\quad
    H_f=H_f(z_1,\dots,z_n)
    \coloneqq \left[\frac{f(z_k)-\overline{f(z_j)}}{z_k-\overline{z_j}}\right]_{k,j=1}^{n}.
\end{equation}
Namely, $f\in\NN_0$ if and only if for every choice of the points~$z_1,\dots,z_n$ this Hermitian
form has no negative squares, that is~$H_f$ has no negative eigenvalues. We call~$H_f$ the
Pick matrix of~$f(z)$.
Based on this characterization Krein and Langer \cite{KL} defined a generalized class known
as~$\NN_{\kappa}$:
\begin{definition} $f\in \NN_\kappa$ whenever it is meromorphic
in~$\mathbb{C}_+\coloneqq\{z\in\C:\Im z>0\}$, the Hermitian form \eqref{eq:quadratic_form} has at most~$\kappa\in\N_0$
negative squares for arbitrarily chosen~$n\in\N$ and~$(z_k)_{k=1}^n\subset \mathbb{C}_+$, and
there are precisely~$\kappa$ negative squares for a certain choice of~$n$ and~$(z_k)_{k=1}^n$.
Here~$\N_0\coloneqq\{0,1,\dots\}$ and~$\N\coloneqq\{1,2,\dots\}$.
\end{definition}
Functions in generalized Nevanlinna classes may be represented in a form more general than~\eqref{eq:N0canonical}. One of such representations was given by Krein and Langer~\cite{KL,KL3}, the other
representation~\cite{DLLS} is cited here in Theorem~\ref{th:DLLS} and may be combined with the
formula~\eqref{eq:N0canonical}.

\begin{definition}
We will say that~$f\in \NN_\kappa^\lambda$ whenever the function~$f(z)$ lies
in~$\NN_\kappa$ and simultaneously~$zf(z)$ lies in~$\NN_\lambda$, cf.~\cite{Derkach,DeKo}.
\end{definition}
In this new notation the Stiletjes class is $\mathcal{S}=\NN_0^0$. Kre\u{\i}n and Langer also considered the
case~$\NN_\kappa^+ \coloneqq \NN_\kappa^0$ with an integral representation remarkably simpler
than for~$\NN_\kappa$, see~\cite{Dyach} for the details. There is a moment problem corresponding
to the class~$\NN_\kappa^\lambda$ in the same way as the Stieltjes moment problem corresponds
to~$\mathcal S$. Analogously to the Stieltjes case, its solutions may be studied via continued
fractions, see e.g.~\cite{KL3,DeKo}.

It is convenient to introduce the unions
of the generalized Nevanlinna classes as follows
\begin{equation}\label{eq:Nevanlinna-U}
    \NU=\bigcup\limits_{\kappa\ge0}\NN_\kappa
    \quad\text{and}\quad
    \SU=\bigcup\limits_{\kappa,\lambda\ge0}\NN_\kappa^\lambda.
\end{equation}
This paper gives a necessary and sufficient condition for $\pm R_{n_1,n_2,m}(z)$ to belong to
the class~$\SU$. In particular, we prove that the Gauss ratio $R_{0,1,1}\in\SU$ or $-R_{0,1,1}\in\SU$ for all real values of parameters. For general shifts, our
condition is given in terms of positivity of a certain explicitly given polynomial~$P_r(x)$ of
degree
\[
    r=(n_1+n_2-m)_{+}+(m)_{+}-\min(n_1,n_2)-1,
    \quad\text{where}\quad
    (x)_+\coloneqq\max\{0,x\}.
\]
We also determine a constant~$B_{n_1,n_2,m}$ such that
$B_{n_1,n_2,m} P_r\big(1/(z+\omega)\big)R_{n_1,n_2,m}(z+\omega)$ belongs
to~$\SU$ for each~$\omega\le 1$. In fact, an explicit factor times~$B_{n_1,n_2,m} P_r(1/z)$ turns out to be equal to
$\Im(R_{n_1,n_2,m}(x\pm{i0}))$ for the values of $x>1$. This further enabled us to derive an
integral representation for $R_{n_1,n_2,m}(z)$ under certain asymptotic restrictions and the
assumption that~$R_{n_1,n_2,m}(z)$ is analytic in the cut plane~$\C\setminus[1,+\infty)$ and on
the banks of the branch cut. The latter amounts to non-vanishing of~${ }_2F_1(a,b;c;z)$ there,
which can be rendered in terms of the parameters~$a$, $b$,~$c$ by a remarkable theorem due to
Runckel~\cite{Runckel}.

Given values of $n_1$, $n_2$, $m$, our integral representation provides a criterion of whether
$R_{n_1,n_2,m}(z)$ belongs to the Stieltjes class~$\mathcal{S}$ and its representing measure.
Finally, to exemplify our results, for 15 triples $(n_1,n_2,m)$ we furnish explicit integral
representations of the ratios~$R_{n_1,n_2,m}(z)$ and conditions of their applicability, as well
as conditions for $\pm R_{n_1,n_2,m}(z)$ to belong to~$\SU$.

In the course of the proof we also establish a number of general facts regarding the classes
$\NN_{\kappa}^{\lambda}$ which are of their own merit. A part of these facts is scattered over
the literature and often presented from a different viewpoint; another part may be considered
folklore, so we were unable to locate corresponding proofs elsewhere. In particular, we give
conditions for Jacobi and Stieltjes continued fractions to belong to~$\SU$ and an algorithm for
computing their indices $\kappa$, $\lambda$. Moreover, we study integral representations for the
class~$\SU$ and deduce conditions for the combination $r_1f+r_2$ to be in $\SU$ once $f\in\SU$
and $r_1$, $r_2$ are rational. To make our exposition more or less self-contained, certain basic
properties of generalized Nevanlinna classes are also established.

% Namely, we give conditions for Jacobi and Stieltjes continued fractions to belong to~$\SU$ and
% an algorithm for computing their indices $\kappa$, $\lambda$. Furthermore, we prove a new
% factorization theorem for the class $\NN_{\kappa}$ and formulate conditions for the
% combination $r_1f+r_2$ to be in $\SU$ once $f$ and $r_1$, $r_2$ are rational.

The ratios of Gauss hypergeometric functions is a recurring theme in literature. Their
relation to certain generalized Nevanlinna classes is probably best studied for the case of
the Jacobi polynomials
\[
    P_n^{(\alpha,\beta)}(z)={}_2F_1\big(-n,1+\alpha+\beta+n;\alpha+1;(1-z)/2\big),
\]
where it reflects the interlacing or partial interlacing of their zeros (see e.g.~\cite{ADL,DJ}).
Some researchers also go beyond the polynomials case, see for instance~\cite{Segura}. A general result
connected to our Theorem~\ref{th:R011Nkappa} was obtained in~\cite{Derevyagin}.

There are many intriguing open questions related to our work. For instance, the case when the
shifts are no longer integer is also of interest for applications, but requires additional
tools. For the Jacobi polynomials, certain relevant results are presented in~\cite{DJM}. For the
non-polynomial case, there are only very fragmentary results of this type, such
as~\cite[Lemma~4.5]{LSW}.

Another interesting topic is rational approximations to the ratios~$R_{n_1,n_2,m}(z)$, which are
connected to orthogonal or multiple orthogonal polynomials. As we mentioned above, Wimp gave
explicit expressions for the diagonal Pad\'{e} approximants to the Gauss ratio in \cite{Wimp87}.
These approximants are related to the so-called associated Jacobi polynomials defined as
solution of the same recurrence relation but with the index shifted by a positive number (for
integer shifts but more general weights see~\cite{Nevai}). The orthogonality measure computed by
Wimp bears noticeable resemblance to the measures appearing in our work. Further results in this
direction are in~\cite{WimpBeck}. We would also like to mention the recent work~\cite{LiLo}
investigating multiple orthogonal polynomials with respect to a pair of particular measures,
that corresponds to $R_{0,1,0}(z)$. A compelling problem is to obtain
counterparts of our results for the ratios of the generalized hypergeometric
functions~${ }_pF_q$ which, for certain integer shifts, have explicitly known branched continued
fractions generalizing the Gauss ratio~\cite[Sections~13--14]{PSZ}. A similar problem may be
posed, mutatis mutandis, for basic hypergeometric functions, cf.~\cite[Section~15]{PSZ}. The
basic analogue of the Gauss continued fraction has been considered in detail in
\cite{AgarwalSahoo,BariczSwami}.

This paper is organized as follows. In Section~\ref{sec:boundary-values-n-integrals}, we obtain
the values of~$\Im(R_{n_1,n_2,m}(x\pm{i0}))$ for~$x>1$ and exploit them to write the
corresponding integral representation. The basic ingredients are Theorem~\ref{th:2F1zeros}
presenting a corollary of~\cite{Runckel} and Lemma~\ref{lemma:Schwarz_asympt}
connecting~$\Im(R_{n_1,n_2,m}(x\pm{i0}))$ with a Cauchy-type integral. Then
Subsection~\ref{sec:asymptotic-behaviour} deals with the asymptotic behaviour
of~$R_{n_1,n_2,m}(z)$ near the point~$z=1$ and at infinity. Theorems~\ref{th:2F1identity}
and~\ref{th:2F1ratioboundary} from Subsection~\ref{sec:main-theorems} are at the heart of our
work: they derive a formula for~$\Im(R_{n_1,n_2,m}(x\pm{i0}))$. The corresponding integral
representations are established in Theorem~\ref{th:2F1ratio-repr}.
Section~\ref{sec:gener-nevanl-class} is concerned with the generalized Nevanlinna classes:
Theorem~\ref{th:RnmNkappa} answers when~$\pm R_{n_1,n_2,m}\in\SU$,
Theorem~\ref{th:B_P_Rnm_Nkappa} exhibit expressions that belong to~$\SU$ for all real~$a,b,c$
such that~$-c\notin\N_0$, and Theorem~\ref{th:R011Nkappa} settles the case of the Gauss
ratio~$R_{0,1,1}(z)$. Subsection~\ref{sec:line-fract-transf} briefly introduces properties of
generalized Nevanlinna classes and their relation to continued fractions, which imply then the
proof of Theorem~\ref{th:R011Nkappa}. Subsection~\ref{sec:line-fract-transf} uses the
multiplicative representation~\cite{DLLS} to obtain the proofs of Theorems~\ref{th:RnmNkappa}
and~\ref{th:B_P_Rnm_Nkappa}. The last section of this paper -- Section~\ref{sec:examples} --
illustrates our study with examples.

In Part II, we plan to give more general integral representations that handle singularity at~$z=1$ and possible zeros of the denominator. 
We hope it will also provide us with a method for calculating ~$\varepsilon\in\{-1,1\}$ and~$\kappa,\lambda$ such that~$\varepsilon R_{n_1,n_2,m}\in\NN_\kappa^\lambda$ whenever~$\pm R_{n_1,n_2,m}\in\SU$.
Part~II will also give deeper and more detailed examination of the particular examples.

\section{Boundary values and integral representation}\label{sec:boundary-values-n-integrals}

Given~$\xi\in\R$, let~$\lfloor\xi\rfloor$ be the maximal integer number~$\le\xi$. Note that
if~$\xi$ is non-integer, then $\lfloor -\xi \rfloor=-\lfloor\xi\rfloor-1$. We need the following
corollary of an important result due to Runckel \cite{Runckel}.

\begin{theorem}\label{th:2F1zeros}
Suppose $c\ne0$ and any of the following conditions is true:

\begin{enumerate}[\upshape (I)]
\item\label{item:Run1} $-1<\min(a,b)\le{c}\le\max(a,b)\le0$
\item\label{item:Run2} $-1<\min(a,b)\le0\le\max(a,b)\le{c}$
\item\label{item:Run3} $-1<c\le\min(a,b)\le0\le\max(a,b)<c+1$
\item\label{item:Run4} $0\le\min(a,b)\le{c}$ and  $0\le\max(a,b)<c+1$
\item\label{item:Run5} $a,b,c,c-a,c-b$ are non-integer negative numbers, such
    that~$\lfloor\xi_1\rfloor+1=\lfloor\xi_4\rfloor$
    and~$\lfloor\xi_2\rfloor=\lfloor\xi_3\rfloor$, where~$\xi_1,\dots,\xi_4$ are the
    numbers~$a,b,c-a,c-b$ taken in  non-decreasing order:
    \[
        \min(a,b,c-a,c-b) = \xi_1\le\xi_2\le\xi_3\le\xi_4 = \max(a,b,c-a,c-b).
    \]
\end{enumerate}
Then $z\to{}_2F_1(a,b;c;z)$ does not vanish in $\C\setminus[1,\infty)$ as well as on the banks of
the branch cut.

For any real numbers $a,b,c$ with $-c\notin\N_0$, the number of zeros of the function $z\to{ }_2F_1(a,b;c;z)$ in $\C\setminus[1,\infty)$ and on the banks of the branch cut is finite.
\end{theorem}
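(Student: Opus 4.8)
The plan is to derive both assertions from Runckel's analysis of the zeros of the Gauss function in the cut plane \cite{Runckel}. For real parameters with $a,b,c,c-a,c-b$ all non-integer, Runckel determines the exact number $N(a,b,c)$ of zeros of $z\mapsto{}_2F_1(a,b;c;z)$ in $\C\setminus[1,\infty)$ together with the banks of the cut: writing $f(z)={}_2F_1(a,b;c;z)$, one applies the argument principle to $f$ on the boundary of a large ``keyhole'' contour hugging $[1,\infty)$, so that $N(a,b,c)$ equals the total variation of $\arg f$ along the two banks plus the contributions from the circle at infinity and the small circle about $z=1$. Runckel evaluates each contribution explicitly and obtains a finite integer expressed through $\lfloor a\rfloor,\lfloor b\rfloor,\lfloor c-a\rfloor,\lfloor c-b\rfloor$. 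In this language the second assertion is the finiteness of $N(a,b,c)$, and the non-vanishing claim under conditions (I)--(V) is the vanishing $N(a,b,c)=0$.

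For the second assertion I would argue directly and uniformly for all real $a,b,c$ with $-c\notin\N_0$, which also covers the integer configurations outside the scope of Runckel's count. If $a\in-\N_0$ or $b\in-\N_0$ the series \eqref{eq:2F1_def} terminates and $f$ is a polynomial, with obviously finitely many zeros; if $c-a\in-\N_0$ and $c-b\in-\N_0$ the Euler relation $f(z)=(1-z)^{c-a-b}\,{}_2F_1(c-a,c-b;c;z)$ reduces $f$ to a non-vanishing factor times a polynomial, again finite. In the remaining cases $f$ is holomorphic on the Riemann surface of $\log(1-z)$, whose only branch points are $1$ and $\infty$, so the one-sided boundary values $f(x\pm i0)$ are real-analytic on $(1,\infty)$; since $f\not\equiv0$, neither boundary value vanishes identically, hence each has only isolated zeros on $(1,\infty)$, and likewise the zeros of $f$ inside $\C\setminus[1,\infty)$ are isolated. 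Thus the zero set can accumulate only at the endpoints $z=1$ and $z=\infty$. Near $z=1$ the two Frobenius exponents $0$ and $c-a-b$ show that $f(z)$ tends either to a nonzero limit or to $\infty$, so it is bounded away from $0$ on a punctured neighbourhood of $1$; near $z=\infty$ the connection formula gives $f(z)=A(-z)^{-a}+B(-z)^{-b}+\cdots$ with real $A,B$ (with the standard logarithmic correction when $a-b\in\Z$), whence $|f|$ is bounded below for large $|z|$ in the cut plane. Accumulation at both endpoints is therefore excluded and $f$ has finitely many zeros.

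For the non-vanishing claim I would exploit two operations that preserve the zero set in $\C\setminus[1,\infty)$ and on the banks: the symmetry $a\leftrightarrow b$, and the Euler transformation above, in which $(1-z)^{c-a-b}$ is single-valued, holomorphic and nowhere zero on the cut plane and has nonzero boundary values on the banks. These identify the zeros for parameters $(a,b,c)$ with those for $(c-a,c-b,c)$ and let one permute the roles of $a,b,c-a,c-b$. The non-integer region (V) is exactly Runckel's criterion $N=0$ rephrased through the floor conditions $\lfloor\xi_1\rfloor+1=\lfloor\xi_4\rfloor$ and $\lfloor\xi_2\rfloor=\lfloor\xi_3\rfloor$ on the ordered quadruple $\xi_1\le\xi_2\le\xi_3\le\xi_4$; verifying it amounts to checking that these conditions make every contribution to the argument variation cancel. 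The regions (I)--(IV) are the limiting configurations in which some of $a,b,c-a,c-b$ meet integer or zero boundaries; after using the two operations to move each into a normal form, I would confirm $N=0$ either by reading off the appropriate limiting case of Runckel's count or by a continuity argument: the zeros of $f$ in the open cut plane vary continuously with $(a,b,c)$ and none can enter $\C\setminus[1,\infty)$ as the parameters approach a boundary of the region without first crossing $[1,\infty)$ or escaping to $\infty$, so $N=0$ propagates from the interior region (V) to its closure.

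The main obstacle is this last reduction for (I)--(IV). Runckel's formula is stated under non-integrality assumptions, and the five regions are so arranged that their boundaries consist precisely of integer or zero values of $a,b,c-a,c-b$; the delicate point is to verify that no zero is created on the boundary of a region — in particular that a zero does not slide onto a bank of the cut or disappear into $z=1$ or $z=\infty$ exactly at a limiting parameter value — so that the vanishing of $N$ in the open case (V) transfers cleanly to (I)--(IV). Establishing this requires controlling the boundary values $f(x\pm i0)$ and the asymptotics at $1$ and $\infty$ uniformly as the parameters degenerate, which is the technical heart of the reduction.
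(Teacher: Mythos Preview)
Your proposed route for the non-vanishing claim has a structural flaw. You want to obtain (I)--(IV) by continuity from region~(V), treating them as ``limiting configurations'' on the boundary of~(V). But (V) requires all of $a,b,c,c-a,c-b$ to be \emph{negative} non-integers, while for instance (IV) has $a,b\ge 0$ and $c>0$; these regions are disjoint in parameter space and (IV) is nowhere near the closure of~(V). So there is no path along which to run the continuity argument. The paper instead treats (I)--(IV) by directly invoking Runckel's explicit count: for $c\ge a+b$, $a\ge b>0$ the count is zero; for $-1<a<0$ one has $N=\lfloor -a\rfloor+(1+S)/2$ with $S=\sign\bigl(\Gamma(a)\Gamma(b)\Gamma(c-b)\Gamma(c-a)\bigr)$, which vanishes exactly under the inequalities in (I)--(II); Euler's transformation then yields (III) and the remaining half of~(IV). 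For (V) the paper uses the further case $N=\lfloor -a\rfloor+(1+S)/2+S\lfloor a-c+1\rfloor$ of Runckel's formula and checks by hand that the floor conditions force $S=-1$ and the three terms cancel. You gesture at this (``amounts to checking that these conditions make every contribution cancel'') without carrying it out; that computation is the actual content.

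Your direct argument for finiteness is different from the paper's (which simply cites Runckel) and is essentially sound, but needs two small repairs. First, the degenerate reduction via Euler should read ``$c-a\in-\N_0$ \emph{or} $c-b\in-\N_0$'' rather than ``and''. Second, the claim that $|f|$ is bounded below near $z=\infty$ from the connection formula requires the leading connection coefficient to be nonzero, i.e.\ none of $a,b,c-a,c-b$ in $-\N_0$; you should note that those cases are already covered by the polynomial reductions.
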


\begin{remark}
    The conditions of the theorem are not necessary as is seen from the example
    ${}_2F_1(-n,c,c,z)=(1-z)^n$,
    $n\in\N$, $-c\notin\N_0$, non-vanishing in $\C\setminus[1,\infty)$. One can show that, apart from this example, the conditions are in fact necessary.
\end{remark}

\begin{remark}
    Under condition~\eqref{item:Run5}, one necessarily has~$c-\xi_4=\xi_1<\xi_2$
    and~$c-\xi_2=\xi_3<\xi_4$. Indeed: $\xi_1+\xi_4=c=\xi_2+\xi_3$ in view
    of~$a+(c-a)=c=b+(c-b)$. So, if we had one of the equalities~$\xi_1=\xi_2$ and~$\xi_3=\xi_4$,
    we automatically had the other. However, assuming the last two equalities together will
    contradict to~$\lfloor\xi_1\rfloor+1=\lfloor\xi_4\rfloor$ on account
    of~$\lfloor \xi_2\rfloor=\lfloor \xi_3\rfloor$.
\end{remark}

In fact,~\eqref{item:Run5} is generated by the following two basic cases
\[
    \begin{aligned}
    &-k-1<a<\min(b,c-b)\le \max(b,c-b)<-k<c-a<-k+1
    % \quad\text{for all}
    ,\quad k\in\N,
    \quad\text{and}
    \\
    &-k-1<a<-k<\min(b,c-b)\le \max(b,c-b)<c-a<-k+1
    % \quad\text{for all}
    ,\quad k\in\N,
    \end{aligned}
\]
further extended through the symmetry~$a\leftrightarrow b$ and Euler's
transformation \eqref{eq:Euler_ID} exchanging $(a,b)\leftrightarrow (c-a,c-b)$.

\begin{proof}[Proof of Theorem~\ref{th:2F1zeros}] The assertion that the number of zeros is finite follows
from~\cite[p.~54]{Runckel} in the degenerate case~$\{-a,-b,a-c,b-c\}\cap\N_0\ne\varnothing$,
and from~\cite[Theorem, p.~56]{Runckel} otherwise.

For another assertion, note that if $a=0$ or $b=0$ while $c\ne0$, then ${}_2F_1(a,b;c;z)=1$
and the claim is obvious. Also if $a=c$, then ${}_2F_1(a,b;c;z)=(1-z)^{-b}$
and similarly for~$b=c$, so that again the conclusion is true. Next,
according to \cite[Theorem, p.~56]{Runckel}
$$
\#\{\text{zeros of}~{}_2F_1(a,b;c;z)~\text{in}~\C\setminus[1,\infty)\}=0
$$ 
if $c\ge{a+b}$ and $a\ge{b}>0$. Exchanging the roles of $a$, $b$ we get the condition $a,b>0$
and $c\ge{a+b}$ which implies $a<c$, ~$b<c$. Note next, that the number of zeros of
${}_2F_1(a,b;c;z)$ in $\C\setminus[1,\infty)$ is equal to the number of zeros of
${}_2F_1(c-a,c-b;c;z)$ by the celebrated Euler's identity
\begin{equation} \label{eq:Euler_ID}
    {}_2F_1(a,b;c;z)=(1-z)^{c-a-b}{}_2F_1(c-a,c-b;c;z).
\end{equation}
Hence, we conclude that $c-a,c-b>0$ and $c\ge{c-a+c-b}$ ($\Leftrightarrow~a+b\ge{c}$) are also
sufficient conditions. Union of these two sets of conditions in view of the aforementioned
degenerate case yields $0\le a,b\le{c}$.
% here we may have~$c>a,b$ and~$a+b ≥ c$  which imply~$a+b > a$ and $a+b>b$,
% and therefore~$c>a,b>0$ and~$a+b ≥ c$

Next, again by \cite[Theorem, p.~56]{Runckel} if $a<0$, ~$b\ge{a}$, ~$c>a$ and $c\ge{a+b}$, then
$$
\#\{\text{zeros of}~{}_2F_1(a,b;c;z)~\text{in}~\C\setminus[1,\infty)\}
=\lfloor-a\rfloor+\big(1+S\big)/2,\quad S\coloneqq \sign(\Gamma(a)\Gamma(b)\Gamma(c-b)\Gamma(c-a)).
$$
This implies that for $-1<a<0$ and $\Gamma(b)\Gamma(c-b)>0$ the quantity on the right hand side is equal to zero.  Condition
$\Gamma(b)\Gamma(c-b)>0$ is satisfied if $c>b>0$ or if $-1<c<b<0$. Combining these conditions and exchanging the roles of $a$, $b$  (in view of the remark about $ab=0$), we get the conditions \eqref{item:Run1} and \eqref{item:Run2}.  Finally, applying conditions \eqref{item:Run1} and \eqref{item:Run2} to the function ${}_2F_1(c-a,c-b;c;z)$ we arrive at \eqref{item:Run3} and $0\le\min(a,b)\le{c}\le\max(a,b)<c+1$.
Combined with $0\le a,b\le{c}$ this leads to~\eqref{item:Run4}.

To obtain the condition~\eqref{item:Run5}, let us start with the case~$\xi_1=a$. Then~$a\le b$,
and hence~$c-b\le c-a$. Moreover,~$a\le c-b$ implies~$b\le c-a$ and further~$b+a\le c$.
Consequently, we have~$\xi_4=c-a$ and the assumptions of~\eqref{item:Run5} yield
\[
    \lfloor a\rfloor+1=\lfloor c-a\rfloor,\quad
    \lfloor b\rfloor=\lfloor c-b\rfloor,\quad\text{and}\quad
    a<\min(b,c-b)\le \max(b,c-b)<c-a<0.
\]
None of the numbers~$a,b,c-a,c-b$ is integer, so the above relations
give~$\Gamma(a)\Gamma(c-a)<0$ and~$\Gamma(b)\Gamma(c-b)>0$, which imply that~$S$ defined as
above equals~$-1$. Due to~$a+b\le c<a\le b<0$, we may apply the last option in~\cite[Theorem,
p.~56]{Runckel} asserting
\begin{equation}\label{eq:Runckel3}
    \#\{\text{zeros of}~{}_2F_1(a,b;c;z)~\text{in}~\C\setminus[1,\infty)\}
    =\lfloor-a\rfloor+\big(1+S\big)/2+S\cdot\lfloor a-c+1\rfloor.
\end{equation}
The right-hand side is equal to $-\lfloor a\rfloor -1 +\big(1-1\big)/2+\lfloor c-a\rfloor=0$, so
the case~$\xi_1=a$ is proved.%
\footnote{In fact, our reasoning is reversible: if~$a+b\le c<a\le b<0$,
    ~$\{-a,-b,a-c,b-c,c\}\cap\N_0=\varnothing$ and the right-hand side of~\eqref{eq:Runckel3} is
    zero, then we necessarily arrive at~\eqref{item:Run5} with~$\xi_1=a$. One only needs to note
    that~\eqref{eq:Runckel3} implies~$S=-1$ in this case.}

The case~$\xi_1=b$ follows by exchanging the roles of~$a$ and~$b$. Now, if~$\xi_1=c-a$
or~$\xi_1=c-b$, we apply Euler's identity~\eqref{eq:Euler_ID} to reduce the proof
of~\eqref{item:Run5} to, respectively, the cases~$\xi_1=a$ or~$\xi_1=b$.
\end{proof}

Integral representations of the forms~\eqref{eq:Gauss_St_int} or~\eqref{eq:N0canonical} may
already be too restrictive for the Gauss ratio~$G(z)=R_{0,1,1}(z)$ when none of the conditions
\eqref{item:Run1}--\eqref{item:Run5} of Theorem~\ref{th:2F1zeros} is satisfied. In particular,~\eqref{eq:Gauss_St_int}
and~\eqref{eq:N0canonical} are analytic in~$\C\setminus\R$, while~$R_{0,1,1}(z)$ may have
complex poles for certain values of~$a,b,c$. Moreover, the measure~$d\sigma(t)$ related
to~$R_{0,1,1}(z)$ via the Stieltjes-Perron formula~\eqref{eq:Stieltjes-Perron} may grow too fast
for the integral in~\eqref{eq:N0canonical} to be convergent. Introduction of the arbitrary
integer shifts~$n_1,n_2,m$ makes the problem even more complicated, since the prospective
measure determined via the Stieltjes-Perron formula may easily be signed, see
Section~\ref{sec:examples} for examples (e.g. Example~\hyperref[example:5]{5}).

To account the complex poles is not an easy task, unless we exactly know the zeros of the
denominator of~$R_{n_1,n_2,m}(z)$. As a certain compromise, Section~\ref{sec:gener-nevanl-class}
shows that the ratio~$R_{n_1,n_2,m}(z)$ may always be expressed as the product~$h(z)g(z)$,
where~$h(z)$ is a real rational function and~$g\in\NN_0$ has the form~\eqref{eq:N0canonical} (in
fact even the form~\eqref{eq:Gauss_St_int} with~$\gamma=1$ up to addition of another real rational
function, see Theorems~\ref{th:B_P_Rnm_Nkappa} and~\ref{th:DLLS}).

This section aims at constructing explicit integral representations for~$R_{n_1,n_2,m}(z)$ in
the case when there are no poles in~$\C\setminus[1,\infty)$ as well as on the banks of the
branch cut. If so, the corresponding signed measure (or charge) turns to be supported
on~$[1,+\infty)$ and have an analytic density. Thus, to obtain the integral representation we
only need to deal with the asymptotic behaviour of~$R_{n_1,n_2,m}(z)$ near the points~$z=1$
and~$z=\infty$. Such representations up to rational correction terms may be expressed via the
boundary values using the standard Schwarz formula; more specifically, the following fact is
true:

\begin{lemma}\label{lemma:Schwarz_asympt}
    Let~$f(z)$ be a real analytic function defined in the cut plane~$\C\setminus[1,+\infty)$ and suppose that $u(x)\coloneqq\frac 1\pi \Im f(x + i0)$ is continuous on $(1,+\infty)$. Suppose that there exists~$n\in\N_0$ such
    that
    \begin{equation}\label{eq:Schwarz_asympt}
        \lim_{|z-1|\to 1}\big|f(z)(1-z)\big|=\lim_{|z|\to\infty}\big|f(z)z^{-n}\big|=0
    \end{equation}
    and~$u(x)x^{-n-1}$ is absolutely integrable over~$(1,+\infty)$. Then
    \begin{equation}\label{eq:Schwarz_plus}
        f(z)
        = \sum_{k=0}^{n-1}\frac{f^{(k)}(0)z^k}{k!}
        + z^n \int_{1}^{+\infty}\frac{u(x)\,dx}{(x-z)x^n}.
    \end{equation}
\end{lemma}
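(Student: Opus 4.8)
The plan is to represent $f$ by a Cauchy-type integral over a keyhole contour around the cut $[1,+\infty)$ and then read off the jump across the cut using the Sokhotski–Plemelj-type boundary behaviour encoded in $u(x)$. First I would fix $z\in\C\setminus[1,+\infty)$ and, for large $\rho$ and small $\delta>0$, consider the contour $\Gamma_{\rho,\delta}$ consisting of the circle $|w|=\rho$ (positively oriented) together with a small clockwise loop hugging the segment $[1,\rho]$: the two horizontal banks at heights $\pm i0$ and a little circular arc of radius $\delta$ around $w=1$. Since $f$ is analytic inside $\Gamma_{\rho,\delta}$ except at $w=z$, Cauchy's formula gives
\[
    f(z)=\frac{1}{2\pi i}\oint_{\Gamma_{\rho,\delta}}\frac{f(w)}{w-z}\,dw.
\]
To handle the growth of $f$ at infinity I would instead apply Cauchy's formula to the function $w\mapsto f(w)/w^{n}$, which together with the interpolation polynomial accounts for the $\sum_{k=0}^{n-1} f^{(k)}(0)z^k/k!$ term: more precisely, the identity
\[
    \frac{1}{w-z}=\frac{z^{n}}{(w-z)w^{n}}+\sum_{k=0}^{n-1}\frac{z^{k}}{w^{k+1}}
\]
splits the kernel so that the polynomial part picks up, via the residue at $w=0$ on the inner loop (or equivalently the Taylor coefficients of $f$ at $0$, which lies inside $\Gamma_{\rho,\delta}$ once $\delta$ is small), exactly $\sum_{k=0}^{n-1} f^{(k)}(0)z^k/k!$, while the remaining kernel $z^n/((w-z)w^n)$ decays like $|w|^{-n-1}$ along the outer circle.

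Next I would take the limits $\delta\to 0$ and $\rho\to\infty$. The arc around $w=1$ contributes nothing because the first condition in \eqref{eq:Schwarz_asympt}, $|f(w)(1-w)|\to 0$, makes the integrand times arc length vanish. The outer circle $|w|=\rho$ contributes nothing in the limit: on it $|f(w)/w^{n}|\to 0$ by the second condition in \eqref{eq:Schwarz_asympt}, and the extra kernel factor $1/(w-z)$ gives an additional decay, so the integral over $|w|=\rho$ of $f(w)z^n/((w-z)w^n)$ is $O(\rho\cdot \rho^{-1}\cdot o(1))=o(1)$. What survives is the integral over the two banks of the cut. On the upper bank we integrate $w=x+i0$ from $x=+\infty$ down to $x=1$ with the factor $f(x+i0)$; on the lower bank we integrate $w=x-i0$ from $x=1$ to $x=+\infty$ with $f(x-i0)=\overline{f(x+i0)}$ by the reality \eqref{eq:real_func}. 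Adding the two banks, the real parts of $f$ cancel and the imaginary parts reinforce: the net contribution of the banks to $\frac{1}{2\pi i}\oint (\cdot)$ is
\[
    z^n\int_{1}^{+\infty}\frac{\tfrac{1}{\pi}\Im f(x+i0)}{(x-z)x^{n}}\,dx
    = z^n\int_{1}^{+\infty}\frac{u(x)\,dx}{(x-z)x^{n}},
\]
which is exactly the claimed integral term. Absolute integrability of $u(x)x^{-n-1}$ on $(1,+\infty)$ guarantees this integral converges (for $z$ bounded away from $[1,+\infty)$ the factor $1/(x-z)$ is $O(1/x)$ as $x\to\infty$, so the integrand is dominated by a constant times $u(x)x^{-n-1}$) and justifies passing to the limit under the integral sign via dominated convergence; near $x=1$ the decay $|f(x+i0)(1-x)|\to 0$ together with continuity of $u$ on $(1,+\infty)$ shows $u$ is integrable at the endpoint as well.

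The main obstacle I anticipate is the careful bookkeeping at the endpoint $w=1$ and on the two banks: one must check that the boundary values $f(x\pm i0)$ exist as locally integrable functions (continuity of $u$ on $(1,+\infty)$ plus reality handles the imaginary part, but one also needs that the $L^1_{\mathrm{loc}}$ real parts cancel exactly, which is automatic from $f(\overline w)=\overline{f(w)}$), and that the small-arc estimate near $w=1$ is uniform enough to pass to the limit. A secondary technical point is justifying the interchange of limit and integral on the outer circle when $f$ grows: this is where the strict inequality hidden in $|f(w)w^{-n}|\to 0$ (rather than merely $O(1)$) is essential, since it upgrades the naive $O(\rho\cdot\rho^{-1})=O(1)$ bound on the circular integral to $o(1)$. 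Once these estimates are in place, the identity \eqref{eq:Schwarz_plus} follows by assembling the surviving pieces; analyticity of both sides in $\C\setminus[1,+\infty)$ then extends it from, say, a neighbourhood of the origin to the whole cut plane by the identity theorem if one prefers to first establish it only for small $|z|$.
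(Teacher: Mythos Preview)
Your proposal is correct and follows the same keyhole-contour method as the paper's proof. The only difference is in the bookkeeping: the paper applies Cauchy's formula to each Taylor coefficient $f^{(n+k)}(0)/(n+k)!$ separately, sums the resulting series inside the unit disc, and then analytically continues to the cut plane, whereas your kernel identity $\frac{1}{w-z}=\frac{z^{n}}{(w-z)w^{n}}+\sum_{k=0}^{n-1}\frac{z^{k}}{w^{k+1}}$ extracts the polynomial part in one step and yields~\eqref{eq:Schwarz_plus} directly for all $z\in\C\setminus[1,+\infty)$.
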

\begin{proof}
    Let~$C$ be the closed contour consisting of a small circle around the point~$z=1$ of
    radius~$\epsilon<1/2$, then (the upper bank of) the interval~$(1+\epsilon+i0,1/\epsilon+i0)$ followed by a large
    circle~$|z|=1/\epsilon$ and (the lower bank of) the interval~$(1+\epsilon-i0,1/\epsilon-i0)$. The contour is traversed in the direction that leaves the bounded domain inside it on the left (so that the large circle is traversed counterclockwise). The Cauchy formula for the
    Taylor coefficients of~$f(z)$ implies
    \[
        \begin{aligned}
            2\pi i f^{(n+k)}(0)/(n+k)! &= {\textstyle\oint_C}f(z)z^{-(n+k+1)}dz
            \\
            &=
            \oint_{|z-1|=\epsilon} \frac{f(z)\,dz}{z^{n+k+1}}
            +\int_{1+A}^{1/\epsilon} \frac{f(x+i0)-f(x-i0)}{x^{n+k+1}}dx
            +\oint_{|z|=1/\epsilon} \frac{f(z)\,dz}{z^{n+k+1}}
            .
        \end{aligned}
    \]
    Here~$k\in\N_0$; note also that~$f(x+i0)-f(x-i0)=2i\Im f(x+i0)$. On letting~$\epsilon\to+0$, the
    first and the last integrals on the right-hand side vanish due to~\eqref{eq:Schwarz_asympt},
    and hence
    \[
        \frac{f^{(n+k)}(0)}{(n+k)!}
        =
        \frac1{\pi}\int_{1}^{+\infty} \frac{\Im f(x+i0)}{x^{n+k+1}}dx
        =
        \int_{1}^{+\infty} \frac{u(x)\,dx}{x^{n+k+1}}
        \eqqcolon c_k
        ,\quad k\in\N_0.
    \]
    The Taylor series uniformly converges on compact subsets of the unit disc (in
    fact,~$c_k\to 0$ as~$k\to\infty$ since for each~$x>1$ the integrands monotonically tend to
    zero). Therefore, if~$|z|<1$ we have
    \[
        \frac 1{z^n}\left(f(z)-\sum_{k=0}^{n-1}\frac{f^{(k)}(0)z^k}{k!}\right)
        = \sum_{k=0}^{\infty}c_kz^k
        = \int_{1}^{+\infty}\left(
            \sum_{k=0}^{\infty}\frac{z^k}{x^{k+1}}\right)\frac{u(x)\,dx}{x^n}
        =
        \int_{1}^{+\infty}\frac{u(x)\,dx}{(x-z)x^n}.
    \]
    The ratio~$x/(x-z)$ is bounded in~$z$ on compact subsets of~$\C\setminus[1,+\infty)$ uniformly in~$x>1$, so the
    integral on the right-hand side uniformly converges there to an analytic function. This
    analytic function coincides with~$f(z)$ in the unit disc, and hence
    in~$\C\setminus[1,+\infty)$.
\end{proof}

\subsection{Asymptotic behaviour}\label{sec:asymptotic-behaviour}

In this section, we will record the behavior of $R_{n_1,n_2,m}(z)$ in the neighborhood of the singular points $z=1$ and $z=\infty$.  
We begin with some remarks.  Define
\begin{equation}\label{eq:A-defined}
\begin{split}
&A(x_1,x_2,x_3)=\frac{\Gamma(x_3)\Gamma(x_2-x_1)}{\Gamma(x_3-x_1)\Gamma(x_2)},~~A_1=A(a+n_1,b+n_2,c+m),
\\
&A_2=A(b+n_2,a+n_1,c+m),~~A_3=A(a,b,c),~~A_4=A(b,a,c).
\end{split}
\end{equation}
Is it clear that $A(x_1,x_2,x_3)=0$ iff $x_1-x_3\in\N_0$ or $-x_2\in\N_0$. In this case ${}_2F_{1}(x_1,x_2;x_3;z)={}_2F_{1}(x_2,x_1;x_3;z)$ reduces to a polynomial possibly times a power of $1-z$. Hence, the condition $A_3A_4=0$ is equivalent to the fact that ${}_2F_1(a,b;c;z)$ reduces to a polynomial, possibly times a power of $1-z$; similar claim holds for $A_1A_2=0$ and ${}_2F_1(a+n_1,b+n_2;c+m;z)$.
Note, finally, that  the condition $A_1A_2A_3A_4\ne0$  is equivalent to
\begin{equation}\label{eq:noninteger}
\{a,a+n_1,b,b+n_2,c-a,c+m-a-n_1,c-b,c+m-b-n_2\}\cap-\N_0=\emptyset.
\end{equation}

Denote, as customary, $(x)_{-}=\min(x,0)$. Our first lemma deals with the singular point $z=1$.  
\begin{lemma}\label{lm:ratio-asymp1}
Suppose that condition \eqref{eq:noninteger} holds true for some $a,b,c\in\C$ and  $n_1,n_2,m\in\Z$.  Denote $\eta=c-a-b$, $q=m-n_1-n_2$ and write $\delta_{x,y}$ for the Kronecker delta.  Then 
\begin{equation}\label{eq:fat1}
R_{n_1,n_2,m}(z)=M\frac{(1-z)^{(\eta+q)_{-}}[1-\delta_{\eta+q,0}+\delta_{\eta+q,0}\log(1-z)]}{(1-z)^{(\eta)_{-}}[1-\delta_{\eta,0}+\delta_{\eta,0}\log(1-z)]}
(1+o(1))
\end{equation}
as $z\to1$ with some constant $M\ne0$ independent of $z$.  If \eqref{eq:noninteger} is violated, formula \eqref{eq:fat1}  should be modified as follows:

 \begin{compactenum}[\upshape (a)]
\item If $-a\in\N_{0}$ and$\:\smalldiv$or $-b\in\N_{0}$, then the denominator should be replaced
    by $1$.
\item If $-(a+n_1)\in\N_0$ and$\:\smalldiv$or $-(b+n_2)\in\N_0$, then the numerator should be
    replaced by $1$.
\item If $-a,-b\notin\N_0$ but $a-c\in\N_0$ and$\:\smalldiv$or $b-c\in\N_{0}$, then the
    denominator should be replaced by $(1-z)^{\eta}$.
\item If $-a-n_1,-b-n_2\notin\N_0$, but $a+n_1-c-m\in\N_{0}$ and$\:\smalldiv$or
    $b+n_2-c-m\in\N_{0}$, then the numerator should be replaced by $(1-z)^{\eta+q}$.
\end{compactenum}
\end{lemma}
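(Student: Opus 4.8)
The plan is to analyze the asymptotic behavior of the numerator and denominator of $R_{n_1,n_2,m}(z)$ separately near $z=1$, using the classical connection formula for ${}_2F_1$ expressing a solution around $z=0$ in terms of the two canonical solutions around $z=1$. Recall that for ${}_2F_1(\alpha,\beta;\gamma;z)$ with $\gamma-\alpha-\beta$ non-integer one has
\[
{}_2F_1(\alpha,\beta;\gamma;z)
= A\cdot{}_2F_1(\alpha,\beta;\alpha+\beta-\gamma+1;1-z)
+ B\,(1-z)^{\gamma-\alpha-\beta}{}_2F_1(\gamma-\alpha,\gamma-\beta;\gamma-\alpha-\beta+1;1-z),
\]
where $A=\Gamma(\gamma)\Gamma(\gamma-\alpha-\beta)/[\Gamma(\gamma-\alpha)\Gamma(\gamma-\beta)]$ and $B=\Gamma(\gamma)\Gamma(\alpha+\beta-\gamma)/[\Gamma(\alpha)\Gamma(\beta)]$. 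As $z\to 1$, both hypergeometric factors on the right tend to $1$, so the leading term is governed by whichever of the two exponents $0$ or $\gamma-\alpha-\beta$ is smaller in real part — i.e. by $(\gamma-\alpha-\beta)_-$ — provided the corresponding coefficient ($A$ or $B$) is non-zero. For the denominator $\gamma-\alpha-\beta=\eta$ and for the numerator $\gamma-\alpha-\beta=\eta+q$ with $q=m-n_1-n_2$; this is where the exponents $(\eta)_-$ and $(\eta+q)_-$ in \eqref{eq:fat1} come from.

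First I would treat the generic case where \eqref{eq:noninteger} holds. One checks that under \eqref{eq:noninteger} the coefficients $A_3,A_4$ (the $A$'s for the denominator) and $A_1,A_2$ (the $A$'s for the numerator) are all non-zero — indeed $A_3=A(a,b,c)$ vanishes exactly when $a-c\in\N_0$ or $-b\in\N_0$, etc., and \eqref{eq:noninteger} excludes precisely these. The $B$-coefficient for the denominator is $\Gamma(c)\Gamma(a+b-c)/[\Gamma(a)\Gamma(b)]$, which is finite and non-zero under \eqref{eq:noninteger} as long as $\eta\notin\N_0$; but if $\eta\in\N_0$ then $\eta_-=0$ and the $A$-term already dominates, so the $B$-term is harmless. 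Thus, when $\eta$ is non-integer the denominator behaves like $A_3(1+o(1))$ if $\Re\eta>0$ and like $B\,(1-z)^{\eta}(1+o(1))$ if $\Re\eta<0$; in both cases this equals $\mathrm{const}\cdot(1-z)^{(\eta)_-}(1+o(1))$, matching the factor $(1-z)^{(\eta)_-}[1-\delta_{\eta,0}]$ in \eqref{eq:fat1}. When $\eta=0$ exactly, the standard logarithmic connection formula gives the $\log(1-z)$ term, accounting for the $\delta_{\eta,0}\log(1-z)$ piece; the same analysis applied to the numerator with $\eta$ replaced by $\eta+q$ gives the numerator factor, and dividing yields \eqref{eq:fat1} with $M$ the ratio of the relevant non-zero $\Gamma$-quotients.

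Next I would handle the degenerate cases (a)--(d), which occur exactly when \eqref{eq:noninteger} fails. If $-a\in\N_0$ or $-b\in\N_0$ (case (a)), then ${}_2F_1(a,b;c;z)$ is a polynomial in $z$, hence bounded and non-vanishing at $z=1$ (a polynomial in $z$ of the form ${}_2F_1(-k,b;c;z)$ need not vanish at $1$ in general — but here we only need that it has a finite non-zero limit, or if it does vanish at $z=1$ the order is absorbed into the $o(1)$; more carefully, one observes its value at $z=1$ is the relevant constant, which is what "replaced by $1$" encodes up to the constant $M$): the denominator contributes no branch factor, so it should be "replaced by $1$." Case (b) is the same statement for the numerator. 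For case (c), if $-a,-b\notin\N_0$ but $a-c\in\N_0$ or $b-c\in\N_0$, then by Euler's transformation \eqref{eq:Euler_ID} ${}_2F_1(a,b;c;z)=(1-z)^{\eta}{}_2F_1(c-a,c-b;c;z)$ and the second factor falls under case (a), so the denominator behaves like $\mathrm{const}\cdot(1-z)^{\eta}$; case (d) is the corresponding statement for the numerator via the same transformation. In each case one simply substitutes the modified numerator/denominator into the ratio and reads off \eqref{eq:fat1} as modified.

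The main obstacle I anticipate is bookkeeping around the boundary cases where $\eta$ or $\eta+q$ is a non-negative integer while \eqref{eq:noninteger} still holds: there the $B$-coefficient $\Gamma(c)\Gamma(\alpha+\beta-\gamma)/[\Gamma(\alpha)\Gamma(\beta)]$ has a pole from $\Gamma(\alpha+\beta-\gamma)=\Gamma(-\eta)$ at a non-positive-integer argument, and one must invoke the correct confluent form of the connection formula (with a terminating sum plus, when $\eta=0$, a logarithm) to see that no spurious singularity appears and the leading behavior is still $(1-z)^{(\eta)_-}=(1-z)^0$. One has to verify that the coefficient $A$ of the surviving analytic solution is genuinely non-zero in these subcases — which is exactly guaranteed by \eqref{eq:noninteger} through the non-vanishing of $A_1,A_2,A_3,A_4$ — so that the stated leading term is correct and $M\neq 0$. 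Assembling the four degenerate cases and their $a\leftrightarrow b$, Euler-transform symmetries into one clean statement is routine but must be done with care to avoid double-counting.
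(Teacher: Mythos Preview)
Your approach is essentially the same as the paper's: expand numerator and denominator via the connection formula for ${}_2F_1$ at $z=1$, distinguish the cases $\eta\notin\Z$, $\eta\in\N_0$, $-\eta\in\N$ (and likewise for $\eta+q$) to extract the dominant term, and then handle the degenerate situations (a)--(d) via polynomial reduction and Euler's transformation \eqref{eq:Euler_ID}. One small slip: the constants $A_1,\dots,A_4$ of \eqref{eq:A-defined} are the coefficients from the expansion near $z=\infty$, not near $z=1$; the relevant quantities here are $\Gamma(c)\Gamma(\eta)/[\Gamma(c-a)\Gamma(c-b)]$ and $\Gamma(c)\Gamma(-\eta)/[\Gamma(a)\Gamma(b)]$ (and their shifted analogues), whose non-vanishing under \eqref{eq:noninteger} is exactly what you need and indeed holds.
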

\begin{proof}  Suppose first  that \eqref{eq:noninteger} is satisfied.
Then, if $\eta=c-a-b\notin\Z$, according to \cite[{\href{https://dlmf.nist.gov/15.8\#E4}{(15.8.4)}}]{DLMF} we have
$$
{}_2F_1(a,b;c;1-z)=\frac{\Gamma(c)\Gamma(c-a-b)}{\Gamma(c-a)\Gamma(c-b)}{}_2F_1(a,b;1+\eta;z)
+\frac{\Gamma(c)\Gamma(a+b-c)}{\Gamma(a)\Gamma(b)}z^{\eta}{}_2F_1(c-a,c-b;1-\eta;z).
$$
If $\eta=c-a-b=s\in\N_0$, then according to \cite[2.10(12-13)]{HTF1} or \cite[{\href{https://dlmf.nist.gov/15.8\#E10}{(15.8.10)}}]{DLMF} we have
\begin{multline*}
{}_2F_1(a,b;c;1-z)=\frac{\Gamma(c)\Gamma(c-a-b)}{\Gamma(c-a)\Gamma(c-b)}
\sum\limits_{n=0}^{s-1}\frac{(a)_n(b)_n}{(1-\eta)_{n}n!}z^n
+\frac{(-1)^s\Gamma(c)}{\Gamma(a)\Gamma(b)s!}z^{\eta}\sum\limits_{n=0}^{\infty}\frac{(c-a)_n(c-b)_n}{(1+\eta)_{n}n!}H_nz^n
\\
-\frac{(-1)^s\Gamma(c)}{\Gamma(a)\Gamma(b)s!}z^{\eta}\log(z){}_2F_1(c-a,c-b;1+\eta;z),
\end{multline*}
where the sum over the empty index set equals zero, and
$$
H_n=\psi(n+1)+\psi(n+s+1)-\psi(a+n+s)-\psi(b+n+s),~~~~\psi(z)=\Gamma'(z)/\Gamma(z).
$$
If $\eta=c-a-b=-s$, $s\in\N_0$, according to \cite[2.10(14-15)]{HTF1} we have
\begin{multline*}
\hspace{-1em}{}_2F_1(a,b;c;1-z)=\frac{\Gamma(c)\Gamma(a+b-c)z^{\eta}}{\Gamma(a)\Gamma(b)}\sum\limits_{n=0}^{s-1}
\frac{(c-a)_n(c-b)_n}{(1+\eta)_{n}n!}z^n+\frac{(-1)^s\Gamma(c)}{\Gamma(c-a)\Gamma(c-b)s!}
\sum\limits_{n=0}^{\infty}\frac{(a)_n(b)_n}{(1-\eta)_{n}n!}\hat{H}_nz^n
\\
-\frac{(-1)^s\Gamma(c)}{\Gamma(c-a)\Gamma(c-b)s!}\log(z){}_2F_1(a,b;1-\eta;z),
\end{multline*}
and
$$
\hat{H}_n=\psi(n+1)+\psi(n+s+1)-\psi(a+n)-\psi(b+n).
$$
These formulae imply that
$$
{}_2F_1(a,b;c;1-z)=\left\{\!\!\!\begin{array}{l}
A(1+\alpha_{1}z+\alpha_2z^2+\cdots)+Bz^{\eta}(1+\beta_{1}z+\beta_2z^2+\cdots),~~~~~~\eta\notin\Z;
\\[5pt]
\hat{A}(1+\hat{\alpha}_{1}z+\hat{\alpha}_2z^2+\cdots)+\hat{B}z^{\eta}\log(z)(1+\hat{\beta}_{1}z+\hat{\beta}_2z^2+\cdots),~~~\eta\in\N_{0};
\\[5pt]
\tilde{A}z^{\eta}(1+\tilde{\alpha}_{1}z+\tilde{\alpha}_2z^2+\cdots)+\tilde{B}\log(z)(1+\tilde{\beta}_{1}z+\tilde{\beta}_2z^2+\cdots),~~-\eta\in\N,
\end{array}
\right.
$$
where the constants $A,\hat{A},\tilde{A},B,\hat{B},\tilde{B}$ do not vanish due to condition \eqref{eq:noninteger}. In a similar fashion,
\begin{multline*}
%{}_2F_1\!\left(\!\!\begin{array}{l}a+n_{1},b+n_{2}\\c+m\end{array}\!\!\vline\:1-z\!\right)
{}_2F_1\left(a+n_{1},b+n_{2};c+m;1-z\right)
\\=
\!\left\{\!\!\!\begin{array}{l}
C(1+\delta_{1}z+\delta_2z^2+\cdots)+Dz^{\eta+q}(1+\gamma_{1}z+\gamma_2z^2+\cdots),~~~~\eta+q\notin\Z;
\\[5pt]
\hat{C}(1+\hat{\delta}_{1}z+\hat{\delta}_2z^2+\cdots)+\hat{D}z^{\eta+q}\log(z)(1+\hat{\gamma}_{1}z+\hat{\gamma}_2z^2+\cdots),~\eta+q\in\N_{0};
\\[5pt]
\tilde{C}z^{\eta+q}(1+\tilde{\delta}_{1}z+\tilde{\delta}_2z^2+\cdots)+\tilde{D}\log(z)(1+\tilde{\gamma}_{1}z+\tilde{\gamma}_2z^2+\cdots),~-\eta-q\in\N,
\end{array}
\right.
\end{multline*}
where the constants $C,\hat{C},\tilde{C},D,\hat{D},\tilde{D}$ do not vanish due to condition \eqref{eq:noninteger}.
Substituting these formulae into definition \eqref{eq:gen-ratio-def} of the function $R_{n_1,n_2,m}(z)$ and analyzing the principal asymptotic term in each of the five possible cases (1) $\eta\notin\Z$; (2) $\eta\in\N_{0}$ and $\eta+q\in\N_{0}$; (3) $\eta\in\N_{0}$ and $-\eta-q\in\N$; (4) $-\eta\in\N$ and $\eta+q\in\N_{0}$; (5) $-\eta\in\N$ and $-\eta-q\in\N$, we arrive at formula \eqref{eq:fat1}.

If condition \eqref{eq:noninteger} is violated, then the claims (a)-(d) of the lemma follow from the following two facts:
(1) if $-a\in\N_{0}$ and$\:\smalldiv$or $-b\in\N_{0}$, then ${}_2F_1(a,b;c;z)$ reduces to a polynomial;
(2) If $-a,-b\notin\N_0$, but $a-c\in\N_0$ and$\:\smalldiv$or $b-c\in\N_{0}$, then Euler's transformation
$$
{}_2F_1(a,b;c;z)=(1-z)^{\eta}{}_2F_1(c-a,c-b;c;z)
$$
implies that ${}_2F_1(a,b;c;z)=(1-z)^{\eta}\times\text{polynomial}$. In view of a similar statement for ${}_2F_1(a+n_{1},b+n_{2};c+m;z)$ we arrive at the conclusions contained in claims (a)-(d) of the lemma on the basis of case-by-case analysis.
\end{proof}

The above lemma implies that in all possible cases the asymptotics takes the form
\begin{equation}\label{eq:asymp1}
R_{n_1,n_2,m}(z)=M(1-z)^{\nu}[\log(1-z)]^{\epsilon}(1+o(1))~~~\text{as}~~z\to1,
\end{equation}
where $\epsilon\in\{-1,0,1\}$ and $\nu=(\eta+q)_{-}-(\eta)_{-}$ if condition \eqref{eq:noninteger} is satisfied. Otherwise, the formula for $\nu$ is modified according to statements (a)-(d) of Lemma~\ref{lm:ratio-asymp1}.  Suppose $\nu>-1$. Take $\theta\in(-1,\nu)$.  Clearly,
$$
R_{n_1,n_2,m}(z)=M(1-z)^{\nu}[\log(1-z)]^{\epsilon}(1+o(1))=M(1-z)^{\theta}o(1)(1+o(1)),
$$
so that in a neighborhood of $z=1$ we get the estimate
\begin{equation}\label{eq:asymp1-2}
|R_{n_1,n_2,m}(z)|\le M|1-z|^{\theta}.
\end{equation}

For the neighborhood of infinity we will break the result in two sub-cases.  First, define
\begin{equation}\label{eq:A12A34}
A_{1,2}=\left\{\!\!\begin{array}{l}A_1,~\text{if}~\min(a+n_1,b+n_2)=a+n_1\\
A_2,~\text{if}~\min(a+n_1,b+n_2)=b+n_2\end{array}\right.\!\!\!,
~~~~~
A_{3,4}=\left\{\!\!\begin{array}{l}A_3,~\text{if}~\min(a,b)=a\\
A_4,~\text{if}~\min(a,b)=b\end{array}\right.\!\!\!.
\end{equation}

\begin{lemma}\label{lm:asymp-infnolog}
Suppose $a-b\notin\Z$, $-c\notin\N_0$, $A_1,\ldots,A_4, A_{1,2}, A_{3,4}$ are defined in \eqref{eq:A-defined} and \eqref{eq:A12A34}, respectively. Then the asymptotic expansion of $R_{n_1,n_2,m}(-z)$ has the form
\begin{equation}\label{eq:asymp-infnolog}
R_{n_1,n_2,m}(-z)\sim \frac{A_{1,2}}{A_{3,4}}z^{\alpha-\gamma}\left(1+\sum\limits_{k=1}^{\infty}\frac{a_k}{z^{\sigma_k}}\right)~~\text{as}~~z\to\infty,
\end{equation}
where $a_k\ne0$ and
\begin{enumerate}[\upshape (1)]
\item $\alpha=-\min(a+n_1,b+n_2)$ if $A_1A_2\ne0$, $\alpha=-a-n_1$ if $A_2=0$, $\alpha=-b-n_2$ if $A_1=0$  \emph{(}the case $A_1=A_2=0$ is impossible under conditions of the lemma\emph{)}\emph{;}
\item $\gamma=-\min(a,b)$ if $A_3A_4\ne0$, $\gamma=-a$ if $A_4=0$, $\gamma=-b$ if $A_3=0$ \emph{(}the case $A_3=A_4=0$ is impossible under conditions of the lemma\emph{)}\emph{;}
\item the term $\sigma_k$ of the positive increasing sequence $\{\sigma_k\}_{k\in\N}$ equals the $k$-th smallest element of the multi-set
$$
\left\{k_1\varepsilon+k_2\delta+k_3:~k_1\in\{0,1\},~k_2\in\N_0,~k_3\in\N_0, k_1k_2k_3\ne0\right\},
$$
where  $\varepsilon=|a+n_1-b-n_2|$ if $A_1A_2\ne0$ or $\varepsilon=0$
otherwise and $\delta=|a-b|$ if $A_3A_4\ne0$ or $\delta=0$  otherwise.
\end{enumerate}
\end{lemma}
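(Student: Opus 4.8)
The plan is to derive the large-$z$ asymptotics of $R_{n_1,n_2,m}(-z)$ from the standard connection formula for ${}_2F_1$ near $z=\infty$, applied separately to numerator and denominator, and then to take the quotient of the two asymptotic series. The starting point is the classical expansion (see~\cite[{\href{https://dlmf.nist.gov/15.8\#E2}{(15.8.2)}}]{DLMF}): for $a-b\notin\Z$,
\[
{}_2F_1(a,b;c;-z)
=A_3\,z^{-a}\,{}_2F_1\!\big(a,a-c+1;a-b+1;-1/z\big)
+A_4\,z^{-b}\,{}_2F_1\!\big(b,b-c+1;b-a+1;-1/z\big),
\]
valid for $|\!\arg z|<\pi$, where $A_3=A(a,b,c)$ and $A_4=A(b,a,c)$ in the notation of~\eqref{eq:A-defined}. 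An analogous formula holds for ${}_2F_1(a+n_1,b+n_2;c+m;-z)$ with $A_1,A_2$ in place of $A_3,A_4$. The hypothesis $-c\notin\N_0$ guarantees ${}_2F_1(a,b;c;z)$ is well defined; the hypothesis $a-b\notin\Z$ (hence also $a+n_1-b-n_2\notin\Z$) ensures both $A_i$ do not simultaneously vanish and that the two exponents $-a$, $-b$ (resp. $-a-n_1$, $-b-n_2$) are distinct and non-congruent modulo $1$, so no logarithmic terms appear.

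First I would extract the leading behaviour of the denominator: among the two terms $A_3z^{-a}$ and $A_4z^{-b}$, the dominant one as $z\to\infty$ is the one with the smaller exponent, i.e. $z^{-\min(a,b)}$, with coefficient $A_{3,4}$ as defined in~\eqref{eq:A12A34}; if one of $A_3,A_4$ vanishes (which can happen only singly, since $A_3=A_4=0$ would force ${}_2F_1(a,b;c;z)\equiv0$, impossible) the surviving term dictates $\gamma$. Factoring out $A_{3,4}z^{-\gamma}$, the denominator equals $A_{3,4}z^{-\gamma}(1+\sum_{k\ge1}b_k z^{-\rho_k})$ where the exponents $\rho_k$ are nonnegative integer combinations $k_2\delta+k_3$ with $\delta=|a-b|$ coming from the subdominant term $z^{-|a-b|}$ contributed by the $A_4$-piece relative to the $A_3$-piece, and $k_3$ coming from the integer powers of $1/z$ inside each ${}_2F_1$ series. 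Doing the same for the numerator with $\varepsilon=|a+n_1-b-n_2|$ yields $A_{1,2}z^{-\alpha}(1+\sum_{k\ge1}a'_k z^{-\rho'_k})$. Then
\[
R_{n_1,n_2,m}(-z)=\frac{A_{1,2}}{A_{3,4}}\,z^{\gamma-\alpha}\cdot\frac{1+\sum_{k\ge1}a'_kz^{-\rho'_k}}{1+\sum_{k\ge1}b_kz^{-\rho_k}},
\]
and inverting the denominator series and multiplying out produces an asymptotic series in powers $z^{-\sigma_k}$ where each $\sigma_k$ is a nonnegative combination $k_1\varepsilon+k_2\delta+k_3$ as claimed; I would note that $\gamma-\alpha=\alpha-\gamma$ is the exponent written in the statement after one checks signs, i.e. one rewrites $\gamma-\alpha$ using $\alpha=-\min(a+n_1,b+n_2)$, $\gamma=-\min(a,b)$ so that the stated exponent $\alpha-\gamma$ (with the convention in the lemma) is correct once one tracks that $R$ has numerator over denominator. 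Finally one verifies $a_k\ne0$: the new coefficients are polynomial combinations of the $a'_j,b_j$, and a degeneracy $a_k=0$ for the first nonzero candidate exponent would force an identity among Pochhammer ratios that is excluded precisely by $a-b\notin\Z$ and $-c\notin\N_0$ — this genericity check is the one delicate point.

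The main obstacle I expect is bookkeeping rather than conceptual: one must carefully argue that the multi-set of exponents $\{k_1\varepsilon+k_2\delta+k_3: k_1\in\{0,1\},\,k_2,k_3\in\N_0\}$ (with the stated nonvanishing constraint) exactly enumerates, with correct multiplicities, the exponents arising when the two connection-formula series are divided — in particular that no cancellations collapse an exponent level to zero coefficient, and that the degenerate subcases $A_1=0$, $A_2=0$, $A_3=0$, $A_4=0$ (where $\varepsilon$ or $\delta$ is set to $0$) are folded in consistently. For those degenerate subcases one simply drops the corresponding term before taking the quotient, which removes the $k_1\varepsilon$ or $k_2\delta$ freedom; the formula in item~(3) already encodes this by the convention $\varepsilon=0$ or $\delta=0$. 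The non-vanishing $a_k\ne0$ is handled by observing that, at the lowest exponent at which a given combination $(k_1,k_2,k_3)$ first contributes, only one product of explicitly known nonzero Pochhammer-type coefficients appears, so no cancellation is possible there; higher coincidences of exponents are excluded by the irrationality-type hypotheses on $\varepsilon,\delta$ relative to $\N$.
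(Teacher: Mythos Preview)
Your approach coincides with the paper's: invoke the connection formula \cite[(15.8.2)]{DLMF}, factor out the dominant power from numerator and denominator separately, invert the denominator series, and multiply. The paper's argument is just as terse on the bookkeeping and does not attempt to verify $a_k\ne 0$ either.

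Two points worth fixing. First, a sign slip: with the lemma's convention $\gamma=-\min(a,b)$, the dominant term in the denominator is $A_{3,4}\,z^{\gamma}$, not $A_{3,4}\,z^{-\gamma}$; correcting this (and the analogous slip for $\alpha$) yields the exponent $\alpha-\gamma$ directly and makes your remark ``$\gamma-\alpha=\alpha-\gamma$'' unnecessary. Second, your justification of $a_k\ne 0$ via ``irrationality-type hypotheses on $\varepsilon,\delta$ relative to $\N$'' fails: writing $x=a-b\notin\Z$ and $n=n_1-n_2\in\Z$, one checks case by case on the signs of $x$ and $x+n$ that either $\varepsilon-\delta\in\Z$ or $\varepsilon+\delta\in\Z$ always holds, so distinct triples $(k_1,k_2,k_3)$ \emph{can} produce coinciding exponents and cancellations are not ruled out by the hypotheses alone. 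The paper does not address this; the assertion $a_k\ne 0$ is best read as defining the $\sigma_k$ to be those exponents from the multi-set whose coefficient is in fact nonzero. Also note that the restriction $k_1\in\{0,1\}$ in the statement comes from the fact that the numerator expansion is used as is (two ${}_2F_1$ series, hence at most one factor of $z^{-\varepsilon}$), whereas the denominator is \emph{inverted}, which is what promotes $k_2$ from $\{0,1\}$ to all of $\N_0$; your write-up blurs this distinction.
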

\begin{proof} According to \cite[(2.3.12)]{AAR} or
    \cite[{\href{https://dlmf.nist.gov/15.8\#E2}{(15.8.2)}}]{DLMF} as long as $a-b\notin\Z$ we
    have
\begin{multline*}
{}_2F_1(a,b;c;-z)=A(a,b,c)z^{-a}(1+\hat{\alpha}_{1}z^{-1}+\hat{\alpha}_{2}z^{-2}+\cdots)
+A(b,a,c)z^{-b}(1+\tilde{\alpha}_{1}z^{-1}+\tilde{\alpha}_{2}z^{-2}+\cdots)
\\
=A_{3,4}z^{\gamma}\left(1+\alpha'_{1}z^{-\delta}+\alpha'_{2}z^{-\delta-1}+\cdots+\bar{\alpha}_{1}z^{-1}
+\bar{\alpha}_{2}z^{-2}+\cdots\right),
\end{multline*}
where $\gamma=-\min(a,b)$, $\delta=|a-b|$,
$$
\hat{\alpha}_j=\frac{(a)_j(a-c+1)_j}{(a-b+1)_jj!},~~~\tilde{\alpha}_j=\frac{(b)_j(b-c+1)_j}{(b-a+1)_jj!}
$$
and $A_{3,4}$ is defined in \eqref{eq:A12A34}.  Hence,
$$
[{}_2F_1(a,b;c;-z)]^{-1}
=(A_{3,4})^{-1}z^{-\gamma}\left(1+\sum_{k=1}^{\infty}\frac{\alpha_k}{z^{\hat{\sigma}_k}}\right),
$$
where $\hat{\sigma}_k$ is the $k$-th smallest number in the (multi)-set $\{k_1\delta+k_2: k_1,k_2\in\N_{0}, k_1k_2\ne0\}$

In a similar fashion,
$$
{}_2F_1(a+n_1,b+n_1;c+m;-z)=A_{1,2}z^{\alpha}\left(1+\beta'_{1}z^{-\varepsilon}+\beta'_{2}z^{-\varepsilon-1}+\cdots
+\bar{\beta}_{1}z^{-1}+\bar{\beta}_{2}z^{-2}+\cdots\right),
$$
where  $\alpha=-\min(a+n_1,b+n_2)$ if $A_1A_2\ne0$, $\alpha=-a-n_1$ if $A_2=0$, $\alpha=-b-n_2$
if $A_1=0$\emph{;} and $A_{1,2}$ is defined in \eqref{eq:A12A34}. Multiplying these
two expansions we arrive at \eqref{eq:asymp-infnolog}.
\end{proof}

The condition $a-b\notin\Z$ in Lemma~\ref{lm:asymp-infnolog} ensures that no logarithms appear in the asymptotics. If, on the contrary $a-b\in\Z$ so that also $a+n_1-b-n_2\in\Z$ the asymptotic expansions of the hypergeometric functions in both numerator and denominator of $R_{n_1,n_2,m}(-z)$ will contain logarithmic terms if \eqref{eq:noninteger} holds true (i.e. $A_1A_2A_3A_4\ne0$). We will treat this situation in the lemma below.  If \eqref{eq:noninteger} is violated, however, then either numerator (if $A_1A_2=0$) or denominator (if $A_3A_4=0$) or both reduce to a polynomial possibly time a power of $(1-z)$ in which case logarithmic terms are missing.
More precisely by Euler's transformation if $-x_1,-x_2\notin\N_0$, $n\in\N_0$, then
\begin{equation}\label{eq:asymp-Azero}
{}_2F_{1}(x_1,x_2;x_1-n;-z)=\frac{{}_2F_{1}(-n,x_1-x_2-n;x_1-n;-z)}{(1+z)^{n+x_2}}=
\frac{(1+x_2-x_1)_{n}}{z^{x_2}(1-x_1)_n}\left(1+\frac{f_1}{z}+\frac{f_2}{z^2}+\cdots\right).
\end{equation}
If $-x_1\in\N_0$ and/or $-x_2\in\N_0$ then ${}_2F_{1}(x_1,x_2;x_3;-z)$ is a polynomial of degree determined by the minimal among the integer values of $-x_{j}$, $j=1,2$.

\begin{lemma}\label{lm:asymp-inflog}
Suppose $n_2-n_1\ne{a-b}\in\Z\setminus\{0\}$, $A_1A_2A_3A_4\ne0$ and $A_{1,2}$, $A_{3,4}$ is defined \eqref{eq:A12A34}. Then the asymptotic expansion of $R_{n_1,n_2,m}(-z)$ as $z\to\infty$ has the form
\begin{equation}\label{eq:asymp-inflog}
R_{n_1,n_2,m}(-z)\sim \frac{A_{1,2}}{A_{3,4}}z^{\alpha-\gamma}\left(1+\sum\limits_{k=1}^{\min(\delta,\varepsilon)-1}\frac{a_k}{z^{k}}
+\sum\limits_{k=\min(\delta,\varepsilon)}^{\infty}\frac{a_k}{z^{k}}\left[
1+b_{1,k}\log(z)+\cdots+b_{k,k}\log^{k}(z)\right]\right),
\end{equation}
where the sum over the empty index set is zero, $a_k$ and $b_k$ are real numbers \emph{(}possibly vanishing\emph{)}, $\alpha=-\min(a+n_1,b+n_2)$\emph{;} $\gamma=-\min(a,b)$\emph{;} and $\varepsilon=|a+n_1-b-n_2|$  and $\delta=|a-b|$ are positive integers.
\end{lemma}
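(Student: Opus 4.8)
The strategy is to rerun the proof of Lemma~\ref{lm:asymp-infnolog}, feeding it the \emph{logarithmic} connection formula at infinity in place of the non-logarithmic one. Applying ${}_2F_1(a,b;c;\cdot)={}_2F_1(b,a;c;\cdot)$ simultaneously to the numerator and the denominator of $R_{n_1,n_2,m}$ — a relabeling $(a,n_1)\leftrightarrow(b,n_2)$ that leaves $\alpha$, $\gamma$, $\delta$, $\varepsilon$, $A_{1,2}$, $A_{3,4}$ and the right-hand side of \eqref{eq:asymp-inflog} unchanged — we may and do assume $a-b=\delta\in\N$. The hypothesis $n_2-n_1\ne a-b$ then forces $a+n_1-b-n_2\in\Z\setminus\{0\}$, whence $\varepsilon=|a+n_1-b-n_2|\in\N$; thus $\delta$ and $\varepsilon$ are positive integers as claimed.

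Next I would record the two logarithmic expansions. Under \eqref{eq:noninteger}, all of $\Gamma(c)$, $\Gamma(a)$, $\Gamma(b)$, $\Gamma(c-a)$, $\Gamma(c-b)$ are finite and nonzero, so the classical connection formula for the degenerate case $a-b=\delta\in\N$ (\cite[2.10]{HTF1}, or \cite[(15.8.2)]{DLMF} with the substitution $z\mapsto1/z$), adapted to the argument $-z$ exactly as in Lemma~\ref{lm:asymp-infnolog}, yields
\[
  {}_2F_1(a,b;c;-z)\sim A_{3,4}\,z^{\gamma}\!\left(1+\sum_{k=1}^{\delta-1}p_kz^{-k}+z^{-\delta}\sum_{k=0}^{\infty}\big(q_k+q'_k\log z\big)z^{-k}\right),\quad z\to\infty,
\]
where $\gamma=-\min(a,b)$, the coefficient of the dominant power $z^{\gamma}$ equals $A_{3,4}$ exactly — it is the constant multiplying the \emph{terminating} series of length $\delta$, which is precisely the one ($A_3$ or $A_4$) picked out in \eqref{eq:A12A34} — and $p_k$, $q_k$, $q'_k$ are real because every Pochhammer symbol and $\psi$-value in the formula has a real argument. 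The same formula applied to ${}_2F_1(a+n_1,b+n_2;c+m;-z)$, with the two characteristic exponents interchanged when $a+n_1-b-n_2<0$, produces the analogous expansion with $\alpha$, $\varepsilon$, $A_{1,2}$ in place of $\gamma$, $\delta$, $A_{3,4}$; in particular its first logarithmic term sits at the order $z^{\alpha-\varepsilon}$.

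It remains to divide. Inverting the denominator via $(1+u)^{-1}=\sum_{j\ge0}(-u)^j$ gives an expansion of the form $A_{3,4}^{-1}z^{-\gamma}\big(1+\sum_{k=1}^{\delta-1}\tilde p_kz^{-k}+\sum_{k\ge\delta}z^{-k}\pi_k(\log z)\big)$, where $\pi_k$ is a real polynomial of degree $\le\lfloor k/\delta\rfloor\le k$: a $\log^j z$ can only arise from the $j$-th power of the $O(z^{-\delta}\log z)$ tail, hence never before order $z^{-\delta}$. Multiplying this by the expansion of the numerator and collecting terms of equal order, the leading term is $(A_{1,2}/A_{3,4})z^{\alpha-\gamma}$; for $1\le k\le\min(\delta,\varepsilon)-1$ the coefficient of $z^{\alpha-\gamma-k}$ carries no logarithm, since in every splitting $k=k_1+k_2$ one has $k_1<\varepsilon$ and $k_2<\delta$; and for $k\ge\min(\delta,\varepsilon)$ it is a real polynomial in $\log z$ of degree at most $\max_{k_1+k_2=k}\big(\lfloor k_1/\varepsilon\rfloor+\lfloor k_2/\delta\rfloor\big)\le k$. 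This is precisely \eqref{eq:asymp-inflog}. The only delicate point is this bookkeeping — fixing the order at which logarithms first enter each of the two factors, and confirming that \eqref{eq:A12A34} names the coefficient of the dominant power (the one attached to the truncating series) in the logarithmic connection formula; granting this, the rest is routine asymptotic-series arithmetic, with reality of all coefficients immediate from \eqref{eq:noninteger}.
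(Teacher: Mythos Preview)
Your proof is correct and follows the same route as the paper: write down the logarithmic connection formula~\cite[(15.8.8)]{DLMF} for numerator and denominator, invert the latter via the geometric series, and multiply. Your presentation is in fact more thorough than the paper's own proof, which simply displays the expansions~\eqref{eq:ablog} and~\eqref{eq:an1bn2log} and leaves the multiplication to the reader; your explicit tracking of the $\log z$--degree at each order (first entry at $z^{-\min(\delta,\varepsilon)}$, degree bounded by $k$ at order $z^{-k}$) fills that gap cleanly. One harmless overcount: the numerator itself carries at most a single power of $\log z$, so your bound $\lfloor k_1/\varepsilon\rfloor$ for its contribution is not sharp --- but since $\lfloor k_1/\varepsilon\rfloor+\lfloor k_2/\delta\rfloor\le k_1+k_2=k$ regardless, the conclusion stands.
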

\begin{proof} Indeed if $|a-b|\ge1$ we apply
    \cite[{\href{https://dlmf.nist.gov/15.8\#E8}{(15.8.8)}}]{DLMF} which can be written in the
    form:
$$
{}_2F_1(a,b;c;-z)=A_{3,4}z^{\gamma}\left(1+\sum_{j=1}^{\infty}\frac{f_j}{z^{j}}
+\log(z)\sum_{k=\delta}^{\infty}\frac{e_k}{z^{k}}\right).
$$
where as before $\gamma=-\min(a,b)$, $\delta=|a-b|\in\N$, and $A_{3,4}$ is defined in \eqref{eq:A12A34}. Hence,
\begin{equation}\label{eq:ablog}
[{}_2F_1(a,b;c;-z)]^{-1}=(A_{3,4})^{-1}z^{-\gamma}\left(1+\sum_{j=1}^{\infty}\frac{\hat{f}_j}{z^{j}}
\left[1+\hat{e}_{j,1}\frac{\log(z)}{z^{\delta-1}}+\hat{e}_{j,2}\frac{\log^{2}(z)}{z^{2(\delta-1)}}
+\cdots+\hat{e}_{j,j}\frac{\log^{j}(z)}{z^{j(\delta-1)}}\right]\right).
\end{equation}
In a similar fashion,
\begin{equation}\label{eq:an1bn2log}
{}_2F_1(a+n_1,b+n_2;c+m;-z)=A_{1,2}z^{\alpha}\left(1+\sum_{j=1}^{\infty}\frac{g_j}{z^{j}}
+\log(z)\sum_{k=\epsilon}^{\infty}\frac{q_k}{z^{k}}\right),
\end{equation}
where as before $\alpha=-\min(a+n_1,b+n_2)$,  $\varepsilon=|a+n_1-b-n_2|\in\N$ and $A_{1,2}$ is defined in \eqref{eq:A12A34}.
\end{proof}
    
Note that in the above lemma $\alpha-\gamma\in\Z$. The remaining cases not covered by Lemmas~\ref{lm:asymp-infnolog} and \ref{lm:asymp-inflog} are the following.  If $a=b$, but $-a,a-c\notin\N_0$ according to
\cite[{\href{https://dlmf.nist.gov/15.8\#E8}{(15.8.8)}}]{DLMF} we have
$$
{}_2F_1(a,a;c;-z)=\frac{\log(z)\Gamma(c)}{\Gamma(a)\Gamma(c-a)z^{a}}\left(1+\frac{f_0}{\log(z)}
+\sum_{k=1}^{\infty}\frac{e_k}{z^{k}}\left[1+\frac{f_k}{\log(z)}\right]\right),
$$
so that
\begin{equation}\label{eq:aa}
[{}_2F_1(a,a;c;-z)]^{-1}=\frac{\Gamma(a)\Gamma(c-a)z^{a}}{\Gamma(c)\log(z)}\left(1+\sum\limits_{k=1}^{\infty}\frac{f_0^k}{[\log(z)]^k}
+\sum_{j=1}^{\infty}\frac{\hat{f}_j}{z^{j}}\left[1+\frac{\hat{e}_{j,1}}{\log(z)}+\cdots++\frac{\hat{e}_{j,j}}{[\log(z)]^{j}}\right]\right).
\end{equation}

In a similar fashion if $a+n_1=b+n_2$, but $-a-n_1,a+n_1-c-m\notin\N_0$ we will have
\begin{equation}\label{eq:an1an1}
{}_2F_1(a+n_1,a+n_1;c+m;-z)=\frac{z^{-a-n_1}\log(z)\Gamma(c+m)}{\Gamma(a+n_1)\Gamma(c-a+m-n_1)}\left(1+\frac{g_0}{\log(z)}
+\sum_{k=1}^{\infty}\frac{q_k}{z^{k}}\left[1+\frac{g_k}{\log(z)}\right]\right).
\end{equation}
Hence, when both $a=b$ and $a+n_1=b+n_2$, but there no nonnegative integers among the numbers $-a,a-c,-a-n_1,a+n_1-c-m$ the asymptotic expansion of $R_{n_1,n_2,m}(-z)$ is obtained by multiplication of \eqref{eq:aa} and \eqref{eq:an1an1}. If  $a=b$ but $a+n_1\ne b+n_2$ we have to multiply  \eqref{eq:aa} by \eqref{eq:an1bn2log} or, if $a+n_1=b+n_2$ but  $a\ne b$, then multiply \eqref{eq:an1an1} by \eqref{eq:ablog}.  
If $a-c\in\N_0$ or/and $a+n_1-c-m\in\N_0$ formula \eqref{eq:asymp-Azero} should be used for the corresponding asymptotic expansion. Finally, if $-a\in\N_0$ and/or $-b\in\N_0$, then  ${}_2F_1(a,b;c;z)$ reduces to a polynomial; a similar claim applies to ${}_2F_1(a+n_1,b+n_2;c+m;z)$ if $-a-n_1\in\N_0$ and/or $-b-n_2\in\N_0$.

\subsection{Main theorems}\label{sec:main-theorems}

For any integer $r$ define the Pochhammer symbol by $(z)_r=\Gamma(z+r)/\Gamma(z)$. Given three integers $n_1,n_2,m\in\Z$ define the following related quantities:
\begin{equation}\label{eq:nmrelated}
\begin{split}
\underline{n}=\min(n_1,n_2),&~~\overline{n}=\max(n_1,n_2),~~ p=(m-n_1-n_2)_{+},~~  l=(n_1+n_2-m)_{+}
\\
&r=l+(m)_{+}-\underline{n}-1=\left\{\!\!\!
\begin{array}{l}
\max(m-\underline{n},\overline{n})-1,~~m\ge0
\\
\max(-\underline{n},\overline{n}-m)-1,~~m\le0.
\end{array}
\right.
\end{split}
\end{equation}
Note that $p-l=m-n_{1}-n_{2}$ and  $r$ may only be negative when $n_1=n_2=m=0$ in which case $r=-1$.  The key fact that we will need is a more precise version of a particular case of \cite[Theorem~1]{KKAMS2019}  which (after some change of notation) reads:
\begin{theorem}\label{th:2F1identity}
 Assume that $n_1,n_2,m\in\Z$. Then 
\begin{multline}\label{eq:2F1identity}
\frac{(\gamma-\alpha)_{-n_2}(\gamma-\beta)_{m-n_2}t^{n_1}}{(\gamma-1)_{n_1-n_2+1}}
{}_{2}F_{1}\!\left(\!\begin{array}{l}1-\gamma+\alpha,1-\gamma+\beta\\2-\gamma\end{array}\!\!\vline\,\,t\right)
{}_{2}F_{1}\!\left(\!\begin{array}{l}\gamma-\alpha-n_2,\gamma-\beta+m-n_2\\\gamma+n_1-n_2\end{array}\!\!\vline\,\,t\right)
+
\\
\frac{(1-\alpha)_{-n_1}(1-\beta)_{m-n_1}t^{n_2}}{(1-\gamma)_{n_2-n_1+1}}
{}_{2}F_{1}\!\left(\!\begin{array}{l}\alpha,\beta\\\gamma\end{array}\!\!\vline\,\,t\right)
{}_{2}F_{1}\!\left(\!\begin{array}{l}1-\alpha-n_1,1-\beta+m-n_1\\2-\gamma+n_2-n_1\end{array}\!\!\vline\,\,t\right)
=\frac{t^{\underline{n}}P_{r}(t)}{(1-t)^{p}},
\end{multline}
where $P_{r}(t)$ is a polynomial of degree $r$ \emph{(}$P_{-1}\equiv0$\emph{)} given by 
\begin{subequations}\label{eq:Prexplicit}
\begin{equation}
P_{r}(t)=(-1)^{\overline{n}}\sum_{k=0}^{r}(-t)^k\!\!\sum\limits_{j=(k-p)_{+}-\overline{n}}^{k-\overline{n}}\!\!\!\!\!\!
(-1)^{j}\binom{p}{k-\overline{n}-j}K_{j}.
\end{equation}
The coefficients $K_j$ are given by  
\begin{multline}
K_{j}=\frac{(1-\alpha)_{j}(1-\beta)_{m+j}}{(1-\gamma)_{n_2+j+1}(j+n_1)!}
{}_{4}F_{3}\!\left(\!\left.\!\begin{array}{l}-j-n_1,\alpha,\beta,\gamma-1-n_2-j
\\
\alpha-j,\beta-m-j,\gamma\end{array}\!\!\vline\,\,1\right.\right)
\\
+
\frac{(\gamma-\alpha)_{j}(\gamma-\beta)_{m+j}}{(\gamma-1)_{n_{1}+j+1}(j+n_2)!}
{}_{4}F_{3}\!\left(\!\left.\!\begin{array}{l}-j-n_2,1-\gamma+\alpha,1-\gamma+\beta,1-\gamma-n_{1}-j
\\
1-\gamma+\alpha-j,1-\gamma+\beta-m-j,2-\gamma\end{array}\!\!\vline\,\,1\right.\right)\!,
\end{multline}
\end{subequations}
where we use the convention $1/(-i)!=0$ for $i\in\N$.  This polynomial can also be computed by multiplying the left hand side of \eqref{eq:2F1identity} by $t^{-\underline{n}}(1-t)^{p}$ and calculating the first $r+1$ Taylor coefficients on the left hand side.
\end{theorem}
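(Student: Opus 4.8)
The bilinear identity \eqref{eq:2F1identity} read with an \emph{a priori} unidentified right‑hand side is the particular case of \cite[Theorem~1]{KKAMS2019} quoted above: there the left‑hand side, call it $\Phi(t)$, is shown to be single‑valued on $\C\setminus\{0,1\}$ and of the shape $t^{\underline{n}}P_r(t)/(1-t)^p$ with $P_r$ a polynomial of degree at most $r$ (the degree being exactly $r$ for generic parameters). What is new here is the explicit form \eqref{eq:Prexplicit}, and the plan is to obtain it in three steps: first recall why $\Phi$ has that rational shape; second compute the Taylor coefficients of $\Phi$ at $t=0$ and recognise them as the terminating ${}_4F_3$'s forming the $K_j$; third recover $P_r$ from the $K_j$ by multiplying by $(1-t)^p$.

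\emph{Step 1 (shape of $\Phi$).} Off $\{0,1\}$ every Gauss function in \eqref{eq:2F1identity} is entire, so the only point where $\Phi$ can be multivalued is $t=1$. Feeding the $t=1$ connection formulae quoted from \cite[(15.8.4),(15.8.10)]{DLMF} in the proof of Lemma~\ref{lm:ratio-asymp1} into the two products and collecting terms, the branch‑cut contributions $(1-t)^{\gamma-\alpha-\beta}$ and the logarithmic ones cancel between the two products — this cancellation is the substance of \cite[Theorem~1]{KKAMS2019} — leaving a genuine Laurent expansion of $\Phi$ about $t=1$ with pole of order at most $p=(m-n_1-n_2)_+$. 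At $t=0$ both Gauss factors of the first product of \eqref{eq:2F1identity} are analytic and the product begins with $\tfrac{(\gamma-\alpha)_{-n_2}(\gamma-\beta)_{m-n_2}}{(\gamma-1)_{n_1-n_2+1}}t^{n_1}$, while the second product begins with $\tfrac{(1-\alpha)_{-n_1}(1-\beta)_{m-n_1}}{(1-\gamma)_{n_2-n_1+1}}t^{n_2}$, so $t^{-\underline{n}}\Phi(t)$ is holomorphic at $0$. Finally, combining \cite[(15.8.2)]{DLMF} with the same cancellation already used at $t=1$ gives $\Phi(t)=O(t^{\,r+\underline{n}})$ at infinity, which is exactly why $r$ has the value \eqref{eq:nmrelated}. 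Hence $(1-t)^pt^{-\underline{n}}\Phi(t)$ is entire of polynomial growth of degree $\le r$, i.e. a polynomial $P_r$. Parameter values for which one of $2-\gamma$, $\gamma+n_1-n_2$, $2-\gamma+n_2-n_1$ or a relevant Gamma‑argument is a non‑positive integer are covered by continuity, both sides of \eqref{eq:2F1identity} being continuous there away from their evident poles.

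\emph{Step 2 (the coefficients $K_j$).} From $\Phi(t)=t^{\underline{n}}P_r(t)/(1-t)^p$ we may write $\Phi(t)=t^{n_1+n_2}\sum_{j\ge-\overline{n}}K_jt^{\,j}$, so that $K_j$ is the coefficient of $t^{\,n_1+n_2+j}$ in $\Phi(t)$; note $K_j=0$ for $j<-\overline{n}$, which is also forced by the convention $1/(-i)!=0$ in the prefactors of \eqref{eq:Prexplicit}. For the first product of \eqref{eq:2F1identity} this coefficient equals $\tfrac{(\gamma-\alpha)_{-n_2}(\gamma-\beta)_{m-n_2}}{(\gamma-1)_{n_1-n_2+1}}$ times the coefficient of $t^{\,n_2+j}$ in the product of the two Gauss series, i.e. the \emph{finite} convolution $\sum_{s,l\ge0,\,s+l=n_2+j}\tfrac{(1-\gamma+\alpha)_s(1-\gamma+\beta)_s}{(2-\gamma)_s\,s!}\cdot\tfrac{(\gamma-\alpha-n_2)_l(\gamma-\beta+m-n_2)_l}{(\gamma+n_1-n_2)_l\,l!}$; writing $l=n_2+j-s$ and applying the reflection $(x)_{N-s}=(-1)^s(x)_N/(1-x-N)_s$ to the four $l$‑indexed Pochhammers, together with identities of the type $(\gamma-\alpha)_{-n_2}(\gamma-\alpha-n_2)_{n_2+j}=(\gamma-\alpha)_j$ and $(\gamma-1)_{n_1-n_2+1}(\gamma+n_1-n_2)_{n_2+j}=(\gamma-1)_{n_1+j+1}$, pulls out the prefactor $\tfrac{(\gamma-\alpha)_j(\gamma-\beta)_{m+j}}{(\gamma-1)_{n_1+j+1}(n_2+j)!}$ and turns the sum into a ${}_4F_3$ at $1$ that terminates because of its upper parameter $-j-n_2$ — this is exactly the second ${}_4F_3$‑term of $K_j$. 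The first ${}_4F_3$‑term arises by the identical manipulation with $n_1\leftrightarrow n_2$ and $(\alpha,\beta)\leftrightarrow(\gamma-\alpha,\gamma-\beta)$, i.e. from the \emph{second} product of \eqref{eq:2F1identity}.

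\emph{Step 3 (reassembling $P_r$), and the obstacle.} Since $P_r(t)=(1-t)^pt^{-\underline{n}}\Phi(t)=\big(\sum_{i\ge0}(-1)^i\tbinom pi t^i\big)\big(\sum_{j\ge-\overline{n}}K_jt^{\,j+\overline{n}}\big)$ is a polynomial of degree $\le r$, its coefficient of $t^k$ for $0\le k\le r$ equals $(-1)^{\overline{n}+k}\sum_j(-1)^j\tbinom p{k-\overline{n}-j}K_j$ (the convolution truncating by itself), which is precisely the coefficient of $t^k$ in \eqref{eq:Prexplicit}; the displayed summation bounds $(k-p)_+-\overline{n}\le j\le k-\overline{n}$ are just the support of $\tbinom p{k-\overline{n}-j}$ intersected with $j\ge-\overline{n}$. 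The recipe in the last sentence of the theorem — multiply the left‑hand side by $t^{-\underline{n}}(1-t)^p$ and take $r+1$ Taylor coefficients — is then immediate from $\deg P_r\le r$. The expected main difficulty is twofold. Conceptually, if one does not invoke \cite[Theorem~1]{KKAMS2019} verbatim, it is establishing the cancellation of the branch‑cut and logarithmic terms at $t=1$ (and of the leading terms at infinity) in full generality, including the degenerate cases $\gamma-\alpha-\beta\in\Z$ with their logarithmic connection formulae. Computationally, it is keeping the index shifts, Pochhammer reflections and signs in Step~2 consistent in \emph{every} case — in particular when one or more of $n_1,n_2,m$ is negative, so that $\underline{n}$ and the monomials $t^{n_1}$, $t^{n_2}$ are to be read in the Laurent sense — and checking that the ${}_4F_3$'s produced by the reindexing match \eqref{eq:Prexplicit} on the nose rather than merely up to a constant.
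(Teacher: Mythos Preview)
Your proposal is essentially correct and matches what the paper does, though the paper itself defers almost entirely to citations rather than giving a self-contained argument: it invokes \cite[Theorem~1]{KKAMS2019} (equivalently Ebisu \cite{Ebisu}) for the rational shape $t^{\underline n}P_r(t)/(1-t)^p$, and for the explicit formula \eqref{eq:Prexplicit} it points to \cite{CKP} or to the $q\to 1$ limit of \cite[Theorem~2]{Yamaguchi}. Your Steps~2--3 are precisely the ``second method'' flagged in the last sentence of the theorem---multiply by $t^{-\underline n}(1-t)^p$ and read off Taylor coefficients---carried out in full, with the Cauchy products correctly reorganised into terminating ${}_4F_3(1)$'s via the Pochhammer reflections you indicate; the check $(\gamma-\alpha)_{-n_2}(\gamma-\alpha-n_2)_{n_2+j}=(\gamma-\alpha)_j$, etc., and the sign bookkeeping in Step~3 are right.

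One minor slip: in Step~1 the growth bound you need is $\Phi(t)=O(t^{\,\underline n+r-p})$ at infinity (so that $(1-t)^p t^{-\underline n}\Phi$ is $O(t^r)$), not $O(t^{\,\underline n+r})$; the latter would only yield $\deg P_r\le r+p$. Since you are invoking \cite{KKAMS2019} for this step anyway, this is a typo rather than a gap. Also ``every Gauss function is entire off $\{0,1\}$'' should read ``analytic in $\C\setminus[1,\infty)$''.
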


\begin{remark}
    The particular ${}_2F_1$
    case of our general identity \cite[Theorem~1]{KKAMS2019} used above has been essentially
    discovered by Ebisu in \cite{Ebisu}. Formula \eqref{eq:2F1identity} can be derived by
    combining Theorem~3.7 with Proposition~3.4 from \cite{Ebisu}.
\end{remark}

\begin{remark}
    Our identity from \cite[Theorem~1]{KKAMS2019} does not contain explicit expression
    \eqref{eq:Prexplicit} for the polynomial $P_r$. This expression is found in \cite{CKP}.
    It can also be computed by taking the limit $q\to1$ in \cite[Theorem~2]{Yamaguchi}. For specific values of
    $n_1,n_2,m$, the second method of computing $P_{r}(t)$ indicated in the above theorem is
    more practical.
\end{remark}

In the following theorem we give an explicit formula for the imaginary part of
$R_{n_1,n_2,m}(z)$ on the banks of the branch cut $[1,\infty)$. Note that for~$x>1$ the
function~${ }_{2}F_{1}(a,b;c;x\pm i0)$ may vanish at finitely many points in the degenerate
case~$\{-a,-b,c-a,c-b\}\cap\N_0\ne\varnothing$, and does not vanish otherwise, see resp.
Theorem~\ref{th:2F1zeros} and~\cite[Lemma~2, p.~54]{Runckel}.

\begin{theorem}\label{th:2F1ratioboundary}
Assume that $n_1,n_2,m\in\Z$. In terms of notation \eqref{eq:nmrelated}, on the branch cut $x>1$ we have 
\begin{subequations}\label{eq:2F1ratioboundary_with_B}
\begin{equation}\label{eq:2F1ratioboundary}
\Im[R_{n_1,n_2,m}(x\pm i0)]=\pm{\pi}B_{n_1,n_2,m}(a,b,c)\frac{x^{l-\underline{n}-c}(x-1)^{c-a-b-l}P_{r}(1/x)}{|{}_{2}F_{1}(a,b;c;x)|^{2}},
\end{equation}
where 
\begin{equation}\label{eq:B-defined}
B_{n_1,n_2,m}(a,b,c)=-\frac{\Gamma(c)\Gamma(c+m)}{\Gamma(a)\Gamma(b)\Gamma(c-a+m-n_1)\Gamma(c-b+m-n_2)}
\end{equation}
\end{subequations}
and $P_r(t)$ is the polynomial \eqref{eq:Prexplicit} with $\alpha=a$, $\beta=1-c+a$, $\gamma=1-b+a$.  Note that $|{}_{2}F_{1}(a,b;c;x)|^2$ takes the same values on both banks of the branch cut.
\end{theorem}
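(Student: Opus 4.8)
The plan is to compute $\Im[R_{n_1,n_2,m}(x\pm i0)]$ directly from the identity in Theorem~\ref{th:2F1identity} by a suitable specialization of parameters, followed by an analytic continuation argument. First I would set $\alpha=a$, $\beta=1-c+a$, $\gamma=1-b+a$ in \eqref{eq:2F1identity}; with these choices the four ${}_2F_1$'s on the left-hand side become (after using the Pochhammer prefactors and standard contiguous-shift bookkeeping) expressible in terms of ${}_2F_1(a,b;c;t)$, ${}_2F_1(a+n_1,b+n_2;c+m;t)$, and their ``second solution'' counterparts ${}_2F_1(1-c+a,1-c+b;2-c;t)$-type functions. The point of this particular substitution is that the Kummer pair $\{{}_2F_1(a,b;c;t),\,t^{1-c}{}_2F_1(a-c+1,b-c+1;2-c;t)\}$ spans the solution space of the hypergeometric ODE near $t=0$, and the two products on the left of \eqref{eq:2F1identity} are, up to explicit $\Gamma$-factors, the two natural bilinear combinations that survive the monodromy around $t=1$.

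Next I would pass to the cut $x>1$. The key analytic input is that ${}_2F_1(a,b;c;x\pm i0)$ and its companion solution have conjugate boundary values (since the Wronskian connection coefficients are real), so that across the cut the combination $R_{n_1,n_2,m}(x+i0)-R_{n_1,n_2,m}(x-i0)=2i\,\Im R_{n_1,n_2,m}(x+i0)$ is governed by the jump of the appropriate second solution. Concretely, dividing the specialized \eqref{eq:2F1identity} through by $[{}_2F_1(a,b;c;t)]^2$, one of the two terms on the left becomes $R_{n_1,n_2,m}(t)$ times a rational function (times a power of $1-t$), and the other term becomes an explicitly known ``companion ratio'' times the analogous rational factor; taking the difference of the $x+i0$ and $x-i0$ values kills the single-valued term and leaves $\Im R_{n_1,n_2,m}$ proportional to the right-hand side $t^{\underline n}P_r(t)/(1-t)^p$ evaluated at $t=1/x$, divided by $|{}_2F_1(a,b;c;x)|^2$. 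Reconciling the powers of $x$ and $x-1$ appearing here — tracking that $t^{n_1}$, $t^{n_2}$, $t^{\underline n}$, $(1-t)^{-p}$ and the branch factors $(1-t)^{c-a-b}$, $(1-t)^{c-a-b+q}$ combine into exactly $x^{l-\underline n-c}(x-1)^{c-a-b-l}$ — is a routine but bookkeeping-heavy check; it is where the quantities $p,l,q,\underline n$ from \eqref{eq:nmrelated} all come into play.

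The constant $B_{n_1,n_2,m}(a,b,c)$ in \eqref{eq:B-defined} is then pinned down by computing the $\Gamma$-factor that multiplies the jump. This comes from: (i) the connection formula \cite[(15.8.2)]{DLMF} (or the $z\to z/(z-1)$ transformation) relating ${}_2F_1(a,b;c;x\pm i0)$ for $x>1$ to its conjugate plus a multiple of the second solution, whose coefficient is $\Gamma(c)\Gamma(c-a-b)/(\Gamma(c-a)\Gamma(c-b))$ type; (ii) the analogous coefficient for the numerator ${}_2F_1(a+n_1,b+n_2;c+m;x\pm i0)$, which produces $\Gamma(c+m)/(\Gamma(c-a+m-n_1)\Gamma(c-b+m-n_2))$; and (iii) the Pochhammer prefactors $(1-\alpha)_{-n_1}$, $(\gamma-\alpha)_{-n_2}$, etc., from \eqref{eq:2F1identity}, which under the specialization $\alpha=a$, $\beta=1-c+a$, $\gamma=1-b+a$ collapse to ratios of $\Gamma(a)$, $\Gamma(b)$. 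Multiplying everything and simplifying the $\sin\pi(\cdot)$ factors (arising from $\Im(x-1)^{c-a-b}$ versus the reflection formula for the $\Gamma$'s) against each other should leave precisely $-\Gamma(c)\Gamma(c+m)/[\Gamma(a)\Gamma(b)\Gamma(c-a+m-n_1)\Gamma(c-b+m-n_2)]$, with the overall sign $\pm$ matching the choice of bank. Finally, to handle the degenerate cases where ${}_2F_1(a,b;c;x)$ has zeros on $(1,\infty)$ or where \eqref{eq:noninteger} fails, I would note that both sides of \eqref{eq:2F1ratioboundary} are meromorphic in $(a,b,c)$ and the identity, once established on the Zariski-dense set where all the $\Gamma$-factors are finite and nonzero, extends by continuity.

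The main obstacle I anticipate is step (ii)–(iii): correctly identifying which of the two products on the left of \eqref{eq:2F1identity} carries $R_{n_1,n_2,m}$ after division by $[{}_2F_1(a,b;c;t)]^2$, and then matching the tangle of Pochhammer symbols $(\gamma-\alpha)_{-n_2}$, $(\gamma-\beta)_{m-n_2}$, $(\gamma-1)_{n_1-n_2+1}$, $(1-\alpha)_{-n_1}$, $(1-\beta)_{m-n_1}$, $(1-\gamma)_{n_2-n_1+1}$ — which can be negative-index and hence need the $1/(-i)!=0$ convention handled carefully — against the $\Gamma$-quotients coming from the two connection formulae, so that the messy trigonometric factors cancel exactly and produce the clean closed form \eqref{eq:B-defined}. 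Getting the signs right (both the overall $\pm$ and the $(-1)^{\overline n}$ hidden inside $P_r$) is the delicate part; everything else is a disciplined but mechanical computation.
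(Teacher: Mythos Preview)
Your plan has the right ingredients but the logical flow is backwards in a way that would make it fail as written. After the substitution $\alpha=a$, $\beta=1-c+a$, $\gamma=1-b+a$, the four ${}_2F_1$'s in \eqref{eq:2F1identity} are \emph{not} ${}_2F_1(a,b;c;t)$, ${}_2F_1(a+n_1,b+n_2;c+m;t)$ or their companions at $t=0$; they are Kummer's solutions at $z=\infty$ written in the variable $t=1/z$, namely ${}_2F_1(a,1-c+a;1-b+a;t)$, ${}_2F_1(b,1-c+b;1-a+b;t)$, and their shifted analogues. So ``dividing \eqref{eq:2F1identity} by $[{}_2F_1(a,b;c;t)]^2$ to isolate $R_{n_1,n_2,m}(t)$'' does not work --- neither term becomes $R_{n_1,n_2,m}$ times anything rational.

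The paper's proof runs in the opposite direction, and that direction is essential. One first writes the boundary values ${}_2F_1(a,b;c;x\pm i0)$ and ${}_2F_1(a+n_1,b+n_2;c+m;x\pm i0)$ for $x>1$ as explicit complex linear combinations of the solutions at infinity (the paper does this via the Meijer-$G$ representation from \cite{KPITSF2017}; your cited connection formula \cite[(15.8.2)]{DLMF} would serve equally well). Then one computes $\Im[R_{n_1,n_2,m}(x+i0)]$ directly from $\Im(u/v)=(\bar v\,\Im u - \bar u\,\Im v)/|v|^2$, obtaining a bilinear form in the four functions ${}_2F_1(a,1-c+a;1-b+a;1/x)$, ${}_2F_1(b,1-c+b;1-a+b;1/x)$ and their shifted versions, divided by $|{}_2F_1(a,b;c;x)|^2$. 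Only \emph{then} is the identity \eqref{eq:2F1identity} invoked, to collapse that bilinear form to $t^{\underline n}P_r(t)/(1-t)^p$. The $\Gamma$-bookkeeping you anticipate is real and is where \eqref{eq:B-defined} emerges, but it happens during the reduction of the $\Im(u/v)$ expression to match the prefactors of \eqref{eq:2F1identity}, not by dividing \eqref{eq:2F1identity} itself by anything. If you reorder your argument this way, it becomes essentially the paper's proof.
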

\begin{proof}
The boundary values of the generalized hypergeometric function on the cut $[1,\infty)$ have been found in \cite[Theorem~3]{KPITSF2017}. For ${}_2F_1$ this theorem takes the form ($x>1$):
$$
{}_{2}F_1\left(a,b;c;x\pm{i0}\right)=-\frac{\pi\Gamma(c)}{\Gamma(a)\Gamma(b)}
G^{2,1}_{3,3}\left(\frac{1}{x}\,\vline\begin{array}{c}1,3/2,c\\a,b,3/2\end{array}\!\!\right)\pm
\pi{i}\frac{\Gamma(c)}{\Gamma(a)\Gamma(b)}G^{2,0}_{2,2}\left(\frac{1}{x}\,\vline\begin{array}{c}1,c\\a,b\end{array}\!\!\right),
$$
where $G^{m,n}_{p,q}$ is Meijer's $G$ function
\cite[section~{\href{https://dlmf.nist.gov/16.17}{16.17}}]{DLMF}. As
$$
\Im\left(\frac{\alpha+i\beta}{\gamma+i\delta}\right)=\frac{\beta\gamma-\alpha\delta}{|\gamma+i\delta|^2},
$$
denoting $\phi_{\pm}(x)=\Im[R_{n_1,n_2,m}(x\pm i0)]$, we get
\begin{multline*}
\phi_{\pm}(x)\!=\!\Im\left[\frac{{}_{2}F_{1}(a+n_1,b+n_2;c+m;x\pm i0)}{{}_{2}F_{1}(a,b;c;x\pm i0)}\right]
\!=\!\pm\frac{\pi^2\Gamma(c)\Gamma(c+m)}{|{}_{2}F_{1}(a,b;c;x)|^{2}\Gamma(a)\Gamma(b)\Gamma(a+n_1)\Gamma(b+n_2)}
\\
\biggl\{G^{2,1}_{3,3}\left(\frac{1}{x}\,\vline\begin{array}{c}1,3/2,c+m\\a+n_1,b+n_2,3/2\end{array}\!\!\right)
G^{2,0}_{2,2}\left(\frac{1}{x}\,\vline\begin{array}{c}1,c\\a,b\end{array}\!\!\right)
-G^{2,1}_{3,3}\left(\frac{1}{x}\,\vline\begin{array}{c}1,3/2,c\\a,b,3/2\end{array}\!\!\right)
G^{2,0}_{2,2}\left(\frac{1}{x}\,\vline\begin{array}{c}1,c+m\\a+n_1,b+n_2\end{array}\!\!\right)\biggr\}.
\end{multline*}
Meijer's $G$ function here can be expanded as follows \cite[Proof of Theorem~3]{KPITSF2017}:
\begin{multline*}
-G^{2,1}_{3,3}\left(t\,\vline\begin{array}{c}1,3/2,c\\a,b,3/2\end{array}\!\!\right)
\!=\!\frac{\Gamma(b-a)\Gamma(a)t^{a}}{\pi\Gamma(c-a)}
{_{2}F_{1}}\!\left(\!\begin{array}{l}a,1-c+a\\1-b+a\end{array}\!\!\vline\,\,t\right)\cos(\pi{a})
\\
+\frac{\Gamma(a-b)\Gamma(b)t^{b}}{\pi\Gamma(c-b)}
{_{2}F_{1}}\!\left(\!\begin{array}{l}b,1-c+b\\1-a+b\end{array}\!\!\vline\,\,t\right)\cos(\pi{b})
\end{multline*}
and
\begin{multline*}
G^{2,0}_{2,2}\left(t\,\vline\begin{array}{c}1,c\\a,b\end{array}\!\!\right)
=\frac{\Gamma(b-a)\Gamma(a)t^{a}}{\pi\Gamma(c-a)}
{_{2}F_1}\!\left(\!\!\begin{array}{l}a,1-c+a\\1-b+a\end{array}\!\!\vline\,\,t\right)\sin(\pi{a})
\\
+\frac{\Gamma(a-b)\Gamma(b)t^{b}}{\pi\Gamma(c-b)}
{_{2}F_1}\!\left(\!\!\begin{array}{l}b,1-c+b\\1-a+b\end{array}\!\!\vline\,\,t\right)\sin(\pi{b}).
\end{multline*}
Substituting these expansion into the above formula for $\phi_{\pm}(x)$ and collecting terms, the expression in braces becomes:
\begin{multline*}
\frac{\Gamma(b-a)\Gamma(a+n_1-b-n_2)\Gamma(b+n_2)\Gamma(a)x^{-a-b-n_2}}{\pi^2\Gamma(c-a)\Gamma(c+m-b-n_2)}
{_{2}F_{1}}\!\left(\!\begin{array}{l}a,1-c+a\\1-b+a\end{array}\!\!\vline\,\,\frac{1}{x}\right)
\\
\times{_{2}F_1}\!\left(\!\!\begin{array}{l}b+n_2,1-c-m+b+n_2\\1-a-n_1+b+n_2\end{array}\!\!\vline\,\,\frac{1}{x}\right)
\sin(\pi(b+n_2-a))
\\
+\frac{\Gamma(a-b)\Gamma(b+n_2-a-n_1)\Gamma(a+n_1)\Gamma(b)x^{-b-a-n_1}}{\pi^2\Gamma(c-b)\Gamma(c+m-a-n_1)}
{_{2}F_{1}}\!\left(\!\begin{array}{l}b,1-c+b\\1-a+b\end{array}\!\!\vline\,\,\frac{1}{x}\right)
\\
\times{_{2}F_1}\!\left(\!\!\begin{array}{l}a+n_1,1-c-m+a+n_1\\1-b-n_2+a+n_1\end{array}\!\!\vline\,\,\frac{1}{x}\right)
\sin(\pi(a+n_1-b)).
\end{multline*}
Then, writing $t=1/x$, applying Euler's transformation and the reflection formula $\Gamma(z)\Gamma(1-z)=\pi/(\sin(\pi{z}))$, we get
\begin{multline*}
\frac{|{}_{2}F_{1}(a,b;c;1/t)|^{2}\Gamma(a)\Gamma(b)\Gamma(a+n_1)\Gamma(b+n_2)}{\Gamma(c)\Gamma(c+m)}\phi_{+}(1/t)=
\\
=\frac{\Gamma(a-b)\Gamma(b+n_2-a-n_1)\Gamma(a+n_1)\Gamma(b)t^{a+b+n_1}}{\Gamma(c-b)\Gamma(c+m-a-n_1)}
{_{2}F_{1}}\!\left(\!\begin{array}{l}b,1-c+b\\1-a+b\end{array}\!\!\vline\,\,t\right)
\\
{_{2}F_1}\!\left(\!\!\begin{array}{l}a+n_1,1-c-m+a+n_1\\1-b-n_2+a+n_1\end{array}\!\!\vline\,\,t\right)
\sin(\pi(a+n_1-b))
\\
+\frac{\Gamma(b-a)\Gamma(a+n_1-b-n_2)\Gamma(b+n_2)\Gamma(a)t^{a+b+n_2}}{\Gamma(c-a)\Gamma(c+m-b-n_2)}
{_{2}F_{1}}\!\left(\!\begin{array}{l}a,1-c+a\\1-b+a\end{array}\!\!\vline\,\,t\right)
\\
{_{2}F_1}\!\left(\!\!\begin{array}{l}b+n_2,1-c-m+b+n_2\\1-a-n_1+b+n_2\end{array}\!\!\vline\,\,t\right)
\sin(\pi(b+n_2-a))
\\
=\frac{\pi\Gamma(a-b)\Gamma(b-a+n_2-n_1)\Gamma(a+n_1)\Gamma(b)t^{a+b+n_1}(1-t)^{c-a-b+m-n_1-n_2}}
{\Gamma(c-b)\Gamma(c-a+m-n_1)\Gamma(a-b+n_1)\Gamma(1+b-a-n_1)}\times
\\
{_{2}F_{1}}\!\left(\!\begin{array}{l}b,1-c+b\\1-a+b\end{array}\!\!\vline\,\,t\right)
{_{2}F_1}\!\left(\!\!\begin{array}{l}1-b-n_2,c-b+m-n_2\\1+a-b+n_1-n_2\end{array}\!\!\vline\,\,t\right)
\\
+\frac{\pi\Gamma(b-a)\Gamma(a-b+n_1-n_2)\Gamma(b+n_2)\Gamma(a)t^{a+b+n_2}(1-t)^{c-a-b+m-n_1-n_2}}
{\Gamma(c-a)\Gamma(c+m-b-n_2)\Gamma(b-a+n_2)\Gamma(1+a-b-n_2)}\times
\\
{_{2}F_{1}}\!\left(\!\begin{array}{l}a,1-c+a\\1-b+a\end{array}\!\!\vline\,\,t\right)
{_{2}F_1}\!\left(\!\!\begin{array}{l}1-a-n_1,c-a+m-n_1\\1+b-a+n_2-n_1\end{array}\!\!\vline\,\,t\right).
\end{multline*}

Further, writing $a=\alpha$, $b=1-\gamma+\alpha$, $c=1-\beta+\alpha$ after tedious but elementary transformations using
the relations
$$
(1-z)_{-k}=\frac{(-1)^k}{(z)_{k}}~~\text{and}~~(z-r)_{k}=\frac{(z)_{k-r}}{(z)_{-r}}=(-1)^{k}\frac{(1-z)_{r}}{(1-z)_{r-k}}
$$
the above expression reduces to:
\begin{multline*}
\frac{|{}_{2}F_{1}(a,b;c;1/t)|^{2}\Gamma(a)\Gamma(b)\Gamma(a+n_1)\Gamma(b+n_2)}{\Gamma(c)\Gamma(c+m)}\phi_{+}(1/t)
\\
=-\frac{{\pi}t^{2\alpha-\gamma+1}(1-t)^{\gamma-\alpha-\beta+m-n_1-n_2}
\Gamma(\alpha+n_1)\Gamma(1+\alpha-\gamma+n_2)}{\Gamma(1-\beta+m-n_1)\Gamma(\gamma-\beta+m-n_2)}\times
\\
\biggl\{
\frac{(\gamma-\alpha)_{-n_2}(\gamma-\beta)_{m-n_2}t^{n_1}}{(\gamma-1)_{n_1-n_2+1}}
{}_{2}F_{1}\!\left(\!\begin{array}{l}1-\gamma+\alpha,1-\gamma+\beta\\2-\gamma\end{array}\!\!\vline\,\,t\right)
{}_{2}F_{1}\!\left(\!\begin{array}{l}\gamma-\alpha-n_2,\gamma-\beta+m-n_2\\\gamma+n_1-n_2\end{array}\!\!\vline\,\,t\right)
\\
+\frac{(1-\alpha)_{-n_1}(1-\beta)_{m-n_1}t^{n_2}}{(1-\gamma)_{n_2-n_1+1}}
{}_{2}F_{1}\!\left(\!\begin{array}{l}\alpha,\beta\\\gamma\end{array}\!\!\vline\,\,t\right)
{}_{2}F_{1}\!\left(\!\begin{array}{l}1-\alpha-n_1,1-\beta+m-n_1\\2-\gamma+n_2-n_1\end{array}\!\!\vline\,\,t\right)
\biggr\}
\\
=-\frac{{\pi}t^{2\alpha-\gamma+1+\underline{n}}(1-t)^{\gamma-\alpha-\beta-l}
\Gamma(\alpha+n_1)\Gamma(1+\alpha-\gamma+n_2)}{\Gamma(1-\beta+m-n_1)\Gamma(\gamma-\beta+m-n_2)}P_{r}(t),
\end{multline*}
where the ultimate equality is an application of Theorem~\ref{th:2F1identity} with the notation introduced in \eqref{eq:nmrelated}.

Now substituting back $\alpha=a$, $\beta=1-c+a$, $\gamma=1-b+a$ we get
$$
|{}_{2}F_{1}(a,b;c;1/t)|^{2}\phi_{+}(1/t)
=-\frac{{\pi}\Gamma(c)\Gamma(c+m)t^{a+b}(1-t)^{c-a-b}}{\Gamma(a)\Gamma(b)\Gamma(c-a+m-n_1)\Gamma(c-b+m-n_2)}
\frac{t^{\underline{n}}P_{r}(t)}{(1-t)^{l}}.
$$
It remains to plug back $x=1/t$ to arrive at (\ref{eq:2F1ratioboundary}).
\end{proof}
    
 Lemmas~\ref{lm:asymp-infnolog} and \ref{lm:asymp-inflog} and the subsequent remarks show that the asymptotic expansion of $R_{n_1,n_2,m}(z)$ at infinity is a combination of terms of the form $Az^{\mu}[\log(z)]^{k}$, where $A$ and $\mu$ are real numbers while $k$ is an integer. Condition \eqref{eq:asympinf1} in the  theorem below requires that each exponent $\mu$ in such terms satisfying $\mu\ge{N}$, $N\in\N_0$, is, in fact,  integer and contains no logarithm (i.e. $k=0$).  The following theorem is the main result of this section.

\begin{theorem}\label{th:2F1ratio-repr}
Suppose conditions of Theorem~\ref{th:2F1zeros} are satisfied, so that ${}_2F_{1}(a,b;c;z)\ne0$ for
$z\in\C\setminus[1,\infty)$. Assume further that $\nu>-1$ in \eqref{eq:asymp1} and for some $N\in\N_0$ the asymptotic expansion of $R_{n_1,n_2,m}(z)$ at infinity has the form
\begin{equation}\label{eq:asympinf1}
R_{n_1,n_2,m}(z)-Q_{a,b,c}(z)=o(z^N)~\text{as}~z\to\infty,
\end{equation}
where  $Q_{a,b,c}(z)$ is a \emph{(}possibly vanishing\emph{)} polynomial with real coefficients and the lowest degree $N$.
Then the following representation holds true
\begin{multline}\label{eq:mainrepresentionN}
R_{n_1,n_2,m}(z)=Q_{a,b,c}(z)+\sum_{k=0}^{N-1}\frac{R_{n_1,n_2,m}^{(k)}(0)}{k!}z^k
\\
+z^NB_{n_1,n_2,m}(a,b,c)\int_1^{\infty}\frac{x^{l-\underline{n}-c-N}(x-1)^{c-a-b-l}P_{r}(1/x)}{|{}_{2}F_{1}(a,b;c;x)|^{2}(x-z)}dx,
\end{multline}
where $r,l$ and $B_{n_1,n_2,m}(a,b,c)$ retain their meanings from Theorem~\ref{th:2F1ratioboundary} and $P_r$ is defined in \eqref{eq:Prexplicit}. In particular, if \eqref{eq:asympinf1} holds with $N=0$ we obtain
\begin{equation}\label{eq:mainrepresention}
R_{n_1,n_2,m}(z)=Q_{a,b,c}(z)+B_{n_1,n_2,m}(a,b,c)\int_1^{\infty}\frac{x^{l-\underline{n}-c}(x-1)^{c-a-b-l}P_{r}(1/x)}{|{}_{2}F_{1}(a,b;c;x)|^{2}(x-z)}dx.
\end{equation}
If $n_1,n_2\ge0$ and \eqref{eq:noninteger} holds, then \eqref{eq:asympinf1} is true with $N=0$ and $Q_{a,b,c}(z)=Q_{a,b,c}$ being a constant.
\end{theorem}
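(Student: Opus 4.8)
The plan is to feed the explicit boundary values of Theorem~\ref{th:2F1ratioboundary} into the Schwarz-type formula of Lemma~\ref{lemma:Schwarz_asympt}. Put $f(z)\coloneqq R_{n_1,n_2,m}(z)-Q_{a,b,c}(z)$. Because $Q_{a,b,c}$ is entire with real coefficients and, under the conditions of Theorem~\ref{th:2F1zeros}, the function ${}_2F_1(a,b;c;z)$ has no zeros in $\C\setminus[1,+\infty)$ nor on the banks of the cut, $f$ is real analytic in $\C\setminus[1,+\infty)$, and for $x>1$ one has $\tfrac1\pi\Im f(x+i0)=\tfrac1\pi\Im R_{n_1,n_2,m}(x+i0)=u(x)$, where by Theorem~\ref{th:2F1ratioboundary} $u(x)=B_{n_1,n_2,m}(a,b,c)\,x^{l-\underline{n}-c}(x-1)^{c-a-b-l}P_r(1/x)/|{}_2F_1(a,b;c;x)|^2$. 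Once the hypotheses of Lemma~\ref{lemma:Schwarz_asympt} are verified with $n=N$, formula~\eqref{eq:Schwarz_plus} yields $f(z)=\sum_{k=0}^{N-1}\tfrac{f^{(k)}(0)}{k!}z^k+z^N\int_1^{\infty}\tfrac{u(x)\,dx}{(x-z)x^N}$. Adding $Q_{a,b,c}(z)$ back and using that $Q_{a,b,c}$ has lowest degree $N$, so that $Q_{a,b,c}^{(k)}(0)=0$ and hence $f^{(k)}(0)=R_{n_1,n_2,m}^{(k)}(0)$ for $0\le k\le N-1$, and finally inserting the expression for $u(x)$, gives~\eqref{eq:mainrepresentionN}; the case $N=0$ is~\eqref{eq:mainrepresention}.

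The work lies in checking the hypotheses of Lemma~\ref{lemma:Schwarz_asympt}. Continuity of $u$ on $(1,+\infty)$ is immediate from the closed form above: its denominator $|{}_2F_1(a,b;c;x)|^2$ is positive there by Theorem~\ref{th:2F1zeros}, and the remaining factors are real analytic. The requirement $\lim_{z\to1}|f(z)(1-z)|=0$ follows from $\nu>-1$ in~\eqref{eq:asymp1}, concretely from the bound~\eqref{eq:asymp1-2} with $\theta\in(-1,\nu)$, together with $Q_{a,b,c}(z)(1-z)\to0$; and $\lim_{|z|\to\infty}|f(z)z^{-N}|=0$ is precisely the standing assumption~\eqref{eq:asympinf1}. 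For the absolute integrability of $u(x)x^{-N-1}$ over $(1,+\infty)$, the piece near $x=1$ is controlled by $|u(x)|\le C(x-1)^{\theta}$ with $\theta>-1$ (the estimate~\eqref{eq:asymp1-2} persists on the banks by continuity of $R_{n_1,n_2,m}$ up to them near $z=1$). The piece near $x=+\infty$ is the delicate point, and the main technical obstacle: the hypothesis~\eqref{eq:asympinf1} read literally gives only $u(x)=o(x^N)$, which is too weak, so one must invoke Lemmas~\ref{lm:asymp-infnolog}--\ref{lm:asymp-inflog} and the remarks following them, according to which the exponents in the asymptotic expansion of $R_{n_1,n_2,m}$ at infinity form a discrete set; hence~\eqref{eq:asympinf1} in fact upgrades to $R_{n_1,n_2,m}(z)-Q_{a,b,c}(z)=O\bigl(z^{N-\rho}(\log z)^{k}\bigr)$ for some $\rho>0$, $k\in\N_0$, so that $u(x)x^{-N-1}=O\bigl(x^{-1-\rho}(\log x)^{k}\bigr)$ is integrable. (Alternatively, estimate $u(x)$ straight from Theorem~\ref{th:2F1ratioboundary}, using that $|{}_2F_1(a,b;c;x\pm i0)|^{2}$ behaves like $x^{-2\min(a,b)}$ up to a power of $\log x$ as $x\to+\infty$, read off from the $z\to\infty$ connection formula for ${}_2F_1$.)

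It remains to prove the final assertion. Suppose $n_1,n_2\ge0$ and that~\eqref{eq:noninteger} holds, so $A_1A_2A_3A_4\ne0$. If $a-b\notin\Z$, Lemma~\ref{lm:asymp-infnolog} gives $R_{n_1,n_2,m}(-z)\sim\tfrac{A_{1,2}}{A_{3,4}}z^{\alpha-\gamma}(1+o(1))$, where $\alpha-\gamma=\min(a,b)-\min(a+n_1,b+n_2)\le0$ since $n_1,n_2\ge0$. If $a-b\in\Z$, the same inequality for the leading exponent holds by Lemma~\ref{lm:asymp-inflog} and the remarks covering the sub-cases $a=b$ and $a+n_1=b+n_2$; in those sub-cases any logarithmic factor either cancels between the numerator and the denominator of $R_{n_1,n_2,m}$ or is multiplied by $z^{-\underline{n}}$ or $z^{-\overline{n}}$ with a positive exponent, and so does not destroy boundedness. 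In every case $R_{n_1,n_2,m}(z)$ tends to a finite real limit at infinity, whence~\eqref{eq:asympinf1} holds with $N=0$ and $Q_{a,b,c}(z)$ equal to that limit, and~\eqref{eq:mainrepresention} applies.
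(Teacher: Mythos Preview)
Your approach mirrors the paper's: set $f=R_{n_1,n_2,m}-Q_{a,b,c}$, verify the hypotheses of Lemma~\ref{lemma:Schwarz_asympt} with $n=N$, and insert the boundary formula from Theorem~\ref{th:2F1ratioboundary}. The structure and most of the verification are correct, and your treatment of the final assertion (the case $n_1,n_2\ge0$) is more explicit than the paper's one-line reference to Lemmas~\ref{lm:asymp-infnolog}--\ref{lm:asymp-inflog}.

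There is, however, a genuine gap in your integrability argument near $x=+\infty$. You assert that, because the exponents in the asymptotic expansion form a discrete set, the hypothesis $f(z)=o(z^N)$ upgrades to $f(z)=O\bigl(z^{N-\rho}(\log z)^k\bigr)$ with some $\rho>0$ and $k\in\N_0$. This fails whenever the expansion contains a term of the form $z^{N}(\log z)^{-j}$ with $j\ge1$: such a term is $o(z^N)$ and cannot be absorbed into the polynomial $Q_{a,b,c}$, yet its exponent is exactly $N$. This situation does arise, for example when $a=b$; then~\eqref{eq:aa} shows that $[{}_2F_1(a,a;c;-z)]^{-1}$ carries a factor $1/\log z$, and if $a+n_1\ne b+n_2$ the leading behaviour of $f$ is a constant times $z^{-\min(n_1,n_2)}/\log z$. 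The paper meets this case head-on: it splits the possible leading asymptotics of $z^{-N}f(z)$ into the power case $Cz^{-\tau}(1+o(1))$ and the logarithmic case $C(\log z)^{-1}(1+O((\log z)^{-1}))$, and in the latter uses $\bigl|\Im(1/\log(x+i0))\bigr|\le\pi/(\log^2|x|+\pi^2)$ to obtain $u(x)x^{-N-1}=O\bigl(x^{-1}(\log x)^{-2}\bigr)$, which is integrable. Your parenthetical alternative (estimating $u$ directly from the explicit formula in Theorem~\ref{th:2F1ratioboundary}) could also close this gap, but you have not carried it out.
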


\begin{remark}
    Note that the choice of $N$ and $Q_{a,b,c}(z)$ in \eqref{eq:asympinf1} is not unique. In
    particular, it follows from Lemmas~\ref{lm:asymp-infnolog} and \ref{lm:asymp-inflog} that we
    can always take $Q_{a,b,c}(z)=0$ by choosing large enough~$N$.
\end{remark}

\begin{remark}
    The first two terms of the Taylor expansion of $R_{n_1,n_2,m}(z)$ are given by
    $$
    R_{n_1,n_2,m}(z)=1+\frac{(an_2+bn_1+n_1n_2)c-abm}{c(c+m)}z+O(z^2)
    $$
\end{remark}

\begin{remark}
Substitution $x=1/t$ brings formula \eqref{eq:mainrepresention} to the form (we write $B=B_{n_1,n_2,m}(a,b,c)$ for brevity):
\begin{equation}\label{eq:mainrepresention1}
R_{n_1,n_2,m}(z)=Q_{a,b,c}(z)+\sum_{k=0}^{N-1}\frac{R_{n_1,n_2,m}^{(k)}(0)}{k!}z^k+z^NB\int_0^{1}\frac{t^{a+b+\underline{n}+N-1}(1-t)^{c-a-b-l}P_{r}(t)}{|{}_{2}F_{1}(a,b;c;1/t)|^{2}(1-zt)}dt.
\end{equation}
This form turns out to be more convenient in most applications.  Moreover, taking $z=0$ or $z=1$ we get the following curious integral evaluations:
\begin{equation}\label{eq:integralz0}
\int_0^{1}\frac{t^{a+b+\underline{n}+N-1}(1-t)^{c-a-b-l}P_{r}(t)}{|{}_{2}F_{1}(a,b;c;1/t)|^{2}}dt=\frac{R_{n_1,n_2,m}^{(N)}(0)-Q_NN!}{N!B},
\end{equation}
where $Q_N$ denotes the coefficient at $z^N$ in $Q_{a,b,c}(z)$, and
\begin{equation}\label{eq:integralz1}
\int_0^{1}\frac{t^{a+b+\underline{n}+N-1}(1-t)^{c-a-b-l-1}P_{r}(t)}{|{}_{2}F_{1}(a,b;c;1/t)|^{2}}dt
=\frac{R_{n_1,n_2,m}(1)-Q_{a,b,c}(1)}{B}
-\frac{1}{B}\sum_{k=0}^{N-1}\frac{R_{n_1,n_2,m}^{(k)}(0)}{k!},
\end{equation}
where, in view of the Gauss summation formula,
$$
R_{n_1,n_2,m}(1)=\frac{(c)_m(c-a-b)_{m-n_1-n_2}}{(c-a)_{m-n_1}(c-b)_{m-n_2}}.
$$
Multiplying the integrand in \eqref{eq:integralz1} by $(1-t)$, splitting the result in two summands, and using both formulae \eqref{eq:integralz0} and \eqref{eq:integralz1}, we also obtain
\begin{equation}\label{eq:integralz01}
\int_0^{1}\frac{t^{a+b+\underline{n}+N}(1-t)^{c-a-b-l-1}P_{r}(t)}{|{}_{2}F_{1}(a,b;c;1/t)|^{2}}dt
=\frac{R_{n_1,n_2,m}(1)-Q_{a,b,c}(1)+Q_N}{B}
-\frac{1}{B}\sum_{k=0}^{N}\frac{R_{n_1,n_2,m}^{(k)}(0)}{k!}.
\end{equation}
\end{remark}

\begin{remark}
The absolute value of ${}_2F_{1}$ on the branch cut in the integrands in  \eqref{eq:mainrepresentionN} and \eqref{eq:mainrepresention} can be computed as follows ($x>1$):
\begin{multline*}
|{}_2F_{1}(a,b;c;x)|^2=\frac{\pi^2\Gamma(c)^2}{\Gamma(a)^2\Gamma(b)^2}
\biggl\{
\frac{(x-1)^{2(c-a-b)}}{[\Gamma(c-a-b)]^2}
\biggl[{}_{2}F_{1}\!\left(\!\begin{array}{l}c-a,  c-b\\c-a-b\end{array}\!\!\vline\,1-x\right)\biggr]^2
\\
+\biggl[\frac{\Gamma(b-a)\Gamma(a)x^{-a}}{\Gamma(c-a)\Gamma(1/2-a)\Gamma(1/2+a)}
{}_{2}F_{1}\!\left(\!\begin{array}{l}a, 1-c+a\\1-b+a\end{array}\!\!\vline\,1/x\right)
\\
+\frac{\Gamma(a-b)\Gamma(b)x^{-b}}{\Gamma(c-b)\Gamma(1/2-b)\Gamma(1/2+b)}
{}_{2}F_{1}\!\left(\!\begin{array}{l}b,1-c+b\\1-a+b\end{array}\!\!\vline\,1/x\right)\biggr]^2
\biggr\}.
\end{multline*}
\end{remark}

\begin{proof}[Proof of Theorem~\ref{th:2F1ratio-repr}]
Define $f(z)=R_{n_1,n_2,m}(z)-Q_{a,b,c}(z)$. As lowest degree term in $Q_{a,b,c}(z)$ is $z^N$, in view of Theorem~\ref{th:2F1zeros} we see that the function
$$
\hat{f}_{N}(z)=R_{n_1,n_2,m}(z)-Q_{a,b,c}(z)-\sum_{k=0}^{N-1}\frac{f^{(k)}(0)}{k!}z^k=R_{n_1,n_2,m}(z)-Q_{a,b,c}(z)-\sum_{k=0}^{N-1}\frac{R_{n_1,n_2,m}^{(k)}(0)}{k!}z^k
$$ 
is holomorphic in $z\in\C\setminus[1,\infty)$ and has no singularities on the banks of the branch cut other than $z=1$ and $z=\infty$. We aim to apply Lemma~\ref{lemma:Schwarz_asympt} to the function $\hat{f}_{N}(z)$.
Condition $\nu>-1$ implies that the first term in \eqref{eq:Schwarz_asympt} vanishes for $f=\hat{f}_{N}(z)$, while \eqref{eq:asympinf1} leads to the second equality in \eqref{eq:Schwarz_asympt} with $n=N$.  Denote $u(x)=\Im(\hat{f}_N(x+i0))$.  As $Q_{a,b,c}(z)$ has real coefficients we conclude that $u(x)=\Im[R_{n_1,n_2,m}(x+i0)]$.  Then  Lemmas~\ref{lm:asymp-infnolog} and \ref{lm:asymp-inflog} imply that the asymptotics must have one of the forms
$$
z^{-N}\hat{f}_{N}(z)=\frac{C}{\log(z)}\left(1+O\left([\log(z)]^{-1}\right)\right)~\text{as}~z\to\infty
$$
or for some $\tau>0$
$$
z^{-N}\hat{f}_{N}(z)=\frac{C}{z^{\tau}}\left(1+o(1)\right)~\text{as}~z\to\infty.
$$
Then, in view of 
$$
\left|\Im\frac{1}{\log(x+i0)}\right|\le\frac{\pi}{\log^2|x|+\pi^2},
$$
we obtain
$$
\frac{u(x)}{x^{N+1}}=O\left(\frac{1}{x\log^2(x)}\right)~~~\text{or}~~~
\frac{u(x)}{x^{N+1}}=O\left(\frac{1}{x^{1+\tau}}\right)~\text{as}~x\to\infty.
$$
Together with the condition $\nu>-1$ this leads to absolute integrability of
$x^{-N-1}u(x)$ on $(0,+\infty)$.  Hence, we are in the position to apply Lemma~\ref{lemma:Schwarz_asympt} leading to \eqref{eq:mainrepresentionN} in view of Theorem~\ref{th:2F1ratioboundary}.  The ultimate claim follows directly from Lemmas~\ref{lm:asymp-infnolog} and \ref{lm:asymp-inflog}.
\end{proof}

\section{Generalized Nevanlinna classes}\label{sec:gener-nevanl-class}

Integral representation~\eqref{eq:mainrepresention} shows that the ratio $R_{n_1,n_2,m}(z)$
belongs, possibly up to an additive polynomial term, to the Stieltjes class
$\mathcal{S}=\NN_{0}^{0}$ under the condition of positivity of the measure. The natural question
is then to determine classes that extend~$\mathcal{S}$ and contain~$R_{n_1,n_2,m}(z)$ if some of
the conditions required for~\eqref{eq:mainrepresention} are violated. To answer this question we
adopt the machinery of the generalized Nevanlinna classes. Functions of these classes may be
expressed in a very convenient multiplicative form due to Dijksma, Langer, Luger and Shondin,
see Theorem~\ref{th:DLLS} below. Another option is to use the representation by Kre\u{\i}n and
Langer~\cite[Satz~3.1]{KL} based on the standard `additive' regularization approach.

For the general ratio $\pm R_{n_1,n_2,m}(z)$ we give a simple and explicit criterion for being a
generalized Nevanlinna function: our Theorem~\ref{th:RnmNkappa} provides conditions under
which~$\pm R_{n_1,n_2,m}(z)$ belongs to the class~$\SU$ or (see Remark~\ref{rem:RnmNkappa})
to~$\NU$. Theorem~\ref{th:B_P_Rnm_Nkappa} shows how to modify $R_{n_1,n_2,m}(z)$ in order to
obtain expressions that lie in~$\SU$ for the case of arbitrary integer shifts~$n_1,n_2,m$ and
parameters~$a,b,c\in\R$ only obeying the natural constraint~$-c\in\N_0$.
Theorem~\ref{th:R011Nkappa} not only proves
that~$\varepsilon R_{0,1,1}\in\NN_{\kappa}^{\lambda}$ for some indices~$\kappa,\lambda$
and~$\varepsilon=\pm1$, which is a particular case for Theorems~\ref{th:RnmNkappa}
and~\ref{th:B_P_Rnm_Nkappa} -- it also gives a method for calculating these indices in terms of the parameters~$a,b,c$. For some other shifts there is a similar way
to determine~$\kappa$, $\lambda$ and~$\varepsilon$, see Examples~2--4 and~8--9 in
Section~\ref{sec:examples}.

Recall that each rational function~$f$ has a unique (up to multiplication by a common constant) representation as a ratio of two polynomials with no common zeros. The maximal degree
of these polynomials is called the degree of~$f$ and denoted by~$\deg f$. Together with a
possible zero or pole at infinity, $f$ has exactly~$\deg f$ zeros and~$\deg f$ poles
(counting with their multiplicities and, resp., orders).

\begin{theorem}\label{th:RnmNkappa}
    Suppose that~$a,b,c\in\R$ and~$-c\notin\N_0$. The function $R_{n_1,n_2,m}\in\SU$ if and only
    if
    \begin{equation}\label{eq:boundIneq}
        B_{n_1,n_2,m}(a,b,c)P_r(t)\ge0
        , \quad \text{for all} \quad
        t\in(0,1),
    \end{equation}
    where $P_r(t)$ is the polynomial from \eqref{eq:Prexplicit} and $B_{n_1,n_2,m}$ is given in
    \eqref{eq:B-defined}. In turn, if the reverse inequality holds above for all $t\in(0,1)$,
    then~\eqref{eq:boundIneq} is equivalent to $-R_{n_1,n_2,m}\in\SU$.

    In particular, $R_{n_1,n_2,m}(z)$ is a rational function if and only
    if~$B_{n_1,n_2,m}P_r(t)=0$ for all~$t$. If so,
    $R_{n_1,n_2,m}\in{\NN_\kappa^\lambda}$ and
    $-R_{n_1,n_2,m}\in \NN_{K-\kappa}^{\Lambda-\lambda}$ for some~$\kappa\le K$
    and~$\lambda\le\Lambda$, where~$K$ and~$\Lambda$ are the degrees of~$R_{n_1,n_2,m}$
    and~$z R_{n_1,n_2,m}$, respectively.
\end{theorem}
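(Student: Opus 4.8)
The plan is to reduce the question to the sign of the absolutely continuous part of the spectral charge of $R:=R_{n_1,n_2,m}$ on the cut $[1,\infty)$, and to transport this to the number of negative squares by means of the multiplicative representation of~\cite{DLLS} (Theorem~\ref{th:DLLS}). Since $-c\notin\N_0$, Theorem~\ref{th:2F1zeros} guarantees that ${}_2F_1(a,b;c;z)$ has only finitely many zeros in $\C\setminus[1,\infty)$ and on the banks of the cut, so $R$ is a real function, meromorphic in $\C\setminus[1,\infty)$ with finitely many poles there, holomorphic across $(-\infty,1)$ off a finite set, and of at most power--logarithmic growth at $z=1$ and $z=\infty$ by Lemmas~\ref{lm:ratio-asymp1}, \ref{lm:asymp-infnolog} and~\ref{lm:asymp-inflog}. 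By Theorem~\ref{th:2F1ratioboundary}, on $(1,\infty)$ one has $\Im R(x\pm i0)=\pm\pi B\,P_r(1/x)\,w(x)$ with $B:=B_{n_1,n_2,m}(a,b,c)$ and $w(x):=x^{\,l-\underline n-c}(x-1)^{\,c-a-b-l}/|{}_2F_1(a,b;c;x)|^{2}>0$ away from the finitely many cut-zeros of ${}_2F_1$; hence the absolutely continuous part of the charge of $R$ is proportional to $B\,P_r(1/x)\,w(x)\,dx$, and its sign on $(1,\infty)$ is $\sign(B)\sign(P_r(1/x))$.

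Next I would invoke Theorem~\ref{th:B_P_Rnm_Nkappa} with $\omega=0$: the function $\Phi(z):=B\,P_r(1/z)\,R(z)$ belongs to $\SU$ for all real $a,b,c$ with $-c\notin\N_0$, and $\Im\Phi(x+i0)=\pi B^{2}P_r(1/x)^{2}w(x)\ge0$, so $\Phi$ has a nonnegative absolutely continuous density. Writing $P_r(1/z)=z^{-r}\widetilde P_r(z)$ with $\widetilde P_r(z):=z^{r}P_r(1/z)$ a real polynomial, we obtain $R(z)=\rho(z)\Phi(z)$ for the real rational function $\rho(z):=z^{r}/\bigl(B\widetilde P_r(z)\bigr)$, whose restriction to the cut is $\rho(x)=1/\bigl(B\,P_r(1/x)\bigr)$; the $r$-fold zero of $z^{r}$ at the origin and the zeros of $\widetilde P_r$ away from it cancel, respectively, the pole and the zeros of $\Phi$, so $R=\rho\Phi$ is holomorphic exactly where it should be. If now $B\,P_r(t)\ge0$ on $(0,1)$, equivalently $B\,P_r(1/x)\ge0$ on $(1,\infty)$, then $\rho\ge0$ on $(1,\infty)$ and its poles there are of even order; consequently $R=\rho\Phi$ again has a nonnegative absolutely continuous density, and the general lemma on rational multiples of $\SU$-functions proved in Subsection~\ref{sec:line-fract-transf} (via the DLLS representation, and already needed for Theorem~\ref{th:B_P_Rnm_Nkappa}) gives $R\in\SU$.

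For the converse, assume $B\,P_r$ is strictly negative somewhere on $(0,1)$; then $\Im R(x+i0)<0$ on a genuine subinterval $I\subset(1,\infty)$ (shrunk to avoid the finitely many cut-zeros of ${}_2F_1$). Relating the Pick form of $R$ to its charge via a Stieltjes--Perron/Cauchy-integral argument --- after subtracting finitely many Taylor coefficients of $R$ at $0$ and multiplying by a suitable real rational function to render the result mild at $z=1$ and decaying at $z=\infty$, as in the proof of Theorem~\ref{th:2F1ratio-repr} --- the infinite-dimensional negative part of the charge over $I$ produces, for every $\kappa\in\N$, a $\kappa$-dimensional subspace on which the Pick form is negative definite, whereas the complex poles, the branch point at $z=1$ and the regularizing rational factor contribute only finitely many negative squares altogether; hence $R\notin\NU\supseteq\SU$. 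Running the same argument with $-R$ (that is, $B$ replaced by $-B$) shows that $-R\in\SU$ iff $B\,P_r(t)\le0$ on $(0,1)$, and that if $B\,P_r$ takes both signs on $(0,1)$ then neither $R$ nor $-R$ lies in $\SU$.

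Finally, by Theorem~\ref{th:2F1ratioboundary} and the asymptotic lemmas the function $R$ is rational precisely when its absolutely continuous charge $B\,P_r(1/x)w(x)\,dx$ vanishes, and since $w>0$ this happens exactly when $P_r\equiv0$, i.e.\ when $B\,P_r(t)=0$ for all $t$. When $R$ is rational, $R$ and $zR$ are real rational functions of degrees $K$ and $\Lambda$; by the sharp bound $\operatorname{rank}H_f\le\deg f$ for real rational $f$ one gets $R\in\NN_\kappa$ with $\kappa\le K$ and $zR\in\NN_\lambda$ with $\lambda\le\Lambda$, while $H_{-f}=-H_f$ together with sharpness of the rank bound yields $-R\in\NN_{K-\kappa}$ and $-zR\in\NN_{\Lambda-\lambda}$, that is $R\in\NN_\kappa^\lambda$ and $-R\in\NN_{K-\kappa}^{\Lambda-\lambda}$. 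The principal obstacle is the ``only if'' part: showing that the finite set of complex poles of $R$, a possible branch-type singularity at $z=1$, and possibly large power--logarithmic growth at $\infty$ contribute, after the standard regularization, only a bounded number of negative squares, so that they cannot absorb the infinitely many forced by an interval on which $B\,P_r<0$; assembling the DLLS-based $\SU$-machinery (the rational-multiples lemma used in the ``if'' part, also underlying Theorem~\ref{th:B_P_Rnm_Nkappa}) is the other main ingredient.
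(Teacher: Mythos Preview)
Your proposal differs from the paper's route and has two issues worth flagging.

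\textbf{Circularity in the ``if'' direction.} You invoke Theorem~\ref{th:B_P_Rnm_Nkappa} to deduce that $\Phi=B\,P_r(1/z)\,R\in\SU$, and then pull back to $R$ via the rational factor $\rho=1/(BP_r(1/z))$. In the paper's logical order this is circular: the proof of Theorem~\ref{th:B_P_Rnm_Nkappa} reuses the contiguous-relation identities~\eqref{eq:R_shifts_via_G0}--\eqref{eq:R_shifts_via_G2} set up in the proof of Theorem~\ref{th:RnmNkappa}, and for the rational case it even cites Theorem~\ref{th:RnmNkappa} directly. One \emph{can} reorder and prove Theorem~\ref{th:B_P_Rnm_Nkappa} first (its non-rational case only needs the setup, not the conclusion, of Theorem~\ref{th:RnmNkappa}), but you should say so explicitly and supply that setup. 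As written, you are assuming what amounts to the hard part of the argument.

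\textbf{The ``only if'' direction is overbuilt and vague.} You propose to produce, for each $\kappa$, a $\kappa$-dimensional negative subspace by regularizing $R$ and appealing to a Stieltjes--Perron argument, while controlling the finite contributions from complex poles, the branch point at $1$, and the rational regularizer. This can be made to work (the mechanism is essentially Lemma~\ref{lm:NkappaM2a}), but as stated it is a sketch, not a proof. The paper's route is much cleaner: by Theorem~\ref{th:DLLS} any $f\in\NU$ satisfies $\liminf_{y\to+0}\Im f(x+iy)\ge 0$ for all but finitely many real $x$ (this is~\eqref{eq:f_Nkappa_pos}); combined with the boundary formula~\eqref{eq:2F1ratioboundary} this immediately forces $B\,P_r(1/x)\ge 0$ on $(1,\infty)$ except at finitely many points, hence $B\,P_r(t)\ge 0$ on $(0,1)$.

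\textbf{Comparison with the paper.} The paper does \emph{not} factor through Theorem~\ref{th:B_P_Rnm_Nkappa}. Instead it picks $\delta$ large so that $\widetilde R_{0,1,1}\in\mathcal S$ has strictly positive boundary imaginary part, writes $R$ as a M\"obius-in-$\widetilde R_{0,1,1}$ expression with rational coefficients via contiguous relations, and reads off from~\eqref{eq:R_shifts_via_G1} (or its analogue~\eqref{eq:R_shifts_via_G2}) that the sign of the rational coefficient $\widetilde h$ (or $h$) on $(1,\infty)$ coincides with that of $B\,P_r(1/x)$. Both directions of the equivalence then follow in one stroke from Corollary~\ref{cr:N_kappa_lambda_M3}. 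Your approach, once the circularity is broken, is a legitimate alternative: it trades the explicit M\"obius reduction for the black-box use of $\Phi\in\SU$, but you still need Theorem~\ref{th:NkappaM2}/Corollary~\ref{cr:N_kappa_lambda_M3} to pass from $\Phi$ to $R$, so there is no real economy. A minor slip: ``$P_r\equiv 0$'' should read ``$B\,P_r\equiv 0$'' (the vanishing can come from $B=0$). Also, your claim that $\Im R(x+i0)\equiv 0$ on $(1,\infty)$ forces $R$ rational needs a short argument (single-valuedness across the cut plus the asymptotic Lemmas~\ref{lm:ratio-asymp1}--\ref{lm:asymp-inflog} to rule out branch-type behaviour at $1$ and $\infty$).
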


\begin{remark}\label{rem:RnmNkappa}
    It is seen from the formulae~\eqref{eq:2F1ratioboundary_with_B} and~\eqref{eq:f_Nkappa_pos}
    that the first part of Theorem~\ref{th:RnmNkappa} has the following equivalent form: the
    condition
    $$
    \varepsilon\lim_{y\to+0}\Im R_{n_1,n_2,m}(x+iy)\ge0
    $$
    for all but finitely many points $x\in\R$ and~$\varepsilon\in\{-1,1\}$ is necessary
    for~$\varepsilon R_{n_1,n_2,m}\in\NU$ and sufficient for
    $\varepsilon R_{n_1,n_2,m}\in\SU\subset\NU$. The next theorem shows that this condition may
    be removed through multiplication by an appropriate rational factor fixing the sign
    of~$\Im R_{n_1,n_2,m}(x+i0)$.
\end{remark}

\begin{theorem}\label{th:B_P_Rnm_Nkappa}
    Suppose that~$a,b,c\in\R$, ~$-c\notin\N_0$, and~$n_1,n_2,m\in\Z$. If $P_r(z)$ is the
    corresponding polynomial from \eqref{eq:Prexplicit}, ~$B_{n_1,n_2,m}$ is defined
    by~\eqref{eq:B-defined} and~$B_{n_1,n_2,m}P_r(z)\not\equiv 0$, then both functions
    \[
        \frac{R_{n_1,n_2,m}(z+\omega)}{B_{n_1,n_2,m}P_r\big(1/(z+\omega)\big)}
        \quad\text{and}\quad
        B_{n_1,n_2,m} P_r\Big(\frac 1{z+\omega}\Big) R_{n_1,n_2,m}(z+\omega)
    \]
    belong to~$\SU$ for any~$\omega\le1$.
\end{theorem}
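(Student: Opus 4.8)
\emph{Proof proposal.} Write $\Phi(z)=B_{n_1,n_2,m}P_r\big(1/(z+\omega)\big)\,R_{n_1,n_2,m}(z+\omega)$ and $\widetilde\Phi(z)=R_{n_1,n_2,m}(z+\omega)\big/\big(B_{n_1,n_2,m}P_r(1/(z+\omega))\big)$. Both are \emph{real} functions (as $a,b,c,\omega\in\R$, the coefficients of $P_r$ are real by~\eqref{eq:Prexplicit}, and $B_{n_1,n_2,m}(a,b,c)\in\R$), and by Theorem~\ref{th:2F1zeros} each of them is holomorphic in the cut plane $\C\setminus[1-\omega,+\infty)$ apart from \emph{finitely many} poles there and on the banks of the cut: the zeros of the denominator ${}_2F_1(a,b;c;z+\omega)$ are finite in number, the only further pole of $\Phi$ is that of $P_r(1/(z+\omega))$ at $z=-\omega<1-\omega$, and the only further poles of $\widetilde\Phi$ are the finitely many points $z$ with $1/(z+\omega)$ a zero of $P_r$. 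Since $1/(x+\omega)$ is real for $x>1-\omega\ge 0$, Theorem~\ref{th:2F1ratioboundary} yields, for all but finitely many $x>1-\omega$,
\begin{multline*}
\Im\Phi(x+i0)=B_{n_1,n_2,m}P_r\!\Big(\tfrac1{x+\omega}\Big)\Im R_{n_1,n_2,m}(x+\omega+i0)
\\
=\pi\Big[B_{n_1,n_2,m}P_r\!\Big(\tfrac1{x+\omega}\Big)\Big]^{2}\frac{(x+\omega)^{l-\underline{n}-c}(x+\omega-1)^{c-a-b-l}}{|{}_2F_1(a,b;c;x+\omega)|^{2}}\ \ge\ 0,
\end{multline*}
and likewise $\Im\widetilde\Phi(x+i0)=\pi\,(x+\omega)^{l-\underline{n}-c}(x+\omega-1)^{c-a-b-l}/|{}_2F_1(a,b;c;x+\omega)|^{2}\ge 0$, because for $x+\omega>1$ the factors $(x+\omega)^{l-\underline{n}-c}$ and $(x+\omega-1)^{c-a-b-l}$ are positive while $|{}_2F_1(a,b;c;x+\omega)|^2\ge 0$.

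The plan is to read off $\Phi,\widetilde\Phi\in\SU$ from these three facts --- realness with finitely many poles off $[1-\omega,+\infty)$, non-negativity of the boundary imaginary part on the cut, and the growth estimates of Subsection~\ref{sec:asymptotic-behaviour} --- by means of the following general criterion, established below via the multiplicative representation of Theorem~\ref{th:DLLS}: \emph{if $g$ is real, holomorphic in $\C\setminus[e_0,+\infty)$ for some $e_0\ge 0$ apart from finitely many poles there and on the banks, has at most polynomial growth as $z\to\infty$ and grows at most like a fixed real power of $z-e_0$ times a power of $\log(z-e_0)$ as $z\to e_0$, and satisfies $\Im g(x+i0)\ge 0$ for all but finitely many $x>e_0$, then $g\in\SU$.} The idea behind the criterion is to split off a rational factor $\varrho$ absorbing the poles of $g$ (the non-real ones occurring in conjugate pairs, through $\varrho^\#\varrho$) together with the integer parts --- taken modulo $2$ --- of the exponents governing the singularities of $g$ at $e_0$ and at infinity, so that $g=\varrho^\#g_0\varrho$ with $g_0\in\NN_0$; the residual sign changes of $\Im(\varrho^\#g_0\varrho)$ being finite in number, one gets $g\in\NN_\kappa$ for a finite $\kappa$, and since $e_0\ge 0$ the boundary imaginary part of $zg_0(z)$ on the cut is again non-negative, so that $g_0\in\NN_0^0=\mathcal S$ and $g=\varrho^\#g_0\varrho\in\SU$.

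To invoke the criterion for $\Phi$ with $e_0=1-\omega$ it remains to observe that, by Lemmas~\ref{lm:asymp-infnolog} and~\ref{lm:asymp-inflog} and the remarks following them, $\Phi$ grows at infinity at most like $|z|^{\mu}$ times a power of $\log|z|$ for a fixed real $\mu$, and that near $z=1-\omega$ the asymptotics~\eqref{eq:asymp1} of Lemma~\ref{lm:ratio-asymp1}, multiplied by $P_r(1/(z+\omega))$ (which merely shifts the exponent by the order of that factor at $z=1-\omega$), is of the admissible power-times-power-of-log form. This gives $\Phi\in\SU$. Because $1-\omega\ge 0$, the criterion applies verbatim to $z\Phi$ --- real, holomorphic off $[1-\omega,+\infty)$ with the same finite pole set, of one degree higher polynomial growth, and with $\Im\big(z\Phi(x+i0)\big)=x\,\Im\Phi(x+i0)\ge 0$ for $x>1-\omega\ge 0$ --- confirming $\Phi\in\NN_\kappa^\lambda\subset\SU$ for suitable $\kappa,\lambda$; and $\widetilde\Phi$ is handled in exactly the same way, using the second boundary formula above. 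This is the precise sense in which the factor $B_{n_1,n_2,m}P_r(1/(z+\omega))$ fixes the sign of the boundary imaginary part and removes the hypothesis of Theorem~\ref{th:RnmNkappa}.

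The main obstacle is the general criterion itself, i.e.\ the step from a non-negative boundary imaginary part to membership in $\SU$ when $\Phi$ is strongly singular at $z=1-\omega$. Indeed the exponent $\nu$ in~\eqref{eq:asymp1} may satisfy $\nu\le-1$ (for instance whenever $q=m-n_1-n_2$ is a sufficiently large negative integer), so that the would-be spectral density of $\Phi$ is not locally integrable at $1-\omega$, and the growth exponent at infinity produced by Lemma~\ref{lm:asymp-infnolog} need not be an integer; the factor $\varrho$ must then be chosen so as to reduce \emph{both} exponents modulo $2$ into $(-1,1)$ while simultaneously clearing the poles, and one has to verify --- this being the delicate point --- that the peeled-off function $g_0$ genuinely lies in $\NN_0$ and not merely has the correct sign on the banks. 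The hypothesis $\omega\le 1$ is exactly what guarantees $1-\omega\ge 0$, so that multiplying by $z$ cannot flip the sign of $\Im\Phi(x+i0)$; for $\omega>1$ that sign flips on $(1-\omega,0)$, the spectrum of $z\Phi$ acquires a negatively weighted piece, and $z\Phi$ --- hence $\Phi$ --- falls outside $\SU$.
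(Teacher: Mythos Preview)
Your approach is quite different from the paper's and contains a genuine gap that you yourself flag but do not close. The ``general criterion'' you propose --- that a real function with finitely many poles off a cut, power-type singularities at the endpoints, and nonnegative boundary imaginary part on the cut must lie in~$\SU$ --- is the entire content of the theorem, and you do not prove it. The sketch (peel off a rational factor $\varrho^{\#}\varrho$ absorbing poles and reducing exponents modulo~$2$, then claim the remainder $g_0$ is in~$\NN_0$) founders exactly where you say it does: having $\Im g_0(x+i0)\ge 0$ on the banks does \emph{not} give $\Im g_0(z)\ge 0$ throughout~$\C_+$. To upgrade boundary positivity to~$\NN_0$-membership you would need a Poisson/Cauchy reconstruction argument showing that $g_0$ coincides (up to an additive real constant) with the Nevanlinna function built from its boundary imaginary part; this in turn requires verifying integrability of $\Im g_0(x+i0)/(1+x^2)$, matching the branch-type singularity at $e_0$ between $g_0$ and its Poisson integral, and a Phragm\'en--Lindel\"of/Liouville step at infinity. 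None of this is carried out, and the log-factors in the asymptotics~\eqref{eq:asymp1} and in Lemma~\ref{lm:asymp-inflog} make the endpoint matching genuinely delicate.

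The paper avoids this analytic criterion entirely. Its proof routes through the proof of Theorem~\ref{th:RnmNkappa}: one chooses a large shift $\delta$ so that the Gauss ratio $\widetilde R_{0,1,1}(z)={}_2F_1(a+\delta,b+\delta+1;c+2\delta+1;z)/{}_2F_1(a+\delta,b+\delta;c+2\delta;z)$ has all positive continued-fraction coefficients, whence $\widetilde R_{0,1,1}\in\mathcal S$ by Theorem~\ref{th:CF_conv}. Contiguous relations then give $R_{n_1,n_2,m}(z)=\widetilde h(z)\widetilde R_{0,1,1}(z)+\widetilde g(z)$ (or a linear-fractional expression in $\widetilde R_{0,1,1}$) with rational $\widetilde h,\widetilde g$; comparing boundary imaginary parts via~\eqref{eq:2F1ratioboundary} shows $\widetilde h(x)B_{n_1,n_2,m}P_r(1/x)>0$ on $(1,\infty)$. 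Now one multiplies through by $B_{n_1,n_2,m}P_r(1/(z+\omega))$ and invokes the already-proved Corollary~\ref{cr:N_kappa_lambda_M3} (built on Theorem~\ref{th:NkappaM2} and Lemma~\ref{lm:NkappaM2a}) --- results about rational multipliers of $\SU$-functions --- together with the finiteness of zeros from Theorem~\ref{th:2F1zeros}. The second function in the statement is obtained from the first by dividing by the nonnegative rational $[B_{n_1,n_2,m}P_r(1/(z+\omega))]^{2}$ and applying Theorem~\ref{th:NkappaM2}. Thus the paper never needs to pass from boundary sign information to interior sign information for a function whose $\NU$-membership is unknown; it works instead with a function ($\widetilde R_{0,1,1}$) whose $\mathcal S$-membership is established by an independent route, and transfers that membership through rational operations whose effect on the Nevanlinna indices is controlled by Section~\ref{sec:mult-fact}.
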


Expressing the indices~$\kappa,\lambda$ of the particular
classes~$\NN_{\kappa}^\lambda\subset\SU$ emerging in Theorems~\ref{th:RnmNkappa}
and~\ref{th:B_P_Rnm_Nkappa} through the shifts~$n_1,n_2,m$ and the parameters and~$a,b,c$ does
not look as an easy task. It is nevertheless doable in various specific cases. In particular, if
a function has a regular C-fraction, then the coefficients of this continued fraction make it
possible to determine the corresponding indices~$\kappa$ and~$\lambda$ uniquely (see
Corollary~\ref{cr:SFracRational} and Theorem~\ref{th:N_kl_CFraction}). The following theorem
extending~\cite[Theorem~3.4]{Derevyagin} applies this idea to the Gauss ratio~$R_{0,1,1}(z)$,
whose continued fraction is given in~\eqref{eq:gen_cont_fr}--\eqref{eq:gen_cont_fr_cf}. We will
also employ this result as an intermediary step in our proofs of Theorems~\ref{th:RnmNkappa}
and~\ref{th:B_P_Rnm_Nkappa} (alternative to, e.g., direct derivation of the multiplicative
representation presented in Theorem~\ref{th:DLLS}).

\begin{theorem}\label{th:R011Nkappa}
    For any real values of parameters $a,b,c$ with~$-c\notin\N_0$, there are
    $\kappa,\lambda\in\N_0$ such that~$\varepsilon R_{0,1,1}(z)\in{\NN_\kappa^\lambda}$ for a
    certain~$\varepsilon\in\{-1,1\}$.
    
    More specifically, if~$\{-a,-b-1,a-c-1,b-c\}\cap\N_0=\varnothing$, put
    \[
        \begin{aligned}
        \theta_{j}
        &\coloneqq\sign \big[(c+2j)\,\Gamma(a+j)\,\Gamma(c-b+j)\,\Gamma(b+j+1)\,\Gamma(c-a+j+1)\big]
        \quad\text{and}
        \\
        \eta_{j}
        &\coloneqq
        \sign\big[(c+2j-1)\,\Gamma(a+j)\,\Gamma(c-b+j)\,\Gamma(b+j)\,\Gamma(c-a+j)\big]
        \end{aligned}
    \]
    for~$j\in\N_0$. Then~$R_{0,1,1}(z)$ is not rational, $\varepsilon=\theta_0$, and $\lambda$
    and $\kappa$ are, respectively, the number of negative entries in the sequences
    \((\theta_j)_{j=0}^\infty\) and \( (\eta_j)_{j=1}^\infty\).
    
    If~$\{-a,-b-1,a-c-1,b-c\}\cap\N_0\ne\varnothing$, then~$R_{0,1,1}(z)$ is a rational function
    of degree~$K=\big\lfloor\frac{s+1}{2}\big\rfloor$ satisfying
    $\varepsilon R_{0,1,1}\in \NN_{\kappa}^{\lambda}$
    and~$-\varepsilon R_{0,1,1}\in \NN_{K-\kappa}^{\Lambda-\lambda}$, where
    \[
        s_1\coloneqq\min\big(\{-a,b-c\}\cap\N_0\big),
        \quad
        s_2\coloneqq\min\big(\{-b,a-c\}\cap\N\big),
        \quad
        s\coloneqq\min\{2s_1,2s_2-1\}
    \]
    and~$\Lambda=\big\lfloor\frac{s+2}{2}\big\rfloor$ equals the degree of~$zR_{0,1,1}(z)$;
    here we assume that~$\min(\varnothing)=+\infty$.
    If~$s=2s_1>0$, put
    \[
        \varepsilon_{2j+1+\delta}
        \coloneqq
        \sign
        \frac{(a+j+\delta)_{s_1-j-\delta}(c-b+j+\delta)_{s_1-j-\delta}(b+j+1)_{s_1-j}(c-a+j+1)_{s_1-j}}
        {(c+2j+\delta)(c+2s_1)}
    \]
    for~$j=0,\dots,s_1-1$ and~$\delta\in\{0,1\}$.  Then~$\varepsilon=\varepsilon_1$ and $\lambda$
    \emph{(}resp.~$\kappa$\emph{)} is the number of negative entries in the sequence
    $\varepsilon_{1}$, $\varepsilon_{3}$, \dots, $\varepsilon_{2s_1-1}$
    \emph{(}resp.~$\varepsilon_{2}$, $\varepsilon_{4}$, \dots,~$\varepsilon_{2s_1}$\emph{)},
    and~$\lambda=\kappa=0$, ~$\varepsilon=1$ if~$s_1=0$.
    If~$s=2s_2-1$, put
    \[
        \varepsilon_{2j+\delta}
        \coloneqq
        \sign
        \frac{(a+j)_{s_2-j}(c-b+j)_{s_2-j}(b+j+\delta)_{s_2-j-\delta}(c-a+j+\delta)_{s_2-j-\delta}}
        {(c+2j-1+\delta)(c+2s_2-1)}
    \]
    and~$\varepsilon=\varepsilon_{1}$, where~$j=0,\dots,s_2-1$, ~$\delta\in\{0,1\}$ and~$j+\delta\ne0$.
    Then $\lambda$ \emph{(}resp.~$\kappa$\emph{)}
    is the number of negative entries in the sequence $\varepsilon_1$,
    $\varepsilon_3$, \dots, $\varepsilon_{2s_2-1}$ \emph{(}resp.
    $\varepsilon_{2}$, $\varepsilon_{4}$, \dots, $\varepsilon_{2s_2-2}$\emph{)} if~$s_2>1$,
    and~$\lambda=1/2-\varepsilon/2$, ~$\kappa=0$ if~$s_2=1$.
\end{theorem}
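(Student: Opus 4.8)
The plan is to extract $\varepsilon$, $\kappa$ and $\lambda$ from the Gauss continued fraction \eqref{eq:gen_cont_fr}--\eqref{eq:gen_cont_fr_cf} of $R_{0,1,1}=G$, using the dictionary between C-fractions and the classes $\NN_\kappa^\lambda$ supplied by Theorem~\ref{th:N_kl_CFraction} and Corollary~\ref{cr:SFracRational}. Since $\alpha_n\to\tfrac14$ as $n\to\infty$, Theorem~\ref{th:CF_conv}~\eqref{item:VanVleck} shows that \eqref{eq:gen_cont_fr} converges to $R_{0,1,1}$ on compact subsets of $\C\setminus[1,\infty)$ outside the poles, and the $\alpha_n$ are positive for all large $n$, so the tail of the fraction is of Stieltjes type. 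Hence $\varepsilon R_{0,1,1}\in\SU$ for some $\varepsilon\in\{-1,1\}$ --- which already gives the first assertion --- and $\varepsilon$ equals the eventual sign of the partial products $\prod_{k=1}^{n}\alpha_k$. A direct computation from \eqref{eq:gen_cont_fr_cf}, repeatedly using $\Gamma(x+1)=x\Gamma(x)$, gives $\sign\prod_{k=1}^{2j}\alpha_k=\theta_j\theta_0$ with $\theta_j\to1$, so $\varepsilon=\theta_0$; consistently, $\theta_0$ is also the sign of $\Im R_{0,1,1}(x+i0)$ on $(1,\infty)$ seen from \eqref{eq:2F1ratioboundary} (where $\underline n=l=r=0$ for $(n_1,n_2,m)=(0,1,1)$).

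Next I would split according to whether some $\alpha_n$ vanishes. Because $-c\notin\N_0$ no denominator in \eqref{eq:gen_cont_fr_cf} vanishes, so $\alpha_{2k+1}=0$ iff $-a\in\N_0$ or $b-c\in\N_0$, and $\alpha_{2k+2}=0$ iff $-b\in\N$ or $a-c\in\N$; thus \eqref{eq:gen_cont_fr} is non-terminating --- equivalently $R_{0,1,1}$ is non-rational --- exactly when $\{-a,-b-1,a-c-1,b-c\}\cap\N_0=\varnothing$. In this case Theorem~\ref{th:N_kl_CFraction} expresses $\kappa$ and $\lambda$ of $\varepsilon R_{0,1,1}\in\NN_\kappa^\lambda$ through the numbers of sign changes in the Hankel sequences $(D_n^{(1)})$ and $(D_n^{(0)})$, respectively, built from the Taylor coefficients of $\varepsilon R_{0,1,1}$; by \eqref{eq:alpha_via_Delta} these sign patterns are carried by the partial products $\prod_{k=1}^{2j-1}\alpha_k$ and $\prod_{k=1}^{2j}\alpha_k$. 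Evaluated from \eqref{eq:gen_cont_fr_cf} they telescope into
\[
    \prod_{k=1}^{2j}\alpha_k=\frac{(a)_j(c-b)_j(b+1)_j(c-a+1)_j}{(c)_{2j}\,(c+1)_{2j}}
    \qquad\text{and}\qquad
    \prod_{k=1}^{2j-1}\alpha_k=\frac{(a)_j(c-b)_j(b+1)_{j-1}(c-a+1)_{j-1}}{(c)_{2j}\,(c+1)_{2j-2}},
\]
whose signs, after writing every $(x)_m=\Gamma(x+m)/\Gamma(x)$ and cancelling $\Gamma(c+2j)\Gamma(c+2j+1)=(c+2j)\Gamma(c+2j)^2$ and $\Gamma(c+2j)\Gamma(c+2j-1)=(c+2j-1)\Gamma(c+2j-1)^2$, come out exactly $\theta_j\theta_0$ and $\eta_j\theta_0$. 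As $\theta_0=\varepsilon$, a sign change occurs in $(D_n^{(0)})$ precisely at the indices $j$ with $\theta_j<0$ and in $(D_n^{(1)})$ at those with $\eta_j<0$, so $\lambda$ is the number of negative $\theta_j$, $j\ge0$, and $\kappa$ the number of negative $\eta_j$, $j\ge1$; both are finite because all arguments are positive for large $j$.

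In the terminating case $\{-a,-b-1,a-c-1,b-c\}\cap\N_0\neq\varnothing$, the first vanishing coefficient in \eqref{eq:gen_cont_fr_cf} is $\alpha_{2s_1+1}$ or $\alpha_{2s_2}$, so \eqref{eq:gen_cont_fr} has $s=\min\{2s_1,2s_2-1\}$ partial numerators; the standard degree count for convergents of such a fraction gives $\deg R_{0,1,1}=\lfloor(s+1)/2\rfloor=K$ and $\deg(zR_{0,1,1})=\lfloor(s+2)/2\rfloor=\Lambda$, with $R_{0,1,1}$ vanishing at infinity exactly when $s=2s_2-1$. I would then apply Corollary~\ref{cr:SFracRational}, which reads $\kappa$ and $\lambda$ of the rational value of a terminating C-fraction off the signs of the tail products $\prod_{i=k}^{s}\alpha_i$ (these stay finite, unlike the $\Gamma$-expressions of the previous paragraph, because the zero of the $\alpha$-sequence sits at index $s+1$, beyond the range), and which also delivers $-\varepsilon R_{0,1,1}\in\NN_{K-\kappa}^{\Lambda-\lambda}$ via the complementarity $\kappa(f)+\kappa(-f)=\deg f$, $\lambda(f)+\lambda(-f)=\deg(zf)$ for real rational $f$. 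Evaluating $\prod_{i=k}^{s}\alpha_i$ from \eqref{eq:gen_cont_fr_cf} in the two sub-cases $s=2s_1$ and $s=2s_2-1$ --- telescoping and cancelling the squared factors $(c+2i+\ast)^2$ that appear --- reproduces the displayed $\varepsilon_{2j+1+\delta}$ and $\varepsilon_{2j+\delta}$, with $\varepsilon=\varepsilon_1=\sign\prod_{i=1}^{s}\alpha_i$, and the assertions that $\lambda$ (resp.\ $\kappa$) is the number of negative odd-indexed (resp.\ even-indexed) $\varepsilon_k$ follow just as in the non-terminating case; the cases $s_1=0$ (where $R_{0,1,1}\equiv1$) and $s_2=1$ are merely these formulas applied to short or empty sign lists.

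The step I expect to be the main obstacle is the interface with Corollary~\ref{cr:SFracRational}: identifying precisely which convergent-related sign data of the truncated Gauss fraction carry the $\NN_\kappa^\lambda$ information (this is where tail products rather than head products enter, reflecting that the finite Hankel table of a rational function is read from its top), and verifying in the degenerate sub-cases --- where numerator and denominator of $R_{0,1,1}$ have unequal degrees --- that the complementarity holds with exactly the stated $K$ and $\Lambda$. The non-rational case is comparatively routine once Theorem~\ref{th:N_kl_CFraction} is available.
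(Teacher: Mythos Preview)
Your approach is essentially the paper's: both split into terminating versus non-terminating Gauss fraction, invoke Corollary~\ref{cr:SFracRational} in the first case and Theorem~\ref{th:N_kl_CFraction} in the second, and then compute the relevant sign products of the~$\alpha_k$ from~\eqref{eq:gen_cont_fr_cf} to recover~$\theta_j,\eta_j$ and the~$\varepsilon_k$. Your identification of when the fraction terminates, of~$s$, and of~$K,\Lambda$ as the degrees is exactly as in the paper.

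One point of phrasing to tighten: Theorem~\ref{th:N_kl_CFraction} does \emph{not} speak of ``sign changes in the Hankel sequences~$(D_n^{(0)}),(D_n^{(1)})$''; it counts negative entries in the \emph{tail}-product sequences~$\varepsilon_j=\sign\prod_{k\ge j}\alpha_k$. Your detour through head products~$\prod_{k=1}^{2j}\alpha_k$ and~\eqref{eq:alpha_via_Delta} is equivalent --- since~$\varepsilon_{2j+1}=\varepsilon_0\cdot\sign\prod_{k=1}^{2j}\alpha_k=\theta_0\cdot(\theta_j\theta_0)=\theta_j$ and similarly~$\varepsilon_{2j}=\eta_j$ --- and your displayed evaluations of those products are correct, but the paper just computes the tail products~$\varepsilon_{2j+1},\varepsilon_{2j+2}$ directly (via the telescoping identity you also use) without passing through the Hankel determinants at all. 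That shortcut removes the loose step you flagged as the ``main obstacle'': in the terminating case there is no separate verification of complementarity needed, because Corollary~\ref{cr:SFracRational} already states both~$\varepsilon_0\varphi\in\NN_\kappa^\lambda$ and~$-\varepsilon_0\varphi\in\NN_{K-\kappa}^{\Lambda-\lambda}$ together with~$K=\lfloor(s+1)/2\rfloor$, $\Lambda=\lfloor(s+2)/2\rfloor$.
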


The proofs of Theorems~\ref{th:RnmNkappa} and \ref{th:B_P_Rnm_Nkappa} will hinge on
Theorem~\ref{th:R011Nkappa}. The next subsection presents some background material and closes
with the proof of Theorem~\ref{th:R011Nkappa}. Several results in this subsection represent
rather general facts connecting generalized Nevanlinna classes with continued fractions. The
main result in this direction is formulated in Theorem~\ref{th:N_kl_CFraction} which gives
explicit formulae for the Nevanlinna indices in terms of the coefficients of continued
fractions. Our starting point is a continued fraction -- it simplifies our task so we do not
need to deal with degenerate cases of the corresponding moment problem, cf.~\cite{DeKo}.

\subsection{Nevanlinna indices of continued fractions and proof of Theorem~\ref{th:R011Nkappa}}\label{sec:line-fract-transf}

Let us recall some basic properties of Hermitian matrices, i.e. matrices satisfying $H=H^*$,
where $H^*$ denotes conjugate-transpose of $H$. These matrices have real spectrum, they posses a
complete system of eigenvectors that may be chosen to be orthogonal. The inertia of a Hermitian
matrix~$H$ is defined as the triplet comprising the number of positive, vanishing, and negative
eigenvalues of $H$, respectively. Sylvester's law of inertia asserts that the inertia of a
Hermitian matrix~$H$ equals the inertia of~$PHP^*$, where~$P$ is an arbitrary non-degenerate
matrix. Moreover, for each Hermitian matrix~$H$ there is a unitary
transformation~$U=(U^*)^{-1}$, such that~$UHU^*$ is diagonal (the so-called reduction to
principal axes). Jacobi's algorithm for
reduction of Hermitian matrices (which relies on the LDU-decomposition, see~\cite[p.~38 and p.~300]{Ga}) shows that the
signs of their eigenvalues equal the signs of the ratios of consecutive leading principal minors
(if these minors do not vanish). As a result, we therefore have:

\begin{lemma}\label{lm:Hsum}
    Given two $n\times n$ Hermitian matrices~$H_1$ and~$H_2$, let the number of their negative
    eigenvalues be~$\kappa_1$ and~$\kappa_2$, respectively. Then~$H_1+H_2$ has at
    most~$\kappa_1+\kappa_2$ negative eigenvalues. If~$H_1$ has~$\lambda_1$ positive
    eigenvalues, then~$H_1+H_2$ cannot have less than~$\kappa_2-\lambda_1$ negative eigenvalues.
\end{lemma}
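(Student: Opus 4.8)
The plan is to argue by dimension counting on eigenspaces. I will use throughout the standard fact — a consequence of the reduction to principal axes recalled just above — that for a Hermitian matrix $H$ the number $n_-(H)$ of its negative eigenvalues equals the maximal dimension of a subspace of $\C^n$ on which the Hermitian form $x\mapsto x^*Hx$ is negative definite; in particular, exhibiting \emph{any} subspace of dimension $d$ on which $H$ is negative definite already proves $n_-(H)\ge d$, while the span of the eigenvectors with negative eigenvalues realizes the maximum. I will also repeatedly invoke the Grassmann inequality $\dim(A\cap B)\ge\dim A+\dim B-n$ for subspaces $A,B\subseteq\C^n$.

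For the upper bound, let $V$ be the span of the eigenvectors of $H_1+H_2$ associated with negative eigenvalues, so $\dim V=n_-(H_1+H_2)$ and $x^*(H_1+H_2)x<0$ for every nonzero $x\in V$. Let $W_i$ ($i=1,2$) be the span of the eigenvectors of $H_i$ with nonnegative eigenvalues; then $\dim W_i=n-\kappa_i$ and $x^*H_ix\ge0$ on $W_i$, hence $x^*(H_1+H_2)x\ge0$ on $W_1\cap W_2$. The two sign conditions force $V\cap W_1\cap W_2=\{0\}$, so $\dim V+\dim(W_1\cap W_2)\le n$; combining this with $\dim(W_1\cap W_2)\ge(n-\kappa_1)+(n-\kappa_2)-n=n-\kappa_1-\kappa_2$ gives $n_-(H_1+H_2)=\dim V\le\kappa_1+\kappa_2$, as claimed.

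For the lower bound, we may assume $\kappa_2>\lambda_1$, since otherwise the assertion is vacuous. Let $U_2$ be the span of the eigenvectors of $H_2$ with negative eigenvalues ($\dim U_2=\kappa_2$) and $N_1$ the span of the eigenvectors of $H_1$ with nonpositive eigenvalues ($\dim N_1=n-\lambda_1$). On $N_1$ one has $x^*H_1x\le0$, and on $U_2$ one has $x^*H_2x<0$, so $x^*(H_1+H_2)x<0$ for every nonzero $x\in U_2\cap N_1$; that is, $H_1+H_2$ is negative definite on $U_2\cap N_1$. Hence $n_-(H_1+H_2)\ge\dim(U_2\cap N_1)\ge\kappa_2+(n-\lambda_1)-n=\kappa_2-\lambda_1$, again by the Grassmann inequality.

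I do not anticipate a genuine obstacle here: the argument is entirely linear-algebraic, and the only care required is to select the right eigenspaces (nonnegative versus nonpositive parts of the spectra) so that the strict and non-strict sign conditions combine correctly, and to apply the Grassmann dimension inequality to the relevant intersections; the degenerate case $\kappa_2\le\lambda_1$ is dispatched by the trivial bound $n_-\ge0$.
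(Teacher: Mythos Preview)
Your proof is correct. Both the upper and lower bounds follow cleanly from the variational characterization of $n_-(H)$ as the maximal dimension of a subspace on which the Hermitian form is negative definite, together with the Grassmann dimension inequality. The care you take in pairing strict and non-strict sign conditions is exactly what is needed.

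Your route, however, differs from the paper's. The paper does not use subspace dimension counting. Instead, it first reduces by a unitary similarity to the case where $H_1$ has rank one (a single diagonal entry~$d_n$), and then analyses how the leading principal minors of $H_2+\varepsilon I$ change when $d_n$ is added to the last diagonal entry; Jacobi's algorithm on these minors shows that the number of negative eigenvalues moves by at most one, in the direction dictated by $\sign d_n$. The general case follows by iterating over the diagonal entries of $UH_1U^*$. Your argument is shorter and avoids both the perturbation by $\varepsilon I$ and the induction on rank; the paper's argument, on the other hand, tracks more explicitly the effect of a single rank-one update, which is closer in spirit to the step-by-step continued-fraction manipulations (Lemma~\ref{lemma:Nkappa_Jacobi1} and Lemma~\ref{lemma:JFracRational}) that follow.
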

\begin{proof}
    Let~$U$ diagonalize~$H_1$:
    \[
        UH_1U^*
        =
        \diag\{d_1,\dots,d_n\} \coloneqq\sum_{j=1}^nd_j \bm{e}_j\cdot\bm{e}_j^{\textup{T}},
    \]
    where~$\bm{e}_1,\dots,\bm{e}_n$ is the standard column basis in~$\R^n$,
    and~$\left(\bm{e}_j\right)^{\textup{T}}$ is the transpose of~$\bm{e}_j$. It is enough to
    check the case~$UH_1U^*=d_j \bm{e}_j\cdot\left(\bm{e}_j\right)^{\textup{T}}$, then the
    general case will follow by consecutive application of that particular result for every~$j$.
    Moreover, we may also assume~$j=n$, which can always be achieved through permuting the rows
    of~$U$.

    For each~$\varepsilon>0$ small enough, no leading principal minor of the
    matrix~$G(\varepsilon)\coloneqq UH_2U^*+\varepsilon I$ vanishes (as nontrivial polynomial
    in~$\varepsilon$), and this matrix has precisely~$\kappa_2$ negative eigenvalues (by
    continuity: the spectrum of~$G(\varepsilon)$ is just the spectrum of~$G(0)$ shifted to the
    right by~$\varepsilon$).

    Denote the~$j$th-order leading principal minor of~$G(\varepsilon)$ by~$\Delta_j$,
    ~$\Delta_0\coloneqq 1$, and observe that the
    matrix~$\widetilde G(\varepsilon)\coloneqq d_n
    \bm{e}_n\cdot\left(\bm{e}_n\right)^{\textup{T}}+G(\varepsilon)$ has the same leading
    principal minors as~$G(\varepsilon)$ with the only exception for
    \[
        \det\widetilde G(\varepsilon)=\Delta_n+d_n\Delta_{n-1}.
    \]
    According to Jacobi's algorithm, the
    sequence~$\Delta_{1},\frac{\Delta_{2}}{\Delta_{1}},\dots,\frac{\Delta_{n}}{\Delta_{n-1}}$
    has exactly~$\kappa_2$ negative entries. The analogous sequence
    for~$\widetilde G(\varepsilon)$ comprises the same entries, except for the last one equal
    to~$\frac{\Delta_{n}}{\Delta_{n-1}}+d_n$. Thus,~$\widetilde G(\varepsilon)$ has
    either~$\kappa_2$ or~$\kappa_2-\sign d_n$ negative eigenvalues for all small
    enough~$\varepsilon>0$. The same is true for number~$\kappa$ of negative eigenvalues of~$\widetilde G(0)$, since the spectrum of~$\widetilde G(\varepsilon)$ is the right-shifted by~$\varepsilon$ spectrum of~$\widetilde G(0)$. In other words, due to
    \[
    \sign d_n=
    \begin{cases}
    \lambda_1,&\text{if } d_n\ge 0,\\
    -\kappa_1,&\text{if } d_n\le 0,
    \end{cases}
    \]
    the number~$\kappa$ satisfies~$\kappa_2-\lambda_1\le\kappa\le\kappa_2+\kappa_1$
    (the positive eigenvalues of~$\widetilde G(\varepsilon)$ remain
    nonnegative as~$\varepsilon\to+0$). Since the inertia of~$H_1+H_2$ coincides with the
    inertia of~$\widetilde G(0)$, the matrix~$H_1+H_2$ has between~$\kappa_2-\lambda_1$
    and~$\kappa_2+\kappa_1$ negative eigenvalues. The lemma is thereby proved for our rank-one
    matrix~$H_1$. As is noted above, then we also obtain the lemma in the general case.
\end{proof}

Next we obtain certain basic properties of the generalized Nevanlinna classes.

\begin{lemma}\label{lemma:Nkappa_Moebius}
    Given~$a,c\in\mathbb R$ and~$b,d>0$, let either~$\displaystyle h(z)= a + \frac{b}{c - z d}$
    or~$h(z)= a + b z$. Then a function~$f(z)$ belongs to the class~$\NN_\kappa$ precisely
    when~$h\circ f \left(z\right)\coloneqq h\left(f(z)\right)$ belongs to~$\NN_\kappa$.
\end{lemma}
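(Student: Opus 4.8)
The plan is to reduce the statement to the behaviour of the Pick kernel from~\eqref{eq:quadratic_form} under the substitution $f\mapsto h\circ f$, and then to invoke Sylvester's law of inertia. First I would note that both admissible forms of $h$ are restrictions of real M\"obius transformations of positive determinant: $z\mapsto a+bz$ has determinant $b>0$, and $z\mapsto a+\tfrac{b}{c-zd}$ has determinant $bd>0$. Hence each maps $\mathbb{C}_+$ biholomorphically onto itself, and its inverse is again a transformation of one of these two kinds — the inverse of $z\mapsto a+bz$ is $z\mapsto -a/b+b^{-1}z$, and the inverse of $z\mapsto a+\tfrac{b}{c-zd}$ is $z\mapsto \tfrac cd+\tfrac{b/d}{a-z}$. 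Therefore it is enough to prove one implication, say $f\in\NN_\kappa\Rightarrow h\circ f\in\NN_\kappa$, and then apply it to $h^{-1}$ in place of $h$ and $h\circ f$ in place of $f$. Also $h\circ f$ is meromorphic in $\mathbb{C}_+$ whenever $f$ is, with the single degenerate exception, in the fractional case, of $f$ being the real constant $c/d$ (then $h\circ f\equiv\infty$); I would dispose of that case separately at the start.

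The second step is the kernel computation. For points $\bm z=(z_1,\dots,z_n)\subset\mathbb{C}_+$ that are poles neither of $f$ nor of $B\coloneqq h\circ f$, using that $a,b,c,d$ are real one gets directly $H_{a+bf}(\bm z)=b\,H_f(\bm z)$ in the affine case, while in the fractional case, writing $g=c-df$ and noting $g(z_k)-\overline{g(z_j)}=-d\big(f(z_k)-\overline{f(z_j)}\big)$, one obtains
\[
    H_{B}(\bm z)=bd\, D\,H_f(\bm z)\,D^*,\qquad D=\diag\Big(\tfrac1{g(z_1)},\dots,\tfrac1{g(z_n)}\Big).
\]
Since $b>0$, resp. $bd>0$, and $D$ is nonsingular at such $\bm z$, Sylvester's law of inertia shows that $H_B(\bm z)$ and $H_f(\bm z)$ have the same inertia, in particular equally many negative squares. (Equivalently: these transformations are generated by $f\mapsto f+\text{const}\in\R$, $f\mapsto(\text{const}>0)f$ and $f\mapsto-1/f$, each of which classically preserves the number of negative squares of the Pick kernel.)

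It remains to upgrade this pointwise fact to the statement about the classes $\NN_\kappa$. Let $E_f$ and $E_B$ be the (discrete) pole sets of $f$ and $B$ in $\mathbb{C}_+$, and put $G\coloneqq\mathbb{C}_+\setminus(E_f\cup E_B)$. On $G$ the previous step makes the numbers of negative squares of $H_f$ and $H_B$ coincide for every choice of points, so the suprema over $G^n$ (over all $n$) agree for $f$ and for $B$. The point requiring care — and the main obstacle — is that the definition of $\NN_\kappa$ for $f$ allows points in $E_B$ (forbidden only for $H_B$) and vice versa, so one must check the supremum is not enlarged by such choices. This follows because $\bm z\mapsto H_f(\bm z)$ is continuous on $(\mathbb{C}_+\setminus E_f)^n$, the number of negative eigenvalues of a Hermitian matrix is lower semicontinuous, and $G^n$ is dense in $(\mathbb{C}_+\setminus E_f)^n$ (its complement there has empty interior since $E_B$ is discrete); hence any choice of points can be perturbed into $G$ without decreasing the number of negative squares, and symmetrically with the roles of $f$ and $B$ exchanged. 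Consequently the two suprema over all admissible point configurations coincide and are simultaneously finite, so $f\in\NN_\kappa$ exactly when $h\circ f\in\NN_\kappa$.
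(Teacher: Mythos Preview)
Your proof is correct and follows essentially the same approach as the paper: compute the Pick-matrix identity $H_{h\circ f}(\bm z)=D\,H_f(\bm z)\,D^*$ (resp.\ $H_{h\circ f}=b\,H_f$ in the affine case) and invoke Sylvester's law of inertia. The paper's proof stops there; you add two refinements the paper leaves implicit. First, you observe that $h$ is a M\"obius map of positive determinant whose inverse is again of the admissible form, which cleanly reduces the equivalence to a single implication. Second, you give the density/semicontinuity argument to handle the mismatch between the pole sets $E_f$ and $E_B=E_{h\circ f}$: the paper simply chooses points avoiding poles of $f$ and does not discuss what happens at points where $f=c/d$ (poles of $h\circ f$ but not of $f$). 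Your argument that the supremum of negative squares over $G=\mathbb C_+\setminus(E_f\cup E_B)$ already attains the supremum over the full admissible domain, via continuity of the Pick matrix in $\bm z$ and lower semicontinuity of the negative-eigenvalue count, is the correct way to close this gap. Flagging the degenerate case $f\equiv c/d$ (where $h\circ f$ is not meromorphic) is also prudent; the paper does not mention it.
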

\begin{proof}
    Take any finite sequence of numbers~$z_1,z_2,\dots,z_n\in\mathbb{C}_+$ such that none of
    them is a pole of~$f(z)$. If~$h(z)=a + b z$, then the Pick matrix $H_{h\circ f}$ defined in \eqref{eq:quadratic_form} satisfies
    \[
        H_{h\circ f}\left(z_1,\dots,z_n\right)
        = b H_{f}\left(z_1,\dots,z_n\right),
    \]
    so the lemma is trivial. If~$h(z)$ is not a polynomial, we have
    \[
        h(f(z_i))-\overline{h(f(z_j))}
        =
        a+\frac{b}{c - f(z_i)d} -a-\frac{b}{c - \overline{f(z_j)}d}
        =
        \frac{\sqrt{b d}\cdot\big(f(z_i)-\overline{f(z_j)}\big)\cdot\sqrt{b d}}
        {\big(c - f(z_i)d\big)\cdot\big(c - \overline{f(z_j)}d\big)}
    \]
    and, therefore,
    \begin{equation}\label{eq:H_Moebius}
        H_{h\circ f}\left(z_1,\dots,z_n\right)
        = D \cdot H_{f}\left(z_1,\dots,z_n\right) \cdot D^*
        ,
        \quad\text{where}\quad
        D\coloneqq \diag\left[\frac{\sqrt{b d}}{c - f(z_i)d}\right]_{i=1}^n
    \end{equation}
    and~$D^*$ stands for the conjugate transpose of~$D$. So, the lemma follows by Sylvester's
    law of inertia.
\end{proof}

\begin{lemma}\label{lemma:Nkappa_Moebius2}
    Let~$f(z)$ be a real function, and let
    \[
        g(z)\coloneqq-f(-z), \quad h(z)\coloneqq-f\left(\frac1z\right)
        \quad\text{and}\quad
        \widetilde h(z)\coloneqq\dfrac1{f\left(\frac1z\right)}.
    \]
    Then~$f\in \NN_\kappa \iff g\in \NN_\kappa \iff h\in \NN_\kappa \iff \widetilde h\in \NN_\kappa$.
\end{lemma}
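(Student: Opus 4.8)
The plan is to reduce the whole statement to the Pick-matrix characterisation recalled around~\eqref{eq:quadratic_form}: a function~$\varphi$ meromorphic in~$\Cp$ lies in~$\NN_\kappa$ exactly when, over all~$n\in\N$ and all~$n$-tuples~$(z_1,\dots,z_n)\subset\Cp$ containing no pole of~$\varphi$, the Hermitian Pick matrix~$H_\varphi(z_1,\dots,z_n)$ has at most~$\kappa$ negative eigenvalues, and precisely~$\kappa$ of them for some tuple. Since~$f$ is real, so are~$g$, ~$h$, ~$\widetilde h$ (for instance~$h(\overline z)=-f(\overline{1/z})=\overline{-f(1/z)}=\overline{h(z)}$), and all three are meromorphic in~$\Cp$; hence their Pick matrices are legitimate, and it suffices to produce, for each of~$g,h,\widetilde h$, a bijection between its admissible tuples and those of~$f$ under which the associated Pick matrices share the same inertia. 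All three are then equivalent to~$f$, which is the assertion of the lemma.

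For~$g(z)=-f(-z)$ I would take~$w_i\coloneqq-\overline{z_i}$, an involution of~$\Cp$; as the poles of a real function are symmetric about~$\R$, the tuple~$(z_1,\dots,z_n)$ is admissible for~$g$ iff~$(w_1,\dots,w_n)$ is admissible for~$f$. Using~$f(\overline\zeta)=\overline{f(\zeta)}$ one gets~$g(z_i)=-\overline{f(w_i)}$, ~$\overline{g(z_j)}=-f(w_j)$ and~$z_i-\overline{z_j}=w_j-\overline{w_i}$, hence~$(H_g(z_1,\dots,z_n))_{ij}=(H_f(w_1,\dots,w_n))_{ji}$, i.e.~$H_g(z_1,\dots,z_n)=\overline{H_f(w_1,\dots,w_n)}$. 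A Hermitian matrix and its complex conjugate have the same (real) spectrum and hence the same number of negative eigenvalues, so taking the supremum over admissible tuples gives~$g\in\NN_\kappa\iff f\in\NN_\kappa$.

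The case~$h(z)=-f(1/z)$ is handled identically with the involution~$w_i\coloneqq1/\overline{z_i}$ of~$\Cp$, which again matches the admissible tuples of~$h$ with those of~$f$. Now~$h(z_i)=-\overline{f(w_i)}$, ~$\overline{h(z_j)}=-f(w_j)$ and~$z_i-\overline{z_j}=(w_j-\overline{w_i})/(\overline{w_i}\,w_j)$, whence a short computation gives~$H_h(z_1,\dots,z_n)=\overline{D\,H_f(w_1,\dots,w_n)\,D^{*}}$ with the nonsingular diagonal matrix~$D\coloneqq\diag(w_1,\dots,w_n)$. By Sylvester's law of inertia~$D\,H_f\,D^{*}$ has the same inertia as~$H_f$, and complex conjugation does not change it, so~$h\in\NN_\kappa\iff f\in\NN_\kappa$.

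Finally,~$\widetilde h(z)=1/f(1/z)=-1/h(z)$ is the composition of~$h$ with the linear-fractional map~$t\mapsto0+1/(0-t)=-1/t$, so Lemma~\ref{lemma:Nkappa_Moebius}, with its linear-fractional function taken as this~$t\mapsto-1/t$ (parameters~$a=c=0$, ~$b=d=1$) and its~$f$ taken to be our~$h$, gives~$\widetilde h\in\NN_\kappa\iff h\in\NN_\kappa$; chaining this with the previous step finishes the proof. One may instead repeat the Pick-matrix computation and obtain~$H_{\widetilde h}(z_1,\dots,z_n)=\overline{E\,H_f(w_1,\dots,w_n)\,E^{*}}$ with~$E\coloneqq\diag\big(w_1/f(w_1),\dots,w_n/f(w_n)\big)$. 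The only genuine obstacle is that~$t\mapsto-1/t$ interchanges zeros and poles, so~$\widetilde h$ and~$h$ have slightly different admissible-tuple sets (equivalently, the factor~$E$ degenerates at the zeros of~$\widetilde h$); this gap is closed — and is already absorbed into Lemma~\ref{lemma:Nkappa_Moebius} — by observing that the $k$-th smallest eigenvalue of a Hermitian matrix is continuous in the matrix, so the number of its negative eigenvalues is lower semicontinuous in the tuple, whence admissible tuples lying off any prescribed discrete set still attain the required supremum. Everything else is routine bookkeeping with~$H_\varphi$.
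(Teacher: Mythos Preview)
Your proof is correct and follows essentially the same approach as the paper's: both arguments work directly with Pick matrices, exhibit the same involutions of~$\Cp$ (the paper uses~$w_i=-\overline{z_i}$ for~$g$ and then passes from~$h$ to~$g$ via~$\zeta_i=-1/z_i$, whereas you go straight from~$h$ to~$f$ via~$w_i=1/\overline{z_i}$; these amount to the same congruence up to composition), and both invoke Lemma~\ref{lemma:Nkappa_Moebius} with~$a=c=0$, $b=d=1$ for~$\widetilde h$. Your added remark on the admissible-tuple mismatch for~$\widetilde h$ and its resolution by semicontinuity of eigenvalues is a nice point that the paper leaves implicit inside Lemma~\ref{lemma:Nkappa_Moebius}.
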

\begin{proof}
    Indeed, $H_g(z_1,\dots,z_n)$ is precisely the transpose of~$H_f(w_1,\dots,w_n)$
    with~$w_i\coloneqq -\overline{z_i}$:
        \[
        \frac{g(z_i)-\overline{g(z_j)}}{z_i-\overline{z_j}}
        =
        \frac{\overline{f(-z_j)}-f(-z_i)}{z_i-\overline{z_j}}
        =
        \frac{\overline{f(\overline{w_j})}-f(\overline{w_i})}{-\overline{w_i}+w_j}
        =
        \frac{f(w_j)-\overline{f(w_i)}}{w_j-\overline{w_i}}
        ,
    \]
    and $H_h(z_1,\dots,z_n)$ is diagonally similar to~$H_g(\zeta_1,\dots,\zeta_n)$
    with~$\zeta_i\coloneqq -\frac 1{z_i}$:
    \[
        \frac{h(z_i)-\overline{h(z_j)}}{z_i-\overline{z_j}}
        =
        \frac1{z_i}\frac{g(-1/z_i)-g(-1/\overline{z_j})}{-1/z_i+1/\overline{z_j}}\frac1{\overline{z_j}}
        =
        \zeta_i\cdot\frac{g(\zeta_i)-\overline{g(\zeta_j)}}{\zeta_i-\overline{\zeta_j}}
        \cdot\overline{\zeta_j}
        .
    \]
    Here~$i,j$ run over~$1,\dots,n$. Therefore,
    \( f\in \NN_\kappa\iff g\in \NN_\kappa\iff h\in \NN_\kappa. \)

    Now, $\widetilde h(z)=-\frac 1{h(z)}$, and hence
    $\widetilde h\in \NN_\kappa\iff h\in \NN_\kappa$ by the case  $a=c=0$, ~$b=d=1$ of  Lemma~\ref{lemma:Nkappa_Moebius}.
\end{proof}

\begin{lemma}[{cf.~\cite[p.~17]{Pick16}}]\label{lm:Nkappa_rational}
    If~$f(z)$ is a real rational function and~$n\ge m\coloneqq\deg f$, then the rank
    of~$H_{f}(z_1,\dots,z_n)$ equals~$m$ provided that the points~$z_1,\dots,z_n$ are distinct.
\end{lemma}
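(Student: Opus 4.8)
The plan is to reduce the Pick matrix $H_f(z_1,\dots,z_n)$ to the Bezout matrix of the numerator and denominator of $f$, and then to use that a Bezout matrix of a coprime pair is nonsingular. Write $f=p/q$ with $p,q$ coprime real polynomials and $m=\max(\deg p,\deg q)=\deg f$ (the case $m=0$ is trivial, since then $f$ is a real constant and $H_f\equiv0$). The Bezoutian $B(z,w)=\dfrac{p(z)q(w)-p(w)q(z)}{z-w}$ is a polynomial, symmetric in $z,w$, of degree $\le m-1$ in each variable; write $B(z,w)=\sum_{i,j=0}^{m-1}b_{ij}z^iw^j$ and $\mathcal B=(b_{ij})_{i,j=0}^{m-1}$. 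Dividing by $q(z)q(w)$ gives the divided-difference identity $\dfrac{f(z)-f(w)}{z-w}=\dfrac{B(z,w)}{q(z)q(w)}$.

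Since $f$ is real, $\overline{f(z_j)}=f(\overline{z_j})$ and $q(\overline{z_j})=\overline{q(z_j)}$, so the entries of the Pick matrix are $\dfrac{f(z_k)-\overline{f(z_j)}}{z_k-\overline{z_j}}=\dfrac{B(z_k,\overline{z_j})}{q(z_k)\overline{q(z_j)}}$, the points being distinct and (necessarily, for $H_f$ to be defined) not poles of $f$, so $q(z_k)\ne0$. This yields the factorization $H_f(z_1,\dots,z_n)=D\,V\,\mathcal B\,V^*D^*$, where $D=\diag(1/q(z_1),\dots,1/q(z_n))$ is invertible and $V=(z_k^i)_{k=1,\dots,n}^{i=0,\dots,m-1}$ is the $n\times m$ Vandermonde-type matrix. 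As $z_1,\dots,z_n$ are distinct and $n\ge m$, the top $m\times m$ block of $V$ is an invertible Vandermonde matrix, so $V$ has full column rank $m$; hence $\operatorname{rank}H_f=\operatorname{rank}(V\mathcal BV^*)=\operatorname{rank}\mathcal B$. It remains to show $\mathcal B$ is nonsingular — note this is precisely where $\deg f=m$ and coprimality of $p,q$ enter.

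This last step is the heart of the matter; it is the classical statement that the Bezout matrix of a coprime pair is nonsingular (see e.g.\ \cite{Ga}), but for completeness one can argue directly. Suppose $\mathcal Bc=0$ with $c=(c_0,\dots,c_{m-1})\ne0$, and put $C(z)=\sum_i c_iz^i\not\equiv0$, $\deg C\le m-1$. Apply to the identity $B(z,w)=q(w)\tfrac{p(z)-p(w)}{z-w}-p(w)\tfrac{q(z)-q(w)}{z-w}$ the linear functional $\ell$ on polynomials of degree $\le m-1$ defined by $\ell(z^i)=c_i$, acting in the variable $z$ (legitimate, all three expressions having $z$-degree $\le m-1$): the left side vanishes, leaving $q(w)P(w)=p(w)Q(w)$ with $P(w)=\ell^{(z)}\big(\tfrac{p(z)-p(w)}{z-w}\big)$ and $Q(w)=\ell^{(z)}\big(\tfrac{q(z)-q(w)}{z-w}\big)$ of degree $\le m-1$. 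Coprimality then forces $P=ph$ and $Q=qh$ for a common polynomial $h$, and the degree bound together with $\max(\deg p,\deg q)=m$ forces $h=0$, so $P\equiv Q\equiv0$. Expanding $P$ in powers of $w$ shows $P\equiv0$ is equivalent to $z^{\deg p}\mid C$, and likewise $Q\equiv0$ to $z^{\deg q}\mid C$; together these give $z^m\mid C$, contradicting $0\ne C$ with $\deg C\le m-1$. Hence $\mathcal B$ is nonsingular and $\operatorname{rank}H_f=\operatorname{rank}\mathcal B=m$. The one genuinely non-formal point is packaging this nonsingularity; the rest is linear-algebra bookkeeping.
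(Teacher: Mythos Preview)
Your proof is correct and takes a genuinely different route from the paper's. The paper first composes $f$ with a M\"obius map $z\mapsto 1/(\delta-z)$ (for a generic real~$\delta$) to obtain a function $g_\delta$ with only simple poles, writes $g_\delta$ in partial fractions, and then factors $H_{g_\delta}=P_n\,M\,P_n^*$ with $P_n$ a nonsingular Cauchy matrix and $M$ a block-diagonal matrix of rank exactly~$m$; the rank of $H_f$ is inherited from $H_{g_\delta}$ via the diagonal congruence~\eqref{eq:H_Moebius}. Your approach is more directly algebraic: you bypass the simple-pole reduction entirely by using the Bezoutian identity $\dfrac{f(z)-f(w)}{z-w}=\dfrac{B(z,w)}{q(z)q(w)}$, which gives the factorization $H_f=DV\mathcal B\,V^*D^*$ with a Vandermonde block in place of a Cauchy matrix, and reduces the rank computation to the classical nonsingularity of the Bezout matrix of a coprime pair (for which you supply a clean self-contained argument). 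The paper's route has the advantage of displaying the pole structure behind the inertia of~$H_f$ explicitly, which is what motivates the remarks following the lemma and the link to Theorem~\ref{th:DLLS}; your route is shorter, avoids the genericity choice of~$\delta$, and stays purely polynomial.
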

\begin{proof}
    Let~$m>0$, otherwise the lemma is trivial. Take any
    \[
        \delta\in\R\setminus\{f(\xi_1),\dots,f(\xi_r),f(\infty)\},
    \]
    where~$\xi_1,\dots,\xi_r$ are the real solutions to the equation~$f'(z)=0$. Put
    \[
        g_\delta(z)\coloneqq \frac 1{\delta- f(z)}
        ,
    \]
    so that~$\deg g_\delta =m$ and, according to~\eqref{eq:H_Moebius} with $h(z)=1/(\delta-z)$, the
    matrices~$H_f(z_1,\dots,z_n)$ and~$H_{g_\delta}(z_1,\dots,z_n)$ have the same inertia.
    Moreover,~$g_\delta(z)$ only has simple poles, and~$g_\delta(\infty)$ is finite. As a
    result, there exists~$l$, such that~$0\le 2l\le m$ and for some numbers~$A_0\in\R$, ~$a_{2k-1}=\overline{a_{2k}}$ and~$A_{2k-1}=\overline{A_{2k}}\ne0$
    when~$0<k\le l$, and~$a_k\in\R$, $A_k\in\R\setminus\{0\}$ when~$k>2l$,
    \[
        g_\delta(z)
        =
        A_0 + \sum_{k=1}^{m}\frac{A_k}{z-a_k}
        =
        A_0
        + \sum_{k=1}^{l}\left(\frac{\overline{A_{2k}}}{z-\overline{a_{2k}}}
        +\frac{A_{2k}}{z-a_{2k}}\right)
        + \sum_{k=2l+1}^{m}\frac{A_k}{z-a_k}.
    \]
    Then
    \[
        \frac{g_\delta(z_i)-\overline{g_\delta(z_j)}}{z_i-\overline{z_j}}
        =
        \frac{1}{z_i-\overline{z_j}}
            \sum_{k=1}^m\left(\frac{A_{k}}{z_i-a_k}-\frac{A_k}{\overline{z_j}-a_k}\right)
        =
        -\sum_{k=1}^m\frac{A_k}{(z_i-a_k)(\overline{z_j}-a_k)}
        .
    \]

    Now, choose arbitrary~$a_{m+1},\dots,a_n\in\R$ so that the numbers~$a_1,\dots,a_n$ are
    distinct. Observe that $H_{g_\delta}(z_1,\dots,z_n)= P_n \cdot M \cdot P_n^*$, where
    \[
        M\coloneqq
        - \diag\left(
            \left[\begin{smallmatrix}0&\overline{A_{2}}\\A_{2}&0\end{smallmatrix}\right],
            \dots,
            \left[\begin{smallmatrix}0&\overline{A_{2l}}\\A_{2l}&0\end{smallmatrix}\right],
            A_{2l+1},\dots,A_m,0,\dots,0\right)
        ,
        \quad
        P_n\coloneqq\left[\frac{1}{z_j-a_k}\right]_{j,k}^{n}
    \]
    and~$P_n^*$ denotes the conjugate transpose of~$P_n$. Since~$P_n$ is the Cauchy matrix, the
    well-known formula for its determinant yields~$\det P_n \ne 0$. By Sylvester's law of
    inertia, the inertia of the Pick matrix~$H_{g_\delta}(z_1,\dots,z_n)$ is the same as that
    of~$M$. The latter, however, equals~$m$, since all the numbers $A_1,\dots,A_m$ are distinct from zero.
\end{proof}

\begin{remark}
    The conditions of Lemma~\ref{lm:Nkappa_rational} exclude the case~$n<\deg f$, in which the
    rank of~$H_{f}(z_1,\dots,z_n)$ may be less than~$n$: for instance,
    if~$f(z)=\prod_{j=1}^n(z-z_j)(z-\overline{z_j})$, then~$\deg f=2n$ and~$\operatorname{rank} H_{f}(z_1,\dots,z_n)=0$.
\end{remark}

\begin{remark}
    As is seen from the above identity~$H_{g_\delta}(z_1,\dots,z_n)= P_n \cdot M \cdot P_n^*$,
    one can relate the inertia of~$H_{g_\delta}(z_1,\dots,z_n)$ to the location of poles
    of~$g_\delta(z)$, and (for real poles) to the signs of the corresponding residues. Moreover,
    this relation may be extended to multiple poles. The works~\cite{DHdS,DLLS} present a
    far-reaching generalization of this idea, which is cited here as Theorem~\ref{th:DLLS}.
\end{remark}

\begin{remark}
    On letting~$z_1,\dots,z_n\to\infty$ and~$f(z)=\sum_{k=0}^\infty c_kz^{-k-1}$, the (scaled)
    matrix~$H_{f}(z_1,\dots,z_n)$ tends to the Hankel matrix~$D_n^{(0)}$ as
    in~\eqref{eq:Delta_def}. Lemma~\ref{lm:Nkappa_rational} is therefore related to Kronecker's
    theorem: $D_n^{(0)}\ne 0=D_{n+1}^{(0)}=D_{n+2}^{(0)}=\cdots$ if and only if~$f(z)$
    represents a rational function of degree~$n$.
\end{remark}

Rational functions may be seen as building blocks for the generalized Nevanlinna classes. From
this viewpoint, it looks natural to study these classes via continued fractions.

\begin{lemma}[{e.g.~\cite{Derevyagin03} or~\cite[Prop.~3.3]{Derevyagin}}]\label{lemma:Nkappa_Jacobi1}
    Given~$\varepsilon,a,b>0$ and~$c\in\mathbb R$, let~$f(z)$ be a function meromorphic
    in~$\C_+\coloneqq\{z\in\C:\Im z>0\}$ such that
    \[
        f(iy)=o(y)\quad\text{as}\quad y\to+\infty.
    \]
    Define
    \[
        g_\varepsilon(z)\coloneqq
        - \frac{\varepsilon b}{a z - c + \varepsilon f(z)}
        \quad
        \left({}=
        - \frac{b}{\frac{a}{\varepsilon}z - \frac{c}{\varepsilon} + f(z) }\right)
        .
    \]
    Then~$g_{-\varepsilon}(z)$ does not belong to~$\NN_0$. Moreover, the following three
    conditions are equivalent:
    \[
        f\in \NN_\kappa, \quad g_\varepsilon\in \NN_\kappa
        \quad\text{and}\quad
        g_{-\varepsilon}\in \NN_{\kappa+1}.
    \]
\end{lemma}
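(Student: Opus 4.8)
The plan is to peel $g_{\pm\varepsilon}$ down to an \emph{additive} perturbation of $f$ followed by the M\"obius map $w\mapsto-b/w$, and then to control how this perturbation affects the number of negative squares. The second, equivalent form of $g_\varepsilon$ in the statement reads $g_\varepsilon=-b/(f+p)$ with $p(z)\coloneqq\frac a\varepsilon z-\frac c\varepsilon$, a real affine function of positive slope $\alpha\coloneqq a/\varepsilon>0$; substituting $-\varepsilon$ for $\varepsilon$ turns it into $g_{-\varepsilon}=-b/(f-p)$. Since $w\mapsto-b/w=0+b/(0-w\cdot1)$ is of the form covered by Lemma~\ref{lemma:Nkappa_Moebius} (with $a=c=0$, $d=1$), composition with it preserves the Nevanlinna index, so $g_\varepsilon\in\NN_\kappa\iff f+p\in\NN_\kappa$ and $g_{-\varepsilon}\in\NN_\kappa\iff f-p\in\NN_\kappa$. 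It therefore suffices to prove: (A)~$f\in\NN_\kappa\iff f+p\in\NN_\kappa$; (B)~$f\in\NN_\kappa\iff f-p\in\NN_{\kappa+1}$; and (C)~$g_{-\varepsilon}\notin\NN_0$.

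Statement (C) is the soft one and is the only place where $f(iy)=o(y)$ enters in a trivial way. If $f$ has a pole in $\C_+$, then so does $f-p$ and hence $g_{-\varepsilon}\notin\NN_0$. Otherwise, assume $g_{-\varepsilon}\in\NN_0$; applying Lemma~\ref{lemma:Nkappa_Moebius} to $w\mapsto-1/w$ and then to $w\mapsto bw$ gives $F\coloneqq f-p\in\NN_0$, so the Herglotz representation~\eqref{eq:N0canonical} forces $F(iy)/(iy)\to\nu_1\ge0$. But $F(iy)/(iy)=f(iy)/(iy)-\alpha+c/(\varepsilon iy)\to-\alpha<0$, a contradiction.

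The heart of the matter is the forward direction of (A) and (B). The Pick matrix~\eqref{eq:quadratic_form} of the affine function $p$ is $H_p(z_1,\dots,z_n)=\alpha\,\mathbf{1}\mathbf{1}^{\textup{T}}$, the all-ones matrix scaled by $\alpha$, which is rank one and positive semidefinite; hence $H_{f\pm p}=H_f\pm\alpha\,\mathbf{1}\mathbf{1}^{\textup{T}}$ at any admissible points. By Lemma~\ref{lm:Hsum}, $f+p$ has at most $\kappa$ negative squares everywhere and $f-p$ at most $\kappa+1$; thus $f+p\in\NN_{\kappa'}$ with $\kappa'\le\kappa$ and $f-p\in\NN_{\mu}$ with $\mu\le\kappa+1$. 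For the matching lower bounds, fix $z_1,\dots,z_n\in\C_+$ realizing exactly $\kappa$ negative eigenvalues of $H_f(z_1,\dots,z_n)$, append $z_{n+1}=iy$ and let $y\to+\infty$. Using $f(iy)=o(y)$ one checks that the corner entry of $H_{f\pm p}(z_1,\dots,z_{n+1})$ equals $\Im f(iy)/y\pm\alpha\to\pm\alpha\ne0$, that its off-diagonal last column tends to $\pm\alpha\mathbf{1}$, and that the Schur complement of $H_{f\pm p}(z_1,\dots,z_{n+1})$ with respect to this corner tends entrywise to $H_f(z_1,\dots,z_n)$, because the $\pm\alpha\,\mathbf{1}\mathbf{1}^{\textup{T}}$ coming from $H_{f\pm p}(z_1,\dots,z_n)$ cancels the $\mp\alpha\,\mathbf{1}\mathbf{1}^{\textup{T}}$ produced by the rank-one correction. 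By Haynsworth's inertia additivity (valid for $y$ large, when the corner is nonzero) together with lower semicontinuity of the number of negative eigenvalues under entrywise convergence, for all large $y$ the matrix $H_{f+p}(z_1,\dots,z_{n+1})$ has at least $\kappa$ negative squares and $H_{f-p}(z_1,\dots,z_{n+1})$ at least $\kappa+1$ (the sign of the corner contributing $0$, respectively $1$, extra negative square). Combined with the upper bounds this yields $\kappa'=\kappa$ and $\mu=\kappa+1$.

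The converses are formal. If $f+p\in\NN_\kappa$, then $H_f=H_{f+p}-\alpha\,\mathbf{1}\mathbf{1}^{\textup{T}}$ has at most $\kappa+1$ negative squares everywhere by Lemma~\ref{lm:Hsum}, so $f\in\NN_{\kappa_0}$ for some finite $\kappa_0$; the forward part of (A) then gives $f+p\in\NN_{\kappa_0}$, whence $\kappa_0=\kappa$. Similarly, $f-p\in\NN_{\kappa+1}$ implies via $H_f=H_{f-p}+\alpha\,\mathbf{1}\mathbf{1}^{\textup{T}}$ that $f\in\NN_{\kappa_0}$ for some finite $\kappa_0$, and the forward part of (B) gives $f-p\in\NN_{\kappa_0+1}$, so $\kappa_0=\kappa$. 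Putting (A), (B), (C) together with the reductions of the first paragraph proves the lemma. I expect the only genuine obstacle to be the lower bound in the third paragraph: a plain rank-one perturbation estimate is off by one, and it is precisely the excursion to the point at infinity, the hypothesis $f(iy)=o(y)$, and the semicontinuity of the negative index that make the count exact.
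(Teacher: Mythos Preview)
Your proposal is correct and follows essentially the same route as the paper: reduce via Lemma~\ref{lemma:Nkappa_Moebius} to the additive perturbation $f\pm p$, use Lemma~\ref{lm:Hsum} for the upper bounds on negative squares, and obtain the matching lower bounds by appending the point $z_{n+1}=iy$ and letting $y\to+\infty$. The paper implements this last step by an explicit congruence $H_{f+h_{\pm\varepsilon}}[z_1,\dots,z_n,iy]\to P\cdot\operatorname{diag}\big(H_f[z_1,\dots,z_n],\,\pm\alpha\big)\cdot P^*$ with a unit upper-triangular $P$, which is exactly the Schur-complement/Haynsworth computation you describe, just written out by hand; for part~(C) the paper simply notes that the $1\times1$ Pick matrix $H_{f-p}[iy]=\Im f(iy)/y-\alpha$ is negative for large $y$, which is a one-line substitute for your Herglotz-representation argument.
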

\begin{remark}
    The choice~$f(z)=2 z$ and~$\varepsilon=a$ yields~$f,g_{-\varepsilon}\in \NN_0$, so
    Lemma~\ref{lemma:Nkappa_Jacobi1} \textbf{fails} to be generally true without the condition
    on the growth of~$f(z)$ at infinity. (It is missed out in~\cite{Derevyagin}, but
    fortunately does not affect other results of~\cite{Derevyagin}).
\end{remark}
\begin{proof}
    Lemma~\ref{lemma:Nkappa_Moebius} implies that~$g_{\pm\varepsilon}\in \NN_\varkappa$ for
    some~$\varkappa$ if and only if~$f+h_{\pm\varepsilon}\in \NN_\varkappa$,
    where
    \[
        h_\varepsilon(z)\coloneqq \frac a\varepsilon z,
        \quad\text{and hence}\quad
        f(z)+h_{\pm\varepsilon}(z) = \pm\frac{c}{\varepsilon} - \frac{b}{g_{\pm\varepsilon}(z)}.
    \]

    Observe that as~$y\to+\infty$
    \begin{equation}
        \label{eq:Nkappa_J1}
        H_{f+h_\varepsilon}[z_1,\dots,z_n,iy]=
        \left[\begin{array}{ccc|c}
                \multicolumn{3}{c|}{\multirow{3}{*}{$H_f[z_1,\dots,z_n]
                +\left[\frac{a}{\varepsilon}\right]_{k,m=1}^{n}$}
                }&o(1)+\frac{a}{\varepsilon}\\[-2pt]
                \multicolumn{3}{c|}{}&\vdots\\
                \multicolumn{3}{c|}{}&o(1)+\frac{a}{\varepsilon}\\
                \hline
                o(1)+\frac{a}{\varepsilon}&\hdots&o(1)+\frac{a}{\varepsilon}&o(1)+\frac{a}{\varepsilon}
              \end{array}\right]
          \to
        P\cdot
        \left[\begin{array}{ccc|c}
                \multicolumn{3}{c|}{\multirow{3}{*}{$H_f[z_1,\dots,z_n]$}}&0\\[-2pt]
                \multicolumn{3}{c|}{}&\vdots\\
                \multicolumn{3}{c|}{}&0\\
                \hline
                \phantom{0}0\phantom{0}&\hdots&0&\frac{a}{\varepsilon}
              \end{array}\right]
        \cdot P^*
        ,
    \end{equation}
    where
    \[
        P\coloneqq
        \left[\begin{array}{ccc|c}
                \multicolumn{3}{c|}{\multirow{3}{*}{$I_n$}}&1\\[-2pt]
                \multicolumn{3}{c|}{}&\vdots\\
                \multicolumn{3}{c|}{}&1\\
                \hline
                0&\hdots&0&1
              \end{array}\right].
    \]
    If~$y>0$ is large enough, the $1\times1$ matrix~$H_{f+h_{-\varepsilon}}[iy]$ turns out to be negative definite,
    so~$g_{-\varepsilon}\notin\NN_0$.
    
    For arbitrary~$n\in\N$ and~$z_1,\dots,z_{n}\in\C_+$, let~$\lambda$ be the number of
    negative eigenvalues of $H_{f}[z_1,\dots,z_n]$. The
    matrix~$H_{h_\varepsilon}[z_1,\dots,z_{n}]$ is positive semidefinite of rank one, thus
    \[
        H_{f+h_\varepsilon}[z_1,\dots,z_{n}]
        =
        H_{f}[z_1,\dots,z_{n}] +
        H_{h_\varepsilon}[z_1,\dots,z_{n}]
    \]
    has~$\lambda$ or~$\lambda-1$ negative eigenvalues by Lemma~\ref{lm:Hsum}. Therefore, the
    inclusion~$f\in\NN_{\kappa_1}$ for some~$\kappa_1$ implies~$g_\varepsilon\in\NN_{\kappa_2}$
    with~$\kappa_2\le \kappa_1$, while~$g_\varepsilon\in\NN_{\kappa_2}$ for some~$\kappa_2$
    implies~$f\in\NN_{\kappa_1}$ with~$\kappa_1\le \kappa_2+1$, in view of the equivalence
    $g_{\varepsilon}\in \NN_\varkappa \iff f+h_{\varepsilon}\in
    \NN_\varkappa$.

    Let~$f\in\NN_{\kappa_1}$. Then there exist numbers~$z_1,\dots,z_n\in\C_+$ such
    that~$H_{f}[z_1,\dots,z_n]$ has precisely~$\kappa_1$ negative eigenvalues. Then
    $H_{f+h_\varepsilon}[z_1,\dots,z_n,iy]$ has~$\kappa_1$ negative eigenvalues due
    to~\eqref{eq:Nkappa_J1} whenever~$y>0$ is large enough. As a
    result,~$g_\varepsilon\in \NN_{\kappa_2}$ with~$\kappa_2\ge\kappa_1$, and
    hence~$\kappa_2=\kappa_1$.

    Analogously, for any~$z_1,\dots,z_n\in\C_+$ and~$y>0$ the
    matrix~$H_{h_{-\varepsilon}}[z_1,\dots,z_n]$ is negative semidefinite of rank one, by
    Lemma~\ref{lm:Hsum} the number of negative eigenvalues
    of~$H_{f+h_{-\varepsilon}}[z_1,\dots,z_n]$ may only equal~$\lambda$ or~$\lambda+1$. In
    particular,~$f\in\NN_{\kappa_1}$ for some~$\kappa_1$
    implies~$g_{-\varepsilon}\in\NN_{\kappa_3}$ with~$\kappa_3\le \kappa_1+1$,
    while~$g_{-\varepsilon}\in\NN_{\kappa_3}$ for some~$\kappa_3$ implies~$f\in\NN_{\kappa_1}$
    with~$\kappa_1\le \kappa_3$.

    If~$f\in\NN_{\kappa_1}$, the formula~\eqref{eq:Nkappa_J1} for the above choice
    of~$z_1,\dots,z_n\in\C_+$, ~$\varepsilon\mapsto-\varepsilon$, and sufficiently large~$y>0$
    yields that~$H_{f+h_{-\varepsilon}}[z_1,\dots,z_n,iy]$ has~$\kappa_1+1$ negative
    eigenvalues. So,~$g_{-\varepsilon}\in \NN_{\kappa_3}$ with~$\kappa_3\ge\kappa_1+1$, and
    hence~$\kappa_3=\kappa_1+1$.
\end{proof}

\begin{lemma}\label{lemma:JFracRational}
    Given numbers~$m,\lambda\in\N_0$, ~$\varepsilon_{2m+1}\in\{-1,1\}$ and a
    function~$\psi_{2m+1}\in \NN_\lambda$ satisfying~$\psi_{2m+1}(iy)=o(y)$ as~$y\to+\infty$,
    let
    \begin{equation}\label{eq:JFracRational}
        \psi(z)\coloneqq
        -\cfrac{\alpha_0}{z-\beta_0-\cfrac{\alpha_1\alpha_2}
            {z-\beta_1-\raisebox{-.45em}{$\ddots$}{{
                        \genfrac{}{}{0pt}{}{\vphantom{a}}{\displaystyle{}-\cfrac{\alpha_{2m-1}\alpha_{2m}}
                        {z-\beta_m+\varepsilon_{2m+1}\psi_{2m+1}(z)}}}}}}
        ,
    \end{equation}
    where the coefficients~$\beta_0,\dots,\beta_m$ and~$\alpha_0,\dots,\alpha_{2m}\ne0$ are real. Put
    \begin{equation}\label{eq:eps_alpha_rel}
        \varepsilon_{j}\coloneqq \varepsilon_{2m+1}\sign \prod_{k=j}^{2m} \alpha_k
        \quad\text{for}\quad j=0,1,\dots,2m,
    \end{equation}
    and let~$\kappa$ be the number of negative entries in the
    sequence~$\varepsilon_1,\varepsilon_3,\dots,\varepsilon_{2m+1}$, that
    is
    $$
       \kappa\coloneqq \frac {m+1}2 - \sum_{j=0}^{m}\frac{\varepsilon_{2j+1}}2.
    $$
    Then~$\varepsilon_0\psi\in \NN_{\kappa+\lambda}$.
    
    The particular  choice~$\psi_{2m+1}(z)\equiv 0$ and~$\varepsilon_{2m+1}=1$
    yields~$\varepsilon_0\psi\in \NN_{\kappa}$
    and~$-\varepsilon_0\psi\in \NN_{m+1-\kappa}=\NN_{\deg\psi-\kappa}$
    for~$\kappa=m/2 - \sum_{j=1}^{m}(\varepsilon_{2j-1}/2)$.
\end{lemma}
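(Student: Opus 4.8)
The plan is to obtain the continued fraction \eqref{eq:JFracRational} by applying Lemma~\ref{lemma:Nkappa_Jacobi1} once per level, peeling the fraction off from the bottom upward, and normalising signs so that the positivity constraints $\varepsilon,a,b>0$ of that lemma are met at every step.

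For $j=0,\dots,m$ let $\Psi_j(z)$ be the tail of \eqref{eq:JFracRational} beginning at level $j$ with leading partial numerator $\alpha_{2j}$; thus $\Psi_0=\psi$, $\Psi_m(z)=-\alpha_{2m}/(z-\beta_m+\varepsilon_{2m+1}\psi_{2m+1}(z))$, and $\Psi_j(z)=-\alpha_{2j}/(z-\beta_j+\alpha_{2j+1}\Psi_{j+1}(z))$ for $j<m$. Put $\widetilde\Psi_j\coloneqq\varepsilon_{2j}\Psi_j$. Using the identity $\varepsilon_j=\varepsilon_{j+1}\sign\alpha_j$, immediate from \eqref{eq:eps_alpha_rel}, one computes
$$\widetilde\Psi_j(z)=-\frac{\varepsilon_{2j+1}\lvert\alpha_{2j}\rvert}{z-\beta_j+\varepsilon_{2j+1}\lvert\alpha_{2j+1}\rvert\,\widetilde\Psi_{j+1}(z)}\quad(j<m),\qquad \widetilde\Psi_m(z)=-\frac{\varepsilon_{2m+1}\lvert\alpha_{2m}\rvert}{z-\beta_m+\varepsilon_{2m+1}\psi_{2m+1}(z)}.$$
Hence $\widetilde\Psi_j$ has precisely the form of the function $g_\varepsilon$ in Lemma~\ref{lemma:Nkappa_Jacobi1} with $a=1$, $c=\beta_j$, inner function $\widetilde\Psi_{j+1}$ (or $\psi_{2m+1}$ when $j=m$), and $\varepsilon=\lvert\alpha_{2j+1}\rvert$, $b=\lvert\alpha_{2j}/\alpha_{2j+1}\rvert$ (or $\varepsilon=1$, $b=\lvert\alpha_{2m}\rvert$ when $j=m$) in case $\varepsilon_{2j+1}=1$, and the form of $g_{-\varepsilon}$ with the same data in case $\varepsilon_{2j+1}=-1$.

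Next I would check, by downward induction on $j$, that each $\widetilde\Psi_j$ is meromorphic in $\C_+$ and satisfies $\widetilde\Psi_j(iy)=o(y)$ (in fact $\to0$), since the denominators above grow like $iy$ as $y\to+\infty$; so Lemma~\ref{lemma:Nkappa_Jacobi1} is applicable at every level. Setting $\widetilde\Psi_{m+1}\coloneqq\psi_{2m+1}\in\NN_\lambda$, that lemma then yields $\widetilde\Psi_j\in\NN_{\kappa_j}$ with $\kappa_j=\kappa_{j+1}+\frac{1-\varepsilon_{2j+1}}2$, and therefore $\varepsilon_0\psi=\widetilde\Psi_0\in\NN_{\lambda+\sum_{j=0}^m(1-\varepsilon_{2j+1})/2}=\NN_{\kappa+\lambda}$, which is the first assertion. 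For the second, take $\psi_{2m+1}\equiv0$, $\varepsilon_{2m+1}=1$; then $\lambda=0$ and the above gives $\varepsilon_0\psi\in\NN_\kappa$. Also $\psi$ is now a terminating $J$-fraction, hence a real rational function: the denominator $B_{m+1}$ of its last convergent obeys a monic three-term recurrence, so $\deg B_{m+1}=m+1$, and the determinant identity for convergents (all $\alpha_i\ne0$) makes $B_{m+1}$ coprime to the numerator, whence $\deg\psi=m+1$. Finally $-\varepsilon_0\psi\in\NN_{m+1-\kappa}=\NN_{\deg\psi-\kappa}$ will follow from Lemma~\ref{lm:Nkappa_rational}: its proof actually shows that for distinct $z_1,\dots,z_n$ with $n\ge m+1$ the inertia of $H_{\varepsilon_0\psi}(z_1,\dots,z_n)$ does not depend on $n$ or the chosen points (it equals the inertia of the fixed matrix $M$ built from the poles and residues of a linear-fractional transform of $\varepsilon_0\psi$); since $\varepsilon_0\psi\in\NN_\kappa$ this common inertia has exactly $\kappa$ negative and $m+1-\kappa$ positive entries, so $H_{-\varepsilon_0\psi}=-H_{\varepsilon_0\psi}$ has exactly $m+1-\kappa$ negative squares when $n\ge m+1$ and at most that many otherwise.

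The main obstacle is the sign bookkeeping in the second paragraph: the normalisation $\widetilde\Psi_j=\varepsilon_{2j}\Psi_j$ must be arranged so that at each level the leading partial numerator and the coefficient in front of the inner function $\widetilde\Psi_{j+1}$ acquire the \emph{same} sign $\varepsilon_{2j+1}$. This is exactly what makes Lemma~\ref{lemma:Nkappa_Jacobi1} applicable (in its $g_\varepsilon$ or $g_{-\varepsilon}$ guise) and at the same time produces the correct index increment $(1-\varepsilon_{2j+1})/2$; verifying that $\varepsilon_j=\varepsilon_{j+1}\sign\alpha_j$ delivers precisely this is the crux. A secondary point needing care is extracting from the proof of Lemma~\ref{lm:Nkappa_rational} the sharpened statement that the Pick-matrix inertia of a real rational function is eventually constant, which is what the final assertion rests on.
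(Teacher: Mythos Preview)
Your proof is correct and follows essentially the same route as the paper: recursively apply Lemma~\ref{lemma:Nkappa_Jacobi1} level by level after normalising signs so that the partial numerator and the inner function carry the common sign~$\varepsilon_{2j+1}$, then sum the increments~$(1-\varepsilon_{2j+1})/2$. The only real difference is in the terminating case: the paper obtains $-\varepsilon_0\psi\in\NN_{m+1-\kappa}$ by re-running the same induction with~$\varepsilon_{2m+1}=-1$ (which flips every $\varepsilon_j$), whereas you extract the constant-inertia statement from the proof of Lemma~\ref{lm:Nkappa_rational}---an alternative the paper itself notes in passing.
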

\begin{proof}
    It is enough only to consider the case~$m\ge 1$: for~$m=0$ the proof immediately follows
    from Lemma~\ref{lemma:Nkappa_Jacobi1}. In view of
    $\varepsilon_{0}=\varepsilon_{1}\sign\alpha_{0}$ and
    $\varepsilon_{2j-1}=\varepsilon_{2j+1}\sign(\alpha_{2j-1}\alpha_{2j})$ we see that
    \[
        \varepsilon_{0}\varepsilon_{1}\alpha_{0}=\varepsilon_{1}^2\left|\alpha_{0}\right|>0
        \quad\text{and}\quad
        \varepsilon_{2j-1}\varepsilon_{2j+1} \alpha_{2j-1}\alpha_{2j}
        =\varepsilon_{2j+1}^2\left|\alpha_{2j-1}\alpha_{2j}\right|>0
        ,
    \]
    for all~$j=1,2\dots,m$. In other
    words,~$\varepsilon_{0}\alpha_{0}=\varepsilon_{1}\left|\alpha_{0}\right|$
    and~$\alpha_{2j-1}\alpha_{2j}
    =\varepsilon_{2j-1}\varepsilon_{2j+1}\left|\alpha_{2j-1}\alpha_{2j}\right|$, so
    \[
        \varepsilon_{0}\psi(z)=
        -\cfrac{\varepsilon_{1}|\alpha_0|}
        {z-\beta_0-\cfrac{\varepsilon_{1}\varepsilon_{3}|\alpha_1\alpha_2|}
            {z-\beta_1-\raisebox{-.45em}{$\ddots$}
                {{\genfrac{}{}{0pt}{}{\vphantom{a}}{\displaystyle{}-\cfrac{\varepsilon_{2m-1}\varepsilon_{2m+1}
                            |\alpha_{2m-1}\alpha_{2m}|}
                            {z-\beta_m+\varepsilon_{2m+1}\psi_{2m+1}(z)}}}}}}.
    \]
    Define
    \begin{equation}\label{eq:psi_recur}
        \psi_{2j-1}(z)
        \coloneqq
        -\frac{\varepsilon_{2j+1}|\alpha_{2j-1}\alpha_{2j}|}
        {z-\beta_j+\varepsilon_{2j+1}\psi_{2j+1}(z)}
        ,\quad
        j=m,m-1,\dots,1.
    \end{equation}
    In particular, for each~$j$ the estimate~$\psi_{2j+1}=o(z)$ as~$z\to\infty$
    implies~$\psi_{2j-1}=O(z^{-1})$. By Lemma~\ref{lemma:Nkappa_Jacobi1},
    $\psi_{2m+1}\in \NN_0\iff\psi_{2m-1}\in \NN_{\kappa_{m}}$,
    where~$\kappa_{m}\coloneqq (1-\varepsilon_{2m+1})/2$. Analogously,
    Lemma~\ref{lemma:Nkappa_Jacobi1} yields
    that~$\psi_{2j+1}\in \NN_{\kappa_{j+1}} \iff \psi_{2j-1}\in \NN_{\kappa_{j}}$
    with~\( \kappa_{j}\coloneqq\kappa_{j+1}+(1-\varepsilon_{2j+1})/2\) as~$j$ runs
    over~$m-1,m-2,\dots,1$. As
    \[
        \varepsilon_{0}\psi(z)=
        -\frac{\varepsilon_{1}|\alpha_0|}{z-\beta_0+\varepsilon_{1}\psi_1(z)}
    \]
    Lemma~\ref{lemma:Nkappa_Jacobi1} also implies~$\varepsilon_0\psi\in \NN_\kappa$, where the
    index~$\kappa$ is determined by the signs of~$\varepsilon_{1},\dots,\varepsilon_{2m-1}$:
    \[
        \kappa
        =\kappa_{1}+\frac{1-\varepsilon_{1}}2
        =\kappa_{2}+\frac{2-\varepsilon_{1}-\varepsilon_{3}}2=\cdots
        =\frac {m+1}2 - \sum_{j=0}^{m}\frac{\varepsilon_{2j+1}}2
        .
    \]

    If~$\psi_{2m+1}(z)\equiv 0$, the corresponding assertion of the lemma follows by noting that
    $\varepsilon_{2m+1}\psi_{2m+1}\in \NN_{0}$ for both $\varepsilon_{2m+1}=1$
    and~$\varepsilon_{2m+1}=-1$. With~$\varepsilon_{2m+1}=1$, we
    obtain~$\varepsilon_0\psi\in N_\kappa$, where
    \[
        \kappa=\frac {m}2 -\sum_{j=0}^{m-1}\frac{\varepsilon_{2j+1}}2
        \quad\text{due to}\quad
        \frac{1-\varepsilon_{2j+m}}2=0.
    \]
    On the other hand, there are~$m+1-\kappa$ positive numbers
    among~$\varepsilon_1,\varepsilon_3,\dots,\varepsilon_{2m+1}$. So, the
    inclusion~$-\varepsilon_0\psi\in N_{m+1-\kappa}$ follows by an application of the previous
    part of the proof, where~$\varepsilon_{2m+1}$ and,
    hence,~$\varepsilon_{0},\varepsilon_{1},\dots,\varepsilon_{2m-1}$ are chosen to have the
    opposite signs. (Alternatively, one can use Lemma~\ref{lm:Nkappa_rational}.)
\end{proof}

Consider a terminating or non-terminating continued fraction
\begin{equation}\label{eq:phi_n_cf}
    \varphi_{n}(z)\coloneqq
    \cfrac{\alpha_{n}}{1-\cfrac{\alpha_{n+1}z}{1-\cfrac{\alpha_{n+2}z}{1-\raisebox{-.4em}{$\ddots$}}}}
    \quad\left(=\frac{\alpha_n}{1-z\varphi_{n+1}\left(z\right)}\right)
    ,\quad
    n=0,1,\dots.
\end{equation}
Fractions of this form are called C-fractions. Here $\varphi_{n}(z)$ is terminating
when~$\alpha_j=0$ for some~$j\ge n$, in that case we assume%
\footnote{This is possible since we start with a continued fraction~$\varphi_n(z)$ with finite
    coefficients (in our application to~$R_{0,1,1}(z)$, the finiteness follows from the
    assumption~$-c\notin\N_0$). If~$\varphi_n(z)$ would be introduced as a (formal) power
    series, and some coefficient~$\alpha_j$ calculated via~\eqref{eq:alpha_via_Delta} would
    vanish, then a neighbouring coefficient might need to be assumed infinite. In such a case,
    the series cannot be expressed as a regular C-fraction (although one can use the general
    C-fraction~\cite[\S~21]{Perron}, or follow~\cite{DeKo,KL3} an use the P-fractions).}%
~$\varphi_j(z)\equiv0$). For every~$n$,
\[
    \varphi_{n}(z)=
    \cfrac{\alpha_{n}}{1-\cfrac{\alpha_{n+1}z}{1-z\varphi_{n+2}\left(z\right)}}
    \sim
    \cfrac{\alpha_{n}(1-z\varphi_{n+2}\left(z\right))}%-\alpha_{n+1}z+\alpha_{n+1}z
    {1-z\varphi_{n+2}\left(z\right)-\alpha_{n+1}z}
    \sim
    \alpha_{n}+\cfrac{\alpha_{n}\alpha_{n+1}z}{1-\alpha_{n+1}z-z\varphi_{n+2}\left(z\right)},
\]
where~`$\sim$' denotes the `correspondence', i.e.\ that the corresponding (formal) power series
coincide. This is the so-called contraction of continued fractions, cf.~\cite[p.~135]{Perron};
it connects regular C-fractions and J-fractions as in~\eqref{eq:JFracRational}.
For our goals, it is enough to observe that
\begin{equation}\label{eq:psi_n_recur}
    \psi_{n}(z)\coloneqq\alpha_n-\varphi_{n}\left(\frac1z\right)
    \sim
    -\cfrac{\alpha_{n}\alpha_{n+1}\frac 1z}
    {1-\alpha_{n+1}\frac 1z-\frac 1z\varphi_{n+2}\left(\frac 1z\right)}
    \sim
    -\cfrac{\alpha_{n}\alpha_{n+1}}
    {z-\alpha_{n+1}-\alpha_{n+2}+\psi_{n+2}\left(z\right)}.
\end{equation}
This formula implies that defining 
\begin{equation}\label{eq:psi_psi1}
\psi(z)\coloneqq-\frac{\alpha_0}{z-\alpha_1+\psi_1(z)}
\end{equation}
we obtain \eqref{eq:JFracRational} with $\beta_0=\alpha_{1}$    and~$\beta_j=\alpha_{2j}+\alpha_{2j+1}$ for~$j=1,\dots,m$. On the other hand, in view of \eqref{eq:psi_n_recur} and \eqref{eq:phi_n_cf}, we have 
\begin{equation}\label{eq:psi_phi}
\psi(z)\sim-\frac{\alpha_0}{z-\phi_1(1/z)}=-\frac{\alpha_0/z}{1-(1/z)\phi_1(1/z)}=-\frac{1}{z}\phi_0\left(\frac{1}{z}\right).
\end{equation}

\begin{corollary}\label{cr:SFracRational}
    Consider the terminating continued fraction
    \[
        \varphi(z)=
        \cfrac{\alpha_0}{1-\cfrac{\alpha_1z}{1-\raisebox{-.45em}{$\ddots$}
                {{\genfrac{}{}{0pt}{}{\vphantom{a}}
                        {\displaystyle{}-\dfrac{\alpha_{k-1}z}{1-\alpha_{k}z}}}}}},
    \]
    whose coefficients~$\alpha_0,\dots,\alpha_k$ are real and non-zero.
    Denote~$\varepsilon_{j}\coloneqq \sign \prod_{l=j}^{k} \alpha_l$ and
    put~$\alpha_{k+1}\coloneqq 0$. Then
    \[
        \varepsilon_0\varphi\in \NN_\kappa^\lambda
        \quad\text{and}\quad
        -\varepsilon_0\varphi\in \NN_{K-\kappa}^{\Lambda-\lambda}
        ,
    % \quad
    % m\coloneqq\left\lfloor\frac k2\right\rfloor\quad
    % n\coloneqq\left\lfloor\frac{k-1}2\right\rfloor
    \]
    where~$\Lambda\coloneqq\left\lfloor\frac {k+2}2\right\rfloor$
    and~$K\coloneqq\left\lfloor\frac{k+1}2\right\rfloor$, while~$\lambda$ and~$\kappa$ are the
    numbers of negative entries in the
    sequences~\( \varepsilon_1,
    \varepsilon_3,\dots,\varepsilon_{2\Lambda-3},\alpha_{2\Lambda-1}\)
    and~\( \varepsilon_2, \varepsilon_4,\dots,\varepsilon_{2K-2},\alpha_{2K} \), respectively.
    \textup{(}Here,~$K$ and~$\Lambda$ are resp. equal to the degrees of the rational
    functions~$\varphi(z)$ and~$z\varphi(z)$.\textup{)}
\end{corollary}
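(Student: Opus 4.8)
The plan is to reduce the statement to the terminating case ($\psi_{2m+1}\equiv0$) of Lemma~\ref{lemma:JFracRational}, applied to two auxiliary J-fractions — one encoding $z\varphi(z)$ and one encoding $\varphi(z)$ — and then to transfer the resulting Nevanlinna indices back along the reflection $z\mapsto1/z$ by means of Lemma~\ref{lemma:Nkappa_Moebius2}. If $k=0$ the corollary is trivial ($\varphi\equiv\alpha_0$, hence $\varepsilon_0\varphi=|\alpha_0|\in\NN_0^0$), so I would assume $k\ge1$ from now on.

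First, by the contraction identities \eqref{eq:psi_n_recur}--\eqref{eq:psi_phi} established just above, the function $\psi(z):=-\tfrac1z\varphi_0(1/z)$ equals — the correspondence of power series being genuine equality since both sides are rational — a terminating J-fraction of the form \eqref{eq:JFracRational} with $m:=\lfloor k/2\rfloor$ levels, with $\psi_{2m+1}\equiv0$, $\varepsilon_{2m+1}=1$, and with ``$\alpha$-coefficients'' $\alpha_0,\alpha_1,\dots,\alpha_{2m}$. Unfolding the C-fraction one further step in the same way, starting from $z\psi(z)=-\varphi_0(1/z)$, one gets
\[
\chi(z):=\alpha_0+z\psi(z)=\alpha_0-\varphi_0(1/z)
=-\cfrac{\alpha_0\alpha_1}{z-(\alpha_1+\alpha_2)-\cfrac{\alpha_2\alpha_3}{z-(\alpha_3+\alpha_4)-\cdots}}\,,
\]
which is again a terminating J-fraction of the form \eqref{eq:JFracRational}, now with $m':=K-1$ levels, with its last ``$\psi$'' vanishing, and with ``$\alpha$-coefficients'' $\alpha_0\alpha_1,\alpha_2,\alpha_3,\dots,\alpha_{2m'+1}$, where $2m'+1=k-1$ for even $k$ and $2m'+1=k$ for odd $k$. (The numbers of levels here are determined purely by when the nonzero coefficients run out, so there is no degeneracy; the $\beta$-coefficients are irrelevant to the Nevanlinna indices by Lemma~\ref{lemma:JFracRational}.)

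Applying the terminating case of Lemma~\ref{lemma:JFracRational} to $\psi$ and to $\chi$ yields $\deg\psi=m+1=\Lambda$, $\deg\chi=m'+1=K$, and explicit indices: with $\varepsilon^{\psi}_j:=\sign\prod_{l=j}^{2m}\alpha_l$ one has $\varepsilon^{\psi}_0\psi\in\NN_{\kappa_\psi}$ and $-\varepsilon^{\psi}_0\psi\in\NN_{\Lambda-\kappa_\psi}$, where $\kappa_\psi$ is the number of negatives among $\varepsilon^{\psi}_1,\varepsilon^{\psi}_3,\dots,\varepsilon^{\psi}_{2m-1}$; and similarly $\varepsilon^{\chi}_0\chi\in\NN_{\kappa_\chi}$, $-\varepsilon^{\chi}_0\chi\in\NN_{K-\kappa_\chi}$ for the analogous signs $\varepsilon^{\chi}_j$ of $\chi$. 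Since $z\varphi_0(z)=-\psi(1/z)$ and $\varphi_0(z)=\alpha_0-\chi(1/z)$, the clause $h(z)=-f(1/z)$ of Lemma~\ref{lemma:Nkappa_Moebius2}, combined with the triviality that adding a real constant does not change the Pick matrix and hence the Nevanlinna class, shows that $z\varphi_0$ carries exactly the indices of $\psi$ and $\varphi_0$ carries exactly the indices of $\chi$ (so in particular $\deg\varphi_0=K$ and $\deg(z\varphi_0)=\Lambda$, which also follows from Lemma~\ref{lm:Nkappa_rational}); transporting the two ``$-(\cdot)$'' conclusions in the same manner produces the ``$-\varepsilon_0\varphi$'' part.

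The remaining step — and the only real obstacle — is the bookkeeping that rewrites $\varepsilon^{\psi}_0,\varepsilon^{\chi}_0,\kappa_\psi,\kappa_\chi$ in terms of the quantities $\varepsilon_j=\sign\prod_{l=j}^{k}\alpha_l$ of the statement, so as to identify the classes above with $\NN_\kappa^\lambda$ and $\NN_{K-\kappa}^{\Lambda-\lambda}$. The parity of $k$ produces an asymmetry: in each parity exactly one of $\varepsilon^{\psi}_0,\varepsilon^{\chi}_0$ equals $\varepsilon_0$ and the other equals $\varepsilon_0\sign\alpha_k$, and correspondingly the strings $(\varepsilon^{\psi}_{2j-1})$ and $(\varepsilon^{\chi}_{2j-1})$ either coincide entrywise with $(\varepsilon_{2j-1})_{j=1}^{\Lambda-1}$, $(\varepsilon_{2j})_{j=1}^{K-1}$ or differ from them by the global sign $\sign\alpha_k$. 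I would settle this by splitting on $\sign\alpha_k$: for $\sign\alpha_k=+1$ the identification is verbatim; for $\sign\alpha_k=-1$ one simultaneously passes from the ``$\varepsilon^{\psi}_0\psi$'' (resp.\ ``$\varepsilon^{\chi}_0\chi$'') conclusion of Lemma~\ref{lemma:JFracRational} to its ``$-\varepsilon^{\psi}_0\psi$'' (resp.\ ``$-\varepsilon^{\chi}_0\chi$'') conclusion — replacing $\kappa_\psi$ by $\Lambda-\kappa_\psi$ (resp.\ $\kappa_\chi$ by $K-\kappa_\chi$) — and flips every sign in a length-$(\Lambda-1)$ (resp.\ length-$(K-1)$) string, turning its count of negatives into ``length minus that count''; the two effects are reconciled exactly by the trailing entry $\alpha_{2\Lambda-1}$ (resp.\ $\alpha_{2K}$) appended in the statement, which equals $\alpha_k<0$ precisely when $k$ is odd (resp.\ even) and $\sign\alpha_k=-1$, and equals $0$ otherwise (this is why those entries are written as coefficients rather than as $\varepsilon$'s). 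Verifying the four resulting sub-cases — $k$ even or odd, $\sign\alpha_k=\pm1$ — then concludes the argument.
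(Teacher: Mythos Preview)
Your approach is correct and essentially the same as the paper's: both pass from $\varphi$ to the two J-fractions $\psi(z)=-z^{-1}\varphi(1/z)$ and $\psi_0(z)=\alpha_0-\varphi(1/z)$ (your $\chi$), apply Lemma~\ref{lemma:JFracRational} to each, and transfer the indices back via Lemma~\ref{lemma:Nkappa_Moebius2}. The only difference is in the final bookkeeping: the paper splits on the parity of $k$ and invokes the lemma in its \emph{general} form with $\varepsilon_{2m+1}=\sign\alpha_k$ (and $\psi_{2m+1}\equiv0$), so that the lemma's $\varepsilon_j$ already coincide with the statement's and no sign-realignment is needed; your route through the special case $\varepsilon_{2m+1}=1$ is equally valid but forces the four-sub-case accounting you describe (one small slip there: the trailing entry $\alpha_{2\Lambda-1}$ equals $\alpha_k$, not $0$, whenever $k$ is odd regardless of its sign --- what you actually need, and what your argument uses, is that it is \emph{negative} precisely when $k$ is odd and $\alpha_k<0$).
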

\begin{proof}
    Suppose that~$\varphi(z)\not\equiv0$, otherwise the corollary is trivial.
    Then~$\varphi_{2m+1}(z)=\alpha_{2m+1}$ for~$m\coloneqq\Lambda-1$ in view
    of~\eqref{eq:phi_n_cf}; hence,~\eqref{eq:psi_n_recur} implies~$\psi_{2m+1}(z)\equiv 0$ and,
    on account of \eqref{eq:psi_psi1}--\eqref{eq:psi_phi},
    \[
        \psi(z)\coloneqq
        -\frac 1z\varphi\left(\frac 1z\right)
        =
        -\cfrac{\alpha_0\frac 1z}{1-\frac 1z\varphi_1\left(\frac 1z\right)}
        =
        -\cfrac{\alpha_0}{z-\alpha_1+\psi_1\left(z\right)}
        ,
    \]
    which is~\eqref{eq:JFracRational} with~$\beta_0=\alpha_{1}$
    and~$\beta_j=\alpha_{2j}+\alpha_{2j+1}$ for~$j=1,\dots,m$. By
    Lemma~\ref{lemma:Nkappa_Moebius2}, if one of the
    functions~$\varepsilon_0z\varphi(z)=\varepsilon_0z\varphi_0(z)$ or~$\varepsilon_0\psi(z)$
    lies in~$\NN_\lambda$, then the other is also in~$\NN_\lambda$.
    Analogously,~$-\varepsilon_0z\varphi(z)$ lies in~$\NN_{\Lambda-\lambda}$ if and only
    if~$-\varepsilon_0\psi\in \NN_{\Lambda-\lambda}=\NN_{m+1-\lambda}$. For~$k=2m$,
    Lemma~\ref{lemma:JFracRational} with~$\varepsilon_{2m+1}=1$ and~$\psi_{2m+1}(z)\equiv 0$
    applied to~$\psi(z)$ immediately yields~$\varepsilon_0\psi\in \NN_\lambda$
    and~$-\varepsilon_0\psi\in \NN_{m+1-\lambda}$, where~$\lambda$ is as required.

    For~$k=2m+1$, we also put~$\psi_{2m+1}(z)\equiv 0$ and apply
    Lemma~\ref{lemma:JFracRational}: to get~$\varepsilon_0\psi\in \NN_\lambda$ it is enough to
    take~$\varepsilon_{2m+1}=\sign\alpha_{2m+1}$, which makes the present values
    of~$\varepsilon_j$ equal to those of Lemma~\ref{lemma:JFracRational}. In turn, the
    inclusion~$-\varepsilon_0\psi\in \NN_{m+1-\lambda}$ follows by taking the opposite signs
    of~$\varepsilon_{2m+1}$ and,
    hence, of~$\varepsilon_{0},\varepsilon_{1},\dots,\varepsilon_{2m-1}$ as well.
            
    Observe that~$\psi_0(z)$ defined by~\eqref{eq:psi_n_recur} may be expressed as
    \begin{equation*}
        \psi_0(z)
        =
        -\cfrac{\alpha_0\alpha_1}{z-(\alpha_1+\alpha_2)-\cfrac{\alpha_2\alpha_3}{z-(\alpha_3+\alpha_4)
                -\raisebox{-.45em}{$\ddots$}{{\genfrac{}{}{0pt}{}{\vphantom{a}}
                        {\displaystyle{}-\frac{\alpha_{2K-2}\alpha_{2K-1}}
                            {z-(\alpha_{2K-1}+\alpha_{2K})}}}}}}
        .
    \end{equation*}
    By Lemmas~\ref{lemma:Nkappa_Moebius} and~\ref{lemma:Nkappa_Moebius2}, the required
    inclusions~$\varepsilon_0\varphi\in \NN_\kappa$
    and~$-\varepsilon_0\varphi\in \NN_{K-\kappa}$ are equivalent
    to~$\varepsilon_0\psi_0\in \NN_\kappa$ and~$-\varepsilon_0\psi_0\in \NN_{K-\kappa}$.
    If~$k=2K-1$, the latter pair of inclusions readily follows from
    Lemma~\ref{lemma:JFracRational} (in which we take~$\varepsilon_{2m+1}=1$ and
    $\psi_{2m+1}(z)\equiv0$) applied to~$\psi_0(z)$.

    If~$k=2K$, letting~$\varepsilon_{2K}=\sign\alpha_{2K}$ and~$\psi_{2K}(z)\equiv 0$
    yields~$\varepsilon_0\psi_0\in \NN_\kappa$. The
    inclusion~$-\varepsilon_0\psi_0\in \NN_{K-\kappa}$ follows in a similar way by taking the
    opposite signs of~$\varepsilon_{2K}$ and,
    hence,~$\varepsilon_{0},\varepsilon_{1},\dots,\varepsilon_{2K-2}$.
\end{proof}

\begin{theorem}\label{th:N_kl_CFraction}
    Let the continued fraction~$\varphi(z)\coloneqq \varphi_0(z)$ defined in \eqref{eq:phi_n_cf}
    be non-terminating and have real coefficients satisfying $\sup_{n}|\alpha_n|<\infty$. If
    there exists~$m\in\N$ such that~$\alpha_{n}>0$ for all~$n>2m$,
    then~$\varepsilon_0\varphi\in \NN_\kappa^\lambda$
    and~$-\varepsilon_0\varphi\notin\NU$, where~$\varepsilon_0$ is found from
    \begin{equation}\label{eq:eps_alpha_rel_inf}
        \varepsilon_{j}\coloneqq \sign\prod_{k=j}^{2m} \alpha_k=\prod_{k=j}^\infty \sign \alpha_k,
    \end{equation}
    and $\lambda$ and $\kappa$ are the number of negative entries in the sequences
    \(\varepsilon_1, \varepsilon_3,\dots,\varepsilon_{2m-1}\) and
    \( \varepsilon_2, \varepsilon_4,\dots,\varepsilon_{2m} \), respectively. Conversely,%
    \footnote{We still may have~$\varepsilon\varphi\in\NU$ when there are infinitely
        large~$n$ such that~$\alpha_{n}<0$: for instance, if~$\alpha_j=(-1)^j$ for all~$j$.} %
    $\varepsilon\varphi\notin\SU$ for both~$\varepsilon\in\{-1,1\}$ if and only if there are
    infinitely large~$n$ such that~$\alpha_{n}<0$.
\end{theorem}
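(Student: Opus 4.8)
The plan is to reduce everything to the J-fraction machinery already developed, chiefly Lemma~\ref{lemma:Nkappa_Jacobi1} and Lemma~\ref{lemma:JFracRational}, together with the convergence facts in Theorem~\ref{th:CF_conv}. First I would record the passage from the C-fraction~$\varphi(z)$ to the associated object~$\psi(z)=-\frac1z\varphi(1/z)$ via~\eqref{eq:psi_n_recur}--\eqref{eq:psi_phi}; by Lemma~\ref{lemma:Nkappa_Moebius2} the membership $\varepsilon_0\varphi\in\NN_\kappa^\lambda$ is equivalent to $\varepsilon_0\psi\in\NN_\lambda$ together with $\varepsilon_0z\psi\in\NN_\kappa$ (recall $z\psi(z)\sim-\varphi_0(1/z)$, so $\varepsilon_0z\psi$ corresponds to $-\varepsilon_0\varphi(1/z)$, and one re-applies Lemma~\ref{lemma:Nkappa_Moebius2}). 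Then I would split the continued fraction at level $2m+1$: since $\alpha_n>0$ for all $n>2m$, the tail $\varphi_{2m+1}(z)$ has all positive coefficients and $\sup_n|\alpha_n|<\infty$, so by Theorem~\ref{th:CF_conv}\eqref{item:Stieltjes_H} (applied to the tail, after the harmless rescaling $z\mapsto\gamma z$) the function $\psi_{2m+1}(z)$ obtained from the tail is, up to sign and affine normalization, a Stieltjes transform; in particular $\psi_{2m+1}\in\NN_0$ and $\psi_{2m+1}(iy)=o(y)$. This is exactly the input required by Lemma~\ref{lemma:JFracRational}: applying it with this genuine $\psi_{2m+1}\in\NN_\lambda$ where $\lambda=0$ and with the finite block of coefficients $\alpha_0,\dots,\alpha_{2m}$ yields $\varepsilon_0\psi\in\NN_\kappa$ with $\kappa$ the number of negative entries among $\varepsilon_1,\varepsilon_3,\dots,\varepsilon_{2m-1}$; here one must check that~\eqref{eq:eps_alpha_rel} with $\varepsilon_{2m+1}=1$ reproduces~\eqref{eq:eps_alpha_rel_inf}, which it does because all $\alpha_k$ with $k>2m$ are positive. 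An entirely parallel application to $\psi_0(z)$ (the contracted J-fraction whose leading coefficient block is $\alpha_0\alpha_1,\alpha_2\alpha_3,\dots$) gives $\varepsilon_0z\psi\in\NN_\kappa$ with $\kappa$ counting negative entries among $\varepsilon_2,\varepsilon_4,\dots,\varepsilon_{2m}$, matching the statement; and the "$-\varepsilon_0\varphi\notin\NU$" assertion follows from the last clause of Lemma~\ref{lemma:Nkappa_Jacobi1}, namely $g_{-\varepsilon}\notin\NN_0$, propagated up through the finitely many J-fraction steps (a function that is not in any $\NN_\varkappa$ at the bottom stays outside $\NU$ after the Möbius steps of Lemma~\ref{lemma:Nkappa_Jacobi1}, since each step changes the index by a bounded amount but $-\varepsilon_0\psi$ provably has unboundedly many negative squares — equivalently, one argues that $-\varepsilon_0\varphi$ restricted to $(1,\gamma)$ would have to be in some Stieltjes-type class, contradicting oscillation).

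For the converse I would argue the contrapositive: suppose there are only finitely many $n$ with $\alpha_n<0$, i.e. $\alpha_n>0$ for all $n$ large; then the forward direction just proved already shows $\varepsilon_0\varphi\in\NN_\kappa^\lambda\subset\SU$ for the appropriate sign $\varepsilon_0$, so it is \emph{not} the case that $\varepsilon\varphi\notin\SU$ for both signs. Conversely, if $\alpha_n<0$ for infinitely many $n$, I would show $\varepsilon\varphi\notin\SU$ for either sign by noting that any $f\in\SU$ has $f$ meromorphic on $\C\setminus[0,+\infty)$ (after the standard change of variable) with at most finitely many poles in any compact subset of the cut plane and a definite sign behaviour of $\Im f$ near the cut; but the tail-truncation analysis shows that if infinitely many $\alpha_n$ are negative, the even/odd convergents of $\varphi(z)$ straddle without settling — more precisely, one uses Theorem~\ref{th:CF_conv}\eqref{item:Worpitzky}--\eqref{item:VanVleck} to see $\varphi$ is still analytic near $z=0$, but the associated Hankel-type quadratic forms (cf.~the remark relating $H_f$ to $D_n^{(0)}$) acquire arbitrarily many negative eigenvalues as $n\to\infty$ whenever the sign pattern of the $\alpha$'s fails to stabilize, so $\varepsilon\varphi\notin\NU\supset\SU$; alternatively and more cleanly, invoke that a convergent regular C-fraction with infinitely many negative $\alpha_n$ cannot represent a function whose restriction to a real interval is, up to rational factors, a Stieltjes transform, because the sign of $\alpha_n$ for large $n$ is forced by the positivity of the representing measure.

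The main obstacle I anticipate is the converse direction, specifically making rigorous the claim that infinitely many negative $\alpha_n$ forces $\varepsilon\varphi\notin\SU$. The subtlety, flagged in the footnote of the statement, is that such a fraction can still lie in $\NU$ (e.g. $\alpha_j=(-1)^j$), so one cannot simply count negative squares crudely; the argument must genuinely use the extra constraint built into $\SU=\bigcup\NN_\kappa^\lambda$ (that $zf$ is also generalized Nevanlinna), which pins down the asymptotic sign of the $\alpha_n$. I would handle this by a tail argument: if $\varepsilon_0\varphi\in\NN_\kappa^\lambda$ then its Stieltjes-type continued-fraction tail $\varphi_{2N}(z)$ for large $N$ must lie in $\NN_0^0$ (the finitely many negative squares are "used up" in the first $N$ steps, by Lemma~\ref{lemma:Nkappa_Jacobi1} run in reverse), and a genuine Stieltjes function has a convergent C-fraction with all coefficients positive — hence $\alpha_n>0$ for all $n\ge 2N$, contradicting the hypothesis. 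Getting the bookkeeping of "which step absorbs which negative square" exactly right, and ensuring the tail is literally in the Stieltjes class rather than merely in some $\NN_\kappa^\lambda$, is the delicate point; Theorem~\ref{th:CF_conv}\eqref{item:Stieltjes_H} and the reverse implications in Lemma~\ref{lemma:Nkappa_Jacobi1} are the tools that make it go through.
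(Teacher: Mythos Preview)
Your approach is essentially the one the paper takes. For the forward direction, the paper likewise invokes Theorem~\ref{th:CF_conv}\eqref{item:Stieltjes_H} on the positive tail to place both $\psi_{2m+1}$ and $\psi_{2m+2}$ in $\NN_0$, feeds these as initial data into Lemma~\ref{lemma:JFracRational} (once for $\psi$, once for $\psi_0$), and translates back to $\varphi$ via Lemmas~\ref{lemma:Nkappa_Moebius} and~\ref{lemma:Nkappa_Moebius2}. For the converse, the paper uses precisely your ``tail argument'': assuming $\varepsilon\varphi\in\NN_\kappa^\lambda$, it runs Lemma~\ref{lemma:Nkappa_Jacobi1} in reverse down both the odd and even chains, observes that the resulting index sequences are nonnegative and non-increasing and hence stabilize, deduces $\varepsilon_n=1$ (equivalently $\alpha_n>0$) for all large $n$, and then appeals once more to Theorem~\ref{th:CF_conv}\eqref{item:Stieltjes_H} on the now-positive tail to confirm the limits $\kappa_*=\lambda_*=0$. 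Your first attempt at the converse via Hankel forms and oscillation is, as you yourself note, a dead end; the paper does not go near it.

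Two small points worth tightening. First, the parallel application for the $\kappa$-index uses the \emph{even} tail $\psi_{2m+2}$ rather than $\psi_{2m+1}$ as input, since $\psi_0$ is linked to even-indexed $\psi_n$ by the contraction~\eqref{eq:psi_n_recur}. Second, for $-\varepsilon_0\varphi\notin\NU$: your ``propagate up'' instinct is right and can be made precise. The tail $\psi_{2m+2}\in\NN_0$ is non-rational (the C-fraction is non-terminating), so $-\psi_{2m+2}\notin\NU$ by~\eqref{eq:f_Nkappa_pos} and Stieltjes--Perron; the equivalences in Lemma~\ref{lemma:Nkappa_Jacobi1} then carry ``$\notin\NU$'' up through the finitely many steps to $-\varepsilon_0\psi_0\notin\NU$, i.e.\ $-\varepsilon_0\varphi\notin\NU$. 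The paper leaves this particular point implicit.
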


\begin{proof}%[Proof of Lemma~\ref{lemma:JfracGen}]
    Theorem~\ref{th:CF_conv}~\eqref{item:Worpitzky} says that, for each~$n=0,1,\dots$ and any
    positive~$\gamma<\gamma_0\coloneqq 1/(4\sup_n|\alpha_n|)$, the continued
    fraction~$\varphi_n(z)$ defined in~\eqref{eq:phi_n_cf} uniformly converges in the
    disc~$D_\gamma\coloneqq\big\{z\in\C:|z|\le\gamma\big\}$ to a function analytic
    in~$D_\gamma$.

    If~$m$ is a number satisfying the assumptions of the theorem, then the functions determined
    by the continued fractions~$\varphi_{2m+1}(z)$ and~$\varphi_{2m+2}(z)$ allow analytic
    continuations to~$\C\setminus[\gamma_0,+\infty)$ by
    Theorem~\ref{th:CF_conv}~\eqref{item:Stieltjes_H}; moreover, on denoting these continuations
    also by~$\varphi_{2m+1}(z)$ and~$\varphi_{2m+2}(z)$, respectively, we
    have~$\varphi_{2m+1},\varphi_{2m+2}\in\NN_0^0$. Then, according to \eqref{eq:psi_n_recur}
    \begin{equation}\label{eq:phi_via_psi}
        \varphi_{2m+1}\left(z\right)=
        \alpha_{2m+1}-\psi_{2m+1}\left(\frac1z\right)
        \quad
        \text{and}
        \quad
        \varphi_{2m+2}\left(z\right)=
        \alpha_{2m+2}-\psi_{2m+2}\left(\frac1z\right),
    \end{equation}
    Lemmas~\ref{lemma:Nkappa_Moebius}~and~\ref{lemma:Nkappa_Moebius2} yields~$\psi_{2m+1}\in \NN_0$ and~$\psi_{2m+2}\in \NN_0$.
    So, if we use each of the last two functions and~$\varepsilon_{2m+1}=\varepsilon_{2m+2}=1$ as the
    `initial data' for Lemma~\ref{lemma:JFracRational}, we
    obtain~$\varepsilon_0\psi_0\in \NN_\kappa$ and~$\varepsilon_0\psi\in \NN_\lambda$ with $\kappa$ and $\lambda$ as defined in the theorem. Since
    \[
        z\varphi(z) = -\psi\Big(\frac{1}{z}\Big)
        \quad
        \text{and}
        \quad
        \varphi\left(z\right)=
        \varphi_{0}\left(z\right)=
        \alpha_0-\psi_0\left(\frac1z\right),
    \]
    Lemmas~\ref{lemma:Nkappa_Moebius}~and~\ref{lemma:Nkappa_Moebius2} imply that the pair of
    conditions~$\varepsilon_0\psi\in \NN_{\lambda}$ and~$\varepsilon_0\psi_0\in \NN_{\kappa}$ is
    equivalent to~$\varepsilon_0\varphi\in \NN_\kappa^\lambda$.
    
    Now let us prove the converse: assuming~$\varepsilon\varphi\in \NN_\kappa^\lambda$ for a
    certain~$\varepsilon=\pm1$ and~$\kappa,\lambda\in\N_0$, and
    hence~$\varepsilon\psi\in \NN_\lambda$ and~$\varepsilon\psi_0\in \NN_\kappa$, we will show
    that there is some~$m>0$ such that $\alpha_n>0$ for all~$n>2m$, and that the signs
    of~$\alpha_1,\dots\alpha_{2m}$ determine~$\kappa,\lambda$ via~\eqref{eq:eps_alpha_rel_inf}.
    Put~$\varepsilon_0\coloneqq\varepsilon$, ~$\varepsilon_1\coloneqq\varepsilon\sign\alpha_0$
    and~$\varepsilon_2\coloneqq\varepsilon_1\sign\alpha_1$ so that in view of \eqref{eq:psi_n_recur} and \eqref{eq:psi_psi1}
    \[
        \varepsilon\psi(z)=
        -\cfrac{\varepsilon_{1}\left|\alpha_{0}\right|}
        {z-\alpha_{1}+\varepsilon_{1}^2\psi_{1}(z)}
        \quad
        \text{and}
        \quad
        \varepsilon\psi_0(z)=
        -\cfrac{\varepsilon_{2}\left|\alpha_{0}\alpha_{1}\right|}
        {z-(\alpha_{1}+\alpha_2)+\varepsilon_{2}^2\psi_{2}(z)}
        .
    \]
    
    According to Lemma~\ref{lemma:Nkappa_Jacobi1}, we
    have~$\varepsilon_1\psi_1\in\NN_{\kappa_1}$ and~$\varepsilon_2\psi_2\in\NN_{\kappa_2}$,
    where~$\kappa_1=\lambda-\frac{1-\varepsilon_1}2$
    and~$\kappa_2=\kappa-\frac{1-\varepsilon_2}2$. Analogously,
    given~$\varepsilon_{n-2}\psi_{n-2}\in \NN_{\kappa_{n-2}}$ with some
    known~$\varepsilon_{n-2}=\pm 1,$ and~$\kappa_{n-2}$, we
    put~$\varepsilon_{n}\coloneqq\varepsilon_{n-2}\sign\left(\alpha_{n-2}\alpha_{n-1}\right)$
    and~$\kappa_n\coloneqq \kappa_{n-2}-\frac{1-\varepsilon_n}2$, so that~$\psi_n(z)$ satisfies
    \[
        \varepsilon_{n-2}\psi_{n-2}(z)
        =-\frac{\varepsilon_{n-2}\alpha_{n-2}\alpha_{n-1}}
        {z-(\alpha_{n-1}+\alpha_{n})+\psi_{n}(z)}
        =-\frac{\varepsilon_{n}
            \left|\alpha_{n-2}\alpha_{n-1}\right|}
        {z-(\alpha_{n-1}+\alpha_{n})+\varepsilon_{n}^2\psi_{n}(z)}
        \implies \varepsilon_{n}\psi_{n}(z)\in \NN_{\kappa_n}
    \]
    due to Lemma~\ref{lemma:Nkappa_Jacobi1}. The sequence of indices~$\kappa_1,\kappa_3,\dots$
    is nonnegative and non-increasing, as well as~$\kappa_2,\kappa_4,\dots$. So, proceeding further
    this way, one obtains
    \[
        \kappa_*=\lim_{n\to\infty}\kappa_{2n}\ge0
        \quad\text{and}\quad
        \lambda_*=\lim_{n\to\infty}\kappa_{2n+1}\ge0
        .
    \]
    As these sequences are built from integers, there exists~$m>0$ such that
    \[
        \kappa_{2m}=\kappa_{2m+2}=\cdots=\kappa_*,\quad
        \kappa_{2m+1}=\kappa_{2m+3}=\cdots=\lambda_*,
        \quad\text{and thus}\quad
        \varepsilon_{2m+1}=\varepsilon_{2m+2}=\cdots=1.
    \]
    For~$n>1$ the formulae~$\varepsilon_0=\varepsilon$,
    ~$\varepsilon_1=\varepsilon\sign\alpha_0$
    and~$\varepsilon_{n}=\varepsilon_{n-2}\sign\left(\alpha_{n-2}\alpha_{n-1}\right)$ give
    \[
        \varepsilon_{n}
        =\varepsilon_{n-2}\prod_{j=n-2}^{n-1}\sign\alpha_j
        =\varepsilon_{n-4}\prod_{j=n-4}^{n-1}\sign\alpha_j
        =\cdots
        =\varepsilon_{0}\prod_{j=0}^{n-1}\sign\alpha_j
    \]
    and, therefore,~$\sign\alpha_{n}=\varepsilon_{n}\varepsilon_{n+1}=1$ as soon as~$n>2m$. In
    particular, our new definition of~$\varepsilon_j$ coincides with the above
    formula~\eqref{eq:eps_alpha_rel_inf}.

    Now, to complete the proof it is enough to show that~$\kappa_*=\lambda_*=0$. By virtue of
    Theorem~\ref{th:CF_conv}~\eqref{item:Stieltjes_H}, both continued
    fractions~$\varphi_{2m+1}(z)$ and~$\varphi_{2m+2}(z)$ converge
    in~$\C\setminus[\gamma_0,+\infty)$ to analytic functions; moreover, on keeping the same
    labelling for these functions, we also have~$\varphi_{2m+1},\varphi_{2m+2}\in \NN_0^0$. By
    Lemma~\ref{lemma:Nkappa_Moebius2}, the expressions~\eqref{eq:phi_via_psi}
    yield~$\psi_{2m+1}\in\NN_{0}$ and~$\psi_{2m+2}\in\NN_{0}$. The latter, however, contradicts
    to~$\psi_{2m+1}\in\NN_{\lambda_{2m+1}}=\NN_{\lambda_*}$
    and~$\psi_{2m+2}\in\NN_{\kappa_{2m+2}}=\NN_{\kappa_*}$ unless~$\kappa_*=\lambda_*=0$.
\end{proof}
\begin{remark}
    The condition~$\sup_n|\alpha_n|<\infty$ may be relaxed. Nevertheless,
    Theorem~\ref{th:N_kl_CFraction} fails if one drops this condition completely: roughly
    speaking,~$\lambda_*$ and~$\kappa_*$ in the proof could remain positive. This depends on
    whether the corresponding moment problem determinate or not, see~\cite[Theorem~5]{Derkach}
    for the details.
\end{remark}

\begin{proof}[Proof of Theorem~\ref{th:R011Nkappa}]
    Let~\(\{-a,-b-1,a-c-1,b-c\}\cap\N_0\ne\varnothing.\) Then the
    coefficients~\eqref{eq:gen_cont_fr_cf} of the C-fraction~\eqref{eq:gen_cont_fr}
    corresponding to~$R_{0,1,1}(z)$ satisfy~$\alpha_0,\dots,\alpha_s\ne 0$ and~$\alpha_{s+1}=0$,
    where
    \[
        s_1\coloneqq\min\big(\{-a,b-c\}\cap\N_0\big),
        \quad
        s_2\coloneqq\min\big(\{-b,a-c\}\cap\N\big)
        \quad\text{and}\quad
        s\coloneqq\min\{2s_1,2s_2-1\};
    \]
    here we assume that~$\min(\varnothing)=+\infty$. In order to apply
    Corollary~\ref{cr:SFracRational}, let us calculate the
    numbers~$\varepsilon_j=\sign\prod_{l=j}^s\alpha_l$ for~$j=0,\dots,s$.

    Due to~$\alpha_0=1$, we have~$ \varepsilon\coloneqq\varepsilon_0 =1$ if~$s=0$, and
    \( \varepsilon\coloneqq\varepsilon_0 =\sign(\alpha_{0})\varepsilon_{1} =\varepsilon_{1} \) otherwise.
    Moreover, given any integer numbers~$m>j\ge 0$
    \begin{equation}\label{eq:finite_prod_alphas}
        \begin{aligned}
            \xi_j(m)
            \coloneqq{}&
            \sign\prod_{l=2j+1}^{2m}\alpha_l
            =\sign\prod_{n=j}^{m-1}\alpha_{2n+1}\alpha_{2n+2}
            \\
            ={}&
            \sign\prod_{n=j}^{m-1}
            \frac{(a+n)(c-b+n)\cdot(b+n+1)(c-a+n+1)}{(c+2n)(c+2n+1)\cdot(c+2n+1)(c+2n+2)}
            \\
            ={}&
            \sign
            \frac{(a+j)_{m-j}(c-b+j)_{m-j}(b+j+1)_{m-j}(c-a+j+1)_{m-j}}
            {(c+2j)\big[(c+2j+1)_{2(m-j)-1}\big]^2(c+2m)}
            \\
            ={}&
            \sign
            \frac{(a+j)_{m-j}(c-b+j)_{m-j}(b+j+1)_{m-j}(c-a+j+1)_{m-j}}
            {(c+2j)(c+2m)}
            ,
        \end{aligned}
    \end{equation}

    In the case~$s=2s_1$ we have~$\varepsilon_{2j+1} = \xi_j(s_1)$ and
    \[
        \varepsilon_{2j+2}=\frac{\varepsilon_{2j+1}}{\sign\alpha_{2j+1}}
        =
        \sign
        \frac{(a+j+1)_{s_1-j-1}(c-b+j+1)_{s_1-j-1}(b+j+1)_{s_1-j}(c-a+j+1)_{s_1-j}}
        {(c+2j+1)(c+2s_1)}
    \]
    for~$j=0,\dots,s_1-1$. Corollary~\ref{cr:SFracRational} yields
    \[
        \varepsilon_1 R_{0,1,1}\in\NN_{\kappa}^\lambda
                \quad\text{and}\quad
        -\varepsilon_1 R_{0,1,1}\in\NN_{K-\kappa}^{\Lambda-\lambda},
    \]
    where~$\Lambda=\big\lfloor\frac{s+2}{2}\big\rfloor=s_1+1$,
    ~$K=\big\lfloor\frac{s+1}{2}\big\rfloor=s_1$, and~$\lambda$ (resp.~$\kappa$) is the number
    of negative entries in the sequence $\varepsilon_1$, $\varepsilon_3$,
    \dots, $\varepsilon_{2s_1-1}$ (resp. $\varepsilon_{2}$, $\varepsilon_{4}$,
    \dots, $\varepsilon_{2s_1}$). In particular, for~$s_1=0$ we have~$\lambda=\kappa=0$.
    
    In the case~$s=2s_2-1$, if~$s_2=1$ we obtain~$\varepsilon_{1}=\sign\alpha_{1}$,
    otherwise~$s_2>1$ and for~$j=0,\dots, s_2-1$ the expression~\eqref{eq:finite_prod_alphas}
    implies
    \[
        \varepsilon_{2j+1}=\xi_j(s_2-1)\sign\alpha_{2s_2-1}
        % \\
        % &=
        % \sign
        % \frac{(a+s_2-1)(c-b+s_2-1)(a+j)_{s_2-1-j}(c-b+j)_{s_2-1-j}(b+j+1)_{s_2-1-j}(c-a+j+1)_{s_2-1-j}}
        % {(c+2s_2-2)(c+2s_2-1)(c+2j)(c+2s_2-2)}
        % \\
        =
        \sign
        \frac{\!(a+j)_{s_2-j}(c-b+j)_{s_2-j}(b+j+1)_{s_2-j-1}(c-a+j+1)_{s_2-j-1}\!}
        {(c+2j)(c+2s_2-1)}
        ,
    \]
    and hence for~$j=1,\dots, s_2-1$
    \[
        \begin{aligned}
            \varepsilon_{2j}&=\sign\alpha_{2j}\,\varepsilon_{2j+1}
            % \\ &=
            % \sign
            % \frac{(b+j)(c-a+j)(a+j)_{s_2-j}(c-b+j)_{s_2-j}(b+j+1)_{s_2-j-1}(c-a+j+1)_{s_2-j-1}}
            % {(c+2j-1)(c+2j)(c+2j)(c+2s_2-1)}
            % \\
            = % &=
            \sign
            \frac{(a+j)_{s_2-j}(c-b+j)_{s_2-j}(b+j)_{s_2-j}(c-a+j)_{s_2-j}}
            {(c+2j-1)(c+2s_2-1)}
            .
        \end{aligned}
    \]
    Moreover,~$K=\big\lfloor\frac{s+1}{2}\big\rfloor=s_2$
    and~$\Lambda=\big\lfloor\frac{s+2}{2}\big\rfloor=s_2$. By
    Corollary~\ref{cr:SFracRational},
    \[
        \varepsilon_1 R_{0,1,1}\in\NN_{\kappa}^\lambda
                \quad\text{and}\quad
        -\varepsilon_1 R_{0,1,1}\in\NN_{K-\kappa}^{\Lambda-\lambda},
    \]
    where~$\lambda$ (resp.~$\kappa$) is the number of negative entries in the sequence
    $\varepsilon_{1}$, $\varepsilon_{3}$, \dots, $\varepsilon_{2s_2-1}$ (resp.
    $\varepsilon_{2}$, $\varepsilon_{4}$, \dots, $\varepsilon_{2s_2-2}$). Here we
    obtain~$\kappa=0$ when~$s_2=1$.

    In both cases~$s=2s_1$ and~$s=2s_2-1$, the ratio~$R_{0,1,1}(z)$ is a rational function of
    degree~$K$, and~$\Lambda$ is the degree of~$zR_{0,1,1}(z)$, see
    Corollary~\ref{cr:SFracRational}.

    Now, let~\(\{-a,-b-1,a-c-1,b-c\}\cap\N_0=\varnothing.\) None of the
    coefficients~\eqref{eq:gen_cont_fr_cf} of the C-fraction~\eqref{eq:gen_cont_fr} vanishes,
    that is the function~$R_{0,1,1}(z)$ is not rational. Furthermore, $\alpha_n>0$ for all~$n>m$
    provided that~$m$ is large enough. So, Theorem~\ref{th:N_kl_CFraction} implies
    that~$\varepsilon R_{0,1,1}\in\NN_\kappa^\lambda$ for~$\varepsilon=\varepsilon_0$
    and~$\kappa,\lambda\in\N_0$ determined by the
    numbers~$\varepsilon_{j}\coloneqq\prod_{n=j}^\infty\sign\alpha_n$. These numbers may be
    calculated via the identity~\eqref{eq:finite_prod_alphas}:
    \[
        \begin{aligned}
        \varepsilon_{2j+1}&=\!\lim_{m\to\infty}\!\xi_j(m)
        %\\
        % =
        % \lim_{m\to\infty}\sign
        % \frac{(c+2m)\Gamma(a+m)\Gamma(c-b+m)\Gamma(b+m+1)\Gamma(c-a+m+1)}
        % {(c+2j)\Gamma(a+j)\Gamma(c-b+j)\Gamma(b+j+1)\Gamma(c-a+j+1)}
        % \\
        =% &=
        \frac{\lim_{m\to\infty}\sign\big[(c+2m)\Gamma(a+m)\Gamma(c-b+m)\Gamma(b+m+1)\Gamma(c-a+m+1)\big]}
        {\sign\big[(c+2j)\Gamma(a+j)\Gamma(c-b+j)\Gamma(b+j+1)\Gamma(c-a+j+1)\big]}
        \\
        &=\sign(c+2j)\Gamma(a+j)\Gamma(c-b+j)\Gamma(b+j+1)\Gamma(c-a+j+1)
        \end{aligned}
    \]
    and
    \[
        \varepsilon_{2j+2}= \frac{\varepsilon_{2j+1}}{\sign\alpha_{2j+1}}
        =\sign(c+2j+1)\Gamma(a+j+1)\Gamma(c-b+j+1)\Gamma(b+j+1)\Gamma(c-a+j+1)
    \]
    where~$j\in\N_0$. Furthermore,
    $\varepsilon_0 =\sign(\alpha_{0})\varepsilon_{1} =\varepsilon_{1}$.
\end{proof}

\subsection{Multiplicative factorizations}\label{sec:mult-fact}

Recall the striking description of the generalized Nevanlinna classes given in~\cite{DHdS,DLLS}:

\begin{theorem}\label{th:DLLS}
    A function~$f(z)$ belongs to the class~$\NN_\kappa$ if and only if it may be written in the
    form~$f(z)=Q(z)g(z)$, where~$g\in \NN_0$ and~$Q(z)$ is a real rational function of
    degree~$2\kappa$ nonnegative on the real line \emph{(}except for its poles\emph{)}.
\end{theorem}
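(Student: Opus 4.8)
The plan is to follow (a streamlined form of) the Dijksma--Langer--Luger--Shondin argument, establishing the two implications separately.

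\emph{Sufficiency.} Suppose $f=Qg$ with $g\in\NN_0$ and $Q$ real, nonnegative on $\R$, of degree $2\kappa$. In lowest terms the numerator and denominator of $Q$ are then, up to a positive constant, real polynomials nonnegative on $\R$ --- their real roots have even multiplicity and their non-real roots come in conjugate pairs --- so, peeling off prime parts one conjugate pair (or real double root) at a time and pairing denominator blocks with numerator blocks, one writes $Q=\prod_{j=1}^{\kappa}Q_j$ with each $Q_j$ real, nonnegative on $\R$, and of degree exactly $2$; in particular $Q_j=r_j r_j^{\#}$ with $r_j$ rational of degree $1$ and $r_j^{\#}(z)\coloneqq\overline{r_j(\bar z)}$. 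For any one such factor a telescoping rearrangement of $r_j(z_k)r_j^{\#}(z_k)h(z_k)-\overline{r_j(z_l)r_j^{\#}(z_l)h(z_l)}$ --- the same device as in the proof of Lemma~\ref{lm:Hsum} --- yields, at any points $z_1,\dots,z_n$ where the functions are regular,
\[
    H_{Q_j h}(z_1,\dots,z_n)=D\,H_{h}(z_1,\dots,z_n)\,D^{*}+\bigl(c\,\mathbf u\mathbf w^{*}+\overline c\,\mathbf w\mathbf u^{*}\bigr),
\]
where $D=\diag\bigl(r_j(z_1),\dots,r_j(z_n)\bigr)$ is invertible and $\mathbf u,\mathbf w$ are suitable vectors; the rank-$\le 2$ perturbation has at most one negative eigenvalue, so by Lemma~\ref{lm:Hsum} the number of negative squares grows by at most one. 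Applying this $\kappa$ times, starting from $g\in\NN_0$, gives $f\in\NN_{\le\kappa}$.

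\emph{Necessity.} Let $f\in\NN_\kappa$. Here I would use the self-adjoint realisation of $f$ in a Pontryagin space $\Pi$ of negative index $\kappa$: then $f$ is meromorphic in $\C\setminus\R$ with the symmetry~\eqref{eq:real_func}, its non-real poles occur in finitely many conjugate pairs $\alpha_i,\bar\alpha_i$ with algebraic multiplicities $\mu_i$, and there are finitely many real generalised poles of non-positive type $x_j$ with multiplicities $\ell_j$ (possibly with one at $\infty$ as well), the total multiplicity being $\kappa$. Assembling this non-positive-type data into
\[
    Q(z)=\prod_i\bigl[(z-\alpha_i)(z-\bar\alpha_i)\bigr]^{\mu_i}\prod_j (z-x_j)^{2\ell_j}
\]
--- with, in general, a reciprocal factor built from the non-positive-type generalised zeros of $f$ and an $\infty$-contribution, so that $\deg Q=2\kappa$ exactly --- produces a real rational function nonnegative on $\R$ of degree $2\kappa$; and dividing it out leaves $g\coloneqq f/Q$ with a positive semidefinite Pick kernel, i.e.\ $g\in\NN_0$. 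When $f$ is rational one can bypass the operator model: after a normalising Möbius transformation, expand $f$ into a $C$- or $J$-fraction and read $Q$ off from the sign pattern of its coefficients via Corollary~\ref{cr:SFracRational} and Theorem~\ref{th:N_kl_CFraction}.

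\emph{Sharpness and the main obstacle.} That $f=Qg$ with $\deg Q=2\kappa$ cannot lie in $\NN_{\kappa'}$ for $\kappa'<\kappa$ follows by combining the necessity with the rigidity of $\NN_0$: such an $f$ would factor as $Q'g'$ with $\deg Q'=2\kappa'$, $g'\in\NN_0$, whence $g=(Q'/Q)g'$ with $g,g'\in\NN_0$ while $Q'/Q$ has strictly more poles than zeros, and tracking the orders of poles and zeros of both sides on $\R$ and in $\C_+$ --- a nonzero element of $\NN_0$ has no poles in $\C_+$, only simple poles with nonpositive residues on $\R$, and only simple zeros at its real points of regularity --- gives a contradiction. (Alternatively one appends a point $z_{n+1}=iy$, $y\to+\infty$, to a configuration realising the current index and uses, as in~\eqref{eq:Nkappa_J1}, that near $i\infty$ each $Q_j$ is a positive multiple of $z^{\pm2}$ to force a genuinely new negative square at each step.) The main obstacle is the necessity, and within it the precise degree accounting --- showing that the poles and zeros of non-positive type of $f$ capture the entire index and nothing spurious, i.e.\ $\deg Q=2\kappa$ --- which is exactly the content of the theory of generalised poles of non-positive type; once $Q$ is identified, checking $f/Q\in\NN_0$ and closing the sharpness argument are comparatively routine.
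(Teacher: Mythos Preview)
The paper does not prove Theorem~\ref{th:DLLS}; it is quoted from~\cite{DHdS,DLLS} and used as a black box. There is therefore no proof in the paper to compare your attempt against.

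On the merits: your sufficiency direction is essentially correct and parallels the rank-perturbation device the paper itself exploits later in the proof of Lemma~\ref{lm:NkappaM2a}. Writing $Q$ as a product of $\kappa$ nonnegative degree-$2$ rational factors and observing that each multiplication contributes a Hermitian perturbation of rank~$\le2$ with at most one negative eigenvalue yields $f\in\NN_{\kappa'}$ for some~$\kappa'\le\kappa$. Your necessity direction, however, is an outline rather than a proof: you name the Pontryagin-space model and the catalogue of generalised poles and zeros of non-positive type --- which is exactly the machinery of~\cite{DLLS} --- but defer all of the substance (why the non-positive-type data has total multiplicity exactly~$\kappa$, and why dividing it out leaves a genuine $\NN_0$ function) to that theory. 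The sharpness argument also has a gap: ``$Q'/Q$ has strictly more poles than zeros'' does not follow from $\deg Q'<\deg Q$, since degree here means the \emph{maximum} of numerator and denominator degrees and cancellation between $Q$ and $Q'$ may occur; the pole/zero bookkeeping you sketch for $g,g'\in\NN_0$ (simple real poles, no zeros in~$\C_+$, behaviour at~$\infty$) would need to be made precise to close the loop.
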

Under the assumptions of this theorem, for~$x\in\R$ excluding the real poles and zeros of~$Q(z)$
we immediately obtain
\begin{equation}\label{eq:f_Nkappa_pos}
    \liminf_{y\to+0}\Im f(x+iy) = Q(x)\liminf_{y\to+0}\Im g(x+iy)\in[0,+\infty]
    ,
\end{equation}
where the right-hand side is due to~$\Im g(x+iy)\ge 0$ for all~$y>0$.

The representation offered by Theorem~\ref{th:DLLS} is unique up to multiplication of $Q(z)$ by
a positive constant: the poles and zeros of~$Q(z)$ in~$\C\cup\{\infty\}$ determine the local
analytic properties of~$f(z)$ uniquely. If~$f\in\NN_\kappa\subset\NU$ with $\NU$ defined in \eqref{eq:Nevanlinna-U} is written
as~$f(z)=Q(z)g(z)$ according to Theorem~\ref{th:DLLS} and~$T(z)$ is a rational function
nonnegative on the real axis, then also~$Tf=TQg\in\NN_{\frac12\deg(TQ)}\subset\NU$. The following theorem states the conditions, under which a similar fact holds
without the assumption~$T(z)\ge 0$ on~$\R$.

\begin{theorem}\label{th:NkappaM2}
    Let~$g\in\NU$, and let~$T(z)$ be a real rational function. Denote
    \begin{equation}\label{eq:def_I_via_T}
        \mathcal I\coloneqq\big\{t\in\R:-\infty<T(t)< 0\big\}.
    \end{equation}
    Then $f\coloneqq Tg\in\NU$ if and only if $g(z)$ is analytic and real on~$\mathcal I$
    excluding at most a finite number of poles.
\end{theorem}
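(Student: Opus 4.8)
The plan is to reduce everything to the factored form of Theorem~\ref{th:DLLS}. Write $g=Q_g h_g$, and, whenever $f:=Tg\in\NU$, also $f=Q_f h_f$, where $h_g,h_f\in\NN_0$ and $Q_g,Q_f$ are real rational functions nonnegative on $\R$ off their poles. These are linked by $h_f=Rh_g$ with $R:=TQ_g/Q_f$; since $T<0$, $Q_g\ge0$, $Q_f\ge0$ on $\mathcal I$, the function $R$ is strictly negative on $\mathcal I$ outside the finite set of zeros of $Q_g$ and poles of $Q_f$. A preliminary observation I would record is that ``$g$ is analytic and real on $\mathcal I$ outside finitely many poles'' is equivalent to ``$\supp\sigma_{h_g}\cap\mathcal I$ is finite'' (and then consists of atoms, simple poles of $h_g$): the rational factor $Q_g$ affects $g$ only at its finitely many zeros and poles, and away from those $g$ is analytic at a real point $x_0$ precisely when $x_0\notin\supp\sigma_{h_g}$; moreover real-valuedness where analytic is automatic since members of $\NU$ are real. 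I would also use freely that every real rational function lies in $\SU\subset\NU$ (Lemma~\ref{lm:Nkappa_rational}), that $\NU$ is closed under addition (Lemma~\ref{lm:Hsum}), and the \emph{local} Stieltjes--Perron inversion, namely that the restriction to an open interval of the representing measure of an $\NN_0$-function equals the boundary jump of the function there, and in particular is nonnegative.

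\emph{Necessity.} Suppose $f\in\NU$ but, for contradiction, that $\supp\sigma_{h_g}\cap\mathcal I$ is infinite. Since $\mathcal I$ minus the above finite set is a finite union of bounded open intervals, there is a bounded open interval $J\subseteq\mathcal I$ avoiding the zeros of $Q_g$, the poles of $Q_f$ and $Q_g$, and the poles of $T$, on which $\sigma_{h_g}$ has positive mass and $R$ is analytic with $R<0$. Decompose $h_g=h_g^{J}+h_g^{J^c}$, where $h_g^{J}(z)=\int_J d\sigma_{h_g}(t)/(t-z)\in\NN_0$ and $h_g^{J^c}=h_g-h_g^{J}\in\NN_0$ is analytic on $J$; writing $R(z)/(t-z)=R(t)/(t-z)+(R(z)-R(t))/(t-z)$ with the last summand analytic in $z$ near $J$ gives $h_f=Rh_g=\int_J R(t)\,d\sigma_{h_g}(t)/(t-z)+(\text{function analytic on }J)$. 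Hence the boundary jump of $h_f$ on $J$ is the signed measure $R|_J\,d\sigma_{h_g}|_J$, which is nonzero and $\le0$ because $R<0$ on $J$; but $h_f\in\NN_0$ forces this jump to equal $\sigma_{h_f}|_J\ge0$ — a contradiction. Therefore $\supp\sigma_{h_g}\cap\mathcal I$ is finite, i.e.\ $g$ is analytic and real on $\mathcal I$ outside finitely many poles.

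\emph{Sufficiency.} Assume now $g$ is analytic and real on $\mathcal I$ outside finitely many poles, so $\sigma_{h_g}|_{\mathcal I}$ is finite and atomic. Set $W:=TQ_g$ and let $F$ collect the atoms of $\sigma_{h_g}$ in $\mathcal I$ together with the zeros and poles of $T$ and of $Q_g$. Away from the finite set $F$, $\supp\sigma_{h_g}$ lies in $\R\setminus\mathcal I$, where $T>0$ and $Q_g>0$, hence in $\{W>0\}$. Split $\sigma_{h_g}=\mu_1+\mu_2$, with $\mu_1$ the finite atomic restriction to $F$ and $\mu_2\ge0$ supported in $\{W>0\}$; correspondingly $h_g=h_1+h_2$ with $h_1$ a real rational function and $h_2\in\NN_0$ having representing measure $\mu_2$. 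Then $Tg=Wh_1+Wh_2$: the first term is a real rational function, hence in $\NU$. For the second, substitute $h_2(z)=\nu_1 z+\nu_2+\int(\tfrac1{t-z}-\tfrac{t}{1+t^2})\,d\mu_2(t)$ and once more write $W(z)/(t-z)=W(t)/(t-z)+(W(z)-W(t))/(t-z)$; after a standard higher-order regularization at $t=\infty$ (needed only when $\deg W\ge1$), the $W(t)$-part becomes $\int W(t)\,d\mu_2(t)/(t-z)$ plus a polynomial, i.e.\ an $\NN_0$-function (as $W>0$ on $\supp\mu_2$) plus a polynomial, while the $(W(z)-W(t))$-part is a real rational function of $z$. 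Thus $Tg$ is a sum of an $\NN_0$-function and real rational functions, so $Tg\in\NU$.

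\emph{Main obstacle.} The crux is the necessity direction: rigorously identifying the boundary jump of $Rh_g$ on $J$ with $R|_J$ times that of $h_g$ for $R$ real rational analytic and nonvanishing on $J$. This is exactly where the local Stieltjes--Perron inversion and the $J$-adapted splitting of $h_g$ are essential, and the virtue of this formulation is that it disposes of atomic and continuous singular mass in $\mathcal I$ uniformly. On the sufficiency side the reasoning is essentially bookkeeping; the only non-formal point is the convergence/regularization of $\int W(t)\,d\mu_2(t)/(t-z)$ when $W$ grows at infinity, handled precisely as the $t/(1+t^2)$ term in \eqref{eq:N0canonical}.
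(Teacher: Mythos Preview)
Your reduction via the factorization $g=Q_gh_g$ with $h_g\in\NN_0$ is the same opening move as the paper's (which then invokes Lemma~\ref{lm:NkappaM2a}, the special case $g\in\NN_0$). Your \emph{necessity} argument, however, is genuinely different from the paper's and arguably cleaner: the paper builds a contradiction by estimating the Pick matrix $H_f$ at $\kappa+1$ points of increase of $\sigma_{h_g}$ in $\mathcal I$ and showing it tends to a negative definite matrix as the evaluation points approach the real line, whereas you identify the boundary measure of $h_f=Rh_g$ on a subinterval $J\subset\mathcal I$ directly via Stieltjes--Perron and read off the sign contradiction. Two small repairs: you must also exclude the zeros of $Q_f$ from $J$ so that $R=TQ_g/Q_f$ is analytic there; and the paper's Pick-matrix route gives the extra quantitative statement ``at most $\kappa$ poles in $\mathcal I$ when $f\in\NN_\kappa$'' (the last line of Lemma~\ref{lm:NkappaM2a}), which your measure-theoretic argument does not capture.

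There is a genuine gap in your \emph{sufficiency} argument, exactly at the point you flag as ``the only non-formal point''. You assert that after a higher-order regularization the $W(t)$-part $\int\frac{W(t)}{t-z}\,d\mu_2(t)$ becomes an $\NN_0$-function plus a polynomial, and that the divided-difference part $\int\frac{W(z)-W(t)}{t-z}\,d\mu_2(t)$ is rational in $z$. When $W=TQ_g$ is unbounded at infinity both claims can fail: you only know $\int\frac{d\mu_2(t)}{1+t^2}<\infty$, so the measure $W(t)\,d\mu_2(t)$ need not satisfy $\int\frac{W(t)\,d\mu_2(t)}{1+t^2}<\infty$, and then no $\NN_0$-function can carry it as its representing measure; likewise the divided-difference integral requires moments of $\mu_2$ of order up to $\deg W-1$, which need not exist. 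This is \emph{not} the same device as the $t/(1+t^2)$ subtraction in~\eqref{eq:N0canonical}. The paper circumvents the difficulty by replacing $W$ with the \emph{bounded} auxiliary $T_1(z):=p(z)q(z)(z-t_*)^{-2\deg T}$ for a fixed $t_*\in\mathcal I$: then $T_1\ge 0$ and bounded on $\R\setminus\mathcal I\supseteq\supp\mu_2$, so $d\tau:=T_1\,d\sigma$ does satisfy the $\NN_0$ growth condition, the corresponding $g_3\in\NN_0$ is well defined, and a Liouville estimate shows $S_1:=T_1g_2-g_3$ is rational. Since $T=Q^2T_1$ with $Q(z)=(z-t_*)^{\deg T}/q(z)$, one obtains $f=S+Q^2g_3$ with $S$ rational and $Q^2\ge 0$ on $\R$, and a short Pick-matrix count finishes. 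Your outline can be salvaged along exactly these lines, but the regularization step as written does not go through.
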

\begin{proof}
    By Theorem~\ref{th:DLLS}, we can write~$g(z)=Q(z)\widetilde g(z)$ for
    some~$\widetilde g\in \NN_0$ and some real rational function~$Q(z)$ nonnegative on the real
    line. The set of poles of~$g(z)$ is finite if and only if the same is true
    for~$\widetilde g(z)$. Thus, the proof reduces to an application of a particular case of
    Theorem~\ref{th:NkappaM2} stated here as Lemma~\ref{lm:NkappaM2a} to the
    function~$f(z)=\widetilde Q(z)\widetilde g(z)$, where~$\widetilde Q(z) \coloneqq T(z)Q(z)$.
\end{proof}

\begin{lemma}\label{lm:NkappaM2a}
    Let~$g\in \NN_0$, and let~$T(z)$ be a real rational function. Define~$\mathcal I$
    via~\eqref{eq:def_I_via_T}. Then in order for~$f(z)\coloneqq T(z)g(z)$ to lie in~$\NU$ it is necessary and
    sufficient that~$g(z)$ is analytic and real on~$\mathcal I$ excluding at most a finite
    number of poles.
    
    Moreover, if~$f\in \NN_\kappa$ for some~$\kappa$, then~$\mathcal I$ may contain at
    most~$\kappa$ of its poles.
\end{lemma}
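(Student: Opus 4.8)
The plan is to derive both implications from the multiplicative factorisation of Theorem~\ref{th:DLLS}, exploiting that $T$ is analytic, nonzero, and of a definite (negative) sign on $\mathcal I$. First I would record a few standing facts: $\mathcal I$ is open (a finite union of open intervals) and $T$ has no zeros or poles there, so on $\mathcal I$ the functions $f=Tg$ and $g$ have exactly the same singularities; $\NU$ is closed under sums (the Pick form is linear in $f$, so $H_{f_1+f_2}=H_{f_1}+H_{f_2}$ and Lemma~\ref{lm:Hsum} controls the negative squares) and under multiplication by real rational functions nonnegative on $\R$ (Theorem~\ref{th:DLLS}); every real rational function lies in $\SU\subset\NU$ (Lemma~\ref{lm:Nkappa_rational}); and, by Theorem~\ref{th:DLLS} together with \eqref{eq:f_Nkappa_pos}, every $h\in\NU$ satisfies $\liminf_{y\to+0}\Im h(x+iy)\ge 0$ for all but finitely many $x\in\R$.

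For necessity (and the ``moreover'') I would assume $f=Tg\in\NN_\kappa$ and write $f=Q\widetilde g$ with $\widetilde g\in\NN_0$ and $Q$ real rational, $\deg Q=2\kappa$, $Q\ge 0$ on $\R$. A pole of $g$ at $p\in\mathcal I$ is simple with negative residue (as for any $\NN_0$-function), hence $f$ has there a simple pole with residue $-T(p)\cdot(\text{negative})>0$ since $T(p)<0$; so $\Im f(p+iy)\to-\infty$, which by the last standing fact can occur only finitely often. A case check of $f=Q\widetilde g$ near $p$ (a nonvanishing analytic $Q$ forces residue $\le 0$; a zero of $Q$ forces $f(p)=0$; so $p$ is a pole of $Q$, of even order $\ge 2$ because $Q\ge0$) then shows that distinct poles of $g$ in $\mathcal I$ are distinct poles of $Q$ of order $\ge 2$, so there are at most $\tfrac12\deg Q=\kappa$ of them. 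It remains to rule out non-polar singularities of $g$ on $\mathcal I$: writing $g$ via \eqref{eq:N0canonical} with measure $\sigma$, if $\sigma|_{\mathcal I}$ had a continuous part, then (since $T(x)<0$ on $\mathcal I$ and $\Im T(x+iy)=O(y)$) $\Im f(x+iy)$ would have negative $\liminf$ at infinitely many $x\in\mathcal I$, contradicting $f\in\NU$; hence $\sigma|_{\mathcal I}$ is a finite sum of point masses and $g$ is analytic and real off the at most $\kappa$ corresponding poles.

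For sufficiency I would proceed by reduction and induction. Subtracting from $g$ the principal parts at its poles in $\mathcal I$ replaces $g$ by some $g_0\in\NN_0$ (take $\sigma-\sigma|_{\mathcal I}\ge0$ in \eqref{eq:N0canonical}) that is pole-free on $\mathcal I$, while $g-g_0$ is real rational, so $T(g-g_0)\in\SU\subset\NU$; thus one may assume $g$ has no poles in $\mathcal I$. Next, factor $T=T_+T_-$ with $T_+$ real rational and $\ge0$ on $\R$ (absorbing the modulus of the leading constant, all complex zeros/poles, and all even-order real ones) and $T_-(z)=\pm\prod_i(z-e_i)\big/\prod_j(z-f_j)$ with the $e_i,f_j$ pairwise distinct reals; since $\{T<0\}$ and $\{T_-<0\}$ differ by a finite set, one may again remove finitely many poles of $g$ and, using $Tg=T_+(T_-g)$ with $T_+\ge0$, reduce to proving $T_-g\in\NU$. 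I would then induct on the number $k$ of sign changes of the reduced $T=T_-$. For $k=0$ the claim is immediate ($T\equiv1$: $g\in\NN_0$; $T\equiv-1$: $g$ is, up to a rational summand, an affine function, so $Tg=-g\in\NU$). For $k=1$, say $T=z-\gamma$ and $\supp\sigma\subset[\gamma,\infty)$; the identity $(z-\gamma)/(t-z)=-1+(t-\gamma)/(t-z)$ with $t-\gamma\ge0$ on $\supp\sigma$ writes $(z-\gamma)g(z)$ as the sum of a polynomial of degree $\le2$ and a function which, up to an affine summand, is a Nevanlinna-type integral $\int(t-z)^{-1}\,d\tau(t)$ with the positive measure $d\tau=(t-\gamma)\,d\sigma$, whence $(z-\gamma)g\in\NU$. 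For $k\ge2$ I would pick a bounded component $(c,d)$ of $\mathcal I$; both $c,d$ are simple zeros or poles of $T$, and multiplying/dividing $T$ by $(z-c)$ and $(z-d)$ suitably produces $\widehat T$ with $k-2$ sign changes and $\{\widehat T<0\}\subseteq\mathcal I$, so by induction $\widehat T g\in\NU$ and it is analytic and real on $(c,d)$; then $Tg=V\cdot(\widehat T g)$ with $V$ an elementary factor negative exactly on $(c,d)$, and after the real Möbius substitution carrying $(c,d)$ to $(0,\infty)$ and $V$ to $-w$ this reduces to $-w\,\hat h(w)\in\NU$ for a $\NU$-function $\hat h$ analytic and real on $(0,\infty)$, which I would settle through the Theorem~\ref{th:DLLS}-factorisation of $\hat h$ and the same computation as in the case $k\le1$.

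The hard part will be the base case(s) of the sufficiency induction, i.e.\ showing that multiplying a $\NU$-function analytic and real on an interval by the rational factor changing sign exactly at that interval's endpoints stays in $\NU$ with a \emph{bounded} rise of the index. Within this, the genuine subtlety is convergence of the representing integral of the transformed function: the new measure $(t-\gamma)\,d\sigma$ may decay too slowly for the canonical form \eqref{eq:N0canonical}, so one must instead use a Kre\u{\i}n--Langer (equivalently, a Theorem~\ref{th:DLLS}) representation in which a polynomial term absorbs the extra growth---this polynomial term being exactly what raises the index by a controlled amount. The remaining work is bookkeeping in the inductive step: the four sub-cases according to whether $c$ and $d$ are zeros or poles of $T$, verifying in each that $\{\widehat T<0\}\subseteq\mathcal I$ and that the peeled function remains analytic on $(c,d)$, and handling separately (but identically) the at most two unbounded components of $\mathcal I$.
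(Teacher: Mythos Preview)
Your approach is sound in outline but differs substantially from the paper's, and the step you yourself flag as ``the hard part'' is precisely where the two diverge.

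For \emph{necessity} you go through the DLLS factorisation $f=Q\widetilde g$ together with the boundary inequality~\eqref{eq:f_Nkappa_pos}, whereas the paper argues directly with Pick matrices: it picks $\kappa+1$ points $t_0,\dots,t_\kappa\in\mathcal I$ where the \emph{upper symmetric derivative} of~$\sigma$ is positive (these need not be atoms), and shows that along a suitable sequence $y_l\to 0^+$ the normalised matrix $H_f(t_0+iy_l,\dots,t_\kappa+iy_l)$ tends to $-I_{\kappa+1}$. Your route is shorter, but you should make explicit that ``continuous part'' includes the singular continuous case: for such a measure $\sigma'(x)=0$ Lebesgue-a.e., so you cannot simply cite positive density; instead use that $\sigma_{sc}$-a.e.\ point has upper derivative $+\infty$, whence $\Im g(x+iy)\ge \sigma((x-y,x+y))/(2y)\to+\infty$ at uncountably many $x$, after which your computation indeed gives $\Im f(x+iy)\to-\infty$ there.

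For \emph{sufficiency} the paper avoids your induction entirely. Writing $T=p/q$ and fixing any $t_*\in\mathcal I$, it sets
\[
T_1(z)=\frac{p(z)q(z)}{(z-t_*)^{2\deg T}},
\]
which is \emph{nonnegative and bounded} on $\R\setminus\mathcal I\supseteq\operatorname{supp}\sigma$. Hence $d\tau:=T_1\,d\sigma$ automatically satisfies $\int(1+t^2)^{-1}d\tau<\infty$, defining $g_3\in\NN_0$; a direct Laurent estimate then shows $S_1:=T_1g_2-g_3$ is rational, so $f=S+Q^2g_3$ with $Q(z)=(z-t_*)^{\deg T}/q(z)$ and $S$ rational, and a Pick-matrix decomposition bounds the negative squares via Lemma~\ref{lm:Hsum}. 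The key point is that $T_1$ is \emph{bounded on $\operatorname{supp}\sigma$}, so no Kre\u{\i}n--Langer regularisation is ever needed---this is exactly the device that dissolves your convergence obstacle in one stroke. Your inductive scheme can be made to work, but to close the base case $k=1$ cleanly you would either rediscover this boundedness trick or invoke the full KL machinery; and the inductive step (peeling off one bounded component of~$\mathcal I$, re-applying DLLS to the peeled function, handling the four zero/pole sub-cases for $c,d$, then treating unbounded components separately) is considerably more laborious than the paper's one-shot construction.
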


\begin{proof}\refstepcounter{dummy}\label{pg:proof_lm:NkappaM2a}
    Write~$g(z)$ in the form \eqref{eq:N0canonical}
    \begin{equation}\label{eq:g_via_sigma}
        g(z)
        = \nu_1z+\nu_2
        + \int_{\R}\left( \frac{1}{t-z}-\frac{t}{t^2+1}\right) \,d\sigma(t)
        = \nu_1z+\nu_2
        + \int_{\R}\frac{1+tz}{t-z}\cdot \frac{d\sigma(t)}{t^2+1},
    \end{equation}
    where~$\nu_1\ge 0$, ~$\nu_2\in\R$ and~$\sigma(t)$ is a non-decreasing function satisfying~$\int_{-\infty}^{\infty}(1+t^2)^{-1}d\sigma(t)<\infty$. 
    According to the
    Stieltjes-Perron inversion formula (see e.g.~\cite[pp. 124--126]{Akhiezer}), if~$(a,b)$ is a
    real interval, then
    \begin{equation} \label{eq:Stieltjes-Perron}
        \frac{\sigma(b+)-\sigma(b-)}{2}-
        \frac{\sigma(a+)-\sigma(a-)}{2}
        =\lim_{y\to+0}\frac 1\pi\int_{a}^b\Im g(x+iy)\,dx.
    \end{equation}
    In particular,~$g(z)$ is analytic and real on~$(a,b)$ precisely
    when~$\sigma(a+)=\sigma(b-)$, which is equivalent to that~$\sigma'(t)$ is defined and equals
    zero for all~$t\in(a,b)$.

    We prove the necessity part of the lemma by contradiction. Suppose that~$f\in \NN_\kappa$ and yet there are
    points~$t_0,\dots,t_\kappa\in\mathcal I$ such that
    \begin{equation} \label{eq:sigma_density}
        \limsup_{\varepsilon\to0}
        \frac{\sigma(t_j+\varepsilon)-\sigma(t_j-\varepsilon)}{2\varepsilon} \in(0,+\infty]
        ,
        \qquad
        j=0,\dots,\kappa,
    \end{equation}
    (this ratio is non-negative since~$\sigma(t)$ is non-decreasing). Then all poles of~$g(z)$ lying in~$\mathcal{I}$ are among the numbers~$t_0,\dots,t_\kappa$.    For~$y\in(0,1)$,
%    \[
%        \begin{aligned}
%            \left|y g(t_j+iy) + i\big(\sigma(t_j+)-\sigma(t_j-)\big)\right|
%            &=\left|
%                \int_{\R\setminus\{t_j\}}\frac{1+t(t_j+iy)}{t-(t_j+iy)}\cdot \frac{y\,d\sigma(t)}{t^2+1}
%            \right|
%            \\
%            &\le 
%            \int_{\R\setminus\{t_j\}}
%            \frac{1+|t|(|t_j|+1)}
%            {\sqrt{(t-t_j)^2y^{-2}+1}}
%            \cdot \frac{d\sigma(t)}{t^2+1}
%            .
%        \end{aligned}
%    \]
\begin{multline*}
yg(t_j+iy)=y\nu_1(t_j+iy)+y\nu_2+y\int_{\R}\frac{1+t(t_j+iy)}{t-(t_j+iy)}\frac{d\sigma(t)}{t^2+1}
\\
=y\nu_1(t_j+iy)+y\nu_2+y\int_{\R\setminus\{t_j\}}\frac{1+t(t_j+iy)}{t-(t_j+iy)}\frac{d\sigma(t)}{t^2+1}
+i\big(\sigma(t_j+)-\sigma(t_j-))
-\big(\sigma(t_j+)-\sigma(t_j-))
\frac{t_{j}y}{t_{j}^2+1},
\end{multline*}    
so that
\begin{multline*}
\left|yg(t_j+iy)-i\big(\sigma(t_j+)-\sigma(t_j-)\big)\right|
\\
\le y\left|\nu_1(t_j+iy)+\nu_2-\big(\sigma(t_j+)-\sigma(t_j-))
\frac{t_{j}}{t_{j}^2+1}\right|
+\left|\int_{\R\setminus\{t_j\}}\frac{1+t(t_j+iy)}{t-(t_j+iy)}\cdot\frac{y\,d\sigma(t)}{t^2+1}\right|
\\
\le Ay
+\int_{\R\setminus\{t_j\}}
            \frac{1+|t|(|t_j|+1)}
            {\sqrt{(t-t_j)^2y^{-2}+1}}
            \cdot \frac{d\sigma(t)}{t^2+1},
\end{multline*}    
where $A\ge0$ does not depend on $y$. On the right-hand side, the second factor in the integrand
satisfies~$\int_{-\infty}^{\infty} (1+t^2)^{-1}d\sigma(t)<\infty$, while the first factor is
bounded uniformly in $y\in(0,1)$ and~$t\in\R$ and vanishes for each~$t\ne t_j$ as~$y\to0$. Thus,
the right-hand side tends to zero by the dominated convergence theorem. This leads to the
asymptotic formula
\begin{equation}\label{eq:yg_is_im_o1}
    yg(t_j+iy)=i\big(\sigma(t_j+)-\sigma(t_j-)\big)+o(1),
    \quad\text{where } y\to0 \text{ and }
    j=0,\dots,\kappa.
\end{equation}
According to definition \eqref{eq:def_I_via_T} the rational function~$T(z)$ is bounded
on~$\mathcal I$, and hence for~$y\to +0$ and~$j=0,\dots,\kappa$ we have~$T(t_j+iy)=T(t_j)+O(y)$
and~$f(t_j+iy)=\big(T(t_j)+O(y)\big)g(t_j+iy)$. So, the formula~\eqref{eq:yg_is_im_o1} gives
    \begin{equation}\label{eq:yPsi_est}
        \begin{aligned}
            y f(t_j+iy)
            &=iy\Im f(t_j+iy)+\Re\left[\big(T(t_j)+O(y)\big)y g(t_j+iy)\right]
            \\[2pt]
            &=iy\Im f(t_j+iy)+o(1)
            \quad\text{as}\quad
            y\to +0
            .
        \end{aligned}
    \end{equation}

Taking the imaginary part of \eqref{eq:g_via_sigma} we get the Poisson representation
$$
\Im{g(t_j+iy)}=\nu_1{y}+\int_{\R}\frac{yd\sigma(t)}{(t-t_{j})^2+y^2}.
$$  
In view of $\nu_1\ge0$ and $y>0$, the above equality leads to the estimate 
    \[
        \Im
        g(t_j+iy)
        \ge
        \int_{\left(t_j-y,t_j+y\right)}
        \frac{y\,
            d\sigma(t)}{(t-t_j)^2+y^2}
        \ge
        \int_{\left(t_j-y,t_j+y\right)}
        \frac{y\,
            d\sigma(t)}{y^2+y^2}
        =
        \frac{\sigma(t_j+y-)-\sigma(t_j-y+)}{2y}.
    \]
    Therefore, from~$T(t_j)<0$ and~$T(t_j+iy)=T(t_j)+T'(t_j)iy+O(y^2)$ we obtain
    \begin{equation*}
        \begin{aligned}
            \Im f(t_j+iy)
            &=
            \big(T(t_j)+O(y^2)\big)
            \Im g(t_j+iy)
            +
            \big(T'(t_j)+O(y)\big)
            \Re\big(y g(t_j+iy)\big)
            \\ &
            \le
            \big(T(t_j)+O(y^2)\big)
            \frac{\sigma(t_j+y-)-\sigma(t_j-y+)}{2y}
            +o\left(1\right)
            \quad\text{as}\quad
            y\to +0
            ,
        \end{aligned}
    \end{equation*}
    where the last inequality is due to~\eqref{eq:yg_is_im_o1}. Now, the last
    estimate %~\eqref{eq:yImPsi_est}
    along with~\eqref{eq:sigma_density} implies that there exist a number~$C>0$ and a
    sequence~$y_1,y_2,\dots$ of positive reals such that
    \begin{equation}\label{eq:ImPsi_est}
        \lim_{l\to+\infty}y_l=0
        \quad\text{and}\quad
        \Im f(t_j+i y_l)\le -C
    \end{equation}
    for all~$l=1,2,\dots$ and~$j=0,\dots,\kappa$. Denote $z_{j,l}\coloneqq t_j+ iy_l$ and
    observe that the matrix
    \[
        M_l= (m_{j,k})_{j,k=0}^{\kappa},
        \quad
        m_{j,k}\coloneqq
        \frac{y_l}{\sqrt{\Im f(z_{j,l})\Im f(z_{k,l})}}\cdot
        \frac{f(z_{j,l})-\overline{f(z_{k,l})}}
        {z_{j,l}-\overline{z_{k,l}}},
    \]
    has the diagonal entries~$m_{j,j}=-1$ for all~$j$. Due to~\eqref{eq:yPsi_est}
    and~\eqref{eq:ImPsi_est}, the off-diagonal entries of~$M_l$ as~$l\to\infty$ are
    \[
        \begin{aligned}
        m_{j,k}
        &=
        \frac{y_l}
        {z_{j,l}-\overline{z_{k,l}}}\cdot
        \frac{f(z_{j,l})-\overline{f(z_{k,l})}}{\sqrt{\Im f(z_{j,l})\Im f(z_{k,l})}}
        =
        \frac{i}{z_{j,l}-\overline{z_{k,l}}}\cdot
        \frac{y_l\Im f(z_{j,l})+y_l\Im f(z_{k,l})+o(1)}
        {\sqrt{\Im f(z_{j,l})\Im f(z_{k,l})}}
        \\
        &=
        \frac{i\sqrt{y_l}}{t_{j}-t_{k}+2iy_{l}}\cdot
        \left(\sqrt{y_l\frac{\Im f(z_{j,l})}{\Im f(z_{k,l})}}
            +\sqrt{y_l\frac{\Im f(z_{k,l})}{\Im f(z_{j,l})}}\right)
        +o(1)\to 0
        .
        \end{aligned}
    \]        
    The ultimate $o(1)$ is due to the estimate 
    $$
     \frac{1}{\sqrt{\Im f(z_{j,l})\Im f(z_{k,l})}}\le\frac{1}{C}
    $$
    following from \eqref{eq:ImPsi_est}. For~$l\to\infty$, the matrix~$M_l$ therefore satisfies
    \[
        D\cdot H_f(t_0+i y_l,\dots,t_{\kappa}+i y_l)\cdot D
        =M_l
        \to -I_{\kappa+1}
        % \quad\text{as}\quad l\to\infty
        ,
        \quad\text{where}\quad
        D\coloneqq
        \diag\left(\sqrt{\frac{y_l}{-\Im f(t_j+i y_l)}}\right)_{j=0}^{\kappa}
    \]
    and~$I_{\kappa+1}$ is the~$(\kappa+1)\times(\kappa+1)$ identity matrix. For~$l$ large
    enough, the left-hand side is negative definite, which contradicts our
    assumption~$f\in \NN_{\kappa}$.

    Now, let us prove the sufficiency part. Since~$g(z)$ may only have a certain finite number of poles in~$\mathcal I$  (say~$m$), it may be written in the form~$g(z)=g_1(z)+g_2(z)$,
    where
    \[
        g_1(z)\coloneqq
        \sum_{k=1}^m\frac{\sigma(t_k+)-\sigma(t_k-)}{t_k-z}
        \quad\text{and}\quad
        g_2(z)\coloneqq
        \nu_1z +\nu_2+\int_{\R\setminus\mathcal I} \left(
            \frac1{t-z}-\frac t{t^2+1}\right)\,d\sigma(t)
        ,
    \]
    and~$t_1,\dots,t_m\in\mathcal I$. Let the co-prime polynomials~$p(z)$ and~$q(z)$ denote,
    respectively, the numerator and the denominator of~$T(z)$, so that~$T(z)=\frac{p(z)}{q(z)}$.
    Choose arbitrary~$t_*\in\mathcal I$, then the
    function~$T_1(z)\coloneqq p(z)q(z)(z-t_*)^{-2\deg T}$ is nonnegative, bounded and analytic
    on~$\R\setminus\mathcal I$. Therefore, the new positive
    measure~$d\tau(t)\coloneqq T_1(t)\,d\sigma(t)$ on~$\R\setminus\mathcal I$ is properly
    defined and satisfies~$\int_{\R\setminus\mathcal I} (1+t^2)^{-1}d\tau(t)<\infty$, so the
    integral
    \[
        g_3(z)\coloneqq
        \int_{\R\setminus\mathcal I} \left( \frac1{t-z}-\frac t{t^2+1}\right)\,d\tau(t)
    \]
    defines a function~$g_3\in\NN_0$. Therefore,
    \[
        S_1(z)\coloneqq T_1(z)g_2(z)-g_3(z)
        =
        T_1(z)\left(\nu_1z +\nu_2\right)+
        \int_{\R\setminus\mathcal I}
        \frac{(1+tz)(T_1(z)-T_1(t))}{t-z}\frac{d\sigma(t)}{t^2+1}
        ,
    \]
    where the integrand is analytic and bounded in~$z$ outside each small disc centred at~$t_*$.
    So,~$S_1(z)$ is manifestly
    analytic in~$\C\setminus\{t_*\}$ and real on~$\R\setminus\{t_*\}$. We want to show that it
    is actually rational. On writing the Laurent expansion of~$T_1(z)$ around~$t_*$ as
    \[
        T_1(z)=\sum_{j=0}^{2\deg T} \frac{c_{-j}}{(z-t_*)^j}
    \]
    with some coefficients~$c_{-{2\deg T}},\dots,c_0\in \R$, we see that
    \[
        \frac{T_1(z)-T_1(t)}{t-z}
        =
        \frac{\sum_{j=1}^{2\deg T} c_{-j}\left((z-t_*)^{-j}-(t-t_*)^{-j}\right)}{(t-t_*)-(z-t_*)}
        =
        \sum_{j=1}^{2\deg T} \sum_{k=0}^{j-1}\frac {c_{-j}}{(t-t_*)^{k+1}(z-t_*)^{j-k}}.
    \]
    Put~$\delta\coloneqq\operatorname{dist}\{t_*,\R\setminus\mathcal I\}$, then
    for~$|t-t_*|\ge\delta$ and~$|z-t_*|\ge\delta$
    \[
        \left|\frac{T_1(z)-T_1(t)}{t-z}\right|
        \le
        \sum_{j=1}^{2\deg T}\sum_{k=0}^{j-1}\frac {|c_{-j}|}{\delta^{j} |t-t_*|}
        =
        \frac{C}{|t-t_*|},
        \quad\text{where}\quad
        C\coloneqq\sum_{j=1}^{2\deg T} j\frac {|c_{-j}|}{\delta^{j}}.
    \]
    Therefore,
    \[
        |S_1(z)|\le
        \left|T_1(z)\left(\nu_1z +\nu_2\right)\right|+
        \int_{\R\setminus\mathcal I}
        \frac{C+C\left|zt\right|}{|t-t_*|}\cdot\frac{d\sigma(t)}{t^2+1}
        =O(z)
        \quad\text{as}\quad z\to \infty.
    \]
    By definition of~$S_1(z)$, its only finite singularity at the point~$t_*$ is a pole of order~$2\deg T$. So, Liouville's theorem implies that~$S_1(z)$ may only be a real rational function of degree at
    most~$2\deg T+1$.   Now, for the real rational functions
    \[
        Q(z) \coloneqq  \frac{(z-t_*)^{\deg T}}{q(z)}
        \quad\text{and}\quad
        S(z)\coloneqq T(z)g_1(z)+Q^2(z)S_1(z)
    \]
    satisfying~$\deg S\le (\deg T+m) + (2\deg T +1) = 3\deg T+m+1$, we obtain
    \[
        f(z)
        =T(z)g(z)
        =T(z)g_1(z)+Q^2(z)T_1(z)g_2(z)
        =S(z) + Q^2(z) g_3(z)
        .
    \]
    It remains to employ a reasoning analogous to~\cite[Section~3]{DHdS} to show
    that~$f\in\NU$: given some numbers~$z_1,\dots,z_n\in\C_+$, write the entries of the
    matrix~$H_f(z_1,\dots,z_n)$ in the following form:
    \[
        \begin{aligned}
            \frac{f(z_i)-\overline{f(z_j)}}{z_i-\overline{z_j}}
            &=
            \frac{S(z_i)-\overline{S(z_j)}}{z_i-\overline{z_j}}
            +
            \frac{
                Q(z_i) g_3(z_i)\overline{Q(\overline z_i)}
                -
                \overline{Q(z_j)} \overline{g_3(z_j)} Q(\overline z_j)
            }
            {z_i-\overline{z_j}}
            \\
            &=
            \frac{S(z_i)-\overline{S(z_j)}}{z_i-\overline{z_j}}
            +
            \overline{Q(\overline z_i)}
            \frac{g_3(z_i) - \overline{g_3(z_j)}}{z_i-\overline{z_j}}
            Q(\overline{z_j})
            \\
            &+
            g_3(z_i)\overline{Q(\overline z_i)}
            \frac{Q(z_i)-Q(\overline{z_j})}{z_i-\overline{z_j}}
            +
            \overline{g_3(z_j)}Q(\overline{z_j})
            \frac{\overline{Q(z_j)}-\overline{Q(\overline z_i)}}{\overline{z_j}-z_i}
            .
        \end{aligned}
    \]
    The right-hand side is the $(i,j)$-th entry of a sum of three Hermitian matrices:
    \[
        \begin{aligned}
            H_f(z_1,\dots,z_n)
            &=
            H_S(z_1,\dots,z_n) + D_1\cdot H_{g_3}(z_1,\dots,z_n)\cdot D_1^*
            \\
            &+
            \Big(D_2\cdot H_{Q}(z_1,\dots,z_n)
            +\big(D_2\cdot H_{Q}(z_1,\dots,z_n)\big)^*\Big)
            ,
        \end{aligned}
    \]
    where~$D_1$, $D_2$ are given by $D_1=D_1^*=\diag\big(Q(z_j)\big)_{j=1}^{n}$
    and~$ D_2=\diag\big(g_3(z_j)Q(z_j)\big)_{j=1}^{n}$. The first matrix~$H_S(z_1,\dots,z_n)$
    comes from a real rational function~$S$ and hence its rank is at most~$\deg S$. The second
    is positive semidefinite, as it is diagonally similar to~$H_{g_3}(z_1,\dots,z_n)$
    and~$g_3\in \NN_0$. The rank of the third matrix is bounded from above by~$2\deg Q$: it is
    the Hermitian part of the product~$D_2\cdot H_{Q}(z_1,\dots,z_n)$ whose rank does not
    exceed~$\deg Q$. By Lemma~\ref{lm:Hsum}, the number of negative eigenvalues
    of~$H_f(z_1,\dots,z_n)$ cannot exceed~$\deg S+2\deg Q$.
\end{proof}

\begin{corollary}\label{cr:N_kappa_lambda_M2}
    Let~$f\in \NU$. Then~$f\in\SU$ if and only if~$f(z)$ is real and analytic for~$z<0$
    excluding at most finitely many poles.
\end{corollary}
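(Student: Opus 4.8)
The plan is to obtain this as an essentially immediate consequence of Theorem~\ref{th:NkappaM2} applied with the special choice $T(z)=z$.

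First I would unwind the definitions. By~\eqref{eq:Nevanlinna-U}, the hypothesis $f\in\NU$ means $f\in\NN_\kappa$ for some $\kappa\in\N_0$, while $f\in\SU$ means $f\in\NN_\kappa^\lambda$ for some $\kappa,\lambda\in\N_0$, i.e.\ $f\in\NN_\kappa$ and $zf\in\NN_\lambda$. Consequently, under the standing assumption $f\in\NU$, the membership $f\in\SU$ is equivalent to $zf\in\NU$ alone: indeed, $zf\in\NN_\lambda$ for some $\lambda$ together with $f\in\NN_\kappa$ gives $f\in\NN_\kappa^\lambda\subset\SU$, and conversely $f\in\SU$ forces $zf\in\NN_\lambda\subset\NU$ for some $\lambda$.

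Next, take the real rational function $T(z)=z$. Then the set $\mathcal I$ defined in~\eqref{eq:def_I_via_T} is $\mathcal I=\{t\in\R:-\infty<t<0\}=(-\infty,0)$. Applying Theorem~\ref{th:NkappaM2} with this $T$ and with $g\coloneqq f\in\NU$ (the hypothesis of the present corollary), we conclude that $zf=Tf\in\NU$ if and only if $f(z)$ is analytic and real on $(-\infty,0)$ excluding at most finitely many poles. Combining this with the reduction of the previous paragraph gives the asserted equivalence.

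There is no genuine obstacle beyond the bookkeeping above; the entire substance is already contained in Theorem~\ref{th:NkappaM2} (and ultimately in Lemma~\ref{lm:NkappaM2a}). I would only remark that, should one want quantitative information — such as the bound that $f\in\NN_\kappa$ admits at most $\kappa$ poles on $(-\infty,0)$, or an estimate on the index $\lambda$ of $zf$ in terms of $\kappa$ and the pole data — this follows from the last sentence of Lemma~\ref{lm:NkappaM2a} and from the degree count in the sufficiency part of its proof, but it is not required for the statement as phrased here.
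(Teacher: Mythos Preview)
Your proof is correct and follows essentially the same route as the paper: apply Theorem~\ref{th:NkappaM2} with $T(z)=z$, so that $\mathcal I=(-\infty,0)$, and combine with the observation that under $f\in\NU$ one has $f\in\SU$ iff $zf\in\NU$. The paper compresses this into a single sentence, but the content is identical.
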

\begin{proof}
    The corollary follows from Theorem~\ref{th:NkappaM2} by choosing~$T(z)=z$, since~$f\in \NN_\kappa^\lambda$
    precisely when~$f(z)$ and~$zf(z)$ belong to~$\NN_\kappa$ and~$\NN_\lambda$, respectively.
\end{proof}

\begin{corollary}\label{cr:N_kappa_lambda_M3}
    Let~$f\in\SU$, and let~$h(z)$ and~$g(z)$ be two real rational functions.
    Then~$hf+g\in\SU$ if and only if~$f(z)$ is real and analytic
    in~$\mathcal I\coloneqq\big\{t\in\R:-\infty<h(t)< 0\big\}$ excluding at most a finite number of poles.
\end{corollary}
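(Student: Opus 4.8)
The plan is to deduce this corollary from Theorem~\ref{th:NkappaM2} (used with $T=h$) together with Corollary~\ref{cr:N_kappa_lambda_M2}. The first step is to record that membership in $\NU$ is unaffected by adding a real rational function: if $g$ is real rational, then for any $z_1,\dots,z_n\in\C_+$ the Pick matrix $H_g(z_1,\dots,z_n)$ has rank at most $\deg g$ (this is Lemma~\ref{lm:Nkappa_rational} together with the remark following it; for $n<\deg g$ the rank is at most $n$). Hence $H_g(z_1,\dots,z_n)$ has at most $\deg g$ negative eigenvalues, \emph{uniformly} in $n$ and in the chosen points, so Lemma~\ref{lm:Hsum} applied to $H_{hf+g}=H_{hf}+H_g$ (and also to $H_{hf}=H_{hf+g}+H_{-g}$) shows that $hf\in\NU$ if and only if $hf+g\in\NU$.

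Next I would invoke Theorem~\ref{th:NkappaM2} with $T=h$: since $f\in\SU\subset\NU$, that theorem asserts that $hf\in\NU$ precisely when $f(z)$ is real and analytic on $\mathcal I$ with at most finitely many poles there. Combining this with the previous step gives the key equivalence
\[
    hf+g\in\NU
    \quad\Longleftrightarrow\quad
    f\text{ is real and analytic on }\mathcal I\text{ excluding at most finitely many poles}.
\]

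It remains to replace $\NU$ by $\SU$ on the left-hand side. By Corollary~\ref{cr:N_kappa_lambda_M2}, a function in $\NU$ belongs to $\SU$ exactly when it is real and analytic for $z<0$ apart from finitely many poles. Applying the same corollary to $f$ (which is in $\SU$ by hypothesis) shows that $f$ has this property, and since $h$ and $g$ are rational, so does $hf+g$. Therefore every element of $\NU$ of the form $hf+g$ already lies in $\SU$, while trivially $\SU\subset\NU$; hence $hf+g\in\SU\iff hf+g\in\NU$. Chaining this with the displayed equivalence proves the corollary.

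The only point that needs a little care is the uniformity in the first step: one must be sure that the rank bound on $H_g$ does not depend on the number or the position of the interpolation points, so that Lemma~\ref{lm:Hsum} can be applied with the single constant $\deg g$. Once this is settled, the remainder is a straightforward bookkeeping of the cited statements, and I do not anticipate any further obstacle.
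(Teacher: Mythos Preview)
Your argument is correct, and it is organized a little differently from the paper's proof. The paper appeals directly to the definition $\SU=\{\,\varphi:\varphi\in\NU\text{ and }z\varphi\in\NU\,\}$: writing $\widetilde f(z)=zf(z)$ and $\widetilde g(z)=zg(z)$, it applies Theorem~\ref{th:NkappaM2} separately to $hf$ and to $h\widetilde f$, noting that $f$ and $\widetilde f$ share the same singularities on $\mathcal I$ (up to the origin), and then uses the rank bound from Lemma~\ref{lm:Nkappa_rational} together with Lemma~\ref{lm:Hsum} on both $H_{hf+g}=H_{hf}+H_g$ and $H_{h\widetilde f+\widetilde g}=H_{h\widetilde f}+H_{\widetilde g}$ to conclude $hf+g\in\SU\iff hf\in\SU$.

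You instead establish only the $\NU$-equivalence $hf+g\in\NU\iff hf\in\NU$ once, and then upgrade $\NU$ to $\SU$ via Corollary~\ref{cr:N_kappa_lambda_M2}, using that $f\in\SU$ forces $f$ (hence $hf+g$) to be real-meromorphic on $(-\infty,0)$ with finitely many poles. This is a perfectly valid shortcut: it trades the duplicated application of Theorem~\ref{th:NkappaM2} and of the rank-perturbation argument for a single invocation of Corollary~\ref{cr:N_kappa_lambda_M2}. The paper's route is slightly more self-contained (it does not chain through Corollary~\ref{cr:N_kappa_lambda_M2}), while yours avoids repeating the same computation for $f$ and $zf$. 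One minor wording issue: when you quote the remark after Lemma~\ref{lm:Nkappa_rational}, what you actually need is the trivial bound $\operatorname{rank}H_g(z_1,\dots,z_n)\le n\le\deg g$ when $n<\deg g$; the remark itself says something else.
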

\begin{proof}
    Recall that~$f\in\SU$ if and only if~$f,\widetilde f\in \NU$,
    where~$\widetilde{f}(z)\coloneqq zf(z)$. The function~$\widetilde f(z)$ has the same
    singular points in~$\mathcal I$ as~$f(z)$, possibly excluding the origin. So, by
    Theorem~\ref{th:NkappaM2}, one has both~$hf\in\NU$
    and~$h\widetilde{f}\in\NU$ if and only if~$f(z)$ is real and analytic
    in~$\mathcal I$ except for at most a finite number of poles.

    Now, for any points~$z_1,\dots, z_n\subset \C_+$
    \begin{align*}
      H_{hf+g}(z_1,\dots,z_n)
      &=H_{hf}(z_1,\dots,z_n)+H_{g}(z_1,\dots,z_n)
        \quad\text{and}
      \\
      H_{h\widetilde{f}+\widetilde{g}}(z_1,\dots,z_n)
      &=H_{h\widetilde{f}}(z_1,\dots,z_n)+H_{\widetilde{g}}(z_1,\dots,z_n)
        ,
    \end{align*}
    where~$\widetilde{g}(z)\coloneqq zg(z)$. Since both matrices~$H_g(z_1,\dots,z_n)$
    and~$H_{\widetilde g}(z_1,\dots,z_n)$ have rank~$\le\deg g+1$ according to
    Lemma~\ref{lm:Nkappa_rational}, the conditions~$hf+g\in\SU$ and~$hf\in\SU$
    are equivalent.
\end{proof}

\begin{proof}[Proof of Theorem~\ref{th:RnmNkappa}]
    \refstepcounter{dummy}\label{pg:pr_th_RnmNkappa}%
    If~$R_{n_1,n_2,m}(z)$ is rational, then we immediately have both: the equality
    in~\eqref{eq:boundIneq} in view of \eqref{eq:2F1ratioboundary} and~$\pm R_{n_1,n_2,m}\in\SU$
    by Lemma~\ref{lm:Nkappa_rational}. Thus, it is enough only to consider the case
    when~$R_{n_1,n_2,m}(z)$ is not rational. Under this assumption we will prove both directions
    of the theorem simultaneously.

    Choose the integer~$\delta\ge 0$ to be large enough so that
    \[
        0\le a+\delta \le c+2\delta
        \quad\text{and}\quad
        0\le b+\delta+1 \le c+2\delta
        .
    \]
   Then the coefficients~\eqref{eq:gen_cont_fr_cf} of the continued fraction~\eqref{eq:gen_cont_fr} corresponding to
    \[
        \widetilde R_{0,1,1}(z)
        \coloneqq
        \frac{{}_2F_1(a+\delta,b+\delta+1;c+2\delta+1;z)}{{}_2F_1(a+\delta,b+\delta;c+2\delta;z)}
    \]
    are all strictly positive, and hence~$\widetilde R_{0,1,1}(z)$ is not rational. Moreover,
    Theorem~\ref{th:CF_conv} implies~$\widetilde R_{0,1,1}\in\mathcal S\subset\SU$. The
    condition~\eqref{item:Run4} of Theorem~\ref{th:2F1zeros} is satisfied, so~$\widetilde R_{0,1,1}(x+i0)$ is
    continuous for all~$x>1$ and, according to  Theorem~\ref{th:2F1ratioboundary} and Example~1 below, has strictly positive
    imaginary part.%
    \footnote{One can avoid Theorem~\ref{th:2F1zeros} here by invoking
        Theorem~\ref{th:CF_conv}~\eqref{item:Blumenthal}.}

    Given shifts~$n_1,n_2,m\in\mathbb Z$ and~$\delta\in\N_0$, there exist
    (see~\cite[pp.~130--133]{Gauss} or~\cite[eq. (1.1)]{Ebisu}) nontrivial real coprime
    polynomials~$p_\delta(z)$, $q_\delta(z)$ and~$r_\delta(z)$ in~$z$ (as well as in~$a,b,c$)
    such that%~$\gcd(p_\delta,q_\delta,r_\delta)=1$ and
    \begin{equation}\label{eq:R_shifts_via_G}
        p_\delta(z)
        \frac{{}_2F_1(a+n_1,b+n_2;c+m;z)}{{}_2F_1(a+\delta,b+\delta;c+2\delta;z)}
        = q_\delta(z)
        \widetilde R_{0,1,1}(z)
        %\\
        + r_\delta(z)
        .
    \end{equation}
    In our case, the numbers~$a,b,c$ are fixed. Since~$\widetilde R_{0,1,1}(z)$ is non-rational for our choice of~$\delta$, the polynomial~$p_\delta(z)$ cannot vanish identically.%
    \footnote{Under `nontrivial' polynomials we understand that one cannot simultaneously
        have~$p_\delta(z)$, $q_\delta(z)$ and~$r_\delta(z)$ identically equal to zero (as
        polynomials in~$z$). The universal choice~$\delta=0$ would shorten the proof, but then
        it would be possible that~$\widetilde R_{0,1,1}(z)=R_{0,1,1}(z)$ is rational. The latter
        would lead to the undesired case~$p_0(z)\equiv 0$ and~$q_0(z),r_0(z)\not\equiv 0$,
        since~$R_{n_1,n_2,m}(z)$ is not rational.}

    The ratio of two version of the equality~\eqref{eq:R_shifts_via_G} -- the second one
    with~$n_1=n_2=m=0$ and polynomials $\widetilde p_\delta(z)\not\equiv 0$, ~$\widetilde q_\delta(z)$
    and~$\widetilde r_\delta(z)$ -- may be written as
    \begin{equation}\label{eq:R_shifts_via_G_delta}
        \frac {p_\delta(z)}{\widetilde p_\delta(z)}
        R_{n_1,n_2,m}(z)
        =
        \frac{q_\delta(z)\widetilde R_{0,1,1}(z) + r_\delta(z)}
        {\widetilde q_\delta(z)\widetilde R_{0,1,1}(z) + \widetilde r_\delta(z)}.
    \end{equation}
    If~$\widetilde q_\delta(z)\equiv 0$, then we obtain
    \begin{equation}\label{eq:R_shifts_via_G0}
        R_{n_1,n_2,m}(z) = \widetilde h(z) \widetilde R_{0,1,1}(z) + \widetilde g(z)
    \end{equation}
    with real rational functions~$\widetilde h(z)$ and~$\widetilde g(z)$.
    Here~$\widetilde h(z)\not\equiv 0$, since~$R_{n_1,n_2,m}(z)$ is non-rational. 
    Then for all~$x>1$
    excluding zeros and poles of~$\widetilde h(x)$,
    \begin{equation}\label{eq:R_shifts_via_G1}
        0<\Im[\widetilde R_{0,1,1}(x+i0)]
        =
        \Im\frac{R_{n_1,n_2,m}(x+i0)}{\widetilde h(x)}
        =
        \frac{\pi B_{n_1,n_2,m}P_{r}(1/x)
            x^{l-\underline{n}-c}(x-1)^{c-a-b-l}}{\widetilde h(x)\cdot|{}_{2}F_{1}(a,b;c;x)|^{2}},
    \end{equation}
    where the last equality is~\eqref{eq:2F1ratioboundary}. The inequality~\eqref{eq:boundIneq}
    holds if and only if $\widetilde h(x)>0$ for all~$x>1$ excluding possible poles and zeros.
    The latter is equivalent to~$R_{n_1,n_2,m}\in\SU$ due to~\eqref{eq:R_shifts_via_G0} and
    Corollary~\ref{cr:N_kappa_lambda_M3}.
    
    If~$\widetilde q_\delta(z)\not\equiv 0$, then~\eqref{eq:R_shifts_via_G_delta} yields
    \begin{equation}\label{eq:R_shifts_via_G2}
        R_{n_1,n_2,m}(z)
        =
        \frac{\widetilde p_\delta(z)}{p_\delta(z)}\left(
            \frac{r_\delta(z)- \widetilde r_\delta(z)q_\delta(z)/\widetilde q_\delta(z)}
            {\widetilde q_\delta(z)\widetilde R_{0,1,1}(z) + \widetilde r_\delta(z)}
            +\frac{q_\delta(z)}{\widetilde q_\delta(z)}
        \right)
        \eqqcolon \frac{-h(z)}{\widetilde R_{0,1,1}(z)+g(z)} +g_1(z)
    \end{equation}
    for certain real rational functions~$h(z),g(z)$ and~$g_1(z)$: note that~$h(z)\not\equiv 0$,
    otherwise~$R_{n_1,n_2,m}(z)$ would be rational. Analogously to~\eqref{eq:R_shifts_via_G1} we
    have
    \begin{multline*}
        0<
        \frac{\Im\widetilde R_{0,1,1}(x+i0)}{\big|\widetilde R_{0,1,1}(x+i0)+g(x)\big|^2}
        =
        \Im\frac{-1}{\widetilde R_{0,1,1}(x+i0)+g(x)}
        \\=
        \Im\frac{R_{n_1,n_2,m}(x+i0)}{h(x)}
        =
        \frac{\pi B_{n_1,n_2,m}P_{r}(1/x)
            x^{l-\underline{n}-c}(x-1)^{c-a-b-l}}{h(x) \cdot|{}_{2}F_{1}(a,b;c;x)|^{2}}
    \end{multline*}
    for all~$x>1$ excluding possible poles and zeros of~$h(x)$. Consequently,~$h(x)>0$ if and
    only if~$B_{n_1,n_2,m}P_{r}(1/x)>0$; the latter inequality is equivalent
    to~\eqref{eq:boundIneq} combined with~$B_{n_1,n_2,m}P_{r}(x)\not\equiv 0$.
    From~\eqref{eq:R_shifts_via_G2} we obtain
    \[
        \widetilde R_{0,1,1}\in\mathcal S
        \implies
        \widetilde R_{0,1,1}+g\in\SU
        \iff
       R_{n_1,n_2,m}-g_1\in\SU
        \iff
        R_{n_1,n_2,m}\in\SU
    \]
    by applying Lemma~\ref{lm:Hsum}, then
    Lemma~\ref{lemma:Nkappa_Moebius2} with Corollary~\ref{cr:N_kappa_lambda_M3}, and finally
    Lemma~\ref{lm:Hsum} again.
\end{proof}

\begin{proof}[Proof of Theorem~\ref{th:B_P_Rnm_Nkappa}]
    Suppose that~$R_{n_1,n_2,m}(z)$ is not rational, since the case when it is rational reduces
    to Theorem~\ref{th:RnmNkappa}. In the proof of Theorem~\ref{th:RnmNkappa},
    for the case~$\widetilde q_\delta(z)\equiv 0$ identity~\eqref{eq:R_shifts_via_G0} yields
    \[
        B_{n_1,n_2,m}P_{r}(1/z)R_{n_1,n_2,m}(z)
        =
        \widetilde h(z)B_{n_1,n_2,m}P_{r}(1/z) \widetilde R_{0,1,1}(z)
        + B_{n_1,n_2,m}P_{r}(1/z)\widetilde g(z).
    \]
    The right-hand side is in~$\SU\subset\NU$ due to~$\widetilde h(z)B_{n_1,n_2,m}P_{r}(1/z)>0$
    for all~$z>1$ except for poles and zeros. In particular, the product
    \begin{equation*}%\label{eq:BPRsq0}
        f(z)\coloneqq B_{n_1,n_2,m}P_{r}\big(1/(z+\omega)\big)R_{n_1,n_2,m}(z+\omega)
    \end{equation*}
    lies in the class~$\NU$ whose definition does not depend on real shifts. According to the ultimate claim of
    Theorem~\ref{th:2F1zeros}, the function~$f(z)$ is real and has at most finitely many zeros and
    poles in~$z<1-\omega$. So, Corollary~\ref{cr:N_kappa_lambda_M3} implies that in
    fact~$f\in\SU$. The same conclusion~$f\in\SU$ holds
    for~$\widetilde q_\delta(z)\not\equiv 0$: it is enough to use~\eqref{eq:R_shifts_via_G2}
    and Lemma~\ref{lemma:Nkappa_Moebius} instead of~\eqref{eq:R_shifts_via_G0}.

    Now, note that
    \[
        g(z)\coloneqq
        \frac{R_{n_1,n_2,m}(z+\omega)}{B_{n_1,n_2,m}P_{r}\big(1/(z+\omega)\big)}
        =
        f(z)\Big[B_{n_1,n_2,m}P_{r}\Big(\frac1{z+\omega}\Big)\Big]^{-2},
    \]
    and therefore~$f\in\SU$ implies~$g\in\SU$ by Theorem~\ref{th:NkappaM2}.
\end{proof}

\section{Examples}\label{sec:examples}
Recall that $R_{n_1,n_2,m}(z)$ is defined in \eqref{eq:gen-ratio-def}. Below we apply our main theorems  to 15 specific triples $n_1,n_2,m$. The resulting integral representations are only valid if~$R_{n_1,n_2,m}(z)$ is well-behaved near~$z=1$ and its denominator~${}_2F_1(a,b;c;z)\ne0$ in the cut plane~$\C\setminus[1,+\infty)$ and on the banks of the branch cut. Conditions for the latter are given in Theorem~\ref{th:2F1zeros}, while the former in ensured by the inequality~$\nu>-1$ with $\nu$ defined in~\eqref{eq:asymp1}. To relax these restrictions, one needs a kind of regularization near the point~$z=1$, as well as near all zeros of the denominator. We plan to tackle these issues in part~II of our work.

\textbf{Example~1}.
\refstepcounter{dummy}\label{example:1}
For the Gauss ratio $R_{0,1,1}(z)$ according to \eqref{eq:nmrelated} we obtain $p=l=r=0$. Theorem~\ref{th:2F1identity} and definition \eqref{eq:B-defined} yield:
$$
B_{0,1,1}P_0(t)\equiv\frac{\Gamma(c)\Gamma(c+1)}{\Gamma(a)\Gamma(b+1)\Gamma(c-a+1)\Gamma(c-b)}.
$$
Next, using \eqref{eq:asymp-infnolog} and \eqref{eq:asymp-inflog} or directly it is easy to verify  that 
$$
Q_{a,b,c}=\lim\limits_{z\to\infty}R_{0,1,1}(z)=\Biggl\{\!\!\!\begin{array}{l}0,~~
b\leq{a}\\[5pt][c(b-a)]/[b(c-a)],~~b>a.\end{array}
$$
Then Theorem~\ref{th:2F1ratio-repr} with $N=0$ yields:
\begin{equation*}
R_{0,1,1}(z)=Q_{a,b,c}+\frac{\Gamma(c)\Gamma(c+1)}{\Gamma(a)\Gamma(b+1)\Gamma(c-b)\Gamma(c-a+1)}\int\limits_{0}^{1}\frac{t^{a+b-1}(1-t)^{c-a-b}dt}{(1-zt)|{}_2F_1(a,b;c;t^{-1})|^2}.
\end{equation*}
In order for this representation to hold we need to assume that any of the conditions \eqref{item:Run1}-\eqref{item:Run5} of Theorem~\ref{th:2F1zeros} is satisfied.  Under this restriction the condition $\nu>-1$ from Theorem~\ref{th:2F1ratio-repr} holds automatically since the parameter $q=m-n_1-n_2$ in Lemma~\ref{lm:ratio-asymp1} vanishes, so that $R_{0,1,1}(z)$ is integrable in the neighborhood of $1$.  We remark that the integrand is symmetric with respect to the interchange of $a$ and $b$ and the asymmetry of $R_{0,1,1}(z)$ is only reflected in the constants $Q_{a,b,c}$ and $B_{0,1,1}$.

The above integral representation was first found by V.\:Belevitch in \cite[formula~(72)]{Bel}
under the restrictions $0\le{a},{b}\le{c}$, $c\ge1$ (there is a small mistake in Belevitch's
paper - a superfluous $2$ in the denominator of the constant $Q_{a,b,c}$). Independently, using
the Gauss continued fraction \eqref{eq:gen_cont_fr} and Wall's theorem K\"{u}stner
\cite[Theorem~1.5]{Kuestner} proved that $R_{0,1,1}(z)$ is a generating function of a Hausdorff
moment sequence if $0<a\le{c+1}$, $0<b\le{c}$. As we mentioned in introduction, the coefficients
of the Gauss continued fraction \eqref{eq:gen_cont_fr} for $R_{0,1,1}(z)$ are all positive if
(a) $-1<a<0$ and either $-1<b<c<0$ or $0<c<b<c+1$ or (b) $0<a<c+1$, $c>0$ and $-1<b<c$. If these
conditions hold while conditions of Runckel's theorem~\ref{th:2F1zeros} are violated, then
Theorem~\ref{th:CF_conv}\eqref{item:Stieltjes_H} implies that representation
\eqref{eq:gen_cont_fr_int} is true while the above integral representation is not. Hence, in
this situation $R_{0,1,1}(z)$ has pole(s) in the interval $(0,1)$ which are reflected by the
atoms of the representing measure in \eqref{eq:gen_cont_fr_int}. This is the case, for instance,
if $0<c<a<c+1$ and $-1<b<0$.

For all~$\omega\le1$ and~$a,b,c\in\R$ such that~$-c\notin\N_0$, Theorem~\ref{th:B_P_Rnm_Nkappa}
implies that~$B_{0,1,1}P_0R_{0,1,1}(z+\omega)$ belongs to~$\SU$; the case~$\omega=0$ is
considered in detail in Theorem~\ref{th:R011Nkappa}.

\medskip

\textbf{Example~2}.
\refstepcounter{dummy}\label{example:2}
For the ratio $R_{0,1,0}(z)$ according to \eqref{eq:nmrelated} we obtain
$l=1$, $p=r=0$. Theorem~\ref{th:2F1identity} and definition \eqref{eq:B-defined}
yield:
$$
B_{0,1,0}P_0(t)\equiv\frac{[\Gamma(c)]^2}{\Gamma(a)\Gamma(b+1)\Gamma(c-a)\Gamma(c-b)}.
$$
Next, using \eqref{eq:asymp-infnolog} and \eqref{eq:asymp-inflog} or directly we can  verify that
$$
Q_{a,b}=\lim\limits_{z\to\infty}R_{0,1,0}(z)=\Biggl\{\!\!\!\begin{array}{l}0,~~
b\leq{a}\\[3pt](b-a)/b,~~b>a.\end{array}
$$
Then Theorem~\ref{th:2F1ratio-repr} with $N=0$ yields:
$$
R_{0,1,0}(z)=Q_{a,b}+\frac{[\Gamma(c)]^2}{\Gamma(a)\Gamma(b+1)\Gamma(c-a)\Gamma(c-b)}
\int_0^1\frac{t^{a+b-1}(1-t)^{c-a-b-1}dt}{(1-zt)|{}_2F_1(a,b;c;1/t)|^2}.
$$
Note that similarly to Example~\hyperref[example:1]{1}, the integrand is symmetric with respect
to the interchange of $a$ and $b$ and the asymmetry of the left hand side is only reflected in
the constants. In order for this representation to hold, we need to assume that any of the
conditions \eqref{item:Run1}-\eqref{item:Run5} of Theorem~\ref{th:2F1zeros} is satisfied. Under this restriction and except
for the degenerate cases $ab=0$, $(c-a)(c-b)=0$ the condition $\nu>-1$ from \eqref{eq:asymp1}
reads
$$
(c-a-b-1)_{-}-(c-a-b)_{-}>-1,
$$
which is easily seen to be equivalent to $c>a+b$.  The above set of conditions holds, for example, if $-1<a<0$ and  $0<b<c$ or $a>0$ and $-1<b<c-a$.

Using continued fractions K\"{u}stner \cite[Theorem~1.5]{Kuestner} proved that  $R_{0,1,0}(z)$ is a generating function of a Hausdorff moment sequence if $-1\le{b}\le{c}$ and $0<a\le{c}$.
Askitis \cite[Lemma~6.2.2]{Askitis} found another proof for the this claim (without use of continued fractions). We also remark that the continued fraction for $R_{0,1,0}$ was also found by Gauss, see~\cite[eq.~26]{Gauss} or~\cite[eq.~(2.7)]{Kuestner}, in the form
$$
\cfrac{1}{1-\cfrac{\alpha_1z}{1-\cfrac{\alpha_2z}{1-\raisebox{-.4em}{$\ddots$}}}},
$$
where $\alpha_1=a/c$, and for $k\ge1$
$$
\alpha_{2k}=\frac{(b+k)(c-a+k-1)}{(c+2k-2)(c+2k-1)},
\quad
\alpha_{2k+1}=\frac{(a+k)(c-b+k-1)}{(c+2k-1)(c+2k)}.
$$
From these formulae, it is not difficult to formulate sufficient conditions for $\alpha_n\ge0$
ensuring that $R_{0,1,0}\in\mathcal{S}$ (the Stieltjes class). For general values of~$a,b,c\in\R$, ~$-c\notin\N_0$,
Theorem~\ref{th:B_P_Rnm_Nkappa} yields that~$B_{0,1,0}P_0R_{0,1,0}(z+\omega)$ belongs to~$\SU$
whenever~$\omega\le 1$. In particular,~$B_{0,1,0}P_0R_{0,1,0}\in\NN_{\kappa}^\lambda$ for
non-rational~$R_{0,1,0}(z)$ where the indices~$\kappa,\lambda$ are computed in
Theorem~\ref{th:N_kl_CFraction}; the case of rational~$\pm R_{0,1,0}$ is treated in
Corollary~\ref{cr:SFracRational}.

\bigskip

\textbf{Example~3}.
\refstepcounter{dummy}\label{example:3}
For the ratio $R_{1,1,1}(z)$ according to \eqref{eq:nmrelated} we obtain
$l=1$, $p=r=0$. Theorem~\ref{th:2F1identity} and definition \eqref{eq:B-defined}
yield:
$$
B_{1,1,1}P_0(t)=\frac{\Gamma(c)\Gamma(c+1)}{\Gamma(a+1)\Gamma(b+1)\Gamma(c-a)\Gamma(c-b)}
$$
Next, it is easy to verify using \eqref{eq:asymp-infnolog} and \eqref{eq:asymp-inflog} or directly that 
$$
Q_{a,b,c}=\lim\limits_{z\to\infty}R_{1,1,1}(z)=0.
$$
Then according to the case $N=0$ of Theorem~\ref{th:2F1ratio-repr} we obtain:
$$
R_{1,1,1}(z)=\frac{\Gamma(c)\Gamma(c+1)}{\Gamma(a+1)\Gamma(b+1)\Gamma(c-a)\Gamma(c-b)}
\int_0^1\frac{t^{a+b}(1-t)^{c-a-b-1}dt}{(1-zt)|{}_2F_1(a,b;c;1/t)|^2}.
$$
In order for this representation to hold we need to assume that any of the conditions \eqref{item:Run1}-\eqref{item:Run5}
of Theorem~\ref{th:2F1zeros} is satisfied. Under this restriction and except for the degenerate
cases $ab=0$, ~$(c-a)(c-b)=0$ the condition $\nu>-1$ from \eqref{eq:asymp1} reads
$$
(c-a-b-1)_{-}-(c-a-b)_{-}>-1,
$$
which is easily seen to be equivalent to $c>a+b$. All these conditions are satisfied, for example, if (a) $-1<a<0$ and $0<b<c$ or (b) $0<a<c$ and $-1<b<c-a$.

In this case, the corresponding C-fraction is not as simple as in the previous two examples. Instead of trying to
write it directly, let us employ the following contiguous relation (\cite[eq.~17]{Gauss}
or~\cite[eq.~(2.2)]{Kuestner}):
\begin{equation*}\label{eq:R111_via_R010}
    R_{1,1,1}(z)
    =
    \frac{{}_2F_1(a+1,b+1;c+1;z)}{{}_2F_1(a,b;c;z)}
    =
    \frac{c}{az}\left(\frac{{}_2F_1(a,b+1;c;z)}{{}_2F_1(a,b;c;z)}-1\right)
    =
    \frac{c}{az}\left(R_{0,1,0}(z)-1\right)
    .
\end{equation*}
Since~$R_{1,1,1}(z)$ is analytic near the origin and only has finitely many poles for~$z<1$, we
obtain
\[
    \frac{a}c R_{1,1,1}\in\NN_{\kappa}^{\lambda} \iff R_{0,1,0}\in\NN_{\lambda}^{\kappa+\delta}
    \quad\text{and}\quad
    -\frac{a}c R_{1,1,1}\in\NN_{\kappa}^{\lambda} \iff -R_{0,1,0}\in\NN_{\lambda}^{\kappa+\delta},
\]
where~$\delta\in\{-1,0,1\}$ is a certain number depending on the asymptotics at infinity.%
\footnote{By Theorem~\ref{th:DLLS}, ~$\frac{a}c R_{1,1,1}\in\NN_\kappa$ implies
    that~$\frac{a}c z^2 R_{1,1,1}(z)$ lies
    in~$\NN_{\kappa-1}\cup\NN_{\kappa}\cup\NN_{\kappa+1}$. Lemma~\ref{lm:Hsum} shows how the
    number of negative eigenvalues of the Pick matrix changes when a function is increased
    by~$z$ or by~$1/z$. In particular,~$zR_{0,1,0}(z)=\frac{a}c z^2 R_{1,1,1}(z)+z$ belongs
    to~$\NN_{\kappa-1}\cup\NN_{\kappa}\cup\NN_{\kappa+1}$. To obtain the exact value of~$\delta$
    and the converse implication, one can check what happens with the
    \emph{generalized pole of nonpositive type} of~$R_{1,1,1}(z)$ at infinity (if any), since
    its generalized poles at finite points remain unaffected, see the definition and further
    details in~\cite{DHdS,DLLS}.} %
Furthermore, Theorem~\ref{th:B_P_Rnm_Nkappa} yields that~$B_{1,1,1}P_0R_{1,1,1}(z+\omega)$
belongs to~$\SU$ whenever~$\omega\le 1$.

In particular, for $-1\le{a}\le{c}$ and $0<b\le{c}$ K\"{u}stner proved
\cite[Theorem~1.5]{Kuestner} that the ratio~$R_{1,1,1}(z)$ is the generating function of a
Hausdorff moment sequence, which is equivalent to~$z\mapsto R_{1,1,1}(z+\omega)\in\mathcal S=\NN_0^0$ for each~$\omega\le 1$.

\bigskip

\textbf{Example~4}.  For the ratio $R_{1,1,2}(z)$ according to \eqref{eq:nmrelated} we obtain $l=p=r=0$.  Theorem~\ref{th:2F1identity} and definition \eqref{eq:B-defined} yield: 
$$
B_{1,1,2}P_0(t)=B_{1,1,2}P_0=\frac{\Gamma(c+1)\Gamma(c+2)}{\Gamma(a+1)\Gamma(b+1)\Gamma(c-a+1)\Gamma(c-b+1)}
.
$$
Next, it is easy to verify using \eqref{eq:asymp-infnolog} and \eqref{eq:asymp-inflog} or directly that 
$$
Q_{a,b,c}=\lim\limits_{z\to\infty}R_{1,1,2}(z)=0.
$$
Then according to the case $N=0$ of Theorem~\ref{th:2F1ratio-repr} we obtain:
$$
R_{1,1,2}(z)=\frac{\Gamma(c+1)\Gamma(c+2)}{\Gamma(a+1)\Gamma(b+1)\Gamma(c-a+1)\Gamma(c-b+1)}
\int_0^1\frac{t^{a+b}(1-t)^{c-a-b}dt}{(1-zt)|{}_2F_1(a,b;c;1/t)|^2}.
$$
In order for this representation to hold we need to assume that any of the conditions \eqref{item:Run1}-\eqref{item:Run5} of Theorem~\ref{th:2F1zeros} is satisfied. 
Under this restriction and except for the degenerate cases $ab=0$, $(c-a)(c-b)=0$ the condition $\nu>-1$ from \eqref{eq:asymp1} reads 
$$
(c-a-b)_{-}-(c-a-b)_{-}>-1
$$
and is trivially satisfied.  

For general values of~$a,b,c\in\R$ such that~$-c\notin\N_0$, we again employ a contiguous
relation (see~\cite[eq.~19]{Gauss} or~\cite[p.~122 eq.~(2)]{Perron}) instead of looking for the
corresponding C-fraction:
\begin{equation}\label{eq:R112_via_R011}
    R_{1,1,2}(z)
    =
    \frac{{}_2F_1(a+1,b+1;c+2;z)}{{}_2F_1(a,b;c;z)}
    =
    \frac{c(c+1)}{a(c-b)z}\left(\frac{{}_2F_1(a,b+1;c+1;z)}{{}_2F_1(a,b;c;z)}-1\right).
\end{equation}
Since~$R_{1,1,2}(z)$ is analytic near the origin and only has finitely many poles for~$z<1$, we
obtain
\[
    \frac{a(c-b)}{c(c+1)} R_{1,1,2}\in\NN_{\kappa}^{\lambda}
    \iff
    R_{0,1,1}\in\NN_{\lambda}^{\kappa+\delta}
    \quad\text{and}\quad
    -\frac{a(c-b)}{c(c+1)} R_{1,1,2}\in\NN_{\kappa}^{\lambda}
    \iff
    -R_{0,1,1}\in\NN_{\lambda}^{\kappa+\delta},
\]
where~$\delta\in\{-1,0,1\}$ is a certain number depending on the asymptotics at infinity, cf.
Example~\hyperref[example:3]{3}. This relation for~$a(c-b)\ne 0$ refines what
Theorem~\ref{th:RnmNkappa} states:
\[
    B_{1,1,2}P_0\ge0 \implies R_{1,1,2}\in\SU,
    \quad
    B_{1,1,2}P_0\le0 \implies -R_{1,1,2}\in\SU
    .
\]
However, the latter also holds for~$R_{1,1,2}(z+\omega)$ for all $\omega\le1$ due to Theorem~\ref{th:B_P_Rnm_Nkappa}.

\bigskip

\textbf{Example~5}.
\refstepcounter{dummy}\label{example:5}
For the ratio $R_{0,2,2}(z)$ according to \eqref{eq:nmrelated} we obtain $l=p=0$, $r=1$.  Theorem~\ref{th:2F1identity} and definition \eqref{eq:B-defined} yield:
$$
B_{0,2,2}P_1(t)=\frac{\Gamma(c)\Gamma(c+2)(ct+b-a+1)}{\Gamma(a)\Gamma(b+2)\Gamma(c-a+2)\Gamma(c-b+2)}.
$$
Next, it is easy to verify using \eqref{eq:asymp-infnolog} and \eqref{eq:asymp-inflog} or directly that 
$$
Q_{a,b,c}=\lim\limits_{z\to\infty}R_{0,2,2}(z)=\Biggl\{\!\!\!\begin{array}{l}0,~~
b\leq{a}\\[3pt]c(c+1)(b-a)(b-a+1)/[b(b+1)(c-a)(c-a+1)],~~b>a.\end{array}
$$
Then according to the case $N=0$ of  Theorem~\ref{th:2F1ratio-repr} we obtain:
$$
R_{0,2,2}(z)=Q_{a,b,c}+\frac{\Gamma(c)\Gamma(c+2)}{\Gamma(a)\Gamma(b+2)\Gamma(c-a+2)\Gamma(c-b)}
\int_0^1\frac{t^{a+b-1}(ct+b-a+1)(1-t)^{c-a-b}dt}{(1-zt)|{}_2F_1(a,b;c;1/t)|^2}.
$$
In order for this representation to hold we need to assume that any of the conditions \eqref{item:Run1}-\eqref{item:Run5}
of Theorem~\ref{th:2F1zeros} is satisfied. Under this restriction and except for the degenerate
cases $ab=0$, $(c-a)(c-b)=0$ the condition $\nu>-1$ from \eqref{eq:asymp1} reads
$$
(c-a-b)_{-}-(c-a-b)_{-}>-1
$$
and is trivially satisfied.  

Further, $B_{0,2,2}P_1(t)$ does not change sign on $(0,1)$ if the zero $t_*=(a-b-1)/c$ of the
polynomial $P_1(t)=ct+b-a+1$ does not lie in $(0,1)$, which is the case if $a\le{b+1}$ or
$a\ge{c+b+1}$. If this case, according to Theorem~\ref{th:RnmNkappa} $R_{0,2,2}\in\SU$ if
$B_{0,2,2}P_1(t)\ge0$ on $(0,1)$ or $-R_{0,2,2}\in\SU$ if $B_{0,2,2}P_1(t)\le0$ on $(0,1)$.
Furthermore, according to Theorem~\ref{th:B_P_Rnm_Nkappa} the functions
\[
    B_{0,2,2}P_1\Big(\frac1{z+\omega}\Big)R_{0,2,2}(z+\omega)
    \quad\text{and}\quad
    \frac{R_{0,2,2}(z+\omega)}{B_{0,2,2}P_1\big(1/(z+\omega)\big)}
\]
lie in~$\SU$ for all real values of parameters  $a,b,c,\omega$ such that~$-c\notin\N_0$ and $\omega\le1$.

\medskip

\textbf{Example~6}. 
For the ratio $R_{0,2,0}(z)$ according to \eqref{eq:nmrelated} we obtain $p=0$, $l=2$, $r=1$. Theorem~\ref{th:2F1identity} and definition \eqref{eq:B-defined} yield: 
$$
B_{0,2,0}P_1(t)=\frac{[\Gamma(c)]^2(t(c-2b-2)+b+1-a)}{\Gamma(a)\Gamma(b+2)\Gamma(c-a)\Gamma(c-b)}.
$$
Next, it is easy to verify using \eqref{eq:asymp-infnolog} and \eqref{eq:asymp-inflog} or directly that 
$$
Q_{a,b}=\lim\limits_{z\to\infty}R_{0,2,0}(z)=\Biggl\{\!\!\!\begin{array}{l}0,~~
b\leq{a}\\[3pt](b-a)(b-a+1)/[b(b+1)],~~b>a.\end{array}
$$
Then according to the case $N=0$ of Theorem~\ref{th:2F1ratio-repr} we obtain:
$$
R_{0,2,0}(z)=Q_{a,b}+
\frac{[\Gamma(c)]^2}{\Gamma(a)\Gamma(b+2)\Gamma(c-a)\Gamma(c-b)}
\!\int\limits_{0}^{1}\!\frac{t^{a+b-1}(b-a+1+t(c-2b-2))(1-t)^{c-a-b-2}dt}{(1-zt)|{}_2F_1(a,b;c;1/t)|^2}.
$$
In order for this representation to hold we need to assume that any of the conditions \eqref{item:Run1}-\eqref{item:Run5}
of Theorem~\ref{th:2F1zeros} is satisfied. Under this restriction and except for the degenerate
cases $ab=0$, $(c-a)(c-b)=0$ the condition $\nu>-1$ from \eqref{eq:asymp1} reads
$$
(c-a-b-2)_{-}-(c-a-b)_{-}>-1
$$
which is easily seen to be equivalent to $c>a+b+1$.

Further, $B_{0,2,0}P_1(t)$ does not change sign on $(0,1)$ if the zero $t_*=(a-b-1)/(c-2b-2)$ of the polynomial $P_1(t)=t(2b+2-c)+a-b-1$ does not lie in $(0,1)$, which is the case if $a\le{b+1}$ or
$a+b+1\ge{c}$. If this case, according to Theorem~\ref{th:RnmNkappa} $R_{0,2,0}\in\SU$ if
$B_{0,2,0}P_1(t)\ge0$ on $(0,1)$ or $-R_{0,2,0}\in\SU$ if $B_{0,2,0}P_1(t)\le0$ on $(0,1)$.
Furthermore, according to Theorem~\ref{th:B_P_Rnm_Nkappa} the functions
\[
    B_{0,2,0}P_1\Big(\frac1{z+\omega}\Big)R_{0,2,0}(z+\omega)
    \quad\text{and}\quad
    \frac{R_{0,2,0}(z+\omega)}{B_{0,2,0}P_1\big(1/(z+\omega)\big)}
\]
lie in~$\SU$ for all real values of parameters  $a,b,c,\omega$ such that~$-c\notin\N_0$ and $\omega\le1$.

\medskip

\textbf{Example~7}. For the ratio $R_{1,1,0}(z)$ according to \eqref{eq:nmrelated} we obtain $p=0$, $l=2$, $r=0$.   Theorem~\ref{th:2F1identity} and definition \eqref{eq:B-defined} yield: $$
B_{1,1,0}P_0(t)=-\frac{[\Gamma(c)]^2(c-a-b-1)}{\Gamma(a+1)\Gamma(b+1)\Gamma(c-a)\Gamma(c-b)}
.
$$
Next, it is easy to verify using \eqref{eq:asymp-infnolog} and \eqref{eq:asymp-inflog} or directly that 
$$
Q_{a,b,c}=\lim\limits_{z\to\infty}R_{1,1,0}(z)=0.
$$
Then according to the case $N=0$ of Theorem~\ref{th:2F1ratio-repr} we obtain:
$$
R_{1,1,0}(z)=-\frac{[\Gamma(c)]^2(c-a-b-1)}{\Gamma(a+1)\Gamma(b+1)\Gamma(c-a)\Gamma(c-b)}
\int_0^1\frac{t^{a+b}(1-t)^{c-a-b-2}dt}{(1-zt)|{}_2F_1(a,b;c;1/t)|^2}.
$$
In order for this representation to hold we need to assume that any of the conditions \eqref{item:Run1}-\eqref{item:Run5} of Theorem~\ref{th:2F1zeros} is satisfied. Under this restriction and except for the degenerate cases $ab=0$, $(c-a)(c-b)=0$ the condition $\nu>-1$ from \eqref{eq:asymp1} reads
$$
(c-a-b-2)_{-}-(c-a-b)_{-}>-1
$$
which is easily seen to be equivalent to $c>a+b+1$.

Further, $B_{1,1,0}P_0(t)=B_{1,1,0}P_0$ does not depend on $t$, and thus does not change sign on $(0,1)$.  Hence, according to Theorem~\ref{th:RnmNkappa} $R_{1,1,0}\in\SU$ if
$B_{1,1,0}P_0(t)\ge0$ on $(0,1)$ or $-R_{1,1,0}\in\SU$ if $B_{1,1,0}P_0(t)\le0$ on $(0,1)$.
Furthermore, according to Theorem~\ref{th:B_P_Rnm_Nkappa} the function
$B_{1,1,0}P_0R_{1,1,0}(z+\omega)$ lies in $\SU$ for all real values of parameters  $a,b,c,\omega$  such that~$-c\notin\N_0$ and $\omega\le1$.

\medskip

\textbf{Example~8}.
\refstepcounter{dummy}\label{example:8}
For the ratio $R_{0,0,1}(z)$ according to \eqref{eq:nmrelated} we obtain $p=1$, $l=r=0$.   Theorem~\ref{th:2F1identity} and definition \eqref{eq:B-defined} yield: $$
B_{0,0,1}P_0(t)=-\frac{\Gamma(c)\Gamma(c+1)}{\Gamma(a)\Gamma(b)\Gamma(c-a+1)\Gamma(c-b+1)}.
$$
Next, it is easy to verify using \eqref{eq:asymp-infnolog} and \eqref{eq:asymp-inflog} or directly that 
$$
Q_{a,b,c}=\lim\limits_{z\to\infty}R_{0,0,1}(z)=\Biggl\{\!\!\!\begin{array}{l}c/(c-b),~~
b\leq{a}\\[3pt]c/(c-a),~~b>a.\end{array}
$$
Then the case $N=0$ of Theorem~\ref{th:2F1ratio-repr} leads to the representation:
$$
R_{0,0,1}(z)=Q_{a,b,c}-\frac{\Gamma(c)\Gamma(c+1)}{\Gamma(a)\Gamma(b)\Gamma(c-a+1)\Gamma(c-b+1)}
\int_0^1\frac{t^{a+b-1}(1-t)^{c-a-b}dt}{(1-zt)|{}_2F_1(a,b;c;1/t)|^2}.
$$
In order for this representation to hold we need to assume that any of the conditions \eqref{item:Run1}-\eqref{item:Run5} of Theorem~\ref{th:2F1zeros} is satisfied. Under this restriction and except for the degenerate cases $ab=0$, $(c-a)(c-b)=0$ the condition $\nu>-1$ from \eqref{eq:asymp1} reads
$$
(c-a-b+1)_{-}-(c-a-b)_{-}>-1
$$
which is easily seen to be satisfied for all real $a,b,c$.

For general values of~$a,b,c\in\R$ such that~$-c\notin\N_0$ and~$b\ne0$, we can use the contiguous
relation~\cite[eq.~12]{Gauss} instead of calculating the corresponding C-fraction directly:
\[
    \frac{c-b}{b}R_{0,0,1}(z)
    =
    \frac{c-b}{b}\,\frac{{}_2F_1(a,b;c+1;z)}{{}_2F_1(a,b;c;z)}
    =
    \frac{c}{b}-\frac{{}_2F_1(a,b+1;c+1;z)}{{}_2F_1(a,b;c;z)}
    =
    \frac{c}{b}-R_{0,1,1}(z).
\]
Since~$R_{0,1,1}(z)$ is analytic near the origin and only has finitely many poles for~$z<1$, we
obtain
\[
    -\frac{c-b}{b} R_{0,0,1}\in\NN_{\kappa}^{\lambda}
    \iff
    R_{0,1,1}\in\NN_{\kappa}^{\lambda+\delta}
    \quad\text{and}\quad
    \frac{c-b}{b} R_{0,0,1}\in\NN_{\kappa}^{\lambda}
    \iff
    -R_{0,1,1}\in\NN_{\kappa}^{\lambda+\delta}
\]
for certain~$\delta\in\{-1,0,1\}$ depending on~$c/b$ and the asymptotics of~$R_{0,0,1}(z)$ at
infinity, cf. Example~\hyperref[example:3]{3}. This relation for~$b\ne 0$ refines what
Theorem~\ref{th:RnmNkappa} states:
\[
    B_{0,0,1}P_0\ge0 \implies R_{0,0,1}\in\SU,
    \quad
    B_{0,0,1}P_0\le0 \implies -R_{0,0,1}\in\SU
    .
\]
However, the latter also holds for~$R_{0,0,1}(z+\omega)$ due to Theorem~\ref{th:B_P_Rnm_Nkappa}.

\medskip

\textbf{Example~9}. For the ratio $R_{0,0,-1}(z)$ according to \eqref{eq:nmrelated} we obtain $l=1$, $p=r=0$.  Theorem~\ref{th:2F1identity} and definition \eqref{eq:B-defined} then yield: 
$$
B_{0,0,-1}P_0(t)=\frac{\Gamma(c)\Gamma(c-1)}{\Gamma(a)\Gamma(b)\Gamma(c-a)\Gamma(c-b)}
$$
Next, it is easy to verify using \eqref{eq:asymp-infnolog} and \eqref{eq:asymp-inflog} or directly that 
$$
Q_{a,b,c}=\lim\limits_{z\to\infty}R_{0,0,-1}(z)=\Biggl\{\!\!\!\begin{array}{l}(c-b-1)/(c-1),~~
b\leq{a}\\[3pt](c-a-1)/(c-1),~~b>a.\end{array}
$$
Then the case $N=0$ of Theorem~\ref{th:2F1ratio-repr} leads to the representation:
$$
R_{0,0,-1}(z)=Q_{a,b,c}+\frac{\Gamma(c)\Gamma(c-1)}{\Gamma(a)\Gamma(b)\Gamma(c-a)\Gamma(c-b)}
\int_0^1\frac{t^{a+b-1}(1-t)^{c-a-b-1}dt}{(1-zt)|{}_2F_1(a,b;c;1/t)|^2}.
$$
In order for this representation to hold we need to assume that any of the conditions \eqref{item:Run1}-\eqref{item:Run5} of Theorem~\ref{th:2F1zeros} is satisfied. Under this restriction and except for the degenerate cases $ab=0$, $(c-a)(c-b)=0$ the condition $\nu>-1$ from \eqref{eq:asymp1} reads
$$
(c-a-b-1)_{-}-(c-a-b)_{-}>-1
$$
which is easily seen to be equivalent to $c>a+b$.  All these conditions are satisfied, for example, if (a) $-1<a<0$ and $0<b<c-a$ or (b) $0<a<c$ and $-1<b<c-a$.

For general values of~$a,b,c\in\R$ such that~$-c\notin\N_0$, one may express~$R_{0,0,-1}(z)$
via~$R_{0,1,0}(z)$ using the contiguous relation~\cite[eq.~12]{Gauss}:
\[
    \frac{c-1}b R_{0,0,-1}(z)=\frac{c-b-1}b + R_{0,1,0}(z)
\]
and further apply Example~\hyperref[example:2]{2}. Another option is to relate to the case
considered in Example~\hyperref[example:8]{8}:
\[
    \frac{1}{R_{0,0,-1}(z)}
    =
    \frac{{}_2F_1(a,b;c;z)}{{}_2F_1(a,b;c-1;z)}
    \eqqcolon \widetilde R_{0,0,1}(z)
    ,
\]
here the right-hand side is~$R_{0,0,1}(z)$ with~$c$ decreased by~$1$. Therefore, from
Lemma~\ref{lemma:Nkappa_Moebius2} and Theorem~\ref{th:DLLS} we get
\[
    - R_{0,0,-1}\in\NN_{\kappa}^{\lambda}
    \iff
    \widetilde R_{0,0,1}\in\NN_{\kappa}^{\lambda+\delta}
    \quad\text{and}\quad
    R_{0,0,-1}\in\NN_{\kappa}^{\lambda}
    \iff
    -\widetilde R_{0,0,1}\in\NN_{\kappa}^{\lambda+\delta}
\]
for certain~$\delta\in\{-1,0,1\}$ depending on the asymptotics at infinity, cf.
Example~\hyperref[example:3]{3}. By Theorem~\ref{th:B_P_Rnm_Nkappa}, $B_{0,0,-1}P_0\ge0$ implies
that~$R_{0,0,-1}(z+\omega)$ is in~$\SU$, and the opposite inequality~$B_{0,0,-1}P_0\le0$ implies
that $-R_{0,0,-1}(z+\omega)$ is in~$\SU$ for all $\omega\le1$.

\bigskip

\textbf{Example~10}. For the ratio $R_{0,0,2}(z)$ according to \eqref{eq:nmrelated} we obtain $p=2$, $l=0$, $r=1$.  Application of Theorem~\ref{th:2F1identity} and definition \eqref{eq:B-defined} yield: $$
B_{0,0,2}P_1(t)=\frac{\Gamma(c)\Gamma(c+2)[ct+a+b-2c-1]}{\Gamma(a)\Gamma(b)\Gamma(c-a+2)\Gamma(c-b+2)}.
$$
Next, it is easy to verify using \eqref{eq:asymp-infnolog} and \eqref{eq:asymp-inflog} or directly that 
$$
Q_{a,b,c}=\lim\limits_{z\to\infty}R_{0,0,2}(z)=\Biggl\{\!\!\!\begin{array}{l}c(c+1)/[(c-b)(c-b+1)],~~
b\leq{a}\\[3pt]c(c+1)/[(c-a)(c-a+1)],~~b>a.\end{array}
$$
Then the case $N=0$ of Theorem~\ref{th:2F1ratio-repr} leads to the representation:
$$
R_{0,0,2}(z)=Q_{a,b,c}+\frac{\Gamma(c)\Gamma(c+2)}{\Gamma(a)\Gamma(b)\Gamma(c-a+2)\Gamma(c-b+2)}
\int_0^1\frac{t^{a+b-1}(ct+a+b-2c-1)(1-t)^{c-a-b}dt}{(1-zt)|{}_2F_1(a,b;c;1/t)|^2}.
$$
In order for this representation to hold we need to assume that any of the conditions \eqref{item:Run1}-\eqref{item:Run5} of Theorem~\ref{th:2F1zeros} is satisfied. Under this restriction and except for the degenerate cases $ab=0$, $(c-a)(c-b)=0$ the condition $\nu>-1$ from \eqref{eq:asymp1} reads
$$
(c-a-b+2)_{-}-(c-a-b)_{-}>-1
$$
which is true for all real $a,b,c$.

Further, $B_{0,0,2}P_1(t)$ does not change sign on $(0,1)$ if the zero $t_*=(2c+1-a-b)/c$ of the polynomial $P_1(t)=2c+1-a-b-ct$ does not lie in $(0,1)$, which is the case if $c>0$ and either $2c\le{a+b-1}$ or $c\ge{a+b-1}$;
or $c<0$ and either $2c\ge{a+b-1}$ or $c\ge{a+b-1}$. If this case, according to Theorem~\ref{th:RnmNkappa} $R_{0,0,2}\in\SU$ if
$B_{0,0,2}P_1(t)\ge0$ on $(0,1)$ or $-R_{0,0,2}\in\SU$ if $B_{0,0,2}P_1(t)\le0$ on $(0,1)$.
Furthermore, according to Theorem~\ref{th:B_P_Rnm_Nkappa} the functions
\[
    B_{0,0,2}P_1\Big(\frac1{z+\omega}\Big)R_{0,0,2}(z+\omega)
    \quad\text{and}\quad
    \frac{R_{0,0,2}(z+\omega)}{B_{0,0,2}P_1\big(1/(z+\omega)\big)}
\]
lie in~$\SU$ for all real values of parameters  $a,b,c,\omega$ such that~$-c\notin\N_0$ and $\omega\le1$.

\medskip

\textbf{Example~11}.   For the ratio $R_{0,1,2}(z)$ according to \eqref{eq:nmrelated} we obtain $p=1$, $l=0$, $r=1$.   Theorem~\ref{th:2F1identity} and definition \eqref{eq:B-defined} yield: $$
B_{0,1,2}P_1(t)=-\frac{\Gamma(c)\Gamma(c+2)(ct+b-c)}{\Gamma(a)\Gamma(b+1)\Gamma(c-a+2)\Gamma(c-b+1)}.
$$
Next, it is easy to verify using \eqref{eq:asymp-infnolog} and \eqref{eq:asymp-inflog} or directly that 
$$
Q_{a,b,c}=\lim\limits_{z\to\infty}R_{0,1,2}(z)=\Biggl\{\!\!\!\begin{array}{l}0,~~
b\leq{a}\\[3pt]c(c+1)(b-a)/[b(c-a)(c-a+1)],~~b>a.\end{array}
$$
Then the case $N=0$ of  Theorem~\ref{th:2F1ratio-repr} leads to the representation:
$$
R_{0,1,2}(z)=Q_{a,b,c}-\frac{\Gamma(c)\Gamma(c+2)}{\Gamma(a)\Gamma(b+1)\Gamma(c-a+2)\Gamma(c-b+1)}
\int_0^1\frac{t^{a+b-1}(ct+b-c)(1-t)^{c-a-b}dt}{(1-zt)|{}_2F_1(a,b;c;1/t)|^2}.
$$
In order for this representation to hold we need to assume that any of the conditions \eqref{item:Run1}-\eqref{item:Run5} of Theorem~\ref{th:2F1zeros} is satisfied. Under this restriction and except for the degenerate cases $ab=0$, $(c-a)(c-b)=0$ the condition $\nu>-1$ from \eqref{eq:asymp1} reads
$$
(c-a-b+1)_{-}-(c-a-b)_{-}>-1
$$
which is true for all real $a,b,c$.

Further, $B_{0,1,2}P_1(t)$ does not change sign on $(0,1)$ if the zero $t_*=(c-b)/c$ of the polynomial $P_1(t)=(ct+b-c)/b$ does not lie in $(0,1)$, i.e. $(c-b)/c\in(-\infty,0]\cup[1,\infty)$.  If this case, according to Theorem~\ref{th:RnmNkappa} $R_{0,1,2}\in\SU$ if
$B_{0,1,2}P_1(t)\ge0$ on $(0,1)$ or $-R_{0,1,2}\in\SU$ if $B_{0,1,2}P_1(t)\le0$ on $(0,1)$.
Furthermore, according to Theorem~\ref{th:B_P_Rnm_Nkappa} the functions
\[
    B_{0,1,2}P_1\Big(\frac1{z+\omega}\Big)R_{0,1,2}(z+\omega)
    \quad\text{and}\quad
   \frac{R_{0,1,2}(z+\omega)}{B_{0,1,2}P_1\big(1/(z+\omega)\big)}
\]
lie in~$\SU$ for all real values of parameters  $a,b,c,\omega$ such that~$-c\notin\N_0$ and $\omega\le1$.

\medskip

\textbf{Example~12}.   For the ratio $R_{0,-1,0}(z)$ according to \eqref{eq:nmrelated} we obtain $l=1$, $p=r=0$.  Theorem~\ref{th:2F1identity} and definition \eqref{eq:B-defined} yield: 
$$
B_{0,-1,0}P_0(t)=-\frac{[\Gamma(c)]^2}{\Gamma(a)\Gamma(b)\Gamma(c-a)\Gamma(c-b+1)}.
$$
Using Lemmas~\ref{lm:asymp-infnolog} and~\ref{lm:asymp-inflog} or by direct, albeit tedious calculation, we get the asymptotic approximations

\smallskip
\begin{compactenum}[\indent(1)]\itemsep1.5pt plus .7pt minus .2pt

\item if $b+1<a$, then $R_{0,-1,0}(z)=Az+B+o(1)~\text{as}~z\to\infty$;

\item if $b<a\le{b+1}$, then $R_{0,-1,0}(z)=Az+o(z)~\text{as}~z\to\infty$;

\item if $b-1\le{a}\le{b}$, then $R_{0,-1,0}(z)=o(z)~\text{as}~z\to\infty$;

\item if $a<b-1$, then  $R_{0,-1,0}(z)=C+o(1)~\text{as}~z\to\infty$,

\end{compactenum}
where
$$
A=\frac{b-a}{c-b},~~B=\frac{b(b+1)-2ab+c(a-1)}{(c-b)(a-b-1)},~~~C=\frac{b-1}{b-a-1}.
$$

Hence, if $|a-b|>1$, we have $R_{0,-1,0}(z)=\beta{z}+\alpha+o(1)$ as $z\to\infty$,  with $(\beta,\alpha)=(A,B)$ if $a>b+1$ and $(\beta,\alpha)=(0,C)$ if  $a<b-1$.
Then for $|a-b|>1$ we can choose $N=0$ in Theorem~\ref{th:2F1ratio-repr} leading to the representation:
\begin{equation}\label{eq:example12}
R_{0,-1,0}(z)=\alpha+\beta{z}-\frac{[\Gamma(c)]^2}{\Gamma(a)\Gamma(b)\Gamma(c-a)\Gamma(c-b+1)}
\int_0^{1}\frac{t^{a+b-2}(1-t)^{c-a-b}}{(1-zt)|{}_{2}F_{1}(a,b;c;1/t)|^{2}}dt.
\end{equation}
In addition to the condition $|a-b|>1$ we need to assume any of the conditions \eqref{item:Run1}-\eqref{item:Run5} of Theorem~\ref{th:2F1zeros}. Under these restrictions and except for the degenerate cases $ab=0$, $(c-a)(c-b)=0$ the condition $\nu>-1$ from \eqref{eq:asymp1} reads
$$
(c-a-b+1)_{-}-(c-a-b)_{-}>-1
$$
which is true for all real $a,b,c$.

For arbitrary $a,b$ we obtain $R_{0,-1,0}(z)=\beta{z}+o(z)$ as~$z\to\infty$, with $\beta=A$ if $b<a$ and $\beta=0$ if  ${a}\le{b}$. Hence, we can remove the restriction $|a-b|>1$ by taking $N=1$ in Theorem~\ref{th:2F1ratio-repr} which leads to 
\begin{equation}\label{eq:example12-1}
R_{0,-1,0}(z)=1+\beta{z}-\frac{z[\Gamma(c)]^2}{\Gamma(a)\Gamma(b)\Gamma(c-a)\Gamma(c-b+1)}
\int_0^{1}\frac{t^{a+b-1}(1-t)^{c-a-b}}{(1-zt)|{}_{2}F_{1}(a,b;c;1/t)|^{2}}dt,
\end{equation}
or, taking $N=2$ to get 
\begin{equation}\label{eq:example12-2}
R_{0,-1,0}(z)=1-\frac{ac}{c^2}z-\frac{z^2[\Gamma(c)]^2}{\Gamma(a)\Gamma(b)\Gamma(c-a)\Gamma(c-b+1)}
\int_0^{1}\frac{t^{a+b}(1-t)^{c-a-b}}{(1-zt)|{}_{2}F_{1}(a,b;c;1/t)|^{2}}dt.
\end{equation}
In particular, for $a=b=1$, $c=2$ we arrive at the curious identity
$$
\frac{z}{\Log(1+z)}=1+z\int\limits_{1}^{\infty}\frac{dx}{(\log^2(x-1)+\pi^2)(x+z)}.
$$
If $z$ is replaced by $1$ in the numerator on the left hand side, a similar representation can be found in \cite[(34)]{BergPedersen2011}.  

Further, $B_{0,-1,0}P_0(t)=B_{0,-1,0}P_0$ does not depend on $t$, and thus does not change sign on $(0,1)$.  Hence, according to Theorem~\ref{th:RnmNkappa} $R_{0,-1,0}\in\SU$ when
$B_{0,-1,0}P_0\ge0$  or $-R_{0,-1,0}\in\SU$ when $B_{0,-1,0}P_0\le0$.
Furthermore, according to Theorem~\ref{th:B_P_Rnm_Nkappa} the function
$B_{0,-1,0}P_0R_{0,-1,0}(z+\omega)$ lies in $\SU$ for all real values of parameters  $a,b,c,\omega$  such that~$-c\notin\N_0$ and $\omega\le1$.

\bigskip

\textbf{Example~13}. For the ratio $R_{-1,-1,0}(z)$ according to \eqref{eq:nmrelated} we obtain $p=2$, $l=r=0$.   Theorem~\ref{th:2F1identity} and definition \eqref{eq:B-defined} yields: 
$$
B_{-1,-1,0}P_0(t)=-\frac{[\Gamma(c)]^2(c-a-b+1)}{\Gamma(a)\Gamma(b)\Gamma(c-a+1)\Gamma(c-b+1)}.
$$
Using Lemmas~\ref{lm:asymp-infnolog} and Lemma~\ref{lm:asymp-inflog} or by direct, albeit tedious calculation we get the asymptotic approximations
\smallskip
\begin{compactenum}[\indent(1)]\itemsep1.5pt plus .7pt minus .2pt
\item if $a>b+1$, then $R_{-1,-1,0}(z)=B(a,b)z+A(a,b)+o(1)$ as $z\to\infty$;
\item if $b\le{a}\le{b+1}$, then $R_{-1,-1,0}(z)=B(a,b)z+o(z)$ as $z\to\infty$;
\item if $b-1\le{a}\le{b}$, then $R_{-1,-1,0}(z)=B(b,a)z+o(z)$ as $z\to\infty$;
\item if $a<b-1$, then  $R_{1,-1,0}(z)=B(b,a)z+A(b,a)+o(1)$ as $z\to\infty$,
\end{compactenum}
\smallskip
where
$$
B(a,b)=\frac{a-1}{b-c},~~A(a,b)=\frac{(a-1)(2b-c)}{(c-b)(1+b-a)}.
$$

Hence,  if $|a-b|>1$, then $R_{1,-1,0}(z)=\beta{z}+\alpha+o(1)$ as $z\to\infty$,  where
$(\beta,\alpha)=(B(a,b),A(a,b))$ if $a>b+1$ and 
$(\beta,\alpha)=(B(b,a),A(b,a))$ if $a<b-1$. Hence, for $|a-b|>1$ the $N=1$ case of Theorem~\ref{th:2F1ratio-repr} leads to the representation:
\begin{equation}\label{eq:example13}
R_{-1,-1,0}(z)=\alpha+\beta{z}-\frac{[\Gamma(c)]^2(c-a-b+1)}{\Gamma(a)\Gamma(b)\Gamma(c-a+1)\Gamma(c-b+1)}
\int_0^{1}\frac{t^{a+b-2}(1-t)^{c-a-b}}{|{}_{2}F_{1}(a,b;c;1/t)|^{2}(1-zt)}dt.
\end{equation}
In addition to the condition $|a-b|>1$ we need to assume any of the conditions \eqref{item:Run1}-\eqref{item:Run5} of Theorem~\ref{th:2F1zeros}. Under these restrictions and except for the degenerate cases $ab=0$, $(c-a)(c-b)=0$ the condition $\nu>-1$ from \eqref{eq:asymp1} reads
$$
(c-a-b+2)_{-}-(c-a-b)_{-}>-1
$$
which is true for all real $a,b,c$. As $R_{-1,-1,0}(z)=\beta{z}+o(z)$ as $z\to\infty$,  where
$\beta=B(a,b)$ if $a\ge{b}$ and 
$\beta=B(b,a)$ if $a\ge{b}$, we can lift the restriction $|a-b|>1$ by taking $N=1$ in Theorem~\ref{th:2F1ratio-repr} which leads to 
\begin{equation}\label{eq:example13-1}
R_{-1,-1,0}(z)=1+\beta{z}-\frac{z[\Gamma(c)]^2(c-a-b+1)}{\Gamma(a)\Gamma(b)\Gamma(c-a+1)\Gamma(c-b+1)}
\int_0^{1}\frac{t^{a+b-1}(1-t)^{c-a-b}}{|{}_{2}F_{1}(a,b;c;1/t)|^{2}(1-zt)}dt,
\end{equation}
or, taking $N=2$, we get 
\begin{equation}\label{eq:example13-2}
R_{-1,-1,0}(z)=1+\frac{(a+b-1)c}{c^2}z-\frac{z^2[\Gamma(c)]^2(c-a-b+1)}{\Gamma(a)\Gamma(b)\Gamma(c-a+1)\Gamma(c-b+1)}
\int_0^{1}\frac{t^{a+b}(1-t)^{c-a-b}}{|{}_{2}F_{1}(a,b;c;1/t)|^{2}(1-zt)}dt.
\end{equation}

Further, $B_{-1,-1,0}P_0(t)=B_{-1,-1,0}P_0$ does not depend on $t$, and thus does not change sign on $(0,1)$.  Hence, according to Theorem~\ref{th:RnmNkappa} $R_{-1,-1,0}\in\SU$ when
$B_{-1,-1,0}P_0\ge0$  or $-R_{-1,-1,0}\in\SU$ when $B_{-1,-1,0}P_0\le0$.
Furthermore, according to Theorem~\ref{th:B_P_Rnm_Nkappa} the function
$B_{-1,-1,0}P_0R_{-1,-1,0}(z+\omega)$ lies in $\SU$ for all real values of parameters $a,b,c,\omega$  such that~$-c\notin\N_0$ and $\omega\le1$.

\medskip 

\textbf{Example~14}.  For the ratio $R_{-1,1,0}(z)$ according to \eqref{eq:nmrelated} we obtain $p=l=r=0$.  Theorem~\ref{th:2F1identity} and definition \eqref{eq:B-defined} yield: $$
B_{-1,1,0}P_0=\frac{[\Gamma(c)]^2(a-b-1)}{\Gamma(a)\Gamma(b+1)\Gamma(c-a+1)\Gamma(c-b)}.
$$
The asymptotic behavior of $R_{-1,1,0}(z)$ as $z\to\infty$ is rather complicated and depends on the relation between $a$ and $b$. Application of Lemmas~\ref{lm:asymp-infnolog} and \ref{lm:asymp-inflog}  yield:
\smallskip
\begin{compactenum}[\indent(1)]\itemsep1.5pt plus .7pt minus .2pt
\item if $b+1<a$, then  $R_{-1,1,0}(z)=o(1)$ as $z\to\infty$;
\item if $b\le{a}\le{b+1}$, then  $R_{-1,1,0}(z)=o(z)$ as $z\to\infty$;
\item if $b-1\le{a}<b$, then  $R_{-1,1,0}(z)=B{z}+o(z)$ as $z\to\infty$;
\item if $a<b-1$, then  $R_{-1,1,0}(z)=B{z}+C+o(1)$ as $z\to\infty$,
\end{compactenum}
\smallskip
where
$$
B=\frac{(b-a)(b-a+1)}{b(a-c)},~~~C=\frac{(b-a)(b-a+1)(c(a+b-1)-2ab)}{b(c-a)(a-b-1)(a-b+1)}.
$$
Hence,  if $|a-b|>1$ we have $R_{-1,1,0}(z)=\beta{z}+\alpha+o(1)$ as $z\to\infty$, where $(\beta,\alpha)=(0,0)$ when $a>b+1$ and $(\beta,\alpha)=(B,C)$  when $a<b-1$.
Then for $|a-b|>1$ the $N=0$ case of Theorem~\ref{th:2F1ratio-repr} leads to the representation:
\begin{equation}\label{eq:example14}
R_{-1,1,0}(z)=\beta{z}+\alpha+\frac{[\Gamma(c)]^2(a-b-1)}{\Gamma(a)\Gamma(b+1)\Gamma(c-a+1)\Gamma(c-b)}
\int_0^{1}\frac{t^{a+b-2}(1-t)^{c-a-b}}{|{}_{2}F_{1}(a,b;c;1/t)|^{2}(1-zt)}dt.
\end{equation}
In addition to the condition $|a-b|>1$ we need to assume any of the conditions \eqref{item:Run1}-\eqref{item:Run5} of Theorem~\ref{th:2F1zeros}. Under these restrictions and except for the degenerate cases $ab=0$, $(c-a)(c-b)=0$ the condition $\nu>-1$ from \eqref{eq:asymp1} reads
$$
(c-a-b)_{-}-(c-a-b)_{-}>-1
$$
which is true for all real $a,b,c$.  

For arbitrary values of $a,b$ we have $R_{-1,1,0}(z)=\beta{z}+o(z)$ as $z\to\infty$, where
$\beta=0$ when $a\ge{b}$ and $\beta=B$  when ${a}<b$.
Hence, we can use representation \eqref{eq:mainrepresentionN} with $N=1$ yielding
\begin{equation}\label{eq:example14-1}
R_{-1,1,0}(z)=1+\beta{z}+\frac{z[\Gamma(c)]^2(a-b-1)}{\Gamma(a)\Gamma(b+1)\Gamma(c-a+1)\Gamma(c-b)}
\int_0^{1}\frac{t^{a+b-1}(1-t)^{c-a-b}}{|{}_{2}F_{1}(a,b;c;1/t)|^{2}(1-zt)}dt
\end{equation}
or with $N=2$ yielding 
\begin{equation}\label{eq:example14-2}
R_{-1,1,0}(z)=1+\frac{(a-b-1)c}{c^2}z+\frac{z^2[\Gamma(c)]^2(a-b-1)}{\Gamma(a)\Gamma(b+1)\Gamma(c-a+1)\Gamma(c-b)}
\int_0^{1}\frac{t^{a+b}(1-t)^{c-a-b}}{|{}_{2}F_{1}(a,b;c;1/t)|^{2}(1-zt)}dt.
\end{equation}
Further, $B_{-1,1,0}P_0(t)=B_{-1,1,0}P_0$ does not depend on $t$, and thus does not change sign on $(0,1)$.  Hence, according to Theorem~\ref{th:RnmNkappa} $R_{-1,1,0}\in\SU$ when
$B_{-1,1,0}P_0\ge0$  or $-R_{-1,1,0}\in\SU$ when $B_{-1,1,0}P_0\le0$.
Furthermore, according to Theorem~\ref{th:B_P_Rnm_Nkappa} the function
$B_{-1,1,0}P_0R_{-1,1,0}(z+\omega)$ lies in $\SU$ for all real values of parameters $a,b,c,\omega$  such that~$-c\notin\N_0$ and $\omega\le1$.

\medskip

\textbf{Example~15}.  For the ratio $R_{-2,-2,0}(z)$ according to \eqref{eq:nmrelated} we obtain $p=4$, $l=0$, $r=1$.   Theorem~\ref{th:2F1identity} and definition \eqref{eq:B-defined} yield: $$
B_{-2,-2,0}P_1(t)=-\frac{[\Gamma(c)]^2(c-a-b+2)(\rho_0+\rho_1t)}{\Gamma(a)\Gamma(b)\Gamma(c-a+2)\Gamma(c-b+2)},
$$
where  $\rho_0=a^2+b^2-(c+2)(a+b)+3c+1$, $\rho_1=c(c-a-b+1)+2(ab-a-b+1)$.
Using Lemmas~\ref{lm:asymp-infnolog} and \ref{lm:asymp-inflog} or by direct, albeit tedious calculation we obtain the asymptotic approximations
\smallskip
\begin{compactenum}[\indent(1)]\itemsep2.3pt plus .7pt minus .2pt
\item if $a>b+2$, then $R_{-2,-2,0}(z)=\gamma_{a,b,c}{z^2}+\beta_{a,b,c}{z}+\alpha_{a,b,c}+o(1)$ as $z\to\infty$;

\item if $b+1<a\le{b+2}$, then $R_{-2,-2,0}(z)=\gamma_{a,b,c}{z^2}+\beta_{a,b,c}{z}+o(z)$ as $z\to\infty$;

\item if $b\le{a}\le{b+1}$, then $R_{-2,-2,0}(z)=\gamma_{a,b,c}{z^2}+o(z^2)$ as $z\to\infty$;

\item if $b-1\le{a}\le{b}$, then $R_{-2,-2,0}(z)=\gamma_{b,a,c}{z^2}+o(z^2)$ as $z\to\infty$;

\item  if $b-2\le{a}<b-1$, then $R_{-2,-2,0}(z)=\gamma_{b,a,c}{z^2}+\beta_{b,a,c}{z}+o(z)$ as $z\to\infty$;

\item if $a<b-2$, then $R_{-2,-2,0}(z)=\gamma_{b,a,c}{z^2}+\beta_{b,a,c}{z}+\alpha_{b,a,c}+o(1)$ as $z\to\infty$,

\end{compactenum}
where
$$
\gamma_{a,b,c}=\frac{(a-2)(a-1)}{(c-b)(c-b+1)},~~~\beta_{a,b,c}=\frac{2(a-2)(a-1)(c+1-2b)}{(c-b)(c-b+1)(b-a+1)},
$$
$$
\alpha_{a,b,c}=\gamma_{a,b,c}\frac{c(c+1)(a-1)+2b^2(a+4c-3b)-2ab(c+2)-b(3c^2-c-6)}{(a-b-2)(a-b-1)^2}.
$$
Hence, for $|a-b|>2$ we have $R_{-2,-2,0}(z)=\gamma{z^2}+\beta{z}+\alpha+o(1)$ as $z\to\infty$, where $(\gamma,\beta,\alpha)=(\gamma_{a,b,c},\beta_{a,b,c},\alpha_{a,b,c})$ when $a>b+2$ and $(\gamma,\beta,\alpha)=(\gamma_{b,a,c},\beta_{b,a,c},\alpha_{b,a,c})$  when $a<b-2$.
Then for $|a-b|>2$ the case $N=0$ of Theorem~\ref{th:2F1ratio-repr} leads to the representation:
\begin{equation}\label{eq:example15}
R_{-2,-2,0}(z)=\gamma{z^2}+\beta{z}+\alpha-\frac{[\Gamma(c)]^2(c-a-b+2)}{\Gamma(a)\Gamma(b)\Gamma(c-a+2)\Gamma(c-b+2)}
\int_0^{1}\frac{(\rho_0+\rho_1t)t^{a+b-3}(1-t)^{c-a-b}}{|{}_{2}F_{1}(a,b;c;1/t)|^{2}(1-zt)}dt.
 \end{equation}
In addition to the condition $|a-b|>2$ we need to assume any of the conditions \eqref{item:Run1}-\eqref{item:Run5} of Theorem~\ref{th:2F1zeros}. Under these restrictions and except for the degenerate cases $ab=0$, $(c-a)(c-b)=0$ the condition $\nu>-1$ from \eqref{eq:asymp1} reads
$$
(c-a-b+4)_{-}-(c-a-b)_{-}>-1
$$
which is true for all real $a,b,c$.  If $1<|a-b|\le2$, we see  that the  asymptotics takes the form  $R_{-2,-2,0}(z)=\gamma{z^2}+\beta{z}+o(z)$ as $z\to\infty$, where $(\gamma,\beta)=(\gamma_{a,b,c},\beta_{a,b,c})$ when $a>b+1$ and $(\gamma,\beta)=(\gamma_{b,a,c},\beta_{b,a,c})$  when $a<b-1$. Hence,  for $1<|a-b|$ according to \eqref{eq:mainrepresentionN} with $N=1$ we get
\begin{equation}\label{eq:example15-1}
R_{-2,-2,0}(z)=\gamma{z^2}+\beta{z}+1-\frac{z[\Gamma(c)]^2(c-a-b+2)}{\Gamma(a)\Gamma(b)\Gamma(c-a+2)\Gamma(c-b+2)}
\int_0^{1}\frac{(\rho_0+\rho_1t)t^{a+b-2}(1-t)^{c-a-b}}{|{}_{2}F_{1}(a,b;c;1/t)|^{2}(1-zt)}dt.
 \end{equation}
Similarly, for $|a-b|\le1$  the  asymptotics takes the form $R_{-2,-2,0}(z)=\gamma{z^2}+o(z^2)$, where $\gamma=\gamma_{a,b,c}$ when $a\ge{b}$ and $\gamma=\gamma_{b,a,c}$  when $a\le{b}$. Hence, without additional restrictions according to \eqref{eq:mainrepresentionN} with $N=2$ we get
\begin{multline}\label{eq:example15-2}
R_{-2,-2,0}(z)=1+\frac{2(2-a-b)}{c^2}z+\gamma{z^2}
\\
-\frac{z^2[\Gamma(c)]^2(c-a-b+2)}{\Gamma(a)\Gamma(b)\Gamma(c-a+2)\Gamma(c-b+2)}
\int_0^{1}\frac{(\rho_0+\rho_1t)t^{a+b-1}(1-t)^{c-a-b}}{|{}_{2}F_{1}(a,b;c;1/t)|^{2}(1-zt)}dt
 \end{multline}
under any of the conditions \eqref{item:Run1}-\eqref{item:Run5} of Theorem~\ref{th:2F1zeros}, but without any other restrictions.

Further, $B_{-2,-2,0}P_1(t)$ does not change sign on $(0,1)$ if the zero $t_*=-\rho_{0}/\rho_{1}$ of the polynomial $P_1(t)=\rho_0+\rho_{1}t$ does not lie in $(0,1)$, i.e. $-\rho_{0}/\rho_{1}\in(-\infty,0]\cup[1,\infty)$.  If this case, according to Theorem~\ref{th:RnmNkappa} $R_{-2,-2,0}\in\SU$ if
$B_{-2,-2,0}P_1(t)\ge0$ on $(0,1)$ or $-R_{-2,-2,0}\in\SU$ if $B_{-2,-2,0}P_1(t)\le0$ on $(0,1)$.
Furthermore, according to Theorem~\ref{th:B_P_Rnm_Nkappa} the functions
\[
    B_{-2,-2,0}P_1\Big(\frac1{z+\omega}\Big)R_{-2,-2,0}(z+\omega)
    \quad\text{and}\quad
    \frac{R_{-2,-2,0}(z+\omega)}{B_{-2,-2,0}P_1\big(1/(z+\omega)\big)}
\]
lie in~$\SU$ for all real values of parameters  $a,b,c,\omega$ such that~$-c\notin\N_0$ and $\omega\le1$.

\paragraph*{Acknowledgements.}
The first author was supported by the research fellowship DY 133/1-1 of the German Research
Foundation~(DFG). He is also grateful to HTWG Konstanz and the Konstanz University: the very
last lines of this paper were written during his research visit to their wonderful town.

\end{document}